\newcommand{\bb}{\mathbb}
\newcommand{\Gal}{\operatorname{Gal}}
\newcommand{\Lap}{\triangle}
\newcommand{\divv}{\operatorname{div}}
\newcommand{\an}{\operatorname{an}}
\newcommand{\vphi}{\varphi}
\newcommand{\Res}{\operatorname{Res}}
\newcommand{\ovl}{\overline}
\newcommand{\Per}{\operatorname{Per}}
\newcommand{\cal}{\mathcal}
\newcommand{\Prep}{\operatorname{Prep}}
\newcommand{\vol}{\operatorname{vol}}
\newcommand{\FS}{\operatorname{FS}}
\newcommand{\eps}{\varepsilon}
\newcommand{\PGL}{\operatorname{PGL}}
\newcommand{\id}{\operatorname{id}}
\newcommand{\del}{\partial}
\newcommand{\h}{\widehat{h}}
\newtheorem{theorem}{Theorem}
\numberwithin{theorem}{section}
\newtheorem{lemma}[theorem]{Lemma}
\newtheorem{proposition}[theorem]{Proposition}
\numberwithin{example}{section}
\numberwithin{definition}{section}
\newtheorem{corollary}[theorem]{Corollary}
\title{Quantitative Equidistribution of Small Points for Canonical Heights}
\author{Jit Wu Yap}
\begin{document}

\begin{abstract}
Let $X$ be a smooth projective variety defined over a number field $K$ and let $\vphi: X \to X$ a polarized endomorphism of degree $d \geq 2$. Let $\h_{\vphi}$ be the canonical height associated to $\vphi$ on $X(\ovl{K})$. Given a generic sequence of points $(x_n)$ with $\h_{\vphi}(x_n) \to 0$ and a place $v \in M_K$, Yuan \cite{Yua08} has shown that the conjugates of $x_n$ equidistribute to the canonical measure $\mu_{\vphi,v}$. When $v$ is archimedean, we will prove a quantitative version of Yuan's result. We give two applications of our result to polarized endomorphisms $\vphi$ of smooth projective surfaces that are defined over a number field $K$. The first is an exponential rate of convergence for periodic points of period $n$ to the equilibrium measure and the second is an exponential lower bound on the degree of the extension containing all periodic points of period $n$. When $X$ is an abelian variety, we also give an upper bound on the smallest degree of a hypersurface that contains all points $x \in X(\ovl{K})$ satisfying 
$$[K(x):K] \leq D \text { and } \h_X(x) \leq \frac{c}{D^8}$$
for some fixed constant $c > 0$ where $\widehat{h}_X$ is the Neron--Tate height for $X$. 
\end{abstract}

\maketitle

\section{Introduction}
Let $K$ be a number field and $X$ a smooth projective variety over $K$. An endomorphism $\vphi: X \to X$ is polarized of degree $d$ if there exists an ample line bundle $L$ and an isomorphism $\vphi^*L \simeq L^{\otimes d}$ for an integer $d \geq 2$. For example, any endomorphism of $\bb{P}^r$ with degree $d \geq 2$ is polarizable of the same degree. Another example is the multiplication by $n$ map on any abelian variety $A$ for an integer $n \geq 2$, which is polarizable of degree $n^2$. By results of Fakkrhudin \cite{Fak14}, if $\vphi: X \to X$ is a polarized endomorphism then we may find an embedding $X \xhookrightarrow{} \bb{P}^N$ into projective space such that $\vphi$ extends to a morphism $\tilde{\vphi}: \bb{P}^N \to \bb{P}^N$.
\par 
If $h_L$ denotes a Weil height associated to the ample line bundle $L$, we can define a canonical height by the formula
$$\h_{\vphi}(x) = \lim_{n \to \infty} \frac{h_L(\vphi^n(x))}{d^n}$$
for all $x \in \ovl{K}$. By \cite{CS93}, this limit is well-defined and $\h_{\vphi}(x) = 0$ occurs exactly when $x$ is a preperiodic point for $\vphi$, i.e. there exists $m > n$ such that $\vphi^m(x) = \vphi^n(x)$. 
\par 
Points of small canonical height play an important role in diophantine geometry. For example the Bogomolov conjecture, proven by Ullmo \cite{Ulm98} and generalized by Zhang \cite{Zha98}, predicts that points of small Neron--Tate height on a subvariety $X$ of an abelian variety $A$ is not Zariski dense unless $X$ is a torsion coset of $A$. Due to the many connections between arithmetic dynamics and diophantine geometry, they also show up in important conjectures from arithmetic dynamics, such as the dynamical Bogomolov conjecture (see \cite{Zha06, GTZ11}) and the dynamical Andre--Oort conjecture \cite{BD14}. 
\par 
One of the main tools to deal with problems involving small points is equidistribution. For the case of abelian varieties, this goes back to Spziro--Ullmo--Zhang \cite{SUZ97}. It was subsequently proven for multiplicative tori by Bilu \cite{Bil97} and then for all polarized dynamical systems on projective varieties by Yuan \cite{Yua08}. In fact, Yuan's theorem holds more generally for heights coming from adelic line bundles. For simplicity, we will state Yuan's theorem only for canonical heights on $X$. 
\par 
Given a sequence of distinct points $(x_n)_{n \geq 1}$ of $X(\ovl{K})$, we say that the sequence is small if $\h_{\vphi}(x_n) \to 0$ and that the sequence is generic if no infinite subsequence of $(x_n)$ is contained in a proper subvariety $Z$ of $X$. Let $v: K \xrightarrow{} \bb{C}$ be an embedding. We may then view $\vphi$ as an endomorphism of $X(\bb{C}_v)$ and let $\mu_{\vphi,v}$ be the equilibrium measure of $\vphi$ on $X(\bb{C}_v)$ \cite{DS10}. We say that the Galois orbits of $(x_n)$ are equidistributed with respect to a measure $\mu$ if the discrete measures $\mu_n = \frac{1}{|F_{x_n}|} \sum_{y \in F_{x_n}} \delta_y$, where $F_{x_n}$ is the $\Gal(\ovl{K}/K)$-orbit of $x_n$, converge weakly to $\mu$.

\begin{theorem}[Theorem 3.7 \cite{Yua08}] \label{IntroTheoremYuan1}
Let $(x_n)$ be a generic and small sequence for the canonical height $\h_{\vphi}$. Then for any embedding $v \xrightarrow{} \bb{C}$, the Galois orbits of $(x_n)$ are equidistributed with respect to the measure $\mu_{\vphi,v}$ on $X(\bb{C}_v)$.
\end{theorem}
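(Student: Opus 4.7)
The plan is to use Yuan's variational method: perturb the adelic metric defining $\h_{\vphi}$ at the place $v$ by a smooth test function $f$, then apply Zhang's inequality for the essential minimum to the perturbed height and compare with the top arithmetic self-intersection, which to first order in the perturbation computes $\int f \, \mu_{\vphi,v}$.

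First I would set up the canonical height as arising from a semipositive adelic metrized line bundle $\ovl{L}_{\vphi}$ on $X$, obtained at each place $w$ as the uniform limit of $\tfrac{1}{d^n}(\vphi^n)^* \|\cdot\|$; at the archimedean place $v$ the resulting first Chern measure is exactly $\mu_{\vphi,v}$. One then has $\h_{\vphi}(x) = \tfrac{1}{[K(x):K]} \ovl{L}_{\vphi}|_{\ovl{\{x\}}}$, and from the functional equation $\vphi^* \ovl{L}_{\vphi} = d \cdot \ovl{L}_{\vphi}$ the top self-intersection $\ovl{L}_{\vphi}^{\dim X + 1}$ vanishes. Zhang's inequality then forces the essential minimum to satisfy $e_1(\ovl{L}_{\vphi}) \geq 0$, and since $(x_n)$ is generic and small this bound is saturated: $e_1(\ovl{L}_{\vphi}) = 0$.

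For $f \in C^\infty(X(\bb{C}_v))$ and small $t > 0$, let $\ovl{L}_{\vphi}(tf)$ denote the adelic line bundle obtained from $\ovl{L}_{\vphi}$ by multiplying the $v$-metric by $e^{-tf}$. Heights transform additively, $\h_{\ovl{L}_{\vphi}(tf)}(x_n) = \h_{\vphi}(x_n) + \tfrac{t}{|F_{x_n}|} \sum_{y \in F_{x_n}} f(y)$, and direct expansion of the arithmetic intersection (using $\ovl{L}_{\vphi}^{\dim X + 1} = 0$) gives
$$\ovl{L}_{\vphi}(tf)^{\dim X + 1} = t(\dim X + 1)\Bigl(\int_{X(\bb{C}_v)} f\, \mu_{\vphi,v}\Bigr) L^{\dim X} + O(t^2).$$
The main obstacle is that $\ovl{L}_{\vphi}(tf)$ need not be semipositive when $f$ is not psh, so Zhang's inequality is not directly available. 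I would circumvent this using Yuan's arithmetic Siu inequality: write $\ovl{L}_{\vphi}(tf) = \ovl{M}_1 - \ovl{M}_2$ as a difference of nef adelic line bundles and bound the arithmetic volume from below by $\ovl{M}_1^{\dim X + 1} - (\dim X + 1)\, \ovl{M}_1^{\dim X} \cdot \ovl{M}_2$. Combined with the volume form of Zhang's inequality applied to the perturbed bundle, this yields $e_1(\ovl{L}_{\vphi}(tf)) \geq t \int f \, \mu_{\vphi,v} + O(t^2)$.

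Finally, the trivial upper bound $e_1(\ovl{L}_{\vphi}(tf)) \leq \liminf_n \h_{\ovl{L}_{\vphi}(tf)}(x_n)$ from genericity, after dividing by $t$ and letting $t \to 0^+$, gives $\liminf_n \mu_n(f) \geq \int f \, \mu_{\vphi,v}$; applying the argument to $-f$ and invoking density of smooth functions among continuous ones produces the weak convergence $\mu_n \to \mu_{\vphi,v}$. The arithmetic Siu inequality is the heart of the matter: earlier arguments in the abelian case (Szpiro--Ullmo--Zhang) bypassed it using the $[-1]$-symmetry of the N\'eron--Tate height, but no such symmetry is available for general polarized dynamics, so one is forced into Yuan's bigness estimate, which in turn rests on a careful construction of small sections for differences of nef adelic line bundles.
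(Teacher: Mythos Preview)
Your sketch is correct and is essentially Yuan's original argument from \cite{Yua08}, which is exactly what the paper cites for this theorem; the paper does not supply a separate proof of Theorem~\ref{IntroTheoremYuan1} but quotes it as background.

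It is worth noting, however, that the paper's proof of its quantitative refinement (Theorem~\ref{QuantTheorem2}) follows the same variational template you describe but diverges at the technical core. Where you invoke Yuan's arithmetic Siu inequality as a black box to obtain $e_1(\ovl{L}_{\vphi}(tf)) \geq t\int f\, d\mu_{\vphi,v} + O(t^2)$, the paper instead replaces the arithmetic Hilbert--Samuel machinery by two explicit ingredients: a global lower bound on $\chi(\ovl{L}^n)$ coming from Looper's canonical spanning set (Theorem~\ref{QuantProposition1}), and quantitative Bergman kernel estimates (Sections~\ref{sec: QuantBergman} and \ref{sec: NegativeBergman}) that control the archimedean volume change under the perturbation $e^{-\eps f}$ after regularizing the semipositive metric. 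The payoff of the paper's route is an explicit bound on the degree of the hypersurface $H(f,\eps)$ outside which the equidistribution inequality holds; your sketch, while perfectly adequate for the qualitative statement, gives no handle on this degree because the section produced by Minkowski in Yuan's argument lives in $H^0(X,L^n)$ for an $n$ that is not effectively controlled.
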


It is easy to see that the genericity of $(x_n)$ is a necessary condition. For example if $V$ is a preperiodic subvariety of $\bb{P}^r$ for $V$, then $V$ has infinitely many preperiodic points \cite{Fak03} and choosing our sequence $(x_n)$ from them, any possible limit measure must be supported on $V(\bb{C}_v)$. However, the equilibrium measure $\mu_{\vphi,v}$ is not supported on any subvariety as it has zero measure on pluripolar sets \cite[Proposition 1.2.3]{DS10}. 
\par 
Yuan's theorem holds too for non-archimedean places $v$ but one has to consider measures on the Berkovich analytification $X_v^{\an}$. The results of \cite{Yua08} has been extended by Yuan--Zhang \cite{YZ24} to the setting of heights arising from adelic line bundles on quasi-projective varieties (see also \cite{Gau23}).  
\par 
In dimension $1$, Favre--Rivera-Letelier \cite{FRL06} has proved a quantitative equidistribution theorem for heights arising from adelic measures on $\bb{P}^1$ using different ideas from \cite{SUZ97} and \cite{Yua08}. For the Neron--Tate height on elliptic curves, Baker and Petsche \cite{BP05, Pet09} have also proven quantitative bounds on the distribution of small points. However, their methods have yet to be generalized to higher dimensions and to date, the only known proof of \cite{Yua08} is following the strategy in \cite{SUZ97}. 
\par 
Our main result in this paper is a quantitative version of Theorem \ref{IntroTheoremYuan1} when $X$ is assumed to be smooth. Other than the case of $\bb{G}_m^n$ by D'Andrea--Narv\'{a}ez-Clauss--Sombra\cite{DNS17} which reduces to the one-dimensional case, we believe that this is the first result regarding quantitative equidistribution of small points that deal with endomorphisms on varieties of dimension larger than one. 
\par  
Given an ample line bundle $L$ on $X$, we fix some power of it such that $X$ embeds into some $\bb{P}^N$ under this power. We say that a subvariety $V$ of $X$ has degree $D$ with respect to $L$ if $V$ has degree $D$ inside $\bb{P}^N$.

\begin{theorem} \label{IntroTheorem1}
Let $X$ be a smooth projective variety defined over a number field $K$ and let $\vphi: X \to X$ a polarized endomorphism of degree $d \geq 2$, i.e. there exists an ample line bundle $L$ such that $\vphi^*L \simeq L^{\otimes d}$. Then there exists constants $c_1 = c_1(K,\vphi), c_2 = c_2(\vphi), c_3 = c_3(K,\vphi)$ such that for any $\eps > 0$ and embedding $v: K \xhookrightarrow{} \bb{C}$, if $f: X(\bb{C}_v) \to \bb{R}$ is a smooth function, then
\begin{enumerate}
\item there exists a hypersurface $H(f,\eps)$ defined over $K$ which has degree at most $ c_1 \eps^{-c_2}$ with respect to $L$,
\item for all $x \in X(\ovl{K})$ with $\Gal(\ovl{K}/K)$-orbit $F_x$, if $x \not \in H(f,\eps)$ then

$$\left|\frac{1}{|F_x|} \sum_{y \in F_x} f(y) - \int f(y) d \mu_{\vphi,v}(y) \right| \leq c_{f,3} \left(\frac{\h_{\vphi}(x)}{\eps} + c_3 \eps \right),$$
where $c_{f,3}$ is a constant depending on the derivatives of $f$ up to order $3$. 
\end{enumerate}
\end{theorem}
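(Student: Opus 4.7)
The plan is to implement the variational strategy of Szpiro--Ullmo--Zhang and Yuan, while making every estimate explicit in $\eps$ and in the derivatives of $f$. Fix a canonical adelic metrization $\ovl{L}$ of $L$ giving rise to $\h_{\vphi}$, and for a smooth $f$ on $X(\bb{C}_v)$ form the perturbed adelic line bundle $\ovl{L}_\eps$ by multiplying the metric of $\ovl{L}$ at the place $v$ by $e^{-\eps f}$ while leaving all other places unchanged. For $\eps$ sufficiently small (relative to $\|f\|_{C^2}$), $\ovl{L}_\eps$ remains integrable in the sense of Zhang/Yuan, so arithmetic intersection theory applies.

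The first step is a controlled second-order expansion of the arithmetic top self-intersection
\[
c_1(\ovl{L}_\eps)^{n+1} \;=\; (n+1)\,\eps\,(\deg_L X)\int_{X(\bb{C}_v)} f\,d\mu_{\vphi,v} \;+\; R(\eps,f),
\]
where the zeroth-order term vanishes because $c_1(\ovl{L})^{n+1}=0$ (a consequence of $\h_\vphi\geq 0$ combined with Zhang's inequality, since preperiodic points force the essential minimum to be zero), and the remainder satisfies $|R(\eps,f)| \leq C\,\eps^2\,\|f\|_{C^3}$ with $C=C(K,\vphi)$. The bound on $R$ comes from successive integration by parts of Monge--Amp\`ere products of the canonical potential against powers of $dd^c f$, which is where the $C^3$ regularity enters.

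The second step is to apply an effective form of Yuan's arithmetic Hilbert--Samuel theorem to $\ovl{L}_\eps^{\otimes m}$, producing a nonzero section $s \in H^0(X, L^{\otimes m})$ with $\|s\|_{\sup,w}\leq 1$ at every place $w$ and such that, for $x \notin \{s=0\}$,
\[
\h_{\ovl{L}_\eps}(x) \;\geq\; \frac{c_1(\ovl{L}_\eps)^{n+1}}{(n+1)\deg_L X} \;-\; \frac{E(m)}{m},
\]
for an explicit error $E(m)/m \to 0$. I would take $H(f,\eps) := \{s=0\}$, whose degree with respect to $L$ is at most $m\cdot \deg_L X$, and choose $m\asymp \eps^{-c_2}$ so that $E(m)/m = O(\eps^2)$. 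This determines $c_2$ from the error rate in the effective Hilbert--Samuel estimate and gives part (1); to ensure $H(f,\eps)$ is defined over $K$, one averages $s$ against its Galois conjugates inside the integral structure of $L^{\otimes m}$.

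For part (2), combine the bound above with the height identity
\[
\h_{\ovl{L}_\eps}(x) \;=\; \h_{\vphi}(x) \;+\; \frac{\eps}{|F_x|}\sum_{y\in F_x} f(y)
\]
and the volume expansion of Step 1; rearranging produces
\[
\frac{1}{|F_x|}\sum_{y\in F_x} f(y) - \int f\,d\mu_{\vphi,v} \;\geq\; -\frac{\h_\vphi(x)}{\eps} \;-\; O(\eps),
\]
where the $O(\eps)$ constant is controlled by $\|f\|_{C^3}$. Running the argument with $-f$ in place of $f$ gives the matching upper bound.

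The main obstacle is making Yuan's arithmetic Hilbert--Samuel theorem effective with explicit dependence on $m$, $\eps$ and the $C^3$-norm of $f$: the classical statement is purely asymptotic in $m$ and rests on Bergman-kernel and $L^2$-extension estimates whose implicit constants must be tracked carefully through the perturbed bundle $\ovl{L}_\eps$. A secondary difficulty is bounding the remainder $R(\eps,f)$ in Step 1, since the canonical Green currents at $v$ are generally only H\"older continuous; the smoothness of $X$ together with regularity results for Green currents of polarized dynamical systems on smooth varieties should suffice to keep the mixed Monge--Amp\`ere integrals finite with the claimed dependence on $\|f\|_{C^3}$.
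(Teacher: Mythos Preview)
Your outline follows the standard Szpiro--Ullmo--Zhang/Yuan template and correctly identifies the two genuine obstacles, but you do not resolve either of them, and the paper's contribution is precisely their resolution.

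First, you defer to ``an effective form of Yuan's arithmetic Hilbert--Samuel theorem'' to produce the small section $s$ with controlled $m$. No such effective statement is available off the shelf: the classical arithmetic Hilbert--Samuel rests on Gillet--Soul\'e and Bismut--Vasserot, and the paper explicitly explains that these make it very hard to extract an explicit lower bound on $\chi(\ovl{L}^n)=\log\vol C_{n,\sup}-\log\vol M_n$. The paper bypasses Hilbert--Samuel entirely: it builds, following Looper's generalization of Baker, a canonical spanning set of $H^0(X,L^{d^n})$ out of products of the coordinate functions of the homogeneous lift $F^{(m)}$, and uses elementary estimates on $\|F^{(m)}\|_v$ together with Tate's telescoping to get the global lower bound $\chi(\ovl{L}^n)\ge -O_{\vphi,K}(d^{n/2})\dim V_n$ directly (Theorem~\ref{QuantProposition1}). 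This is the step your proposal is missing.

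Second, your Step~1 treats the second-order remainder $R(\eps,f)$ as a matter of integration by parts against $dd^c f$, invoking ``regularity results'' for Green currents. But the canonical potential at $v$ is only H\"older, so the mixed Monge--Amp\`ere terms do not obviously give a clean $O(\eps^2\|f\|_{C^3})$. The paper does not expand $c_1(\ovl{L}_\eps)^{n+1}$ at all. Instead it (i) regularizes the canonical metric on $\bb{P}^N$ by convolving against the $\PGL_{N+1}$-action to produce a smooth metric $h'$ with $|h'-h|=O(d^n\eps^2)$ and controlled $C^k$-norms in terms of the H\"older exponent, and (ii) bounds the $L^2$-volume difference $\log\vol B_{n,L^2}-\log\vol C_{n,L^2}$ directly via quantitative upper and lower bounds on Bergman kernels (Propositions~\ref{Bergman2}, \ref{Bergman3}, \ref{Distortion1}) together with Jensen's inequality, in the spirit of Yuan's proof of arithmetic Siu. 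The $C^3$-norm of $f$ enters through the Bergman-kernel threshold $n\ge O(\lambda_\omega^{-(2r+1)}c_{\omega,2}^{2r}c_{h,3})^8$, not through a Taylor expansion of the arithmetic intersection number. The exponent $c_2$ is then determined by the H\"older exponent $\kappa$ of $G_v$ and the dimension, via the regularization estimates.

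In short: your final height computation in part (2) matches the paper, but the two ``obstacles'' you flag are the actual content of the theorem, and the paper resolves them by the Baker--Looper basis (replacing effective Hilbert--Samuel) and by metric regularization plus explicit Bergman-kernel asymptotics (replacing the Monge--Amp\`ere expansion).
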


The constant $c_{f,3}$ depends on the $C^3$-norm of $f$. We will give the precise definition of $c_{f,3}$ later in Section \ref{sec: Background}. The constants $c_1,c_2,c_3, c_{f,3}$ can be made explicit in terms of the data given. For example, the exponent $c_2$ depends only on the dimension of $X$ along with a Holder exponent for the Green's function at the archimedean place $v$. In fact for abelian varieties, due to the smoothness of the metric, we show that we can take $c_2 = 8$, a constant independent of $X$. 

\begin{theorem} \label{IntroAbelianQuant1}
Let $X$ be an abelian variety over a number field $K$ and $L$ a symmetric ample line bundle on $X$. Let $\h_X(x)$ be the Neron--Tate height associated to $L$. Then there exists constants $c_1 = c_1(K,L), c_3=  c_3(K,L)$ such that for any $\eps > 0$ and embedding $v: K \xhookrightarrow{} \bb{C}$, if $f: X(\bb{C}_v) \to \bb{R}$ is a smooth function, then
\begin{enumerate}
\item there exists a hypersurface $H(f,\eps)$ defined over $K$ which has degree at most $ c_1 \eps^{-8}$ with respect to $L$,
\item for all $x \in X(\ovl{K})$ with $\Gal(\ovl{K}/K)$-orbit $F_x$, if $x \not \in H(f,\eps)$ then

$$\left|\frac{1}{|F_x|} \sum_{y \in F_x} f(y) - \int f(y) d \mu_{X,v}(y) \right| \leq c_{f,3} \left(\frac{\h_{X}(x)}{\eps} + c_3 \eps \right),$$
where $c_{f,3}$ is a constant depending on the derivatives of $f$ up to order $3$ and $\mu_{X,v}$ is the Haar measure for $A$ associated to $v$.  
\end{enumerate}
\end{theorem}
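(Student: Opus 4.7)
My plan is to follow the proof of Theorem \ref{IntroTheorem1} line by line, and then exploit two special features of the abelian setting to sharpen the exponent to the universal constant $c_2 = 8$. These features are: (i) the canonical metric on the symmetric ample line bundle $L$ at the archimedean place $v$ is $C^{\infty}$ — it is a translation-invariant positive form whose Monge--Amp\`ere equals a constant multiple of the Haar measure $\mu_{X,v}$; and (ii) the Green's functions for $\mu_{X,v}$ are smooth rather than merely H\"older continuous. In the general Theorem \ref{IntroTheorem1} the exponent $c_2$ is forced by the H\"older exponent $\alpha$ of the canonical Green's function at $v$ (and by $\dim X$); in the abelian case one effectively has $\alpha = 1$, which is precisely what removes the dependence on $\dim X$ and pins down the universal value $8$.

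\emph{Step 1: approximate $f$ by a model function.} Given $f \in C^{\infty}(X(\bb{C}_v))$ and $\eps > 0$, I would first mollify $f$ at some scale $\eps^{a}$ (for a suitable exponent $a$) against a translation-invariant smoothing kernel to produce $f_{\eps}$ with $\|f - f_{\eps}\|_\infty \leq c_{f,3}\,\eps$ and with $C^{2}$-norm growing at most polynomially in $\eps^{-1}$. Then, using a Tian--Yau--Zelditch / Bergman-kernel expansion for the smoothly metrized $L^{\otimes N}$ on the abelian variety (where the expansion is clean because the metric is translation-invariant), I would produce a global section $s \in H^{0}(X, L^{\otimes N})$, defined over $K$ after clearing denominators, with
$$\left\| f_{\eps} - \tfrac{1}{N} \log \|s\|_{v}^{2} \right\|_{\infty} \leq C\,\|f_{\eps}\|_{C^{2}}/N.$$
Balancing the two errors, and absorbing the integrality loss from the arithmetic Bernstein--Markov step (see below), yields $N \leq c_{1}\eps^{-8}$. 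I would then take $H(f,\eps) := \divv(s)$, which has degree $O(N) = O(\eps^{-8})$ with respect to $L$.

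\emph{Step 2: variational/arithmetic bound.} For $x \notin H(f,\eps)$, the section $s$ is nonzero at each conjugate $y \in F_{x}$, so the adelic product formula applied to $s$ over $\ovl{L}$ gives
$$\frac{1}{|F_{x}|} \sum_{y \in F_{x}} \log \|s(y)\|_{v} = -\sum_{w \neq v} \frac{1}{|F_{x}|} \sum_{y \in F_{x}} \log \|s(y)\|_{w}.$$
The non-archimedean contributions are controlled by $N\,\h_{X}(x) + O(1)$ via the standard height inequality for $\ovl{L}$ on an abelian variety. Substituting the approximation of Step 1 and using that $\int \tfrac{1}{N}\log\|s\|_{v}^{2}\,d\mu_{X,v} = O(1)$ (since the first Chern form is a multiple of $\mu_{X,v}$) transfers the estimate into
$$\left|\frac{1}{|F_x|} \sum_{y \in F_x} f(y) - \int f\, d\mu_{X,v}\right| \leq c_{f,3}\!\left(\frac{\h_X(x)}{\eps} + c_{3}\eps\right),$$
as claimed.

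\emph{Main obstacle.} The technical heart of the argument is Step 1: producing the approximating section $s$ \emph{defined over $K$}, with the claimed degree bound $N \leq c_{1}\eps^{-8}$, while simultaneously bounding the archimedean sup-norms of $s$ and keeping $\log\|s\|_{v}$ pointwise close to $N f$. Over $\bb{C}$ this is a clean Bergman-kernel calculation, but descending to $K$ requires an effective arithmetic Bernstein--Markov inequality: one selects an integral basis of $H^{0}(X, L^{\otimes N})$ of small Faltings height, approximates the complex-analytic model by a rational combination, then clears denominators by passing to a further tensor power. Tracking the interplay between the smoothing scale $\eps^{a}$, the Bergman rate $1/N$, and the integrality loss is precisely what pins the exponent down to $8$ rather than a dimension-dependent value.
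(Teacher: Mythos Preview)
Your proposal has a genuine gap in Step~1, and the overall architecture diverges from the paper's in a way that does not obviously close.

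The central problem is the claimed approximation $\left\| f_{\eps} - \tfrac{1}{N}\log\|s\|_v^2 \right\|_{\infty} \leq C\|f_\eps\|_{C^2}/N$ for a single global section $s$. Any nonzero $s \in H^0(X,L^{\otimes N})$ with $N \geq 1$ has $\divv(s) \neq \emptyset$, so $\log\|s\|_v = -\infty$ along a hypersurface and the sup-norm on the left is infinite. Tian--Yau--Zelditch gives uniform convergence of the \emph{Bergman potential} $\tfrac{1}{N}\log\sum_i |s_i|^2$ (summed over an orthonormal basis) to the metric, not of $\tfrac{1}{N}\log\|s\|^2$ for one section to an arbitrary smooth $f$. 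Even if you only claim the approximation away from $\divv(s)$, Step~2 then breaks: for $x \notin \divv(s)$ the Galois conjugates $y \in F_x$ can lie arbitrarily close to $\divv(s)$, where $\tfrac{1}{N}\log\|s(y)\|^2$ is enormous and uncontrolled. The product-formula manipulation you wrote therefore does not yield the stated bound.

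The paper's approach is the Szpiro--Ullmo--Zhang / Yuan variational method, and it sidesteps this issue entirely. One does \emph{not} try to make $\log\|s\|$ approximate $f$; instead one perturbs the adelic metric on $\ovl{L}$ to $\ovl{L}(\eps f)$, proves a quantitative lower bound on the Euler characteristic $\chi(\ovl{L}(\eps f)^{n})$ of the form $\eps \int f\,d\mu_{X,v} - O(\eps^2)$ (this is where the Bergman-kernel upper bound and the Looper--Baker global basis construction enter), and then applies Minkowski's theorem to produce a section $s$ with $\log\|s\|_w \leq 0$ at \emph{every} place $w$ for the perturbed metric. Evaluating the $\ovl{L}(\eps f)$-height at $x \notin \divv(s)$ then gives a one-sided inequality; applying the same to $-f$ gives the other side. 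The exponent $8$ arises not from balancing mollification against integrality loss, but directly from Proposition~\ref{Bergman2}: the Bergman-kernel upper bound $\sum|s_i(x)|^2 \leq (n/2\pi)^r(1 + n^{-1/8})$ is valid once $n \geq O(c_{\omega,1}\lambda_\omega^{-1}c_{h,3})^8$, and in the abelian case these metric constants are bounded independently of $\eps$ because the canonical metric is already smooth and positive --- no regularization is needed, which is exactly why the H\"older exponent drops out and $c_2 = 8$ is universal.
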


The condition of $x \not \in H(f,\eps)$ appears in Theorem \ref{IntroTheoremYuan1} through the genericity of the sequence $(x_n)$. Although it is not clear from the statement of Theorem \ref{IntroTheoremYuan1} that such a hypersurface $H(f,\eps)$ exists, it  follows immediately from the proof of Theorem \ref{IntroTheoremYuan1}, for e.g. see \cite[Lemma 23]{Kuh21}. The new result in Theorem \ref{IntroTheorem1} is a bound on the degree of $H(f,\eps)$, which tells you how ``generic" $x$ has to be to achieve a desired accuracy for a given test function $f$ and $\eps > 0$. 
\par 
Recently, there have been many breakthrough involving uniform bounds of unlikely intersection problems with proofs relying on equidistribution, e.g. \cite{Kuh21, GGK21, MS22, GTV23, DM24, DMY24}. These results prove uniform bounds on the number of special points as one varies over a family, but it is not known how these bounds depend on the parameters of the given family as Theorem \ref{IntroTheoremYuan1} and its variants are all inexplicit. We hope that Theorem \ref{IntroTheorem1} would be the first step in producing explciit constants for the above results.
\par 
We now give two applications of Theorem \ref{IntroTheorem1}. The first concerns the rate of equidistribution of periodic points to the equilibrium measure. Let $\vphi: \bb{P}^r \to \bb{P}^r$ be an endomorphism of degree $d \geq 2$ defined over $\bb{C}$ and let $\Per_n$ denote the set of periodic points of period $n$ for $\vphi$, where we do not count with multiplicity. Then Briend--Duval \cite{BD01} has shown that as $n$ goes to infinity, we have 
\begin{equation} \label{eq: Periodic1}
\frac{1}{|\Per_n|}  \sum_{x \in \Per_n} \delta_x \to \mu_{\vphi}.
\end{equation}
Dinh--Sibony \cite[Problem 7]{DS17} asks for an estimate of the rate of convergence of \eqref{eq: Periodic1}. When $\vphi$ is defined over a number field $K$, we will show that the convergence is of an exponential rate. Over $\bb{C}$, this has recently been shown for $\bb{P}^1$ by Dinh--Kaufman \cite{DK24}. 

\begin{theorem} \label{IntroTheoremPeriodicRate1}
Let $X$ be a smooth projective surface over a number field $K$ and $\vphi: X \to X$ a polarized endomorphism of degree $d \geq 2$. Then there exists constants $C > 0$ and $\lambda > 1$, depending on $\vphi$, such that if $v$ is an archimedean place of $K$, then for any smooth function $f: X(\bb{C}_v) \to \bb{R}$, we have
$$\left|\frac{1}{|\Per_n|} \sum_{x \in \Per_n} f(x) - \int f d \mu_{\vphi,v} \right| \leq C c_{f,3}\lambda^{-n}.$$
\end{theorem}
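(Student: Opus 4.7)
The strategy is to apply Theorem \ref{IntroTheorem1} to each periodic point of period $n$ --- all of which satisfy $\h_{\vphi}(x) = 0$ --- with a parameter $\eps = \eps(n)$ chosen to decay exponentially in $n$, and then to bound separately the contribution of periodic points that happen to lie on the exceptional hypersurface $H(f,\eps)$.

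First I would split $\Per_n$ into $\Gal(\ovl{K}/K)$-orbits. Since $\vphi$ is defined over $K$, $\Per_n$ is Galois-stable, so $\Per_n = \bigsqcup_{[x]} F_x$. Because $H := H(f,\eps)$ is defined over $K$, each orbit lies either entirely in $H$ or entirely in its complement. Applying Theorem \ref{IntroTheorem1} to each $x \in \Per_n \setminus H$, averaging over orbits, and bounding the contribution from $\Per_n \cap H$ crudely by $2\|f\|_\infty$ per point yields
\[
\left|\frac{1}{|\Per_n|}\sum_{x \in \Per_n} f(x) - \int f \, d\mu_{\vphi,v}\right|
\leq c_{f,3} c_3 \eps + \frac{2\|f\|_\infty \cdot |\Per_n \cap H|}{|\Per_n|}.
\]

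The key step is to bound $|\Per_n \cap H|$. Since $X$ is a surface, $H$ is a curve of degree $D \leq c_1 \eps^{-c_2}$ with respect to $L$. I would decompose $H = H^{\text{np}} \cup H^{\text{pp}}$ into non-preperiodic and preperiodic irreducible components. For a non-preperiodic component $C$ of degree $D_C$, the curve $\vphi^{-n}(C)$ has degree $d^n D_C$ and shares no component with $C$; Bezout on $X$ then gives $|\Per_n \cap C| \leq |C \cap \vphi^{-n}(C)| \leq D_C^2 \, d^n$, so that $|\Per_n \cap H^{\text{np}}| \leq D^2 d^n$. For a preperiodic component $C$, reducing along $\vphi^m$ to a periodic curve $C' = \vphi^m(C)$ of period $p$ shows that $\vphi^p|_{C'}$ has degree exactly $d^p$ (using $\vphi^{p*}L \simeq L^{\otimes d^p}$), so $C'$ contains at most $d^n + O(1)$ periodic points of period $n$, and summing over components yields $|\Per_n \cap H^{\text{pp}}| = O(D \, d^n)$. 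Combined with the standard asymptotic $|\Per_n| \geq c \, d^{2n}$ --- a consequence of the Lefschetz trace formula for polarized surface endomorphisms --- the error bound becomes
\[
c_{f,3} c_3 \eps + C' \|f\|_\infty \, \eps^{-2c_2} d^{-n},
\]
and the choice $\eps = d^{-n/(2c_2+1)}$ balances both terms, giving the exponential rate with $\lambda = d^{1/(2c_2+1)}$.

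The hard part will be a clean treatment of the preperiodic components of $H(f,\eps)$: one must verify that no such component can accumulate more than $O(d^n)$ periodic points of period $n$, otherwise the exponent $c_2$ from Theorem \ref{IntroTheorem1} would be too weak. This reduces to one-dimensional dynamics on a periodic curve, where the polarized hypothesis $\vphi^*L \simeq L^{\otimes d}$ pins down the degree of the restricted map and delivers the required bound. The remaining ingredients --- Bezout on $X$, the asymptotic count of $\Per_n$, and the Galois-stability of $H(f,\eps)$ --- are standard, and the stated exponential rate then follows by an elementary optimization in $\eps$.
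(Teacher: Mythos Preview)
Your proposal is correct and follows essentially the same route as the paper: apply Theorem~\ref{IntroTheorem1} with $\eps$ shrinking exponentially in $n$, bound the number of $n$-periodic points on the exceptional curve via a Bezout-type count for the components not fixed by $\vphi^n$ and a direct fixed-point count on the periodic components, and compare against $|\Per_n|\asymp d^{2n}$. The paper's Propositions~\ref{PeriodicCurve1}--\ref{PeriodicCurve3} carry out exactly this curve count (using the forward image $\vphi^n(C)$ rather than your preimage $\vphi^{-n}(C)$, but the Bezout bound is the same $e^2 d^n$ either way), and the final optimization in $\eps$ differs only in the resulting exponent for $\lambda$.

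Two small points to tighten: on a periodic curve $C'$ the restricted map has degree $d^p$, which forces genus $\leq 1$ by Riemann--Hurwitz, but in the elliptic case the fixed-point count is $d^n + O(d^{n/2})$ rather than $d^n + O(1)$ (the paper records $d^n + 2d^{n/2} + 1$ via \cite{MMSSZZ22}); this is harmless for the final estimate. Also, the lower bound $|\Per_n|\geq c\,d^{2n}$ counted \emph{without} multiplicity is more delicate than a bare Lefschetz computation --- the paper cites \cite{DZ23} for this input.
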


Our second application of Theorem \ref{IntroTheorem1} concerns the set of preperiodic points $x$ of $\vphi$ that satisfy $[K(x):K] \leq D$ for some fixed positive integer $D \geq 1$. In \cite{Bak06}, Baker showed that if $\vphi: \bb{P}^1 \to \bb{P}^1$ is a rational map defined over $K$ of degree $d \geq 2$, then there are at most $O_{\vphi}(D^{1+\eps})$ preperiodic points in a fixed extension $L/K$ of degree $D$. Our first application is a higher dimensional analogue of Baker's result, where we control the geometry of preperiodic points of bounded degree $\leq D$. We will let the set of preperiodic points having degree $\leq D$ over $K$ be denoted by $\Prep_{\vphi,D}$ and similarly denote the set of periodic points by $\Per_{\vphi,D}$.  

\begin{theorem} \label{IntroTheorem2}
Let $X$ be a smooth projective variety defined over a number field $K$ and $\vphi: X \to X$ a polarized endomoprhism of degree $d \geq 2$. There exists a constant $e = e(\vphi) > 0$ such that for any positive integer $D > 0$, there exists a hypersurface $H$ of degree at most $O_{\vphi,K}(D^e)$, such that $\Prep_{\vphi,D} \subseteq H$.  
\end{theorem}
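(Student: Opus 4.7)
The plan is to derive Theorem \ref{IntroTheorem2} from Theorem \ref{IntroTheorem1} by combining it with a covering argument at an archimedean place, exploiting that $\h_\vphi(x) = 0$ for preperiodic $x$, so the right-hand side of Theorem \ref{IntroTheorem1} collapses to $c_3 c_{f,3}\eps$. The strategy is: apply Theorem \ref{IntroTheorem1} to a family of bump test functions $\{f_i\}$ covering $\Supp(\mu_{\vphi, v})$ at scale $\delta$, take $H$ to be the union of the resulting hypersurfaces, and show that any $x \in \Prep_{\vphi, D}$ with $x \notin H$ would force its Galois orbit $F_x$ to be quasi-equidistributed on a scale too fine for $|F_x| \leq D$ to survive.

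Fix an archimedean embedding $v: K \hookrightarrow \bb{C}$ and a scale $\delta > 0$. I would cover $\Supp(\mu_{\vphi, v})$ by balls $B_\delta(y_i)$, $i = 1, \ldots, M$, with centers $y_i \in \Supp(\mu_{\vphi, v})$ chosen so that $\mu_{\vphi, v}(B_\delta(y_i)) \geq c_0 \delta^\alpha$, where $\alpha > 0$ is an Ahlfors-type regularity exponent for $\mu_{\vphi, v}$; packing then yields $M \leq C\delta^{-\alpha}$. For each $i$, take a non-negative smooth bump $f_i$ supported in $B_\delta(y_i)$, equal to $1$ on $B_{\delta/2}(y_i)$, with $\|f_i\|_\infty \leq 1$ and $C^3$-norm of order $\delta^{-3}$, so that $\int f_i \, d\mu_{\vphi, v} \gtrsim \delta^\alpha$ and $c_{f_i, 3} \lesssim \delta^{-3}$. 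Applying Theorem \ref{IntroTheorem1} to each $f_i$ at parameter $\eps$ yields a $K$-hypersurface $H_i$ of degree $\leq c_1 \eps^{-c_2}$, and I set $H := \bigcup_i H_i$, a hypersurface of degree $\leq M c_1 \eps^{-c_2}$ defined over $K$.

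For any $x \in \Prep_{\vphi, D}$ with $x \notin H$, Theorem \ref{IntroTheorem1} applied to each $f_i$ (using $\h_\vphi(x)=0$) gives
$$\left| \frac{1}{|F_x|} \sum_{y \in F_x} f_i(y) - \int f_i \, d\mu_{\vphi, v} \right| \lesssim \delta^{-3}\eps.$$
If $F_x \cap B_\delta(y_i) = \emptyset$ for some $i$, the left-hand side reduces to $\int f_i \, d\mu_{\vphi, v} \gtrsim \delta^\alpha$, which contradicts the displayed bound once $\eps \lesssim \delta^{\alpha + 3}$. Hence for such $\eps$, $F_x$ meets every $B_\delta(y_i)$, so $F_x$ is a $2\delta$-net of $\Supp(\mu_{\vphi, v})$, and packing gives $|F_x| \gtrsim M \gtrsim \delta^{-\alpha}$. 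Together with $|F_x| \leq D$ this forces $\delta \gtrsim D^{-1/\alpha}$.

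Setting $\delta \sim D^{-1/\alpha}$ and $\eps \sim \delta^{\alpha+3}$ then rules out any preperiodic $x$ of degree $\leq D$ outside $H$, so $\Prep_{\vphi, D} \subseteq H$, and $\deg H \leq M c_1 \eps^{-c_2} \lesssim \delta^{-\alpha - c_2(\alpha+3)} \lesssim D^{\,1 + c_2 + 3c_2/\alpha}$, yielding an admissible $e = 1 + c_2 + 3c_2/\alpha$. The main obstacle is the covering input: producing the uniform lower bound $\mu_{\vphi, v}(B_\delta(y)) \geq c_0\delta^\alpha$ on $\Supp(\mu_{\vphi, v})$, i.e., Ahlfors lower-regularity of the equilibrium measure. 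For a polarized endomorphism this should follow from the Holder continuity of the Green's function \cite{DS10} together with the functional equation $\vphi^*\mu_{\vphi, v} = d^n \mu_{\vphi, v}$, but extracting a clean uniform estimate is where the real analytic work sits. For abelian varieties $\mu_{X, v}$ is the Haar measure and the step is automatic, which is consistent with the cleaner exponent visible in Theorem \ref{IntroAbelianQuant1}.
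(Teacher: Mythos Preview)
Your overall architecture is correct and matches the paper's: pick an archimedean place, build a family of bump functions, apply Theorem \ref{IntroTheorem1} with $\h_\vphi(x)=0$, and force any preperiodic $x\notin H$ to have Galois orbit touching too many disjoint regions. The gap is in the covering input you flag at the end, and it is not a technicality you can finesse.

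Your argument needs two lower bounds that are \emph{not} available for general polarized endomorphisms: (i) an Ahlfors-type lower regularity $\mu_{\vphi,v}(B_\delta(y))\ge c_0\delta^\alpha$ for $y\in\Supp(\mu_{\vphi,v})$, to make $\int f_i\,d\mu_{\vphi,v}$ quantitatively positive; and (ii) a lower bound on the $\delta$-packing number of $\Supp(\mu_{\vphi,v})$, to pass from ``$F_x$ is a $2\delta$-net'' to $|F_x|\gtrsim\delta^{-\alpha}$. The Holder continuity of the Green's function and the invariance $\vphi^*\mu=d^r\mu$ give \emph{upper} regularity (this is the moderate measure property of Dinh--Nguyen--Sibony), but there is no general lower Ahlfors bound for equilibrium measures, and the small Julia set can be quite wild.

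The paper sidesteps this entirely. Instead of covering $\Supp(\mu_{\vphi,v})$, it works inside a single chart where $\mu_{\vphi,v}$ has positive mass, partitions it into cubes of side $\sim D^{-2/\kappa}$, and uses pigeonhole to find \emph{one} cube $R_1$ with measure at least the average. Then it removes $R_1$ together with a bounded dilation $nR_1$; the \emph{upper} bound $\mu_{\vphi,v}(nR_1)\le C(n\,\text{side})^\kappa\lesssim D^{-2}$ (from moderateness) guarantees only a negligible fraction of the mass is lost, so pigeonhole can be repeated. After $D$ steps one obtains $D$ cubes $R_1,\dots,R_D$ with pairwise disjoint doubles $2R_i$ and each carrying measure $\gtrsim D^{-2r/\kappa}$. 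Now apply Theorem \ref{IntroTheorem1} to a bump $f_i$ supported in $2R_i$ and identically $1$ on $R_i$: if $x$ avoids the resulting hypersurface $H_i$ then $F_x$ meets $2R_i$. Since the $2R_i$ are disjoint, $|F_x|\ge D$, contradicting $[K(x):K]<D$. The only analytic input is the moderate measure upper bound, which is known; no lower regularity and no packing bound on the support are needed. You should replace your covering step with this pigeonhole-and-peel construction.
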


One difference from Theorem \ref{IntroTheorem2} and Baker's result is that we are able to handle all extensions of degree $\leq D$ at once, instead of fixing a single extension $L$. In the case of $r = 1$, our hypersurface $H$ is an union of at most $O_{\vphi}(D^e)$ points and so we obtain a bound on $|\Prep_{\vphi,D}|$. This appears to already be a new result, although we will show that one can also deduce this from Favre--Rivera-Letelier's quantitative equidistribution theorem (see Proposition \ref{GaloisDimOne1}). 
\par 
In the case of abelian varieties, due to the stronger estimates from Theorem \ref{IntroAbelianQuant1}, we can deduce the following result.

\begin{theorem} \label{IntroAbelianGalois1}
Let $X$ be an abelian variety over a number field $K$ and let $\h_X(x)$ denote the Neron--Tate height of $X$ with respect to a symmetric ample line bundle $L$. Then there exists $c = c(L,K) > 0$, such that if  
$$S = \{ x \in X(\ovl{K}) \mid \h_{X}(x) \leq \frac{c}{D^8} \text{ and } [K(x):K] \leq D\},$$
then $S$ is contained in a hypersurface of degree $O_{L,K}(D^{33})$.
\end{theorem}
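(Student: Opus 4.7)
The plan is to apply Theorem \ref{IntroAbelianQuant1} to a family of smooth bump functions that tile $X(\bb{C}_v)$ at an appropriate scale, and then take the union of the resulting hypersurfaces. Fix an archimedean place $v$ of $K$ and write $d = \dim X$. The argument balances two parameters: $\eps$, entering the quantitative equidistribution bound, and the scale $r$ of the bump functions.

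First I would set $\eps = C_\eps D^{-4}$, so that for $x \in S$ the error term $\h_X(x)/\eps + c_3 \eps$ has size $O(D^{-4})$ while the hypersurface-degree bound $c_1 \eps^{-8}$ from Theorem \ref{IntroAbelianQuant1} is $O(D^{32})$. Next, take a maximal $r$-separated net $\{p_1, \ldots, p_N\}$ in $X(\bb{C}_v)$ with $r = C_r D^{-1/(2d)}$, so that $N \asymp r^{-2d} \asymp D$ and the balls $B(p_i, 2r)$ cover $X(\bb{C}_v)$ with bounded multiplicity. For each $i$ I build a smooth bump $f_i$ with $0 \leq f_i \leq 1$, $f_i \equiv 1$ on $B(p_i, r)$, $\Supp(f_i) \subseteq B(p_i, 2r)$, and $\|f_i\|_{C^3} \lesssim r^{-3}$. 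Applying Theorem \ref{IntroAbelianQuant1} to each pair $(f_i, \eps)$ yields a hypersurface $H_i \subseteq X$ over $K$ of degree at most $c_1 \eps^{-8} = O(D^{32})$. Let $H$ be the union of the $H_i$, cut out over $K$ by the product of their defining polynomials; then $H$ has total degree at most $N \cdot c_1 \eps^{-8} = O(D^{33})$.

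To verify $S \subseteq H$, argue by contradiction. For $x \in S \setminus H$ with Galois orbit $F_x$, Theorem \ref{IntroAbelianQuant1} applied to each $f_i$ gives
$$\left|\frac{1}{|F_x|}\sum_{y \in F_x} f_i(y) - \int f_i \, d\mu_{X,v}\right| \leq c_{f_i,3}\left(\frac{\h_X(x)}{\eps} + c_3 \eps\right) \lesssim r^{-3} D^{-4}.$$
On the other hand $\int f_i \, d\mu_{X,v} \gtrsim r^{2d}$, and since $d \geq 1$ one has $1/(2d) < 4/(2d+3)$, so $r^{2d} \gg r^{-3} D^{-4}$ once $c$ (and hence $C_\eps$) is chosen small enough. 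Thus the Galois average of $f_i$ is nonzero, forcing $F_x \cap B(p_i, 2r) \neq \emptyset$ for every $i$. The bounded-overlap property of the cover then gives $|F_x| \gtrsim N \asymp C_r^{-2d} D$, and shrinking $C_r$ in advance makes the implicit constant exceed $1$, yielding $|F_x| > D$. This contradicts $|F_x| \leq [K(x):K] \leq D$.

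The main obstacle is the parameter matching. The exponent $33$ is exactly the product of $N \asymp D$ (the smallest net size forcing $|F_x|$ to exceed $D$) and $\eps^{-8} \asymp D^{32}$ (the choice of $\eps$ minimizing the equidistribution error when $\h_X(x) \lesssim D^{-8}$). Compatibility of the two parameter choices requires $r^{-2d} > D$ simultaneously with $r^{2d+3} > D^{-4}$, which is possible precisely when $d \geq 1$. Any improvement of the exponent $33$ would require either a sharper equidistribution bound than $\eps^{-8}$ or test functions that simultaneously detect many balls; no algebraicity issue arises because Theorem \ref{IntroAbelianQuant1} produces each $H_i$ over $K$ regardless of $f_i$ being only a smooth real-valued function.
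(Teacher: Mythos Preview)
Your argument is correct and follows essentially the same strategy as the paper: build $\asymp D$ bump functions, apply Theorem~\ref{IntroAbelianQuant1} with $\eps \asymp D^{-4}$ to each, and take the union of the resulting degree-$O(D^{32})$ hypersurfaces. The only difference is geometric: the paper slices the fundamental domain $\bb{C}^g/\Lambda$ into $D+1$ \emph{disjoint} slabs along a single real coordinate (so $c_{f_i,3}=O(D^3)$ and $\int f_i\,d\mu_{X,v}\gtrsim D^{-1}$, and disjointness gives $|F_x|\geq D+1$ immediately), whereas you use an $r$-net of balls with $r\asymp D^{-1/(2d)}$ and a bounded-overlap argument. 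Both constructions yield $N\asymp D$ test functions with $\int f_i\asymp D^{-1}$ and land on the same exponent $33$; the paper's slab version just avoids the multiplicity bookkeeping.
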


In \cite{Mas84}, Masser proved an upper bound on the number of small points for an abelian variety $X$ that are defined over a fixed extension of degree $D$. More recently, Gaudron--Remond \cite{GR22} proves an explicit version of Masser's result through different methods. Again, our theorem differs from both of theirs as it is able to handle all extensions of degree $\leq D$ at once rather than just a fixed extension.
\par 
Applying Theorem \ref{IntroTheorem2} to $\Per_n$, the set of periodic points of period $n$, gives us a generalization of \cite[Corollary 1.19]{Bak06}. Let $K_n = K(\Per_n)$ denote the field obtained by adjoining every element of $\Per_n$ to $K$. In the case of $\vphi: \bb{P}^1 \to \bb{P}^1$, Baker showed that $[K_n:K] \geq O(c^n)$ for any $c < d$. We show that for polarized endomorphisms of projective surfaces $X$, we can find some constant $c > 1$ such that $[K_n:K] \geq c^n$.

\begin{theorem} \label{IntroTheorem5}
Let $X$ be a projective smooth surface defined over a number field $K$ and let $\vphi: X \to X$ be a polarized endomorphism of degree $d \geq 2$. Then there exists $c = c(\vphi,K)$ such that $[K_n:K] \geq c^n$. In fact there exists $c' > 1$ such that  for $1- O(c'^{-n})$ proportion of points $x \in \Per_n$, we will have $[K(x):K] \geq c^n$. 
\end{theorem}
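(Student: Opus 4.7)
The proof combines the effective equidistribution of periodic points from Theorem \ref{IntroTheoremPeriodicRate1} with the degree bound for algebraic sets containing preperiodic points of bounded degree from Theorem \ref{IntroTheorem2}. The idea is a contrapositive counting argument: if many $x \in \Per_n$ had small field of definition, they would all lie on a hypersurface of controlled degree, but too many periodic points cannot concentrate on any single hypersurface by the exponential equidistribution rate.

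For $D \geq 1$, let $S_{n,D} = \{x \in \Per_n : [K(x):K] \leq D\} \subseteq \Prep_{\vphi,D}$. By Theorem \ref{IntroTheorem2} there is a hypersurface $H = H_D$ of degree at most $C_1 D^e$, for some $e = e(\vphi)$, containing $\Prep_{\vphi,D}$ and hence $S_{n,D}$. Fix an archimedean place $v$ of $K$ and choose a smooth bump function $\chi_\delta: X(\bb{C}_v) \to [0,1]$ that equals $1$ on the $\delta$-tubular neighborhood of $H(\bb{C}_v)$, vanishes outside the $2\delta$-neighborhood, and has $C^3$-norm $c_{\chi_\delta,3} \lesssim \delta^{-3}$. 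Applying Theorem \ref{IntroTheoremPeriodicRate1} to $\chi_\delta$ yields
\[
\frac{|S_{n,D}|}{|\Per_n|} \leq \frac{1}{|\Per_n|}\sum_{x \in \Per_n} \chi_\delta(x) \leq \int \chi_\delta \, d\mu_{\vphi,v} + C\, \delta^{-3} \lambda^{-n}.
\]

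The key analytic input is a bound of the form $\mu_{\vphi,v}(\{p : d(p,H) < 2\delta\}) \leq C_2 \delta^\beta \deg H$ for some $\beta = \beta(\vphi) > 0$, which follows from the H\"older continuity of local potentials of $\mu_{\vphi,v}$ for polarized endomorphisms of smooth surfaces in the spirit of Dinh--Sibony. Substituting $\deg H \leq C_1 D^e$ and optimizing $\delta$ to balance $\delta^\beta D^e$ against $\delta^{-3} \lambda^{-n}$ by choosing $\delta = (D^{-e}\lambda^{-n})^{1/(\beta+3)}$ gives
\[
\frac{|S_{n,D}|}{|\Per_n|} \leq C_3\, D^{3e/(\beta+3)}\, \lambda^{-n\beta/(\beta+3)}.
\]
Taking $D = c^n$ with $1 < c < \lambda^{\beta/(3e)}$, the right-hand side decays as $O((c')^{-n})$ for some $c' > 1$. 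Hence all but an exponentially small proportion of $x \in \Per_n$ satisfy $[K(x):K] > c^n$; since $K_n$ contains $K(x)$ for every $x \in \Per_n$, this gives $[K_n:K] \geq c^n$ and the refined statement about proportions.

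The main obstacle is the ``moderate measure'' estimate $\mu_{\vphi,v}(\text{nhbd}_{2\delta} H) \lesssim \delta^\beta \deg H$. While such H\"older-type bounds on Monge--Amp\`ere masses of H\"older potentials are standard for a \emph{fixed} analytic set, I need a version that is uniform in $H$ and tracks $\deg H$ linearly; in the surface case this should be accessible via slicing and Bezout-type arguments applied to local potentials. A secondary technical point is calibrating the $C^3$-norm dependence in Theorem \ref{IntroTheoremPeriodicRate1} against the $\delta^{-3}$ growth of $c_{\chi_\delta,3}$, which is precisely what forces the constraint $c < \lambda^{\beta/(3e)}$ and thereby determines the final rate.
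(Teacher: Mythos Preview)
Your approach has a real gap and is also more circuitous than the paper's. The paper's argument is completely elementary: combine Theorem \ref{IntroTheorem2} with the Bezout-type count of Proposition \ref{PeriodicCurve3}, which says that any (possibly reducible) curve $C$ of degree $e$ contains at most $3e^2 d^n$ points of $\Per_n$. This is proved by splitting $C$ into irreducible components: if $C_i$ is not $\vphi^n$-invariant, then $\Per_n \cap C_i \subseteq C_i \cap \vphi^n(C_i)$ and Bezout applies; if $C_i$ is $\vphi^n$-invariant, a dynamical-degree argument on the normalization bounds the number of fixed points. Feeding $e = C_1 D^{e(\vphi)}$ with $D = c^n$ into $3e^2 d^n$ and comparing with $|\Per_n| \asymp d^{2n}$ immediately gives the exponential saving for $c>1$ small enough. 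No analytic input about $\mu_{\vphi,v}$ is needed beyond what already went into Theorem \ref{IntroTheorem2}.

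Your route requires the tubular-neighborhood estimate $\mu_{\vphi,v}(\{d(\cdot,H)<2\delta\}) \leq C_2 \delta^{\beta}\deg H$ with $\beta>0$ and a constant \emph{linear} in $\deg H$. This is not in the paper and is not a standard consequence of the Dinh--Nguyen--Sibony moderateness results you allude to: those give $\mu$-mass $\lesssim r^{\kappa}$ for \emph{balls}, but covering a $\delta$-tube around a degree-$D$ curve by $\delta$-balls costs roughly $D\,\delta^{-2}$ balls (the curve is real $2$-dimensional inside a real $4$-manifold), so the naive bound is $D\,\delta^{\kappa-2}$, which is useless unless $\kappa>2$. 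A slicing/Chern--Levine--Nirenberg argument might salvage something, but you have not supplied it, and it is genuinely nontrivial to get the linear $\deg H$ dependence together with a positive $\beta$. There is also an irony: Theorem \ref{IntroTheoremPeriodicRate1}, which you invoke as a black box, is proved in the paper \emph{via} Proposition \ref{PeriodicCurve3}. So the direct Bezout bound you are trying to avoid is already sitting inside the tool you are using; it is both simpler and sharper to apply it directly.
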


When $X = \bb{P}^2$, under some conditions, we are also able to get a numerical bound on $|\Per_{\vphi,D}|$, the set of periodic points for $\vphi$ of degree $\leq D$.

\begin{theorem} \label{IntroTheorem3}
Let $\vphi: \bb{P}^2 \to \bb{P}^2$ be an endomorphism of degree $d \geq 2$ defined over a number field $K$. Further assume that there are only finitely many periodic curves for $\vphi$. Then there exists $e = e(\vphi) > 0$ such that $|\Per_{\vphi,D}| \leq O_{\vphi,K}(D^e)$.
\end{theorem}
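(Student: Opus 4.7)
The natural starting point is Theorem \ref{IntroTheorem2} applied to $\vphi$ on $\bb{P}^2$: it produces a $K$-hypersurface $H \subset \bb{P}^2$ of degree at most $O_{\vphi,K}(D^{e_0})$ with $\Per_{\vphi,D} \subseteq \Prep_{\vphi,D} \subseteq H$. I would decompose $H$ into its geometric irreducible components and separate those that are periodic curves for $\vphi$ from those that are not. By the finiteness hypothesis, every periodic component lies in a fixed union $\cal{P} = C_1 \cup \cdots \cup C_m$ of periodic curves for $\vphi$, independent of $D$.

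For the contribution of $\cal{P}$, let $k_i \geq 1$ satisfy $\vphi^{k_i}(C_i) = C_i$. Then $\vphi^{k_i}$ restricts to a self-map of $C_i$ that lifts through normalization to a polarized endomorphism of the smooth curve $\widetilde{C_i}$ of degree $d^{k_i} \geq 2$; by Riemann--Hurwitz $\widetilde{C_i}$ has genus $0$ or $1$. Every $\vphi$-periodic point on $C_i$ is also $\vphi^{k_i}|_{C_i}$-periodic, so in the genus $0$ case the $r=1$ specialization of Theorem \ref{IntroTheorem2} bounds the number of such points with $[K(x):K] \leq D$ by a polynomial in $D$, and in the genus $1$ case the same kind of bound follows from Theorem \ref{IntroAbelianGalois1} applied to the elliptic curve $\widetilde{C_i}$. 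Summing over the fixed set $i = 1, \ldots, m$ gives a polynomial bound $O_{\vphi,K}(D^{e_1})$ for the contribution from $\cal{P}$.

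The remaining contribution comes from the non-periodic components of $H$. For such a component $Z$, $\vphi^n(Z) \neq Z$ for every $n \geq 1$, so Bezout in $\bb{P}^2$ gives
$$|\{x \in Z : \vphi^n(x) = x\}| \leq |Z \cap \vphi^n(Z)| \leq d^n (\deg Z)^2.$$
To make this useful one needs an a priori upper bound on the period of any $x \in \Per_{\vphi,D}$. I would use the estimate underlying Theorem \ref{IntroTheorem5}: outside an exceptional set of geometric decay rate $c'$, every $x \in \Per_n$ satisfies $[K(x):K] \geq c^n$, so $x \in \Per_{\vphi,D}$ forces $n \leq \log_c D$ modulo this exceptional set. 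Coupling this with Bezout on each component, summing over $n \leq \log_c D$ and over the at most $O(D^{e_0})$ non-periodic components of $H$, and absorbing the exceptional-set contribution via $c'$, yields a polynomial bound $O_{\vphi,K}(D^{e_2})$. Taking $e = \max(e_1, e_2)$ completes the argument.

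The main difficulty is this last step: the sum $\sum_n d^n (\deg Z)^2$ diverges without an upper bound on $n$, so everything hinges on the period-versus-degree estimate from Theorem \ref{IntroTheorem5}, and in particular on controlling the exceptional set of periodic points with small degree but anomalously large period so that the total count remains polynomial in $D$.
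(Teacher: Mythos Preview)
Your treatment of the periodic components is fine and matches the paper. The gap is in the non-periodic components, and you have correctly located it yourself: the exceptional-set control coming from Theorem~\ref{IntroTheorem5} is not strong enough to close the argument. Concretely, that theorem gives constants $c,c'>1$ with $c' < d$ (in its proof one needs $C^2 c^{2en} < (c'^{-1}d)^n$, forcing $c' < d/c^{2e} < d$), so the number of $x \in \Per_n$ with $[K(x):K] < c^n$ is only bounded by $O(c'^{-n}|\Per_n|) = O((d^2/c')^{n})$, which still grows with $n$. Summing over $n > \log_c D$ therefore diverges, and your ``absorbing the exceptional-set contribution via $c'$'' step cannot be carried out. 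Equivalently, any approach that bounds $|Z \cap \Per_n|$ period-by-period via Bezout and then sums in $n$ is doomed, because no polynomial-in-$D$ a priori period bound is available.

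The paper bypasses the period entirely with a different idea. For a component $C_j$ of $H$ that is \emph{not preperiodic}, the iterates $\vphi(C_j), \vphi^2(C_j),\ldots$ are pairwise distinct curves, while $H$ has only finitely many components; hence there is a smallest $k$ with $\vphi^k(C_j)$ not a component of $H$. Then $\vphi^{k-1}(C_j)$ \emph{is} a component, so $\deg \vphi^{k-1}(C_j) \leq \deg H = O_{\vphi,K}(D^{e_0})$, whence $\deg \vphi^k(C_j) \leq d\cdot O_{\vphi,K}(D^{e_0})$. Now every $x \in \Per_{\vphi,D}$ lying on $C_j$ maps under $\vphi^k$ to a point that is still periodic of degree $\leq D$, hence lies in $H$; since $\vphi^k(C_j)$ is not a component of $H$, these images lie in the proper intersection $\vphi^k(C_j) \cap H$, which by Bezout has at most $\deg(\vphi^k(C_j))\cdot \deg(H) = O_{\vphi,K}(D^{2e_0})$ points. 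As $\vphi$ is injective on periodic points, this bounds the count on $C_j$ itself, with no reference to the period. Summing over the $O(D^{e_0})$ components of $H$ gives the polynomial bound. So the missing ingredient in your proposal is exactly this ``iterate the curve out of $H$, then intersect back with $H$'' trick, which replaces the divergent sum over periods by a single Bezout count.
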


Heuristically one expects that a general endomorphism $\vphi: \bb{P}^r \to \bb{P}^r$ should have only finitely many periodic subvarieties, although this is not known yet. However, results of Xie \cite{Xie24} show that this is true for a general endomorphism $\vphi: \bb{A}^2 \to \bb{A}^2$ that extends to an endomorphism of $\bb{P}^2$. In some cases where we have infinitely many periodic subvarieties, it might still be possible to still deduce a numerical bound if we are able to understand their geometry. As an example, check out Theorem \ref{Galois4}.
\par 
We remark that there are previous known bounds on the number of periodic points in a fixed extension $L/K$ of degree $D$ for an endomorphism $\vphi: X \to X$ given by Hutz \cite{Hut09, Hut18} and Huang \cite{Kep21}, following the ideas of Morton--Silverman \cite{MS94}. The bounds they get are uniform over certain families, but they grow super-exponentially large as $D$ increases. 

\subsection{Comparison with other results} We now compare Theorem \ref{IntroTheorem1} with other quantitative equidistribution theorems in different settings. In $\bb{P}^1$, equidistirbution of small points for canonical heights was proven independently by Baker--Rumely \cite{BR06, BR10}, Chambert-Loir \cite{CL06} and Favre--Rivera-Letelier \cite{FRL06}.  Favre--Rivera-Letelier's results also give an explicit control over the rate of equidistribution as follows: if $x$ is a point of $\bb{P}^1(\ovl{\bb{Q}})$ and we let $F_x$ denote the Galois orbit of $x$, then for any $C^2$ function $f$ and archimedean place $v \in M_K$, we have
\begin{equation} \label{eq: IntroIneq2}
\left|\frac{1}{|F_x|} \sum_{y \in F_x} f(y) - \int f d\mu_{\vphi,v} \right| \leq \h_{\vphi}(x) + \frac{c \log |F|}{|F|^{1/2}} (\Lap f, \Lap f)_v
\end{equation}
where $c = c(\vphi,K) > 0$ is some constant depending on our rational map $\vphi$ and the number field $K$ and $( \, , \, )_v$ is some energy pairing of measures. Hence if we wanted the LHS to be less than $\eps$, we require $\h_{\vphi}(x) \leq \eps$ and $|F|$ to be at least $\eps^{-2+\delta}$ in size for some fixed $\delta > 0$. For a more explicit version of what the constant $c$ is in terms of $\vphi$ and $K$, one may look at \cite{Fil17}. For elliptic curves over function fields, a similar statement is proven by Petsche \cite{Pet09}. 
\par 
When specialized to $X = \bb{P}^1$, Theorem \ref{IntroTheorem1} tells us that for each smooth function $f$ and $\eps > 0$, there is a finite $\Gal(\ovl{K}/K)$-invariant set of points $H(f,\eps)$ of size at most $c_2 \eps^{-c_1}$ such that if $x$ is not a member of $H(f,\eps)$, then
\begin{equation} \label{eq: IntroIneq1}
\left|\frac{1}{|F_x|} \sum_{y \in F_x} f(y) - \int f d\mu_{\vphi,v} \right| \leq \frac{\h_{\vphi}(x)}{\eps} + c_3 \eps
\end{equation}
where $c_2,c_3$ depends on $\vphi$ and $K$ and $c_1$ depends only on $\vphi$. Hence for example, if $|F_x|$ were larger than $c_2 \eps^{-c_1}$ in size, then $x$ cannot lie in $H(f,\eps)$ and so our \eqref{eq: IntroIneq1} would always hold. This gives a similar statement to Favre--Rivera-Letelier's as we only require $|F_x|$ to be larger than some polynomial in $\eps^{-1}$ to achieve an $\eps$ accuracy, but is much weaker in the dependencies. For example we have an $\eps$ in the denominator of $\h_{\vphi}(x)$ but \eqref{eq: IntroIneq2} does not and our exponent in $\eps$ depends on $\vphi$ whereas \eqref{eq: IntroIneq2} only requires an exponent of $2+\delta$. D'Andrea--Narv\'{a}ez-Clauss--Sombra's theorem for $\bb{G}_m^n$ reduces to \cite{FRL06} and thus their estimates have the same form as Favre--Rivera-Letelier's estimates.
\par 
We now move onto dynamical settings, where one considers all iterated preimages $f^{-n}(x)$ of a point $x \in \bb{P}^r(\bb{C})$. Here, both $f$ and $x$ need not be defined over a number field. Equidistribution of preimages for a rational map $f: \bb{P}^1 \to \bb{P}^1$ was first proven by Freire--Lopes--Mane \cite{FLM83} and Lyubich \cite{Lyu83} independently. The case of polynomials was proven by Brolin \cite{Bro65} earlier. In the higher dimensional setting, there have been many results concerning equidistribution of subvarieties or points, see for e.g. \cite{BD01, FJ03, DS03}. One may consult \cite{DS10} for an excellent survey on these results.
\par 
On the quantitative side, we have the following result of Dinh--Sibony \cite{DS10b}. For a holomorphic endomorphism $\vphi: \bb{P}^r \to \bb{P}^r$ of algebraic degree $d \geq 2$, there exists a maximal totally invariant analytic subset $\cal{E}$. For simplicity, we will state their result when $\cal{E} = \emptyset$. Under this assumption, there exists a $\lambda > 1$ such that for any $C^2$ function $f$ and any $a \in \bb{P}^r(\bb{C})$, we have
$$\left|\frac{1}{|F_n|} \sum_{y \in F_n} f(y) - \int f d \mu_{\vphi} \right| \leq \frac{A ||f||_{C^2}}{ \lambda^{n}}$$
where $F_n = \vphi^{-n}(a)$ as a multiset and $A$ is a constant independent of $n,f$ and $a$. If one knew the existence of $\lambda > 1$ such that no hypersurface of degree $\lambda^n$ contains $\frac{1}{\lambda^n}$-proportion of the points of $f^{-n}(a)$, we may apply Theorem \ref{IntroTheorem1} to obtain a similar result. However, it is not clear whether such a $\lambda$ exists. 
\par 
Dinh--Ma--Nguyen \cite{DMN17} have also proven a quantitative equidistribution theorem for Fekete points. By the work of Looper \cite{Loo24}, small points for the canonical height may be viewed as ``almost" Fekete points for the homogeneous filled Julia set and so there might be a way to modify Dinh--Ma--Nguyen's approach to work in our setting. In their theorem, the error bound depends on the $C^3$-norm of the test function $f$, similar to our error bound in Theorem \ref{IntroTheorem1}. This is not a coincidence as we both rely on quantitative estimates of asymptotic expansion of Bergman kernels, which is where the $C^3$-norm of $f$ appears.

\subsection{Proof strategy} We follow the approach of Szpiro--Ullmo--Zhang \cite{SUZ97} and Yuan \cite{Yua08}. The general idea is to consider an adelic line bundle $\ovl{L}$ on $X$ whose corresponding height function is equal to the canonical height $\h_{\vphi}$. Given a smooth function $f: X(\bb{C}_v) \to \bb{R}$ for an archimedean place $v$, we consider the adelic line bundle $\ovl{L}(\eps f)$ where we modify the metric at the place $v$ by multiplying with $e^{-\eps f}$. 
\par 
In the case where $X$ is an abelian variety and $\h_{\vphi}$ is the Neron--Tate height, the adelic line bundle $\ovl{L}$ is a positive line bundle at the archimedean places and so for $\eps$ small enough, the perturbed line bundle $\ovl{L}(\eps f)$ remains positive. This allows us to apply Zhang's inequality \cite{Zha92, Zha95} to $\ovl{L}(\eps f)$ and then some straightforward algebraic manipulation allows us to prove equidistribution.
\par 
For canonical heights in general, the adelic line bundle $\ovl{L}$ is not necessarily positive but only semipositive and hence even if $\eps$ is arbitrarily small, it is possible for $\ovl{L}(\eps f)$ to be not semipositive. To handle this issue, Yuan proves an arithmetic Siu's inequality which allows him to give a lower bound on the volume of $\ovl{L}(\eps f)$ in terms of $\ovl{L}$ and $f$. This then allows him to prove a version of Zhang's inequality for $\ovl{L}(\eps f)$.  
\par 
Zhang's inequality gives a lower bound on the height of points that lie outside a certain codimension one subvariety $H$, and it is this subvariety for $\ovl{L}(\eps f)$ that will be our hypersurface $H(f,\eps)$ in Theorem \ref{IntroTheorem1}. The hypersurface arises as the divisor of a section $s$ of $L^n$ for some large $n > 0$, where $L$ is the underlying line bundle of $\ovl{L}$, and thus to control the degree of $H(f,\eps)$, we have to control how large $n$ is. 
\par 
For simplicity, we will assume that $K = \bb{Q}$. Then $H^0(X,L^n)$ is a $\bb{Q}$-vector space and we may tensor up to $\bb{R}$ to obtain a $\bb{R}$-vector space $V_n$. The sections $s$ of $H^0(X,L^n)$ that satisfies $|s|_{\sup,v} \leq 1$ for all non-archimedean places $v$ forms a $\bb{Z}$-lattice $M_n$ inside $V_n$. On the other hand, we may look at all sections $s \in V_n$ that satisfies $|s|_{\sup,v} \leq 1$ for the archimedean place $v$ of $\bb{Q}$. This forms a convex ball $B_n$ inside $V_n$. Fixing any choice of Haar measure, the quantity
$$\chi(\ovl{L}^n) := \frac{1}{n} \left(\log \vol(B_n) - \log \vol(M_n) \right)$$
makes sense and is independent of our choice of Haar measure, where $\vol(M_n)$ denotes the volume of its fundamental region. To control the degree of $H(f,\eps)$, one then has to prove lower bounds of $\chi(\ovl{L}^n)$ for large $n$. We note that quantitative upper bounds have been proven by Xinyi Yuan and Tong Zhang \cite{YZ13}. 
\par 
In fact if $\ovl{L}$ is semipositive, as $n \to \infty$, there is a formula for the asymptotics of $\chi(\ovl{L}^n)$ given by the arithmetic Hilbert--Samuel theorem \cite[Theorem 1.4]{Zha95} in terms of the arithmetic self-intersection number of $\ovl{L}$. However, this relies on the deep results of Gillet--Soule \cite{GS92} and Bismut--Vasserot \cite{BV89} (see also Moriwaki's book \cite{Mor14} for an exposition), which makes it difficult to deduce an explicit lower bound for $\chi(\ovl{L}^n)$. 
\par 
We instead give an alternative approach to obtain an explicit lower bound for $\chi(\ovl{L}^n)$, instead of relying on the arithmetic Hilbert--Samuel theorem. The idea is to find a suitable ``canonical" basis $\cal{B}_n$ of $H^0(X,L^n)$ with respect to $\vphi$, such that if $M'_n$ denotes the lattice spanned by this basis, we can find a suitable $c > 0$ such that its fundamental domain is contained inside $cB_n$ and similarly the lattice $cM'_n$ is contained in $M_n$. Controlling $c$ in terms of $n$ then gives us a lower bound on $\chi(\ovl{L}^n)$. 
\par 
The idea of constructing such a canonical basis $\cal{B}_n$ goes back to Baker \cite{Bak06} in his work on proving lower bounds for average values of Arakelov-Green functions associated to $\vphi$ in the case of $\bb{P}^1$. This result, along with Baker's construction, has recently been generalized by Looper \cite{Loo24} to the setting of a general polarized dynamical system. Our construction of this basis, which might look different on first sight, is essentially the same as Looper's.  A key obstacle in generalizing Baker's work is that it is much harder to prove that a set of homogeneous polynomials form a basis when there are more than $2$ variables. Looper gets around this by instead constructing a canonical spanning set of polynomials which is much easier. Then there must exist a basis formed by a subset of this canonical set which turns out to be enough for applications. 
\par 
For equidistribution, we actually need a lower bound on $\chi(\ovl{L}(\eps f)^n)$ and not on $\chi(\ovl{L}^n)$. Using ideas from Yuan's proof of the arithmetic Siu's inequality which goes back to Abbes--Bouche \cite{AB95}, we are able to directly prove a lower bound of $\chi(\ovl{L}(\eps f)^n)$ of the form
$$\chi(\ovl{L}(\eps f)^n) \geq \chi(\ovl{L}^n) + \eps \int f d \mu_{\vphi,v} + O(\eps^2)$$
for $n$ large enough. Hence a lower bound on $\chi(\ovl{L}^n)$ leads to a lower bound on $\chi(\ovl{L}(\eps f)^n)$ which then allows us to prove Theorem \ref{IntroTheorem1}. The main input is a quantitative control on the asymptotic expansion of Bergman kernels in terms of the positivity of the underlying $(1,1)$-form $\omega$ of a positive line bundle $\ovl{L}$. 
\par 
We remark that there is recently a new proof of arithmetic Hilbert--Samuel via deformation to the normal cone by Dorian Ni \cite{Ni22}. Also, David--Phillipon \cite{DP99} prove lower bounds on $\chi(\ovl{L}^n)$ that are explicit, but are not optimal in the sense that they do not give the correct asymptotics. 

\subsection{Outline of Paper} We now give the outline of the paper. Section $2$ provides some complex geometry background along with how to regularize a continuous metric on a line bundle $L$ to obtain a smooth metric. Sections $3$ and $4$ contains quantitative estimates which are sufficient to prove our theorems for abelian varieties. Section $5$ contains the main global input coming from Looper's generalization of Baker's construction. Theorems \ref{IntroAbelianQuant1} and \ref{IntroAbelianGalois1} are then proven in Section $6$. We then move onto more technical quantitative estimates in Section $7$, which are needed in the general setting where our canonical metric is only semipositive. We prove our main theorem, Theorem \ref{IntroTheorem1}, in general in Section $8$. Theorem \ref{IntroTheoremPeriodicRate1} is proven in Section $9$ and Theorems \ref{IntroTheorem2},  \ref{IntroTheorem5}, \ref{IntroTheorem3} are proven in Section $10$.

\subsection{Notations} 
We now state some notation. Throughout the paper, we will use $O(X)$ to denote a quantity which is bounded from above by a constant multiplied by $X$. When the constant depends on some other variable $Y$ that are not fixed, we will try to make it clear by writing it as $O_Y(X)$ instead. 
\par 
In a local chart with coordinates $z_1,\ldots,z_n$, for a $(p,q)$-form 
$$u(z) = \sum_{|I|=p, |J|=q} u_{I,J}(z) dz_I \wedge d \ovl{z}_J,$$
we will let 
$$\del u = \sum_{|I|=p, |J|=q}  \sum_{k=1}^{n} \frac{\partial u_{I,J}}{\partial z} dz_k \wedge dz_I \wedge d \vol{z}_J$$
and similarly $\ovl{\del}$ will be the analogue with $\frac{\partial}{\partial \ovl{z}}$. We will let $d = \del + \ovl{\del}$ and $d^c = \frac{i}{\pi} (\del - \ovl{\del})$. 
\par 
If $\ovl{L}$ is a metrized line bundle and $f: X(\bb{C}) \to \bb{R}$ is a function, we will let $\ovl{L}(f)$ denote the metriezd line bundle with norm $| \cdot | e^{-f}$, where $| \cdot |$ is the original norm of $\ovl{L}$. When $f$ is a constant function $c$, we will simply denote it by $\ovl{L}(c)$. 

\subsection{Acknowledgements} The author would like to thank Tien--Cuong Dinh, Laura DeMarco, Nicole Looper, Niki Myrto Mavraki, Thomas Gauthier, Junyi Xie and Xinyi Yuan for helpful discussions and encouragement. The work was supported in part by the National Science Foundation.

\section{Background and Preliminaries} \label{sec: Background}

\subsection{Complex geometry background}
Let $X$ be a complex manifold of dimension $r$ and let $\cal{U}_I = \{U_{\alpha}\}_{\alpha \in I}$ be a choice of charts for $X$ where $I$ is a finite set. We say that $\cal{U}_I$ is a choice of good charts for $X$ if each $U_{\alpha}$ is biholomorphic to the open polydisc of radius $3$, $\bb{D}_3$, and every point $x \in X$ lies inside the unit polydisc $\bb{D}_1$ for one of these charts. 
\par 
Now fix a good choice of charts $\cal{U}_I$. Given a $C^k$ function $f: X \to \bb{R}$, viewing $X$ as a real manifold of dimension $2r$, we may define a $C^k$-norm $c_{f,k}$ as the sum of its order $\leq k$ derivatives in the polydisc $\bb{D}_2 \subseteq U_{\alpha}$. We will also assume that $c_{f,k} \geq 1$ by adding $1$ to this sum. In other words, if $\vphi_{\alpha}: U_{\alpha} \to \bb{D}_3$ is our biholomorphism, then we have 
$$c_{f,k} = 1 + \ \sum_{\alpha \in I} \sum_{i_1 + \cdots + i_{2r} \leq k}  \left(\sup_{|\vphi_{\alpha}(z)| \leq 2} \left| \frac{\partial^{i_1 + \cdots i_{2r}} (f \circ \vphi_{\alpha}^{-1})}{ \partial x_1^{i_1} \cdots \partial y_r^{i_{2r}}}(z)   \right| \right) .$$
Similarly, given a $C^k$ differential form $\omega$, we may define the constants $c_{\omega,k}$. We now prove some basic propositions that show these constants are comparable independent of the choice of good charts.

\begin{lemma} \label{GoodCharts1}
Let $\cal{U}_I$ and $\cal{V}_J$ be two different choice of good charts for $X$. Then for any $k \geq 0$, there exists $C = C(k, \cal{U}_I, \cal{V}_J)$, depending on $k$ and the choice of our good charts, such that for any $C^k$ function $f: X \to \bb{R}$, if $c_{f,k}$ is the norm defined via $\cal{U}_I$ and $c'_{f,k}$ is defined via $\cal{V}_J$, then
$$c_{f,k} \leq C c'_{f,k}.$$
\end{lemma}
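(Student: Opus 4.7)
The plan is to reduce the comparison of $c_{f,k}$ and $c'_{f,k}$ to a local chain rule estimate, combined with a compactness argument that exploits the fact that both index sets $I$ and $J$ are finite and every point is interior to some chart of each atlas.

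First I would fix $\alpha \in I$ and consider the compact polydisc-image $K_\alpha := \{x \in U_\alpha : |\vphi_\alpha(x)| \le 2\}$, over which the $\alpha$-summand of $c_{f,k}$ is taken. Since $\cal{V}_J$ is a good atlas, for every $x \in K_\alpha$ there is some $\beta(x) \in J$ with $x \in V_{\beta(x)}$ and $|\psi_{\beta(x)}(x)| \le 1$; by continuity this persists in a neighborhood where $|\psi_{\beta(x)}| < 2$. Extracting a finite subcover $\{W_{\alpha,j}\}_{j=1}^{N_\alpha}$ of $K_\alpha$ with associated indices $\beta_{\alpha,j} \in J$, I obtain transition maps $T_{\alpha,j} := \psi_{\beta_{\alpha,j}} \circ \vphi_\alpha^{-1}$ which are holomorphic on the (relatively compact) set $\vphi_\alpha(W_{\alpha,j})$ and send it into $\{|z| \le 2\} \subseteq \bb{D}_3$.

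Next I would apply Fa\`{a} di Bruno (the multivariate chain rule for higher derivatives) to the identity
$$f \circ \vphi_\alpha^{-1} \;=\; \bigl(f \circ \psi_{\beta_{\alpha,j}}^{-1}\bigr) \circ T_{\alpha,j}$$
on $\vphi_\alpha(W_{\alpha,j})$. Every real partial derivative of $f \circ \vphi_\alpha^{-1}$ of order $\le k$ is a polynomial expression in the partial derivatives of $f \circ \psi_{\beta_{\alpha,j}}^{-1}$ of order $\le k$, evaluated on $\{|z|\le 2\}$, with coefficients that are polynomials in the partial derivatives of $T_{\alpha,j}$ of order $\le k$. Since each $T_{\alpha,j}$ is smooth on a compact set, there is a constant $C_{\alpha,j,k}$ bounding all those coefficient polynomials uniformly, independent of $f$.

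Finally I would sum: on each $W_{\alpha,j}$ the contribution to $c_{f,k}$ from the $\alpha$-chart is bounded by $C_{\alpha,j,k}$ times a sum of sup-norms of partials of $f \circ \psi_{\beta_{\alpha,j}}^{-1}$ of order $\le k$ over $\{|z|\le 2\}$, which is itself bounded by $c'_{f,k}$. Summing over the finitely many $\alpha \in I$ and the finitely many $j \le N_\alpha$, and taking $C$ to be the total (which depends only on $k$, $\cal{U}_I$ and $\cal{V}_J$ through the finitely many compactly supported transition maps), yields $c_{f,k} \le C\, c'_{f,k}$. I do not anticipate any genuine obstacle here; the only place one must be careful is to ensure that the finite subcover $\{W_{\alpha,j}\}$ is chosen so that $T_{\alpha,j}$ has relatively compact domain inside $\{|\vphi_\alpha| < 3\}$, so that its higher derivatives are genuinely bounded — and this is guaranteed by the strict inequalities built into the ``good chart'' definition (radius $3$ versus radius $1$ and $2$).
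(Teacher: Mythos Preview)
Your proposal is correct and follows essentially the same approach as the paper: a compactness argument on the closed polydisc of radius $2$ in each $U_\alpha$, a finite subcover by neighborhoods each lying inside the radius-$2$ region of some $V_\beta$, and then the chain rule to bound derivatives of $f\circ\vphi_\alpha^{-1}$ by those of $f\circ\psi_\beta^{-1}$ using bounded derivatives of the transition maps. The paper's version is terser (it does not name Fa\`a di Bruno explicitly), but the structure and the care about relative compactness are identical.
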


\begin{proof}
Let $U_i$ be a chart of $\cal{U}_I$. Then it is isomorphic to $\bb{D}_3$, Given any $x \in \ovl{\bb{D}}_2$, there is a corresponding chart $V_j \simeq \bb{D}_3$ such that $x$ lies inside $\bb{D}_1$ of $V_j$. We may then find an open set around $x$, contained inside a compact subset of $U_i \cap V_j$ and also contained inside $\bb{D}_2$ of $V_j$. Then the $k^{th}$ derivatives of our transition function between $U_i$ and $V_j$ is bounded on this open set. 
\par 
Now since $\ovl{\bb{D}}_2$ is compact, we may cover it with finitely many such open sets. This then implies that we may upper bound the $k^{th}$ derivative of $f$ on $\bb{D}_2$ of $U_i$ by some constant multiplied by $c'_{f,k}$ as desired. We then do this for each $U_i$ of our chart $\ovl{U}_I$ to obtain that $c_{f,k} \leq C c'_{f,k}$ for some $C$ depending on $k,\cal{U}_I$ and $\cal{V}_J$. 
\end{proof}

It is also useful to understand how the norms change when restricting from a manifold $X$ to a submanifold $Y$. 

\begin{lemma} \label{GoodCharts2}
Let $X$ be a complex manifold and $Y \subseteq X$ be a complex submanifold. Let $\cal{U}_I$ be a choice of good charts for $X$ and $\cal{V}_J$ be a choice of good charts for $Y$. Then for any $k \geq 0$, there exists $C = C(k, \cal{U}_I, \cal{V}_J)$, depending on $k$ and the choice of our good charts, such that for any $C^k$ function $f: X \to \bb{R}$, if $c_{f,k}$ is the norm defined via $\cal{U}_I$ and $c'_{f,k}$ is the norm of $f\mid_Y: Y \to \bb{R}$ defined via $\cal{V}_J$, then
$$c'_{f,k} \leq C c_{f,k}.$$
\end{lemma}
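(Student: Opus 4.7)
The plan is to mimic the proof of Lemma \ref{GoodCharts1} but with the additional ingredient that a holomorphic embedding $Y \hookrightarrow X$, when read through charts, is a holomorphic map whose derivatives (to any fixed order) are bounded on compact subsets. The role of the transition functions in Lemma \ref{GoodCharts1} is now played by the local coordinate expressions of the inclusion map.

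First I would fix a chart $V_j \in \mathcal{V}_J$ of $Y$ with biholomorphism $\psi_j : V_j \to \mathbb{D}_3 \subseteq \mathbb{C}^{\dim Y}$, and work on $\overline{\mathbb{D}}_2 \subseteq \mathbb{D}_3$. For each point $y \in \overline{\mathbb{D}}_2$, there exists a chart $U_i \in \mathcal{U}_I$ with biholomorphism $\varphi_i : U_i \to \mathbb{D}_3 \subseteq \mathbb{C}^{\dim X}$ such that $y$ lies in the preimage of $\mathbb{D}_1$ under $\varphi_i$. Since $Y \hookrightarrow X$ is holomorphic, the composite $\varphi_i \circ \psi_j^{-1}$, defined on an open subset of $\mathbb{D}_3 \subseteq \mathbb{C}^{\dim Y}$, is holomorphic near $\psi_j(y)$ and maps $\psi_j(y)$ into $\mathbb{D}_1 \subseteq \mathbb{C}^{\dim X}$. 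By continuity I can shrink to an open neighborhood $W_y \subseteq \mathbb{D}_2$ of $\psi_j(y)$ whose closure is compact, contained in the domain of $\varphi_i \circ \psi_j^{-1}$, and mapped inside $\overline{\mathbb{D}}_2 \subseteq \mathbb{D}_3$.

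Next, by compactness of $\overline{\mathbb{D}}_2$, I cover it by finitely many such neighborhoods $W_{y_1}, \ldots, W_{y_N}$, each associated with some chart $U_{i(\ell)}$. On each $\overline{W_{y_\ell}}$ the holomorphic map $\varphi_{i(\ell)} \circ \psi_j^{-1}$ has derivatives of every order bounded by constants depending only on $k$, $\mathcal{U}_I$, $\mathcal{V}_J$ (and the finite covering data, which in turn depends only on $\mathcal{U}_I, \mathcal{V}_J$). The multivariate chain rule (Faà di Bruno, applied to real-variable derivatives after splitting into real and imaginary parts) then expresses the derivatives of order $\leq k$ of $f \circ \psi_j^{-1}$ at a point in $W_{y_\ell}$ as a polynomial in derivatives of $f \circ \varphi_{i(\ell)}^{-1}$ (of order $\leq k$, evaluated at points inside $\overline{\mathbb{D}}_2$) with coefficients that are polynomials in the bounded derivatives of $\varphi_{i(\ell)} \circ \psi_j^{-1}$. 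This gives a bound of the form $C_\ell \cdot c_{f,k}$ on $W_{y_\ell}$.

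Taking the sup over the finite cover of $\overline{\mathbb{D}}_2$ in $V_j$ and then summing over all charts $V_j \in \mathcal{V}_J$ yields a constant $C = C(k, \mathcal{U}_I, \mathcal{V}_J)$ with $c'_{f,k} \leq C c_{f,k}$, as desired. The only mildly tricky step is the chain rule bookkeeping in step three; everything else is a verbatim compactness argument modeled on Lemma \ref{GoodCharts1}. I do not foresee any real obstacle, since the submanifold inclusion is holomorphic and the relevant closures are compact.
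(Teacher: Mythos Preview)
Your argument is correct: the inclusion $Y\hookrightarrow X$ read in charts is holomorphic, its derivatives to order $k$ are bounded on compacta, and the multivariate chain rule then bounds the $C^k$-norm of $f\circ\psi_j^{-1}$ on $\overline{\bb{D}}_2$ by a constant times $c_{f,k}$. The compactness-and-finite-cover step is exactly what is needed, and there is no obstacle.

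The paper, however, takes a shorter route. Rather than run the chain-rule argument directly for arbitrary $\cal{U}_I$ and $\cal{V}_J$, it first invokes Lemma~\ref{GoodCharts1} (applied once on $X$ and once on $Y$) to reduce to \emph{any} single convenient pair of good charts. It then constructs slice charts for $X$---charts in which $Y$ is the coordinate subspace $\{z_1=\cdots=z_k=0\}$---and takes the induced charts on $Y$. In these coordinates the inclusion is the trivial embedding $(w_1,\ldots,w_m)\mapsto(0,\ldots,0,w_1,\ldots,w_m)$, so every partial derivative of $f|_Y$ in the $Y$-chart is literally one of the partial derivatives of $f$ in the $X$-chart, giving $c'_{f,k}\le c_{f,k}$ with constant $1$ for this particular pair. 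The advantage of the paper's approach is that it offloads all the chain-rule bookkeeping to the already-proven Lemma~\ref{GoodCharts1} and makes the submanifold step a one-line observation; your approach is more self-contained and does not require constructing special charts, but effectively reproves Lemma~\ref{GoodCharts1} for the inclusion map.
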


\begin{proof}
By Lemma \ref{GoodCharts1}, it suffices to prove the existence of $C$ for a single choice of $\cal{U}_I$ and $\cal{V}_J$. We first choose $\cal{V}_J$ such that for each $V_j \in \cal{V}_J$, the submanifold $Y \cap V_j$ is given by $\{z_1 = z_2 = \cdots = z_k = 0\}$. To do so, for each $x \in X$ we may find a local chart $V_x$ such that $Y \cap V_x$ is given by $\{z_1 = \cdots = z_k = 0\}$. We then choose a polydisc $\bb{D}_r$ centered at $x$ that also lies inside $V_x$, and we let $V'_x$ be the polydisc $\bb{D}_{r/3}$ of one third radius. Then $\{V'_x\}_{x \in X}$ form an open cover of $X$ and by compactness we may choose a finite open cover $V'_{x_1},\ldots,V'_{x_n}$. We may then use $V_{x_1},\ldots,V_{x_n}$ as a choice of good charts. 
\par 
Given such a choice of good charts $\cal{V}_J$, we then set $U_j = Y \cap V_j$ and clearly $U_j$ is isomorphic to $\bb{D}_3$ of the correct dimension for $Y$. Then using $\cal{U}_J$ and $\cal{V}_J$, it is clear that we have $c_{f,k} \leq c'_{f,k}$ as desired.
\end{proof}

Lemmas \ref{GoodCharts1} and \ref{GoodCharts2} similarly hold for $c_{\omega,k}$. Let $L$ be a line bundle on $X$ equipped with a hermitian metric $e^h$. Since each $U_{\alpha}$ is biholomorphic to a polydisc, it follows that $L$ can be trivialized over each $U_{\alpha}$. We then fix a choice of non-vanishing local sections $t_{\alpha}$ for each $U_{\alpha}$, giving us the isomorphism $L |_{U_{\alpha}} \simeq U_{\alpha} \times \bb{C}$. Then we may find functions $h_{\alpha}$ such that $e^{h_{\alpha}}$ represents our hermitian metric. We may then similarly define the constants $c_{h,k}$ if our metric $h$ is $C^k$ and Lemmas \ref{GoodCharts1} and \ref{GoodCharts2} also hold.  
\par 
If $h$ is a continuous metric but not differentiable, we may still make sense of $dd^c h$ in the sense of distributions. In our setting, it will be the case that $dd^c h$ is a positive current, i.e. that $dd^c h \geq 0$. It will be necessary for us to approximate $h$ by a sequence of smooth metrics $h_{\eps}$, such that $dd^c h_{\eps}$ is almost positive. For a general compact manifold $X$, this has been done in a quantitative manner by Dinh--Ma--Nguyen \cite[Theorem 2.1]{DMN17} with a rather involved argument. As we only have to deal with the case of $X = \bb{P}^r$ due to our endomorphisms being polarized, we follow a simpler idea of Dinh--Sibony \cite{DS10b} to use the action of $\PGL_{r+1}(\bb{C})$ on $\bb{P}^r$ to regularize our metric $h$. 
\par 
We first explain how to regularize a continuous function $f: \bb{P}^r \to \bb{R}$. The group $\PGL_{r+1}(\bb{C})$ is a complex Lie group that acts transitively on $\bb{P}^r(\bb{C})$. We fix a holomorphic chart near the identity element $\id$ of $\PGL_{r+1}(\bb{C})$ isomorphic to the polydisc $\bb{D}_2$ and let $y$ be the holomorphic coordinate such that $y = 0$ at $\id$. We then let $\rho$ be a smooth probability measure with compact support in $|y| < 1$. We will assume that $\rho$ is radial (depends only on $|y|$) and that the involution $\tau \mapsto \tau^{-1}$ preserves the norm of $y$. 
\par 
Following Dinh--Sibony, for $\theta \in \bb{C}$ we define $h_{\theta}(y) = \theta y$ to be multiplication by $\theta$. For $|\theta| \leq 1$, we define $\rho_{\theta} = (h_{\theta})_* \rho$. This is smooth probability measure supported on $\{|y| \leq \theta\}$ which converges to the Dirac mass on $\id$ as $\theta \to 0$. We now define our regularization $f_{\theta}$ as 
$$f_{\theta}(x) = \int_{\PGL_{r+1}(\bb{C})} (f \circ \tau_y)(x) d \rho_{\theta}(y)$$
where $\tau_y$ is the automorphism corresponding to the coordinate $y$. By Dinh--Sibony, we have the following estimate.

\begin{proposition} \label{Regularization1}
Fix a positive integer $k$. Then there exists an uniform constant $c_{k} > 0$, depending only on $k$ and $r$, such that
$$c_{f_{\theta},k} \leq c_{k} c_{f,0} |\theta|^{-2r^2-4r-k}.$$
\end{proposition}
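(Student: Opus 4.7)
The strategy is to use the transitive action of $G = \PGL_{r+1}(\bb{C})$ on $\bb{P}^r(\bb{C})$ to move the $x$-dependence out of the non-differentiable factor $f(\tau_y(x))$ and into the smooth density $\rho_\theta$, so that the $k$-th derivatives in $x$ can be read off directly from the explicit form of $\rho_\theta$. Since $f$ is only continuous, we cannot differentiate $f \circ \tau_y$ in $x$; but after a change of variable in the group, the entire $x$-dependence will sit inside a smooth kernel.

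First I fix a point $x_0 \in \bb{P}^r(\bb{C})$. Because the orbit map $G \to \bb{P}^r$, $g \mapsto g \cdot x_0$, is a submersion, there is a neighborhood $U$ of $x_0$ together with a holomorphic section $\sigma: U \to G$ with $\sigma(x_0) = \id$ and $\sigma(x) \cdot x_0 = x$. For $y$ close to $0$ in our chart and $x \in U$, the product $\tau_y \cdot \sigma(x)$ stays in the coordinate chart of $G$, and I let $\psi(y,x)$ be its coordinate, so that $\tau_y(x) = \tau_{\psi(y,x)}(x_0)$. Substituting $z = \psi(y,x)$ (with $x$ held fixed) in the defining integral yields
\begin{equation*}
f_\theta(x) = \int f\bigl(\tau_z(x_0)\bigr) \, R^{(x)}(z) \, dz,
\end{equation*}
where $R^{(x)}(z) = |\theta|^{-(2r^2+4r)} \rho\bigl(\psi^{-1}(z,x)/\theta\bigr) \, |J(z,x)|$ and $J$ is the Jacobian of $y \mapsto z$. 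All of the $x$-dependence is now packaged into the smooth density $R^{(x)}$.

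Differentiating under the integral and applying the chain rule, each of the $k$ derivatives in $x$ that lands on the factor $\rho(\cdot/\theta)$ contributes at most a factor $|\theta|^{-1}$ from the inner derivative; the factors $J(z,x)$ and $\psi^{-1}(z,x)$, together with all their $x$-derivatives, stay bounded uniformly for $x \in U$ (since $\sigma$ is a fixed smooth section and $\tau_y\sigma(x)$ remains near the identity). Consequently
\begin{equation*}
|\partial_x^k R^{(x)}(z)| \leq C_{r,k} \|\rho\|_{C^k} |\theta|^{-(2r^2+4r)-k}
\end{equation*}
pointwise on the support of $R^{(x)}$, which stays within a fixed bounded subset of $G$ of finite volume. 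Combining this with the trivial bound $|f(\tau_z(x_0))| \leq c_{f,0}$ and integrating over the bounded support produces the pointwise estimate $|\partial_x^k f_\theta(x)| \leq C_{r,k}' c_{f,0} |\theta|^{-(2r^2+4r)-k}$. A finite covering of $\bb{P}^r(\bb{C})$ by such neighborhoods $U$, each with its own local section $\sigma$, together with the comparability of good charts from Lemma \ref{GoodCharts1}, assembles these local estimates into the claimed bound on $c_{f_\theta,k}$.

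The main technical obstacle is that the section $\sigma$ only exists locally, so the map $\psi(y,x)$ is only well-defined when $\tau_y\sigma(x)$ stays within the coordinate chart of $G$. The argument therefore must be done chart by chart on $\bb{P}^r$ and then assembled using compactness and the comparability of norms between good charts. A secondary point is bookkeeping the factor $2r^2+4r$: this is precisely the real dimension of $G = \PGL_{r+1}(\bb{C})$, which controls the scaling of the density of $(h_\theta)_*\rho$ in Lebesgue measure and thus appears in the final exponent.
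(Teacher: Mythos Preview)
Your argument is correct. The paper itself does not give a proof here; it simply cites Proposition~2.1.6 of Dinh--Sibony \cite{DS10b}. What you have written is essentially the standard proof behind that citation: use a local section $\sigma$ of the orbit map to write $\tau_y(x)=\tau_{\psi(y,x)}(x_0)$, change variables in the group integral so that the $x$-dependence sits entirely in the smooth kernel $R^{(x)}$, and then let the $k$ derivatives in $x$ land on $R^{(x)}$, each contributing a factor $|\theta|^{-1}$ from the chain rule applied to $\rho(\cdot/\theta)$. The identification of $2r^{2}+4r$ with the real dimension of $\PGL_{r+1}(\bb{C})$ is correct, and the local-to-global patching via compactness and Lemma~\ref{GoodCharts1} is the right way to finish.

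One small remark: in the last step you bound the $z$-volume of the support of $R^{(x)}$ by a fixed constant. In fact that support is the image, under a diffeomorphism with bounded Jacobian, of $\{|y|<|\theta|\}$, so it has volume $O(|\theta|^{2r^{2}+4r})$. Using this would cancel the factor $|\theta|^{-(2r^{2}+4r)}$ coming from the density and give the sharper bound $c_{f_{\theta},k}\le c_{k}\,c_{f,0}\,|\theta|^{-k}$. This is not needed for the proposition as stated, so your argument is already complete.
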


\begin{proof}
This is Proposition 2.1.6 of Dinh--Sibony \cite{DS10b}.     
\end{proof}

Fix the Fubini-Study metric on $\bb{P}^r(\bb{C})$. We now further assume that $f(x)$ is Holder continuous with respect to our metric. Then we also have the following control on $|f_{\theta} - f|$ as $\theta \to 0$. 

\begin{lemma} \label{Regularization2}
Assume there exists $C,\kappa > 0$ such that $|f(x)-f(y)| \leq C|x-y|^{\kappa}$ for all $x,y \in \bb{P}^r(\bb{C})$. Then there exists some $c> 0$ such that for all $|\theta| > 0$, we have
$$|f_{\theta}(x) - f(x)| \leq c |\theta|^{\kappa}$$
\end{lemma}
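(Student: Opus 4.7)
The plan is to exploit the fact that $\rho_\theta$ is a probability measure supported on $\{|y| \leq \theta\}$, so that the regularization $f_\theta(x)$ is an average of values $f(\tau_y x)$ with $\tau_y$ close to the identity in $\PGL_{r+1}(\bb{C})$. The heart of the argument is then to control how far $\tau_y x$ can move away from $x$ in the Fubini--Study metric when $y$ ranges over the ball $\{|y| \leq \theta\}$, and to combine this with the Hölder hypothesis on $f$.

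First I would rewrite
\begin{equation*}
f_\theta(x) - f(x) = \int_{\PGL_{r+1}(\bb{C})} \bigl( f(\tau_y x) - f(x) \bigr) \, d\rho_\theta(y),
\end{equation*}
using that $\rho_\theta$ is a probability measure. By the Hölder hypothesis, the integrand is bounded by $C \, |\tau_y x - x|^\kappa$ pointwise, where $|\cdot|$ denotes the Fubini--Study distance on $\bb{P}^r(\bb{C})$.

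Next I would establish a uniform Lipschitz-type bound of the form $|\tau_y x - x| \leq M |y|$ for all $x \in \bb{P}^r(\bb{C})$ and all $y$ in the chosen polydisc around the identity. This is where one uses the structure of the group action: the map $(\tau, x) \mapsto \tau \cdot x$ is holomorphic on the compact set $\{|y| \leq 1\} \times \bb{P}^r(\bb{C})$, and $\tau_0 = \id$, so the difference $\tau_y x - x$ vanishes at $y = 0$ uniformly in $x$. Differentiating in $y$ and applying the mean value inequality together with compactness of $\bb{P}^r(\bb{C})$ yields such an $M$, independent of $x$. This step is essentially the only non-formal input and is the step I expect to be the main (though mild) obstacle: one must be careful about which metric is being used on $\PGL_{r+1}(\bb{C})$ and check that the coordinate $y$ gives a genuine bi-Lipschitz chart near $\id$.

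With these two ingredients, for $|y| \leq \theta$ we obtain $|f(\tau_y x) - f(x)| \leq C M^\kappa \theta^\kappa$, and integrating against the probability measure $\rho_\theta$ gives
\begin{equation*}
|f_\theta(x) - f(x)| \leq C M^\kappa \, \theta^\kappa,
\end{equation*}
so we may take $c = C M^\kappa$. The bound is clearly uniform in $x$, as required.
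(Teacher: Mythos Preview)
Your proposal is correct and follows essentially the same approach as the paper: write $f_\theta(x)-f(x)$ as an integral of $f(\tau_y x)-f(x)$ against the probability measure $\rho_\theta$, bound the displacement $|\tau_y x - x|$ linearly in $|y|$, and then apply the H\"older hypothesis. The only cosmetic difference is that the paper verifies the bound $|\tau_y x - x| = O(|y|)$ by an explicit matrix computation in homogeneous coordinates, whereas you invoke smoothness of the action and compactness; both arguments are valid and yield the same conclusion.
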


\begin{proof}
We first claim that for $|y|$ sufficiently small, the automoprhism $\tau_y$ has Lipschitz constant $O(|y|)$ with respect to our metric. Indeed if we let $\eps = |y|$, then our automorphism $\tau_y$ has entries $1+O(\eps)$ on the diagonal and $O(\eps)$ everywhere else. Then if $[z_0:\cdots:z_r]$ is our point, which we may WLOG assume that $|z_r|$ is maximal, then the image under $\tau_y$ is of the form $[z_0 + O(\eps|z_r|): \cdots : z_r + O(\eps|z_r|)]$. Passing into the chart $\{z_r = 1\}$ then gives us a point which is $O(\eps)$ distance away from our original point in our metric. 
\par 
Now by definition, we have
$$f_{\theta}(x) = \int_{\PGL_{n+1}(\bb{C})} f(\tau_y(x)) d \rho_{\theta}(y).$$
The measure $\rho_{\theta}(y)$ is supported on $|y| < \theta$ and for such $y$, we have $|\tau_y(x)-x| < O(\theta)$. Using Holder continuity, we get that $|f(\tau_y(x)) - f(x)| < O( \theta^{\kappa})$ and so we obtain
$$\left|f_{\theta}(x) - f(x) \right| \leq O(\theta^{\kappa})$$ 
as desired.
\end{proof}

We now also show that if $i \partial \ovl{\partial} f$ is not too negative, then so is $i \partial \ovl{\partial} f_{\theta}$. Let $\omega$ be some smooth positive $(1,1)$-form on $\bb{P}^r$. 

\begin{lemma} \label{Regularization3}
Assume that $\del \ovl{\del} f \geq -\omega$. Then for all $\theta > 0$ sufficiently small, there exists some constant $c > 0$, depending on $\omega$, such that 
$$\del \ovl{\del} f_{\theta} \geq -(1 + c \theta) \omega.$$
\end{lemma}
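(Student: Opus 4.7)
The plan is to compute $\del \ovl{\del} f_\theta$ by differentiating under the integral sign and then exploit the transitivity and smoothness of the $\PGL_{r+1}(\bb{C})$-action on $\bb{P}^r$ together with the positivity hypothesis on $f$.

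First I would write
$$\del \ovl{\del} f_\theta(x) = \int \tau_y^{*}(\del \ovl{\del} f)(x) \, d\rho_\theta(y),$$
interpreting this identity as follows: since $\rho_\theta$ is a smooth, compactly supported density and each $\tau_y$ is a biholomorphism, the standard distributional argument (in which $\del \ovl{\del}$ acts on the $x$-variable and commutes with integration against a smooth kernel in $y$) is valid, and $f_\theta$ is in any case smooth by Proposition \ref{Regularization1}. Because $\tau_y$ is biholomorphic, pullback preserves positivity of currents, so the hypothesis $\del \ovl{\del} f \geq -\omega$ gives $\tau_y^{*}(\del \ovl{\del} f) \geq -\tau_y^{*}\omega$, and integrating against the positive measure $\rho_\theta$ yields
$$\del \ovl{\del} f_\theta \geq -\int \tau_y^{*}\omega \, d\rho_\theta(y).$$

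The remaining and substantive step is to show that $\int \tau_y^{*}\omega \, d\rho_\theta(y) \leq (1+c\theta)\,\omega$ pointwise as hermitian forms, for $\theta$ sufficiently small. Since $(y,x)\mapsto \tau_y(x)$ is smooth and $\tau_0 = \id$, the smooth $(1,1)$-form $\tau_y^{*}\omega - \omega$ vanishes at $y=0$ and depends smoothly on $y$ near the origin. By the compactness of $\bb{P}^r$, one obtains a uniform bound $\tau_y^{*}\omega - \omega = |y|\,\eta_y$, where $\eta_y$ is a smooth $(1,1)$-form whose coefficients are bounded uniformly in $y$ (for $|y|\le 1$, say). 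Since $\omega$ is smooth and strictly positive on the compact manifold $\bb{P}^r$, every such uniformly bounded family $\eta_y$ satisfies $\eta_y \leq C_0\,\omega$ for some constant $C_0$ independent of $y$. Integrating against $\rho_\theta$, which is supported in $\{|y|\le \theta\}$, yields
$$\int (\tau_y^{*}\omega - \omega) \, d\rho_\theta(y) \leq C_0\, \theta\,\omega,$$
and combining this with the lower bound for $\del \ovl{\del} f_\theta$ gives the claim with $c = C_0$. (The radial symmetry of $\rho$ would actually let one kill the linear-in-$y$ term in the Taylor expansion of $\tau_y^*\omega$ at $y=0$ and improve $c\theta$ to $c\theta^2$, but this is not needed.)

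The main technical obstacle is justifying the interchange of $\del \ovl{\del}$ and the integral when $f$ is only continuous. If one wants to avoid relying on distributional pullback, this can be handled by a brief approximation argument: approximate $f$ uniformly by smooth functions $f^{(n)}$ with $\del \ovl{\del} f^{(n)} \geq -(1+1/n)\omega$, run the pointwise calculation for $f^{(n)}$, and pass to the limit in the resulting inequality using that $(f^{(n)})_\theta \to f_\theta$ in $C^2$ (which follows from the kernel representation since the $x$-derivatives act only on the smooth kernel). The remaining ingredients — transitivity and smoothness of the $\PGL$-action, and the pointwise domination of smooth forms by $\omega$ on compact $\bb{P}^r$ — are routine.
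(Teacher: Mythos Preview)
Your proposal is correct and follows essentially the same route as the paper: differentiate under the integral, use that pullback by the biholomorphism $\tau_y$ preserves the inequality $\del\ovl{\del}f\ge-\omega$, and then compare $\tau_y^{*}\omega$ with $\omega$ via the smoothness of the $\PGL_{r+1}$-action near $\id$. The paper phrases the middle step through pushforward and a distributional pairing with strongly positive $(r-1,r-1)$-forms, and carries out the final comparison by an explicit local-coordinate computation, whereas you invoke pullback directly and appeal to compactness; these are cosmetic differences, and your treatment of the interchange of $\del\ovl{\del}$ with the $y$-integral (via approximation of $f$) is in fact more careful than the paper's.
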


\begin{proof}
We have 
$$\del \ovl{\del} f_{\theta} = \int_{\PGL_{r+1}(\bb{C})} \del \ovl{\del} (f \circ \tau_y) d \rho_{\theta}(y)$$
and so it suffices to show that $\del \ovl{\del} (f \circ \tau_y) \geq -(1 + O(\theta)) \omega$ for any $y$ satisfying $|y| < \theta$. Fix any strongly positive $(r-1,r-1)$-form $\eta$. Let $\tau_{y'}$ denote the inverse of $\tau_y$. We have
$$\langle \del \ovl{\del}(f \circ \tau_y), \eta \rangle = \langle f \circ \tau_y , \ovl{\del} \del \eta \rangle = \langle (\tau_{y'})_* f, \ovl{\del} \del \eta \rangle = \langle f, (\tau_{y'})^* \ovl{\del} \del \eta \rangle = \langle f, \ovl{\del} \del ( (\tau_{y'}^*) \eta) \rangle$$
$$= \langle \del \ovl{\del} f, (\tau_{y'})^* \eta \rangle \geq \langle - \omega, (\tau_{y'})^* \eta \rangle = \langle (\tau_{y'})_* (-\omega), \eta \rangle,$$
where we used the fact that $f \circ \tau_y$ is the pushforward of $f$ under the inverse map $\tau_{y'}$ and that the pullback of a strongly positive form under $\tau_{y'}$ is still strongly positive as $\tau_{y'}$ is a holomorphic map. 
\par 
It now suffices to show that 
$$(\tau_{y'})_* (-\omega) + (1 + O(\theta)) \omega$$
gives a positive definite matrix at each point $x$. Let $\theta = \eps$. Then in a local chart, if 
$$\omega(z) = \sum_{1 \leq i,j \leq n} c_{i,j}(z) dz_i \wedge d\ovl{z}_j,$$
we have 
$$(\tau_{y'})_*(\omega)(z) = \sum_{1 \leq i,j \leq n} c_{i,j}(\tau_y(z)) dz_i \wedge d \ovl{z}_j.$$
Then $|\tau_y(z) - z| = O(\eps)$, and so $|c_{i,j}(z) - c_{i,j}(\tau_y(z))| < O(\eps)$ too. If $A(z)$ is the matrix formed by $(c_{i,j}(z))$, then we know that $|A(\tau_y(z)) -A(z)|$ is a matrix with entries bounded by $O(\eps)$. Since $A(z)$ is positive definite, we can find a bound $\lambda$ for the smallest eigenvalue. Then $A(\tau_y(z))-A(z) \geq - O(\lambda^{-1} \eps)A(z)$ and so locally, we have $(\tau_{y'})_* \omega \geq -(1 + O(\theta)) \omega$. We may then make the constants uniform by taking a finite cover $\{U_i\}$ with each $U_i$ living inside a larger chart $V_i$ where $U_i \simeq \bb{D}_1$ and $V_i \simeq \bb{D}_3$.  
\end{proof}

Let $X = \bb{P}^r$ and $L = O(1)$. We may then give $L$ the Fubini-Study metric $h_{\FS}$ so that $i \partial \ovl{\partial} h_{\FS} = \omega_{\FS}$ is a positive $(1,1)$-form. Now if $h$ is our continuous metric on $L$, then $f = h - h_{\FS}$ is a globally defined continuous function such that $i \partial \ovl{\partial} f = dd^c h - \omega_{\FS}$. We may then apply our regularization process to $f$ to obtain $f_{\theta}$, and set $h_{\theta} = f_{\theta} + h_{\FS}$ to be our new hermitian metric. If $h$ is semipositive, so that $i \partial \ovl{\partial} f \geq - \omega_{\FS}$, by Lemma \ref{Regularization3} we know that 
$$i \partial \ovl{\partial} f_{\theta} \geq -(1 + O(\theta)) \omega_{\FS} \implies i \partial \ovl{\partial} h_{\theta} \geq -O(\theta) \omega_{\FS}$$
and so our regularized metric is still nearly semipositive. 

\subsection{Normed modules and volumes} We will mainly refer to \cite[2.3]{Yua08} and \cite[Chapter 3]{Mor14}. Let $M$ be a free $\bb{Z}$-module. If $| \cdot |$ is a norm on $M \otimes_{\bb{Z}} \bb{R} = M_{\bb{R}}$, we call $(M, | \cdot |)$ a normed $\bb{Z}$-module. Let $B_M = \{ x \in M_{\bb{R}} \mid |x| \leq 1\}$. After fixing a choice of a Haar measure on $M_{\bb{R}}$, we may define the Euler characteristic $\chi(M, | \cdot |)$ by the formula
$$\chi(M, | \cdot |) = \log \vol(B_M) - \log \vol(M)$$
where $\vol(M)$ denotes the volume of a fundamental domain of $M$. This is independent of the choice of our Haar measure. The following proposition follows from Minkowski's theorem.

\begin{lemma} \label{Modules1}
Assume that $\chi(M ,| \cdot |) > \dim M_{\bb{R}} + \log 2$. Then the intersection $B_M \cap M$ is non-empty.
\end{lemma}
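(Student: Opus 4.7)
The plan is to invoke Minkowski's first theorem on lattice points in symmetric convex bodies. The hypothesis on $\chi(M,|\cdot|)$ is essentially just a volume comparison between $B_M$ and a fundamental domain of $M$, packaged so that Minkowski applies directly.

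First I would observe that $B_M$ is a symmetric convex body in $M_{\bb{R}}$: symmetry is immediate from $|-x| = |x|$, and convexity follows from the triangle inequality for $|\cdot|$. Moreover $B_M$ has nonempty interior (since it is the unit ball of a norm on a finite dimensional real vector space), so in particular $\vol(B_M) > 0$ and we may freely exponentiate the defining identity $\chi(M,|\cdot|) = \log\vol(B_M) - \log\vol(M)$.

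Next, I would translate the hypothesis into a usable volume bound. Setting $n = \dim M_{\bb{R}}$, the assumption $\chi(M,|\cdot|) > n + \log 2$ (in particular the sharper threshold $\chi(M,|\cdot|) > n\log 2$ that Minkowski actually requires) yields
\[
\vol(B_M) > 2^n \vol(M).
\]
By Minkowski's first theorem, any symmetric convex body in $M_{\bb{R}}$ whose volume exceeds $2^n \vol(M)$ must contain a nonzero lattice point of $M$. Applied to $B_M$, this produces an element $0 \neq x \in M \cap B_M$, which is what the lemma asserts.

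There is essentially no obstacle here beyond correctly unwinding the definition of $\chi$ and verifying that the hypothesis implies the classical Minkowski volume inequality; the result is a direct application of the geometry of numbers. The only subtlety worth noting is that the assertion ``$B_M \cap M$ is non-empty'' is to be read as ``contains a nonzero element,'' since $0$ is always in the intersection.
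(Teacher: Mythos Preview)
Your proposal is correct and matches the paper's own proof, which simply reads ``This is just Minkowski's theorem.'' You have filled in the straightforward verification that the hypothesis $\chi(M,|\cdot|) > n + \log 2$ implies the Minkowski bound $\vol(B_M) > 2^n \vol(M)$ (since $n\log 2 \le n < n + \log 2$), and your remark that the conclusion should be read as ``contains a nonzero lattice point'' is a sensible clarification.
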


\begin{proof}
This is just Minkowski's theorem.    
\end{proof}

Now let $| \cdot |_1$ and $| \cdot |_2$ be two different norms on $M_{\bb{R}}$. We would like to calculate the difference between $\chi(M, | \cdot |_1)$ and $\chi(M, | \cdot |_2)$. When both our norms are given by inner products $\langle \, , \, \rangle_1$ and $\langle \, , \, \rangle_2$, the next proposition gives a way to calculate this difference. 

\begin{proposition} \label{Modules2}
Let $| \cdot |_1$ and $| \cdot |_2$ be two different norms on $M_{\bb{R}}$ that are given by inner products $\langle \, , \, \rangle_1$ and $\langle \, , \, \rangle_2$. Pick an orthonormal basis $e_1,\ldots,e_n$ for $\langle , \rangle_1$ that is orthogonal for $\langle , \rangle_2$. Then
$$\chi(M, | \cdot |_1) - \chi(M, | \cdot |_2) = \sum_{i=1}^{n} \log |e_i|_2.$$
\end{proposition}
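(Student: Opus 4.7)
The plan is to reduce the problem to a clean volume computation by choosing a Haar measure adapted to the basis $e_1,\ldots,e_n$. First I would observe that, since the Euler characteristic
$$\chi(M, |\cdot|_i) = \log \vol(B_{M,i}) - \log \vol(M)$$
is independent of the chosen Haar measure on $M_{\bb{R}}$, the difference
$$\chi(M, |\cdot|_1) - \chi(M, |\cdot|_2) = \log \vol(B_{M,1}) - \log \vol(B_{M,2})$$
depends only on the relative sizes of the two unit balls. Hence the lattice $M$ plays no role and we may work on $M_{\bb{R}} \cong \bb{R}^n$.

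Next I would justify the existence of the stated basis by appealing to the simultaneous diagonalization of two real symmetric bilinear forms, where one (namely $\langle\,,\,\rangle_1$) is positive definite: there is a basis $e_1,\ldots,e_n$ that is orthonormal with respect to $\langle\,,\,\rangle_1$ and orthogonal with respect to $\langle\,,\,\rangle_2$. Writing the coordinates of a vector $v = \sum x_i e_i$ in this basis, we have
$$|v|_1^2 = \sum_{i=1}^n x_i^2, \qquad |v|_2^2 = \sum_{i=1}^n |e_i|_2^2\, x_i^2.$$
Thus $B_{M,1}$ is the standard Euclidean unit ball while $B_{M,2}$ is the ellipsoid with semi-axes $|e_i|_2^{-1}$ along the $e_i$-directions.

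For the final step I would fix the Haar measure on $M_{\bb{R}}$ for which $e_1,\ldots,e_n$ is a unit cube, so that $\vol(B_{M,1})$ equals the standard volume of the unit ball in $\bb{R}^n$. The change of variables $y_i = |e_i|_2\, x_i$ turns $B_{M,2}$ into the standard unit ball and has Jacobian $\prod_i |e_i|_2$, giving
$$\vol(B_{M,2}) = \vol(B_{M,1}) \cdot \prod_{i=1}^n |e_i|_2^{-1}.$$
Taking logarithms and combining with the first step yields
$$\chi(M, |\cdot|_1) - \chi(M, |\cdot|_2) = \sum_{i=1}^n \log |e_i|_2,$$
as claimed. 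There is no real obstacle here; the only mild subtlety is remembering that the Haar-measure dependence cancels in the Euler characteristic, so one is free to adopt the basis-adapted normalization that makes the Jacobian calculation immediate.
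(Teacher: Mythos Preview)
Your proposal is correct and follows essentially the same approach as the paper: both reduce to comparing $\log \vol(B_{M,1}) - \log \vol(B_{M,2})$, fix the Haar measure making the cube on $e_1,\ldots,e_n$ have unit volume, and compute the ellipsoid volume as $\prod_i |e_i|_2^{-1}$ times the standard ball volume. Your extra remarks on simultaneous diagonalization and the explicit Jacobian are harmless elaborations of the same idea.
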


\begin{proof}
By definition, it suffices to calculate the difference of volumes between the unit ball for $| \cdot |_2$ and $| \cdot |_1$. Fix a Haar measure such that the volume of $\{\lambda_1 e_1 + \cdots + \lambda_n e_n \mid 0 \leq \lambda_i \leq 1\}$ is $1$. Then the volume of the unit ball for $| \cdot |_1$ is exactly the volume of the $n$-sphere. 
\par 
For $| \cdot |_2$, the unit ball is given by  
$$\{(\lambda_1,\ldots,\lambda_n) \mid  \sum_{i=1}^{n} \lambda_i^2 |e_i|_2^2 \leq 1\}$$
as a subset of $\bb{R}^n$. The volume of this is $\prod_{i=1}^{n} |e_i|^{-1}_2$ multiplied by the volume of the n-sphere. Hence taking $\log$'s and the difference, we obtain 
$$\chi(M, | \cdot |_1) - \chi(M, | \cdot |_2) = \sum_{i=1}^{n} \log |e_i|_2$$
as desired.
\end{proof}

We will be using Proposition \ref{Modules2} repeatedly to compute difference of volumes in later sections. An immediate consequence of Proposition \ref{Modules2} is that if $| \cdot |_2 = \alpha | \cdot |_1$ for some $\alpha > 0$, then 
$$\chi(M, | \cdot |_1) -\chi(M, | \cdot |_2) = \log \alpha \dim M_{\bb{R}}.$$
The situation where our normed modules arise comes from looking at a lattice inside the space of global sections $H^0(X,L)$ for a projective variety $X$ over a number field $K$. For each archimedean place $v$ of $K$, we have an embedding $v: K \xhookrightarrow{} \bb{C}$ which gives an inclusion $H^0(X, L) \xhookrightarrow{} H^0(X(\bb{C}_v),L)$. We may combine them to get an isomorphism
$$H^0(X,L) \otimes_{\bb{Q}} \bb{C} \simeq \bigoplus_{v \text{ arch. }} H^0(X(\bb{C}_v),L)$$
where we count conjugate archimedean places $v$ separately. 
\par 
Now if each $H^0(X(\bb{C}_v),L)$ has an inner product $\langle \, , \, \rangle_v$, we may define an inner product $\langle \, , \, \rangle$ on $H^0(X,L) \otimes_{\bb{Q}} \bb{C}$ by declaring each $H^0(X(\bb{C}_v),L)$ to be orthogonal to each other. If each $\langle \, , \, \rangle_v$ is positive definite, then this inner product $\langle \, , \, \rangle$ is also positive definite by \cite[Proposition 3.9]{Mor14}. Furthermore, if one knew that for conjugate archimedean places $v,v'$, we have 
$$\langle x , y \rangle_v = \overline{\langle x ,y \rangle_{v'}}$$
for $x,y \in H^0(X,L)$, then one can check that we get an induced inner product on $H^0(X,L) \otimes_{\bb{Q}} \bb{R}$. 
\par 
To get the inner products $\langle \, , \, \rangle_v$, we will choose a positive measure $\mu_v$ on $X(\bb{C}_v)$ for each archimedean place $v$, along with a hermitian metric on $L$. Then the formula
$$\langle s , s' \rangle_v = \int_{X(\bb{C}_v)} \langle s,s' \rangle_x d \mu_{v}(x)$$
where $\langle \, , \, \rangle_x$ is the inner product on the fibers coming from our hermitian metric produces an inner product. We will call this a $L^2$-norm due to integrating over a measure. 
\par 
The other situation where a norm appears is the supremum norm. For each archimedean place $v$, we have a metrized line bundle $\ovl{L}$. For a global section $s$, we set $|s|_v = \sup_{x \in X(\bb{C}_v)} |s(x)|_x$ where $| \cdot |_x$ is again the fiberwise norm. This gives a norm on each $H^0(X(\bb{C}_v),L)$ and taking the maximum over each archimedean place $v$ gives a supremum norm on $H^0(X,L) \otimes_{\bb{Q}} \bb{C}$. 

\section{Gromov's Norm Comparison Theorem}
Let $X$ be a compact complex manifold of dimension $r$ and let $\cal{U}_I = \{U_{\alpha}\}_{\alpha \in I}$ be a choice of good charts for $X$. Let $L$ be a metrized line bundle on $X$ and fix a choice of non-vanishing sections $t_{\alpha}$ on each $U_{\alpha}$ for $L$. For the line bundle $L^n$, we will use $t_{\alpha}^n$ to trivialize it. As a reminder, a choice of good charts means that each $U_{\alpha}$ is biholomorphic to the open polydisc of radius $3$ and every point $x \in X$ lies inside the unit polydisc for one of these charts. 
\par 
In this short section, we will prove a quantitative version of Gromov's comparison between the $L^2$-norm and the sup norm of a given section $s \in H^0(X,L^n)$, as $n$ gets large. Given a section $s$, if $s_{\alpha}$ is the holomorphic function representing $s$ on $U_{\alpha}$ under our trivialization, we know that $|s(z)|$ is given by the formula
$$|s(z)|^2 = e^{h_{\alpha}}(z) s_{\alpha}(z) \ovl{s}_{\alpha}(z)$$
for all $x \in U_{\alpha}$.

\begin{proposition} \label{Gromov1}
Let $\mu$ be a positive volume form on $X$ with total volume $1$. Let $L$ be a line bundle and $\ovl{L}$ be a metrization with metric $h$. Let $c$ be a constant such that $d \mu > c dV$ when restricted to $\bb{D}_2$ for each of the charts $z_{\alpha}(U_{\alpha})$, where $dV$ denotes the Lebesgue measure and let $1 \geq c_2 > 0$ be a constant such that $e^{h_{\alpha}(x)} \geq c_2$ for all $x \in U_{\alpha}$ such that $|z_{\alpha}(x)| \leq 2$. Then for all $n \geq 1$, we have the inequality
$$||s||_{\sup} \leq O\left( \left(\frac{c_{e^h,1}}{c_2} \right)^{2r} \frac{n^r}{c} ||s||_{L^2}\right) \text{ for all } s \in H^0(X,L^n)$$
where we take the $L^2$-norm with respect to $\mu$.  
\end{proposition}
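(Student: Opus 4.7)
The plan is to work in a single good chart $U_\alpha \simeq \bb{D}_3$ containing $x$ with $|z_\alpha(x)| \leq 1$, trivialize $L^n$ via $t_\alpha^n$, and write $|s(x)|^2 = e^{n h_\alpha(x)}|s_\alpha(x)|^2$ where $s_\alpha$ is holomorphic. Since $|s_\alpha|^2$ is plurisubharmonic, for any polydisc $P(x,\rho) \subset \{|z_\alpha| \leq 2\}$ (i.e.\ $\rho \leq 1$) one has the sub-mean value inequality
$$|s_\alpha(x)|^2 \leq \frac{1}{\vol P(x,\rho)} \int_{P(x,\rho)} |s_\alpha(y)|^2 \, dV(y),$$
where $dV$ denotes Lebesgue measure on $\bb{C}^r \cong \bb{R}^{2r}$. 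Reintroducing the metric factor and multiplying and dividing by $e^{n h_\alpha(y)}$ inside the integral gives
$$|s(x)|^2 \leq \frac{1}{\vol P(x,\rho)} \int_{P(x,\rho)} e^{n(h_\alpha(x) - h_\alpha(y))} |s(y)|^2 \, dV(y).$$

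The core of the argument is the choice of $\rho$ that balances the exponential factor against the polydisc volume. Applying the mean value theorem to $\log$ and using the lower bound $e^{h_\alpha} \geq c_2$ together with the $C^1$-bound, one obtains $|h_\alpha(x) - h_\alpha(y)| \leq \tfrac{c_{e^h,1}}{c_2}|x-y|$ throughout the polydisc, so for $y \in P(x,\rho)$,
$$e^{n(h_\alpha(x) - h_\alpha(y))} \leq \exp\!\left(\frac{n \rho c_{e^h,1} \sqrt{2r}}{c_2}\right).$$
Setting $\rho = c_2 / (n c_{e^h,1})$ reduces this factor to a dimension-dependent constant. This choice is admissible ($\rho \leq 1$) once $n$ is large enough; for small $n$ the stated bound is trivial with $\rho = 1$. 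With this $\rho$, we have $(\vol P(x,\rho))^{-1} = \pi^{-r}(n c_{e^h,1}/c_2)^{2r}$.

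Finally, the hypothesis $d\mu \geq c \, dV$ on $\bb{D}_2$ lets us replace $dV$ by $d\mu/c$ and extend the integral to all of $X$, yielding
$$|s(x)|^2 \leq O\!\left(\frac{1}{c}\left(\frac{c_{e^h,1} n}{c_2}\right)^{2r}\right) \|s\|_{L^2}^2.$$
Taking square roots and the supremum over $x$ gives a bound of the form $O\bigl((c_{e^h,1}/c_2)^{r} n^r / \sqrt{c}\bigr)\|s\|_{L^2}$, which (since $c_{e^h,1}/c_2 \geq 1$ and one may assume $c \leq 1$) implies the claimed inequality. The single genuine obstacle is the balancing choice $\rho = c_2/(n c_{e^h,1})$: too large a $\rho$ makes the metric factor $e^{n(h_\alpha(x)-h_\alpha(y))}$ blow up exponentially in $n$, while too small a $\rho$ makes the reciprocal volume $1/\vol P(x,\rho)$ blow up too fast. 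Everything else reduces to the sub-mean value property and bookkeeping of constants across the finite cover $\cal{U}_I$.
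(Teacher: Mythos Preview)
Your argument is correct and follows essentially the same route as the paper, which defers to Yuan's Proposition~2.13: work in one good chart, use plurisubharmonicity of $|s_\alpha|^2$, control the metric oscillation via the Lipschitz bound $c_{e^h,1}/c_2$, and compare $dV$ with $d\mu$ via the constant $c$. The only technical difference is that Yuan (and hence the paper) integrates over a \emph{fixed} ball of radius $\sim c_2/c_{e^h,1}$ and absorbs the factor $(e^{h_\alpha(y)}/e^{h_\alpha(x)})^n \geq (1 - \tfrac{c_{e^h,1}}{2c_2}|y-x|)^n$ by a beta-type integral, which produces the combinatorial factor $(2r+n)\binom{2r+n-1}{2r-1}\sim n^{2r}$; you instead shrink the radius to $\rho = c_2/(n c_{e^h,1})$ so the exponential stays $O(1)$ and the $n^{2r}$ comes from $\rho^{-2r}$. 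Both yield $\|s\|_{\sup}\leq O\bigl((c_{e^h,1}/c_2)^r n^r c^{-1/2}\bigr)\|s\|_{L^2}$, which is indeed stronger than the stated bound, and your version is arguably cleaner since it avoids the combinatorial bookkeeping.
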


\begin{proof}
The proof is mainly the same as Proposition 2.13 of \cite{Yua08}. Note that $h_{\alpha}$ in Yuan's Proposition is our $e^{h_{\alpha}}$. We reproduce a sketch of the argument here. If $h_{\alpha}$ is the function induced by $h$ on $U_{\alpha}$, then for $x \in U_{\alpha}$, one has 
$$|s(z)|^2 = e^{n h_{\alpha}(z)}  s_{\alpha}(z) \ovl{s}_{\alpha}(z) \text{ for all } s \in H^0(X,L^n).$$
Identifying $U_{\alpha}$ with $\bb{D}_3 = \{z \in \bb{C}^r \mid |z| < 3\}$. the constant $c_{e^h,1}$ serves as a Lipschitz constant for $e^{h_{\alpha}}$ in the disc $\bb{D}_2 = \{|z| \leq 2\}$ and so $|e^{h_{\alpha}(z_1)} - e^{h_{\alpha}(z_0)}| \leq c_{e^h,1}|z_1 - z_0|$ for $z_1,z_0 \in \bb{D}_2$. Hence
$$e^{h_{\alpha}(z_1)} \geq e^{h_{\alpha}(z_0)} - c_{e^h,1}|z_1 - z_0| \geq e^{h_{\alpha}(z_0)}\left(1 - \frac{c_{e^h,1}}{c_{2}} |z_1 - z_0| \right).$$
This implies that in Yuan's argument, we may take $c_1 = 2 \max \{1, \frac{c_{e^h,1}}{c_2}\}$. The constant $c''$ in Yuan's argument may also be taken as $c$ multiplied by some constant depending only on the dimension $r$. The rest of Yuan's argument then gives us the inequality
$$||s||^2_{L^2} \geq c'' c_1^{-2r} \left( \frac{||s||^2_{\sup}}{(2r + n)\binom{2r + k-1}{2r-1}} \right)$$
which gives us what we want. 
\end{proof}

This gives us the following comparison of volumes between the $L^2$-norm and the sup norm. Given a positive measure $\mu$, we may use it to define an inner product on the sections of $H^0(X,L^n)$ by $\langle s,s' \rangle = \int_X s(z) \ovl{s'}(z) d \mu$, which we will refer to as the $L^2$-norm. 

\begin{corollary} \label{Gromov2}
Keep the same notation as in Proposition \ref{Gromov1}. If $V_{n,L^2}$ denotes the unit ball in $H^0(X,L^n)$ for the $L^2$-norm, and $V_{n,\sup}$ denotes the unit ball under the sup norm, we have the inequality
$$0 \leq \log \vol(V_{n,L^2}) - \log \vol(V_{n,\sup}) \leq O \left( \log \frac{c_{e^h,1} n}{c_2 c'}  \right) \dim H^0(X,L^n).$$
\end{corollary}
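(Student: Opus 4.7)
The statement is essentially a direct translation of the pointwise estimate in Proposition \ref{Gromov1} into an estimate of the two volumes, by comparing the inclusions between the two unit balls. Both bounds reduce to standard facts: if two norms $\|\cdot\|_1, \|\cdot\|_2$ on a real vector space $W$ of dimension $N$ satisfy $\|s\|_1 \leq C \|s\|_2$, then the unit ball of $\|\cdot\|_2$ is contained in $C$ times the unit ball of $\|\cdot\|_1$, so the logarithm of the ratio of volumes is controlled by $N \log C$.

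First I would handle the lower bound. Since $\mu$ is a positive measure of total mass $1$ on $X$, for every $s \in H^0(X, L^n)$ we have
$$\|s\|_{L^2}^2 \;=\; \int_X |s(z)|^2 \, d\mu(z) \;\leq\; \|s\|_{\sup}^2 \int_X d\mu \;=\; \|s\|_{\sup}^2,$$
so $V_{n,\sup} \subseteq V_{n,L^2}$, giving $\log \vol(V_{n,L^2}) - \log \vol(V_{n,\sup}) \geq 0$.

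For the upper bound, I would invoke Proposition \ref{Gromov1} directly: setting
$$C_n \;=\; O\!\left(\Bigl(\tfrac{c_{e^h,1}}{c_2}\Bigr)^{2r}\tfrac{n^r}{c}\right),$$
we have $\|s\|_{\sup} \leq C_n \|s\|_{L^2}$ for every $s \in H^0(X, L^n)$. Equivalently, $V_{n, L^2} \subseteq C_n \cdot V_{n, \sup}$, and taking volumes yields
$$\vol(V_{n, L^2}) \;\leq\; C_n^{\dim H^0(X, L^n)} \vol(V_{n, \sup}).$$
Taking logarithms gives $\log \vol(V_{n,L^2}) - \log \vol(V_{n,\sup}) \leq \dim H^0(X, L^n) \cdot \log C_n$, and the estimate
$$\log C_n \;=\; 2r \log\!\tfrac{c_{e^h,1}}{c_2} + r \log n - \log c + O(1) \;=\; O\!\left(\log \tfrac{c_{e^h,1} n}{c_2 c}\right)$$
finishes the proof.

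There is no real obstacle here: the entire content sits in Proposition \ref{Gromov1}, and the corollary is a mechanical translation into volumes. The only minor points to be careful about are absorbing the dimensional constant $r$ into the $O(\cdot)$ and keeping track of the absolute constants so the direction of each inequality matches the statement.
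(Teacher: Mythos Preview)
Your proposal is correct and follows essentially the same approach as the paper: the lower bound from $\mu$ having total mass $1$ giving $V_{n,\sup}\subseteq V_{n,L^2}$, and the upper bound from Proposition \ref{Gromov1} giving $V_{n,L^2}\subseteq C_n V_{n,\sup}$ and hence the volume comparison. Your version is in fact slightly more explicit in unwinding $\log C_n$, but the argument is the same.
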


\begin{proof}
We certainly have $V_{n,\sup} \subseteq V_{n,L^2}$ because $\mu$ has volume $1$. If we let $c$ denote a constant such that $||s||_{\sup} \leq c||s||_{L^2}$, then we have 
$$V_{n,L^2} \subseteq c V_{n,\sup} \implies \log V_{n,L^2} - \log V_{n,\sup} \leq (\log c )\dim H^0(X,L^n).$$
Applying Proposition \ref{Gromov1} to obtain a constant $c$, the inequality then follows. 
\end{proof}

\section{Quantitative Upper Bounds on Bergman Kernels} \label{sec: QuantBergman}
Again, let $X$ be a compact complex manifold of dimension $r$ and let $\cal{U}_I = \{U_{\alpha}\}_{\alpha \in I}$ be a choice of good charts for $X$. Let $L$ be a metrized line bundle on $X$ and fix a choice of non-vanishing sections $t_{\alpha}$ on each $U_{\alpha}$ for $L$. For the line bundle $L^n$, we will use $t_{\alpha}^n$ to trivialize it. As a reminder, a choice of good charts means that each $U_{\alpha}$ is biholomorphic to the open polydisc of radius $3$ and every point $x \in X$ lies inside the unit polydisc for one of these charts. 
\par 
Given a positive line bundle $\ovl{L}$ with hermitian metric $e^h$, we let $\omega = i \partial \ovl{\partial} h$ be the associated positive $(1,1)$-form and $\mu = \omega^{\wedge r}/r!$ be its volume form. We can then endow $H^0(X,L^n)$ with a $L^2$-norm by 
\begin{equation} \label{eq: InnerProduct1} 
\langle s, s' \rangle = \int_{X(\bb{C})} \langle s,s'\rangle_x d \mu(x)
\end{equation}
where $\langle \, , \, \rangle_x$ is the fiberwise inner product. It is then known, dating back to Tian \cite{Tian90}, that if $s_1,\ldots,s_N$ is an orthonormal basis of sections for $H^0(X,L^n)$, we have
\begin{equation} \label{eq: Bergman1} 
\sum_{i=1}^{N} |s_i(x)|^2 \sim \left(\frac{n}{2 \pi} \right)^{r}
\end{equation}
uniformly for $x \in X(\bb{C})$ as $n \to \infty$. The function $\sum_{i=1}^{N} |s_i(x)|^2$ is independent of the choice of orthonormal basis and is called the Bergman kernel. In this section, we will prove a quantitative version of \eqref{eq: Bergman1} along with a quantitative version of Gromov's norm comparison inequality. Using the approach of Yuan \cite{Yua08} will then allow us to deduce a lower bound on the difference in volumes of the sup unit ball of $H^0(X,L^n)$ when the metric of $L$ is perturbed.
\par 
We will follow \cite{Cha15} to obtain our quantitative version of \eqref{eq: Bergman1}, where the ideas date back to Berndtsson \cite{Ber03}. Dinh--Ma--Nguyen \cite{DMN17} also prove a quantitative version of the asymptotic expansion of Bergman kernels, but is of a different flavor from what we need. Throughout the section, we will assume that our complex manifold $X$ and the line bundle $L$ is fixed. We will be interested in how our constants depend on the metric of $L$, especially on its positivity. 
\par 
Given $x \in X(\bb{C})$ a metrized line bundle $\ovl{L}$ with associated $(1,1)$-form $\omega$, we want to pass to normal coordinates around $x$ for our Kahler form $\omega$. We first control the error in this normal coordinates. We assume that $\omega$ is positive, so that there exists a constant $1 \geq \lambda_{\omega} > 0$ such that in each of the charts $U_{\alpha} \simeq \bb{D}_3$, the eigenvalues of $\omega(z)$ is $\geq \lambda_{\omega}$ with respect to $\sum_{i=1}^{r} dz^i \wedge d \ovl{z}^i$ for $z \in \bb{D}_2$. 

\begin{proposition} \label{NormalCoords1}
For each $x \in X(\bb{C})$, there exists a chart $\vphi_x: V_x \to \bb{C}^r$ such that $x$ maps to the origin, the image $\vphi_x(V_x)$ contains a disc of radius $O(c_{\omega,0})^{-1}$ such that our $(1,1)$-form $\omega$ is of the form
$$\frac{i}{2} \sum_{1 \leq i,j \leq r} g_{i,j}(z) dz^i \wedge d \ovl{z}^j$$
where $g_{i,j}(0) = \delta_{i,j}$ and the order $k$ derivatives for $g_{i,j}(z)$ are bounded by $O(\lambda_{\omega}^{-1}) c_{\omega,k}$. 
\end{proposition}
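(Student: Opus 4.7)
The plan is to build $\vphi_x$ as the composition of a translation placing $x$ at the origin of a local chart with a complex-linear change of coordinates that orthonormalizes the Hermitian matrix of $\omega$ at $x$. The construction is essentially pointwise linear algebra at $x$; the chain rule then transports estimates on $\omega$ into estimates on the transformed coefficients $g_{i,j}$.

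First, since $\cal{U}_I$ is a good cover, pick $U_\alpha \simeq \bb{D}_3$ whose unit polydisc contains $x$. Let $z_0$ denote the image of $x$ under $\vphi_\alpha$, so $|z_0| \leq 1$, and set $w = z - z_0$. The translated chart contains $\{|w| < 2\}$, and $\omega$ takes the form $\frac{i}{2} \sum_{i,j} b_{i,j}(w)\, dw^i \wedge d\bar{w}^j$, with the $k$-th partial derivatives of $b_{i,j}$ bounded by a dimensional constant times $c_{\omega,k}$ on $\{|w| \leq 2\}$. The Hermitian matrix $B := (b_{i,j}(0))$ is positive definite with eigenvalues in $[\lambda_\omega, O(c_{\omega,0})]$. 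From its spectral decomposition $B = U \Lambda U^*$ (with $U$ unitary and $\Lambda$ diagonal), take $L := \bar{U} \Lambda^{-1/2}$, so that $L^T B \bar{L} = I_r$ and the operator norms obey $\|L\| \leq O(\lambda_\omega^{-1/2})$ and $\|L^{-1}\| \leq O(c_{\omega,0}^{1/2})$.

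Declare the new coordinate by $w = L u$, set $\vphi_x(p) := L^{-1}(z(p) - z_0)$, and let $V_x$ be the preimage of the largest disc $\{|u| < r_0\}$ so that $L u$ stays inside $\{|w| < 2\}$, which gives $r_0 \geq 2/\|L\|$. Substituting $dw^i = \sum_k L_{ik}\, du^k$ yields
$$\omega = \frac{i}{2} \sum_{k,l} g_{k,l}(u)\, du^k \wedge d\bar{u}^l, \qquad g_{k,l}(u) = (L^T B(Lu) \bar{L})_{k,l},$$
and by construction $g_{k,l}(0) = \delta_{k,l}$. Each application of $\partial_{u^i}$ or $\partial_{\bar u^i}$ to $g_{k,l}$ hits $B(Lu)$ and introduces a factor of $L$ or $\bar{L}$, so a $k$-th derivative is pointwise bounded by $\|L\|^{k+2} \cdot O(c_{\omega,k})$. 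Combined with the two outer factors $L^T$ and $\bar{L}$ and the spectral bound on $B$, these estimates assemble into the claimed lower bound $r_0 \geq O(c_{\omega,0})^{-1}$ on the radius and the upper bound $O(\lambda_\omega^{-1}) c_{\omega,k}$ on the order-$k$ derivatives of $g_{k,l}$.

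The main technical obstacle is bookkeeping: the whole construction is linear algebra, but one must verify that the two-sided spectral control on $B(0)$ propagates cleanly through the spectral decomposition into operator-norm bounds on $L$, and that these in turn yield the stated dependence of $r_0$ and the derivatives of $g_{k,l}$ on $\lambda_\omega$ and $c_{\omega,0}$. Care is required to absorb the auxiliary powers of $\|L\|$ arising from the chain rule into the implicit constants, and to use the inequality $\lambda_\omega \leq O(c_{\omega,0})$ when reconciling the disc radius with the claimed form $O(c_{\omega,0})^{-1}$.
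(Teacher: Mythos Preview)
Your approach is the same as the paper's: translate $x$ to the origin in one of the good charts and then apply the linear change of variables $w=Lu$ that orthonormalizes the Hermitian matrix $B$ of $\omega$ at $0$. The formula $g_{kl}(u)=(L^TB(Lu)\bar L)_{kl}$ and the chain-rule bookkeeping are exactly what the paper does with its matrix $Q$.

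There is one point where your write-up does not actually close the argument. Your chart gives
\[
r_0 \;=\; \frac{2}{\|L\|} \;=\; 2\,\lambda_{\min}(B)^{1/2}\;\ge\; 2\,\lambda_\omega^{1/2},
\]
since the singular values of $L=\bar U\Lambda^{-1/2}$ are $\lambda_i^{-1/2}$. You then try to convert this into the stated lower bound $r_0\ge O(c_{\omega,0})^{-1}$ by invoking $\lambda_\omega\le O(c_{\omega,0})$, but that inequality points the wrong way: it gives an \emph{upper} bound on $\lambda_\omega^{1/2}$, not a lower bound, so it cannot produce $\lambda_\omega^{1/2}\ge c\,c_{\omega,0}^{-1}$. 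In other words, from your construction you genuinely obtain a disc of radius $\asymp \lambda_\omega^{1/2}$, not $\asymp c_{\omega,0}^{-1}$, and the two quantities are not comparable in the required direction.

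This is worth flagging, but it is not a defect of your method relative to the paper: the paper's own radius computation identifies the coordinate change with $\bar Q^{T}$ and reads off $\tfrac{1}{r\sqrt{\lambda_1}}$, whereas the change that makes $g(0)=I$ with the stated derivative control is the inverse map, which yields exactly your $\lambda_{\min}^{1/2}$. In every later use of this proposition (Propositions~\ref{Bergman2} and~\ref{Bergman3}) one only needs the chart to contain a disc of radius $n^{-2/5}$, and the standing hypothesis $n\ge O(\lambda_\omega^{-1}\cdots)^8$ already forces $n^{-2/5}\ll \lambda_\omega^{1/2}$. So the clean fix is simply to state the radius as $O(\lambda_\omega)^{1/2}$ rather than $O(c_{\omega,0})^{-1}$; your computation then proves the proposition as used. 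A similar remark applies to the derivative bound: your honest estimate is $\|L\|^{k+2}c_{\omega,k}=O(\lambda_\omega^{-(k+2)/2})c_{\omega,k}$, which for $k\ge 1$ is weaker than the stated $O(\lambda_\omega^{-1})c_{\omega,k}$, but only bounded $k$ ever occur downstream and the extra powers are absorbed into the exponent $8$.
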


\begin{proof}
Given $x$, we can find a $U_{\alpha} \simeq \bb{D}_3$ such that $x$ is contained in $\bb{D}_1$. Thus there is an unit disc around $x$ that is contained in $\bb{D}_2$. Re-centering $x$ to be $0$, We write
$$\omega = \frac{i}{2} \sum_{1 \leq i,j \leq r} f_{i,j}(z) dz^i \wedge d \ovl{z}^j.$$
Then we know that $\{f_{i,j}(0)\}_{i,j}$ forms a positive definite hermitian matrix. Thus if we let $A$ denote our hermitian matrix, there exists some unitary $P$ such that $PA \ovl{P}^T = D$ where $D$ is the diagonal matrix of eigenvalues for $\omega$ at $0$. We then scale the $i$th column of $P$ by $\sqrt{\lambda_i^{-1}}$ where $\lambda_i$ is the $i$th eigenvalue to get an orthogonal $Q$ such that $QA \ovl{Q}^T = I$. Then the entries of $Q$ are upper bounded by $O(\lambda_{\omega}^{-1/2})$ and so in our new basis, we get 
$$\omega = \frac{i}{2} \sum_{1 \leq i,j \leq r} g_{i,j}(z) dz^i \wedge d\ovl{z}^j$$
with $g_{i,j}(0) = \delta_{i,j}$ and where the derivatives of $g_{i,j}$ are bounded by that of $f_{i,j}$ multiplied by $r \lambda_{\omega}^{-1/2}$. Since $\ovl{P}^T$ is unitary, it maps $\bb{D}_1$ into $\bb{D}_{r}$ and so the image of its inverse, $P$, when applied to $\bb{D}_1$ will contain $\frac{1}{r} \bb{D}_{1}$. Then the image of $\ovl{Q}^T(\bb{D}_1)$ contains $\frac{1}{r \sqrt{\lambda_1}} \bb{D}_1$. We may bound $\lambda_1$ by the trace of our matrix as our matrix is positive definite, which is $O(c_{\omega,0})$ as desired.  
\end{proof}

This allows us to prove an upper bound on the Bergman kernel, uniform over $x \in X$, where the dependence on how large $n$ has to be on the metric is explicit.

\begin{proposition} \label{Bergman2}
Let $\ovl{L}$ be a positive line bundle with $(1,1)$-form $\omega = i \partial \ovl{\partial} h$ and let $s_1,\ldots,s_N$ be an orthonormal basis for $L^n$ under the volume form $\mu = (\omega)^{\wedge r}/r!$. Then for all $n \geq O(c_{\omega,1} \lambda_{\omega}^{-1} c_{h,3})^{8}$, we have
$$\sum_{i=1}^{N} |s(x)|^2 \leq \left(\frac{n}{2 \pi} \right)^r \left(1  + \frac{1}{n^{1/8}}\right) $$
for all $x \in X$. 
\end{proposition}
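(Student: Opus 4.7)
The plan is to follow the standard Bergman-kernel upper bound strategy via a local comparison with the Bargmann--Fock model, made quantitative in the dependencies on $c_{\omega,1}$, $\lambda_\omega$, and $c_{h,3}$. The first step is the extremal characterization
\[
\sum_{i=1}^{N} |s_i(x)|^2 \;=\; \sup\Bigl\{\,|s(x)|_h^2 \;:\; s \in H^0(X,L^n),\ \|s\|_{L^2}\le 1\,\Bigr\},
\]
which reduces the task to a pointwise bound on individual sections. For fixed $x \in X$, I would apply Proposition \ref{NormalCoords1} to obtain a chart centered at $x$ in which $\omega$ is standard at the origin, and then modify the local frame $t_\alpha$ by a holomorphic factor $e^{q(z)}$, where $q$ is a polynomial of degree $\le 2$ chosen to absorb the value, the linear part, and the holomorphic quadratic part of the local weight $h$. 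The resulting weight satisfies $h(0)=0$, $\partial h(0)=0$, $\partial_i \partial_j h(0)=0$ and $\partial_i \bar\partial_j h(0)=\tfrac{1}{2}\delta_{ij}$, so by $C^3$-Taylor expansion,
\[
h(z)=\tfrac{1}{2}|z|^2 + R(z),\qquad |R(z)| \le A\,c_{h,3}\,|z|^3,
\]
on a definite neighborhood of $0$, where $A$ depends polynomially on $c_{\omega,1}$ and $\lambda_\omega^{-1}$.

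The second step is to localize at scale $r_n := n^{-3/8}$, which by the hypothesis $n \gtrsim (c_{\omega,1}\lambda_\omega^{-1}c_{h,3})^8$ lies inside the neighborhood of validity of the above expansion. Writing $s = f(z)\,t_\alpha^n$ locally, the rotational symmetry of the Gaussian measure $e^{-n|w|^2/2}\,dV$ makes the monomials $w^\alpha$ orthogonal, which gives
\[
|f(0)|^2 \int_{B(0,r_n)} e^{-n|w|^2/2}\,dV(w) \;\le\; \int_{B(0,r_n)} |f(w)|^2 e^{-n|w|^2/2}\,dV(w).
\]
A standard Gaussian tail estimate yields $\int_{B(0,r_n)} e^{-n|w|^2/2}\,dV \ge (2\pi/n)^r(1 - O(e^{-n^{1/4}/4}))$, producing the prefactor $(n/2\pi)^r$. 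Finally, I would transfer from the Gaussian weight to the actual metric weight: on $B(0,r_n)$ the Taylor remainder gives $|nh(w) - \tfrac{n}{2}|w|^2| \le A c_{h,3} n r_n^3 = A c_{h,3}\, n^{-1/8}$, so $e^{-n|w|^2/2} \le e^{-nh(w)}(1 + O(c_{h,3}n^{-1/8}))$, and likewise the comparison $dV$ versus $\omega^r/r!$ on $B(0,r_n)$ costs a factor $1 + O(c_{\omega,1}\lambda_\omega^{-1} r_n)$. Under the hypothesis on $n$ both errors are $O(n^{-1/8})$, and bounding the resulting local integral by the global $L^2$ norm yields the desired conclusion. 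The exponent $3/8$ is the balance point: $nr_n^3 \sim n^{-1/8}$ controls the cubic Taylor error, while $nr_n^2 \sim n^{1/4} \to \infty$ keeps the Gaussian tail negligible.

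The main technical obstacle is the normal-form step for $h$: the holomorphic change of frame killing the quadratic part of $h$ depends on its Hessian at $x$, which introduces additional dependence on $c_{\omega,1}$ and $\lambda_\omega^{-1}$ beyond that already present in Proposition \ref{NormalCoords1}. A careful bookkeeping is required to certify that the constant $A$ in the cubic Taylor remainder has the correct polynomial dependence on these quantities, so that the $8$th-power hypothesis on $n$ is genuinely sufficient to absorb both the Taylor-expansion error and the volume-form comparison error at rate $n^{-1/8}$.
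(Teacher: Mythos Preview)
Your proposal is correct and follows essentially the same approach as the paper: the extremal characterization of the Bergman kernel, passage to Kähler normal coordinates via Proposition~\ref{NormalCoords1}, modification of the local frame so that $h(z) = -\tfrac{1}{2}|z|^2 + O(|z|^3)$, the sub-mean-value inequality for $|f|^2$ against the radial Gaussian weight, explicit evaluation of the Gaussian integral, and then transfer back to the actual metric and volume form. The only noteworthy difference is the localization scale: the paper takes radius $n^{-2/5}$ (equivalently $c_n = n^{1/10}$), whereas you take $r_n = n^{-3/8}$. Your choice makes the cubic Taylor error exactly of size $c_{h,3}\,n^{-1/8}$, which leaves no extra power of $n$ to absorb the prefactor $A c_{h,3}$ into the stated bound $1 + n^{-1/8}$; the paper's slightly smaller radius $n^{-2/5}$ gives the extra slack $n r_n^3 = n^{-1/5}$ needed to do this under the hypothesis $n \gtrsim (c_{\omega,1}\lambda_\omega^{-1} c_{h,3})^8$. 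This is a minor recalibration, and otherwise your plan matches the paper's argument line for line.
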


\begin{proof}
First, it is well known, see \cite[Lemma 3.1]{Cha15} that 
$$\sum_{i=1}^{N} |s_i(x)|^2 = \sup_{s \in H^0(X,L^n)} \frac{|s(x)|}{|s|_{L^2}}$$
where the supremum is taken over all non-zero sections $s$. By Proposition \ref{NormalCoords1}, we can find a local chart of radius $O(c_{\omega,0})^{-1}$ for which 
$$\omega(z) = \frac{i}{2}\sum_{1 \leq i,j \leq r} \left( f_{i,j}(z)\right) dz^i \wedge d\ovl{z}^j$$
where $f_{i,j}(0) = \delta_{i,j}$ and we have a bound on the derivatives of $f_{i,j}(z)$. Now given a local non-vanishing section $s$ in the chart, we get that $-2i dd^c \log |s| = \omega$. Representing $s$ by a holomorphic function $f$, we get $-2i dd^c (\log |f| + h) = \omega$ where $h$ is our hermitian metric. We may multiply $f$ by any local non-vanishing holomorphic function and thus we may add or subtract any harmonic function from $h$. Since we have 
$$dd^c (2 h + \sum_{i=1}^{r} |z_i|^2) = \frac{i}{2} \sum_{1 \leq i,j \leq r} (-f_{i,j}(z) + \delta_{i,j}) dz^i \wedge d \ovl{z}^j,$$
with $f_{i,j}(z) - \delta_{i,j}$ vanishing at $0$, we may modify $h$ by a polynomial of degree $\leq 2$ and coefficients $\leq c_{\omega,2} O(\lambda_{\omega}^{-r})$ in size such that $h + \frac{1}{2} \sum_{i=1}^{r} |z_i|^2$ vanishes at $0$ to order $2$. To bound the third derivative of $h$ in this chart, we observe that the coordinates was transformed by a linear transformation with coefficients $\leq O(\lambda_{\omega}^{-1})$. This changes the derivative of $h$ by at most a factor of $O(\lambda_{\omega}^{-1})$ and so the third derivative of $h - \sum_{i=1}^{r} |z_i|^2$ may be bounded by $O(\lambda_{\omega}^{-1}) c_{h,3}$. Thus we get
$$h = -\frac{1}{2}\sum_{i=1}^{r} |z_i|^2 + O(\lambda_{\omega}^{-1} c_{h,3}) |z|^3.$$
Now let $\vphi(z) = \frac{1}{2} \sum_{i=1}^{r} |z_i|^2$ and let $c_n$ be some constant depending on $n$. Given any section $s$ represented by a holomorphic function $f$ in our local chart, we have 
$$|s(x)|^2 = |f(0)|^2 \leq \frac{ \int_{|z| < \frac{c_n}{\sqrt{n}}} |f(z)|^2 e^{-n \vphi(z)} dV}{\int_{|z| < \frac{c_n}{\sqrt{n}}} e^{-n \vphi(z)} dV}$$
where $dV$ is the usual Lebesgue measure on $\bb{C}^r$. The inequality follows from the fact that $|f(z)|^2$ is plurisubharmonic and that $\vphi(z)$ is constant when each $|z_i|$ is constant. Now if $\mu$ denotes the volume form induced by $\omega$ in our local chart, we see that if $dV$ denotes the Lebesgue measure, then locally 
\begin{equation} \label{eq: BergmanVolume1}
\mu(z) = (1 + O(\lambda_{\omega}^{-1} c_{\omega,1}) |z|)) dV,
\end{equation}
as the the first derivative of the coefficients of $\omega$ in our chart is bounded by $O(\lambda_{\omega}^{-1} c_{\omega,1})$. 
We also have
$$e^{-n \vphi(z)} \leq e^{nh} \cdot e^{n O(\lambda_{\omega}^{-1} c_{h,3})|z|^3}$$
Taking $c_n = n^{1/10}$, we get $c_n/\sqrt{n} = n^{-2/5}$ and so for $|z| \leq \frac{c_n}{\sqrt{n}}$, we have $n|z|^3 \leq n^{-1/5}$ which ensures that $e^{nO(\lambda_{\omega}^{-1} c_{h,3} |z|^3} = O(\frac{1}{n})$. Hence
\begin{equation} \label{eq: Bergman2} \int_{|z| < \frac{c_n}{\sqrt{n}}} |f(z)|^2 e^{-n \vphi(z)} dV \leq  (1+ O(\lambda_{\omega}^{-1} c_{\omega,1})n^{-2/5}) |s|_{L^2}
\end{equation}
since $n \geq O(\lambda_{\omega}^{-1} c_{h,3})^8$. On the other hand, we have
$$\int_{|z| < \frac{c_n}{\sqrt{n}}} e^{-n \vphi(z)} dV = \frac{1}{n^r} \int_{|z| < n^{1/10}} e^{-\frac{1}{2}\sum_{i=1}^{r} |z_i|^2} = \frac{1}{n^r} \left( \int_{|z| \in \bb{C}^r} e^{-\frac{1}{2} \sum_{i=1}^{r} |z_i|^2} - \int_{|z| > n^{1/10}} e^{-\frac{1}{2}\sum_{i=1}^{r} |z_i|^2} \right)$$
$$= \frac{1}{n^r} \left( \left( 2 \pi \right)^r + O(\frac{1}{n}) \right)$$
Hence we obtain
$$|s(x)|^2 \leq \left(\frac{n}{2 \pi} \right)^r \left(1 + O(\lambda_{\omega}^{-1} c_{\omega,1})n^{-2/5} \right) |s|^2_{L^2} \leq \left(\frac{n}{2 \pi} \right)^r \left(1 + \frac{1}{n^{1/8}} \right) |s|^2_{L^2}$$
since $n \geq O(\lambda_{\omega}^{-1}c_{\omega,1})^8$, as desired.
\end{proof}

\section{Global Bound on Differences of Volumes}
Let $X$ be a projective variety over a number field $K$ and let $\vphi: X \to X$ be a polarizable endomorphism. By \cite{Fak14}, after replacing $L$ with a high enough power, we may assume that $L$ embeds $X$ into $\bb{P}^N$ with $L$ being the restriction of $O(1)$, and that $\vphi$ extends to a morphism $\tilde{\vphi}: \bb{P}^N \to \bb{P}^N$ of degree $d$. We may then lift $\tilde{\vphi}$ to a homogeneous map $F: \bb{A}^{N+1} \to \bb{A}^{N+1}$. 
\par 
For each place $v$ of $M_K$, we may define a homogeneous Green's function using $F$ by
$$G_{v}(x) = \lim_{n \to \infty} \frac{\log ||F^n(x)||_v}{d^n} \text{ for } x \in \bb{A}^{N+1}(\bb{C}_v),$$
where $||(x_0,\ldots,x_{n+1})||_v = \max\{|x_0|_v,\ldots,|x_{n+1}|_v\}$. We can then use $G_v$ to define a metric on $L$ for each place $v$ by 
$$\log |s(x)|_v = \log |P(\tilde{x})|_v - G_v(\tilde{x}) \text{ for } x \in \bb{P}^N(\bb{C}_v),$$
where $\tilde{x}$ is any lift of $x$ to $\bb{A}^{N+1}(\bb{C}_v)$. Due to the homogeneity of $G_v$, our definition is independent of the lift. Restricting to $X$, this turns $L$ into a semipositive adelic line bundle $\ovl{L}$ in the sense of Zhang \cite[Section 3.5]{Yua08}, although it would not be important for us to know this. It is easy to see that its associated height on $X(\ovl{K})$ is the canonical height $\h_{\vphi}(x)$, which may also be defined as 
$$\h_{\vphi}(x) = \lim_{n \to \infty} \frac{h(\vphi^n(x))}{d^n}.$$
where $h(x)$ is the standard Weil height on $\bb{P}^N$. Let $L_n = L^{d^n}$. Then there is an induced adelic meric on $L_n$ coming from $L$ and we will denote the adelic line bundle by $\ovl{L}_n$ and the metrized line bundle on $X(\bb{C}_v)$ by $\ovl{L}_{n,v}$. 
\par 
The global sections of $L_n$, denoted by $H^0(X,L_n)$, form a $K$-vector space. We let $M_n$ be the sub $O_K$-module of sections $s$ satisfying $|s|_{v.\sup} \leq 1$ for all non-archimedean places $v$, where $\sup$ denotes the supremum over $X(\bb{C}_v)$. Then $M_n$ is a $\bb{Z}$-submodule of maximal rank of $H^0(X,L_n)$. 
\par 
We now consider $V_n = H^0(X,L_n) \otimes_{\bb{Q}} \bb{R}$. Then $V_n$ is a $\bb{R}$-vector space and we can give it a sup norm as follows. We have 
$$V_n \otimes_{\bb{R}} \bb{C} \simeq \bigoplus_{v \text { arch. }} H^0(X(\bb{C}_v),L_{n})$$
where $X(\bb{C}_v)$ denotes the complex variety obtained by embedding $K$ into $\bb{C}$ via the place $v$. Each $H^0(X(\bb{C}_v),L_{n})$ has a supremum norm coming from the metric on $L_n$ for the place $v$, and taking the supremum of each of them defines a sup norm on $V_n \otimes_{\bb{R}} \bb{C}$ and hence on $V_n$ which we will denote by $| \cdot |_{\sup}$. 
\par 
We now let $C_{n,\sup}$ be the unit ball of sections $s \in V_n$ satisfying $|s|_{\sup} \leq 1$. Fix a Haar measure on $V_n$ and let $\vol M_n$ denote the volume of the fundamental domain of the lattice $M_n$ in $V_n$. The quantity
$$\log \vol C_{n,\sup} - \log \vol M_n$$
does not depend on the choice of our Haar measure. Using arithmetic Hilbert--Samuel \cite[Theorem 1.4]{Zha95}, one can deduce that 
\begin{equation} \label{eq: VolumeDifference1}
\lim_{n \to \infty} \frac{1}{d^n \dim V_n} \left(\log \vol C_{n,\sup} - \log \vol M_n \right) = 0.
\end{equation}

Our main result in the section is a quantitative lower bound on the volume difference \eqref{eq: VolumeDifference1}. The main idea is a generalization of a construction of Baker \cite{Bak06} to higher dimensions due to Looper \cite{Loo24}. 

\begin{theorem} \label{QuantProposition1} 
Let $K = \bb{Q}(\alpha)$ for some $\alpha \in K$. Assume that $n$ is sufficiently large depending on $L$. Then there exists $c = c(\vphi) > 0$ such that for all $n > 0$, we have 
$$\frac{1}{\dim V_n} \left(\log \vol(C_{n,\sup}) - \log \vol(M_n) \right) \geq -c \left(d^{n/2} + [K:\bb{Q}]^2 h(\alpha) \right).$$

\end{theorem}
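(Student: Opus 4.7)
The plan is to construct an auxiliary basis $\cal{B}_n$ of $H^0(X,L_n)$ via Looper's generalization of Baker's construction (which is developed in Section $5$ of the paper), generating a $\bb{Z}$-lattice $M'_n \subseteq V_n$ that can be compared both to $M_n$ (using non-archimedean denominator bounds on the basis elements) and to the sup-norm ball $C_{n,\sup}$ (using archimedean sup-norm bounds). Translating these two comparisons into volume estimates via the transformation rule $\chi(M,\alpha|\cdot|) = \chi(M,|\cdot|) - (\log\alpha)\dim M_{\bb{R}}$ implicit in Proposition \ref{Modules2} will yield the claim.

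The set $\cal{B}_n$ arises from iterates of the homogeneous lift $F \in O_K[x_0,\ldots,x_N]^{N+1}$ of $\vphi$, exploiting the $\vphi$-equivariance of the canonical Green's function: pulling back a section $s \in H^0(X,L^k)$ by $\vphi$ yields $\vphi^\ast s \in H^0(X,L^{dk})$ whose canonical sup norm at every place $v$ is unchanged. Looper's construction uses this to produce a spanning set of $H^0(X,L_n)$ whose elements, at every archimedean $v$, have canonical sup norm growing at most like $e^{O(d^{n/2})}$; the exponent $d^{n/2}$ (rather than the naive $d^n$) comes from using the intermediate iterate $F^{\lceil n/2 \rceil}$, which is the optimal balance point in the combinatorics of the construction. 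On the non-archimedean side, the coefficients of the basis elements in the canonical trivialization are algebraic integers up to denominators whose total $v$-adic logarithmic size across all non-archimedean $v$ is also $O(d^{n/2})$, since they are controlled by the coefficients of $F^{\lceil n/2 \rceil}$.

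Passing from the canonical spanning set to a sub-basis $\cal{B}_n$ and scaling appropriately, I obtain $M'_n$ with fundamental domain inscribed in $e^{O(d^{n/2})}\,C_{n,\sup}$ and an inclusion $M'_n \subseteq e^{O(d^{n/2})}\,M_n$. Combining these two estimates via Proposition \ref{Modules2} yields
$$\log\vol(C_{n,\sup}) - \log\vol(M_n) \geq -c\,d^{n/2}\,\dim V_n \, + \, (\text{arithmetic correction}),$$
as claimed. The $[K:\bb{Q}]^2 h(\alpha)$ correction appears when descending from the $O_K$-integral lattice generated by $\cal{B}_n$ to the actual $\bb{Z}$-lattice $M_n$: writing $K = \bb{Q}(\alpha)$, expressing a power basis of $O_K$ over $\bb{Z}$ under the $[K:\bb{Q}]$ archimedean embeddings of $K$ introduces a Vandermonde-type change of basis whose entries are bounded in terms of the conjugates of $\alpha$, contributing an additive term of the stated form (the square arises because $[K:\bb{Q}]$ enters both through the number of embeddings and through the size of the Vandermonde entries).

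The main obstacle, which I expect to be the technical crux, is verifying that Looper's construction indeed produces a spanning set satisfying both the archimedean sup-norm bound and the non-archimedean denominator bound with the same $d^{n/2}$ scaling, and that a sub-basis can be extracted from it without degrading this property. This combinatorial input is exactly what Section $5$ of the paper is devoted to establishing, and the rest of the argument above should be essentially formal once that input is in hand.
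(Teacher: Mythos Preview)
Your proposal is correct and matches the paper's approach essentially step for step: build the auxiliary $\bb{Z}$-basis $\cal{B}_n$ from products of the coordinates of $F^{(m)}$ times low-degree monomials (with $m = \lceil n/2 \rceil$ the optimal choice), bound each basis element's sup norm at every place by $O_{\vphi}(d^{n-m}) + [K:\bb{Q}]\log^+|\alpha|_v$ (this is the paper's Proposition \ref{QuantProposition2}), and translate the two resulting comparisons $M'_n \leftrightarrow M_n$ and $M'_n \leftrightarrow C_{n,\sup}$ into the volume inequality. The only cosmetic difference is your account of the $[K:\bb{Q}]^2 h(\alpha)$ term: the paper obtains it not via a Vandermonde change of basis but more directly, by taking the $\bb{Q}$-basis to consist of $\alpha^j \cdot (\text{monomial}) \cdot G$ for $0 \le j < [K:\bb{Q}]$, so that the $\alpha^j$ coefficient contributes $[K:\bb{Q}]\log^+|\alpha|_v$ to the sup-norm bound at each place and then the sum over places produces the extra factor of $[K:\bb{Q}]$.
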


\begin{proof}
To prove this, we follow an idea of Looper \cite{Loo24} by constructing a special basis. In the case of $\bb{P}^1$, this has been done by Baker \cite{Bak06}. We let $F_i^{(m)}$ be the $i^{th}$ coordinate of the iterate $F^{(m)}$. Since $\Res(F^{(m)})$ is non-zero, it follows by the fundamental theorem of elimination theory \cite{Mac03} that every homogeneous polynomial of degree $ N \geq (r+1)d^{m}$ can be written as
$$\eta_0 F_0^{(m)} + \cdots + \eta_r F_r^{(m)}$$
where $\eta_i$ is a polynomial of degree $\leq  N-d^m$. We can now repeat this with each $\eta_i$, and in particular if we let $S$ be the set of all possible products of $F_i^{(m)}$, we obtain that every homogeneous polynomial of degree $N \geq (r+1)d^m$ can be written as a sum of terms $\eta_i G_i$ for $G_i \in S$ where each $\eta_i$ is a homogeneous polynomial of degree $\leq (r+1)d^m$.
\par 
Hence we can find a basis $\cal{B}$ over $K$ for homogeneous polynomials of degree $d^n$ with coefficients in $K$ where each term is of the form $\eta G$, where $\eta$ is a monomial of degree $\leq (r+1)d^m$ and $G$ is a product of terms of the form $F_i^{(m)}$. To find a basis over $\bb{Q}$, we allow $\eta$ to have a coefficient from $\{1,\alpha,\ldots,\alpha^{l-1}\}$ where $l = [K:\bb{Q}]$. We first bound the sup norm of each term $\eta G$ under $| \cdot |_v$ for each place $v$. 

\begin{proposition} \label{QuantProposition2}
Assume that $m \leq \frac{n}{2}$. For each place $v$ of $K$, if $s$ is the section corresponding to $\eta G$, then we have 
$$\log |s|_{\sup,v} \leq d^{n-m} O_{\vphi}(1) + [K:\bb{Q}] \log^+|\alpha|_v$$
where we view $\eta G$ as a section of $H^0(X,L_n)$. 
\end{proposition}

\begin{proof}
Recall that we have the formula
$$\log |s(x)|_v = \log |\eta(\tilde{x})|_v +  \log|G(\tilde{x})|_v - d^n G_v(\tilde{x})$$
where $\tilde{x} \in \bb{A}^{N+1}(\bb{C}_v)$ is any lift of $x \in \bb{P}^N(\bb{C}_v)$. By \cite[Theorem 5]{Ing22}, we know that 
$$\log ||F(y)|| - d \log ||y|| = O_{F}(1)$$
for any $y \in \bb{A}^{N+1}(\bb{C}_v)$. Applying Tate's telescoping argument tells us that if 
$$G_{v,m} = \frac{\log ||F^m(x)||}{d^m},$$
then
$$|G_{v,m}(x) - G_{v}(x) | \leq \frac{1}{d^m}O_{\vphi}(1).$$
Writing $G$ as a product of $F_i^{(m)}$, as we have $|F_i^{(m)}|_v \leq ||F^{(m)}||_v$, we get
$$\log |\eta(\tilde{x})|_v + \log |G(\tilde{x})|_v - d^n G_v(\tilde{x}) \leq \log |\eta(\tilde{x})|_v - (d^n - \deg G) G_v(\tilde{x}) + O_{\vphi}(d^{n-m}).$$
Pick a lift $\tilde{x}$ such that $||\tilde{x}|| = 1$. Then we may bound
$$\log |\eta(\tilde{x})|_v \leq [K:\bb{Q}] \log^+|\alpha|_v$$
and by \cite[Lemma 6]{Ing22} we have
$$G_v(\tilde{x}) \geq -O_F(1).$$
Since $d^n - \deg G \leq (r+1)d^m$, we have
$$\log|\eta(\tilde{x})| + \log |G(\tilde{x})|_v - d^n G_v(\tilde{x}) \leq [K:\bb{Q}] \log^+|\alpha|_v  + O_{\vphi}(d^{n-m} + d^m)$$
$$= d^{n-m} O_{\vphi}(1) + [K:\bb{Q}] \log^+|\alpha|_v)$$
as desired.

\end{proof}

Now let $M'_n$ be the lattice spanned by our basis $\cal{B}$. We will use it to approximate our lattice $M_n$. For each non-archimedean place $v$, if $p$ is the corresponding prime, it follows from Proposition \ref{QuantProposition2} that for any section $s \in M_n'$, we have 

$$\left(p^{O_{\vphi}(d^{n-m}) +  [K:\bb{Q}] \log^+|\alpha|_v} \right) s \subseteq M_{n,v}$$
where $M_{n,v}$ is the $\bb{Z}_p$-submodule of sections satisfying $|s|_{v,\sup} \leq 1$ for that particular non-archimedean place $v$. When $p$ is a prime of good reduction for $F$ and also satisfying $|\alpha|_v = 1$, we have $\log |s|_{\sup,v} \leq 1$. Hence summing up over all places of bad reduction and those for which $|\alpha|_v \not = 1$, we obtain that 
$$\log \vol(M_n) \leq (\dim V_n) \left(O_{\vphi}(d^{n-m}) + [K:\bb{Q}]^2 h(\alpha) \right)  + \log \vol(M'_n).$$
We now look at the fundamental domain $\cal{F}$ of $M'_n$ that is given by our basis $\cal{B}$. Any element of our fundamental domain can be written as 
$\sum_i c_i \eta_i G_i$
where $0 \leq c_i \leq 1$. Applying our bound and using the triangle inequality, we get 
$$ \log |\sum_i c_i \eta_i G_i|_v \leq \left( O_{\vphi}(d^{n-m}) + [K:\bb{Q}]\log^+|\alpha|_v \right) + \log \dim V_n$$ 
$$= O_{\vphi}(d^{n-m}) + [K:\bb{Q}] \log ^+|\alpha|_v  + \log \dim V_n.$$
Hence if we scale $C_n$ by $e^{O_{\vphi}(d^{n-m}) + [K:\bb{Q}]^2 h(\alpha) + \log \dim V_n}$, it would contain our fundamental domain $\cal{F}$.  
We thus obtain that 
$$\log \vol (C_n) - \log \vol(M'_n) \geq -O_{\vphi}(d^{n-m} + [K:\bb{Q}]^2 h(\alpha)) \dim(V_n) - (\log \dim V_n) \dim V_n.$$
Putting things together, we obtain
$$\frac{1}{\dim V_n} \left(\log \vol(C_n) - \log \vol (M_n) \right) \geq -O_{\vphi}(d^{n-2k} + [K:\bb{Q}]^2 h(\alpha)) - \log \dim V_n$$
which is an error of $-\left(O_{\vphi}(d^{n-m}) + [K:\bb{Q}]^2 h(\alpha) \right)$ since, $\log \dim V_n \leq d^{n-m}$ for $n$ large enough depending on $L$, as desired. We then obtain an exponent of $\frac{n}{2}$ by taking $m = \frac{n}{2}$. 
\end{proof}

\section{Quantitative Equidistribution for Abelian Varieties} \label{sec: QuantEquibAbelian}
Our aim in this sectin is to prove the main theorem of our paper in the case where $X$ is an abelian variety, so that $\vphi = [2]$ is the doubling map and $\h_{\vphi} = \h_{X}$ is the Neron--Tate height. 

\begin{theorem} \label{AbelianQuantTheorem1}
Let $X$ be an abelian variety over a number field $K$ and $L$ a symmetric ample line bundle on $X$. Let $\h_X(x)$ be the Neron--Tate height associated to $L$. Then there exists constants $c_1 = c_1(K,L), c_3=  c_3(K,L)$ such that for any $\eps > 0$ and embedding $v: K \xhookrightarrow{} \bb{C}$, if $f: X(\bb{C}_v) \to \bb{R}$ is a smooth function, then
\begin{enumerate}
\item there exists a hypersurface $H(f,\eps)$ defined over $K$ which has degree at most $ c_1 \eps^{-8}$ with respect to $L$,
\item for all $x \in X(\ovl{K})$ with $\Gal(\ovl{K}/K)$-orbit $F_x$, if $x \not \in H(f,\eps)$ then

$$\left|\frac{1}{|F_x|} \sum_{y \in F_x} f(y) - \int f(y) d \mu_{X,v}(y) \right| \leq c_{f,3} \left(\frac{\h_{X}(x)}{\eps} + c_3 \eps \right),$$
where $c_{f,3}$ is a constant depending on the derivatives of $f$ up to order $3$ and $\mu_{X,v}$ is the Haar measure for $X$ associated to $v$.  
\end{enumerate}
\end{theorem}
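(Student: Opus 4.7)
Following Szpiro--Ullmo--Zhang and Yuan, the plan is to perturb the canonical adelic metric on $L$ at the archimedean place $v$ and extract a quantitative essential-minimum bound via a Minkowski-type lattice argument. For an abelian variety with symmetric ample $L$, the canonical metric on $L$ is smooth and its curvature form $\omega$ is strictly positive with $\omega^{\wedge r}/r! = \mu_{X,v}$; accordingly, $\ovl{L}(\eps g)$ remains positive at $v$ when $\eps$ is small compared to $\lambda_{\omega}/c_{g,2}$. Heights transform as
$$
\h_{\ovl{L}(\eps g)}(x) = \h_X(x) + \frac{\eps}{|F_x|}\sum_{y \in F_x} g(y),
$$
so the theorem will follow from a lower bound $\h_{\ovl{L}(\eps g)}(x) \geq -C c_{f,3}\eps^2$ for $x$ outside a controlled hypersurface, together with the symmetric argument applied to $-g$. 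I replace $f$ by $g := f - \int f\,d\mu_{X,v}$ so that $\int g\,d\mu_{X,v} = 0$, noting $c_{g,k} \leq 2c_{f,k}$.

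The central quantitative estimate is
$$
\frac{\chi(\ovl{L}(\eps g)^n) - \chi(\ovl{L}^n)}{d^n\,\dim V_n} \;\geq\; -C\,c_{g,0}\,\frac{\eps}{d^{n/8}},
$$
valid once $d^n$ exceeds a threshold depending only on $L$. To prove it, pick an $L^2$-orthonormal basis $\{s_i\}$ of $H^0(X, L^{d^n})$ with respect to $\ovl{L}$. Proposition \ref{Modules2} identifies the change in $L^2$-Euler characteristic as $\sum_i \log |s_i|_{L^2,\mathrm{new}}$, and Jensen's inequality gives
$$
\log |s_i|_{L^2,\mathrm{new}}^2 \;\geq\; -2 d^n \eps\,\frac{\int g\,|s_i|^2\,d\mu}{\int |s_i|^2\,d\mu}.
$$
Summing over $i$ and substituting $\sum_i |s_i|^2 = (d^n/2\pi)^r (1 + O(d^{-n/8}))$ from Proposition \ref{Bergman2}, the hypothesis $\int g\,d\mu = 0$ kills the main term and leaves $\sum_i \log |s_i|_{L^2,\mathrm{new}} \geq -C (d^n)^{r+7/8}\,\eps\,c_{g,0}$. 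Dividing by $d^n \dim V_n \asymp (d^n)^{r+1}$ produces the displayed inequality, and Corollary \ref{Gromov2} compares the $L^2$-volume to the sup-volume at an additive cost $O(\log d^n)$ per generator, negligible against the main error terms.

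Combining with Theorem \ref{QuantProposition1} yields
$$
\frac{\chi(\ovl{L}(\eps g)^n)}{d^n\,\dim V_n} \;\geq\; -\frac{c}{d^{n/2}} - C\,c_{g,0}\,\frac{\eps}{d^{n/8}}.
$$
Choose the smallest $n$ with $d^n \geq C_1(K,L)\,\eps^{-8}$; since $c_{g,0} \leq 2c_{f,3}$ and $c/d^{n/2} \leq c\,\eps^4$, the right-hand side is then bounded below by $-A$ for $A = C'(K,L)\,c_{f,3}\,\eps^2$. Lemma \ref{Modules1} applied to the shifted bundle $\ovl{L}(\eps g + A)$ furnishes a non-zero section $s_n \in M_n$ whose sup norm is at most $1$ at every non-archimedean place and at most $e^{d^n A}$ in the original perturbed metric at $v$. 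Setting $H(f,\eps) := \divv(s_n) \cup \divv(s_n')$, where $s_n'$ is the analogous section from $-g$, gives a $K$-rational hypersurface of degree at most $2\,d^n\deg_L X = O_{K,L}(\eps^{-8})$. For any $x \notin H(f,\eps)$, the product formula applied to $s_n$ produces $\h_{\ovl{L}(\eps g)}(x) \geq -A$, which rearranges to $(1/|F_x|)\sum_{y} g(y) \geq -\h_X(x)/\eps - C'\,c_{f,3}\,\eps$; the matching upper bound from $s_n'$ together with $g = f - \int f\,d\mu_{X,v}$ completes the proof.

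The main technical obstacle lies in making the Bergman-kernel step quantitative and uniform in the perturbation: one must verify that the positivity $\lambda_\omega$ and the $C^3$-norm of the metric stay uniformly controlled under $\omega \mapsto \omega + \eps\, i\partial \ovl{\partial} g$ (automatic under the restriction on $\eps$), and propagate the $O(d^{-n/8})$ remainder of Proposition \ref{Bergman2} through Jensen and the summation while preserving the crucial factor of $\eps$ to the first power. A secondary bookkeeping issue is absorbing the $[K:\bb{Q}]^2 h(\alpha)$ contribution from Theorem \ref{QuantProposition1} into the constants $c_1$ and $c_3$, which are permitted to depend on $K$ and $L$.
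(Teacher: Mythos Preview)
Your overall architecture matches the paper's: perturb the canonical metric at $v$, compare Euler characteristics via Jensen and the Bergman kernel, feed in the global lower bound from Theorem~\ref{QuantProposition1}, and produce a small section by Minkowski. There is, however, a genuine direction error in your Jensen step that makes the argument as written fail.

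You take $\{s_i\}$ orthonormal for the \emph{unperturbed} $L^2$-norm and then apply Jensen to bound $\log|s_i|_{L^2,\mathrm{new}}$ from below. By Proposition~\ref{Modules2}, with $s_i$ orthonormal for the old norm one has $\chi(\text{old})-\chi(\text{new})=\sum_i\log|s_i|_{\mathrm{new}}$, so your Jensen inequality yields an \emph{upper} bound on $\chi(\ovl{L}(\eps g)^n)-\chi(\ovl{L}^n)$, not the lower bound you state and need for Minkowski. The paper fixes this by taking $\{s_i\}$ orthonormal for the \emph{perturbed} metric (Proposition~\ref{AbelianBound1}); then Jensen produces a lower bound on $\sum_i\log|s_i|_{\mathrm{old}}=\chi(\text{new})-\chi(\text{old})$, and the Bergman kernel that enters is the one for $\ovl{L}(\eps f)$, which is why the positivity of the perturbed metric matters. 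Your final paragraph correctly anticipates that this positivity must be checked, but the Jensen computation you wrote does not actually use it.

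There is a second, related slip: you invoke Proposition~\ref{Bergman2} as a two-sided asymptotic $\sum_i|s_i|^2=(d^n/2\pi)^r(1+O(d^{-n/8}))$, but that proposition only gives the upper bound. With your centering $\int g\,d\mu=0$, the function $g$ takes both signs, so controlling $\int g\cdot(\text{Bergman kernel})\,d\mu$ genuinely needs both directions; the lower bound is Proposition~\ref{Bergman3}, which is more delicate. The paper sidesteps this by first replacing $f$ with $f-\sup|f|\le 0$: then, after Jensen with the perturbed-orthonormal basis, the integrand is nonpositive and the Bergman \emph{upper} bound alone suffices (the integral of the constant shift cancels when unwinding). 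Either route works, but your write-up mixes the two in a way that does not close.
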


Given our smooth function $f$, we first divide it by $c_{f,3}$ to get a new function $f$ which satisfies $c_{f,3} \leq 2$. It then suffices to prove that 
$$\left|\frac{1}{|F_x|} \sum_{y \in F_x} f(y) - \int f(y) d\mu_{\vphi,v}(y) \right| \leq \frac{\h_{\vphi}(x)}{\eps} + c_3 \eps$$
for functions $f$ satisfying $c_{f,3} \leq 2$ and obtain the general version by multiplying $c_{f,3}$ back in. 
\par 
The strategy of the proof will be the same as the general case of polarized endomorphisms, but there are less technical details due to the smoothness of the metric. 
\par 
Let $L$ be a symmetric ample line bundle on $X$. Then $\vphi$ is polarizable with respect to $L$ and we may find an embedding $X \xrightarrow{} \bb{P}^N$ such that $\vphi$ extends to $\tilde{\vphi}$. 
\par 
We may lift $\tilde{\vphi}$ to a homogeneous map $F: \bb{A}^{N+1} \to \bb{A}^{N+1}$. Then for each place $v \in M_K$, we may define 
$$G_v(x) = \lim_{n \to \infty} \frac{\log ||F^n(x)||}{d^n} \text{ for } x \in \bb{A}^{N+1}(\bb{C}_v).$$
where $||(x_0,\ldots,x_{n+1})||_v = \max\{|x_0|_v,\ldots,|x_{n+1}|_v\}$. We can then use $G_v$ to define a metric on $L$ for each place $v$ by 
$$\log |s(x)|_v = \log |P(\tilde{x})|_v - G_v(\tilde{x}) \text{ for } x \in \bb{P}^N(\bb{C}_v),$$
where $\tilde{x}$ is any lift of $x$ to $\bb{A}^{N+1}(\bb{C}_v)$. This turns $\ovl{L}$ on $\bb{P}^N$ into a semi-positive adelic line bundle in the sense of Zhang. The main advantange is this setting is that when $v$ is an archimedean place, the metric on $\ovl{L}_v$ when restricted to our abelian variety $X \subseteq \bb{P}^N$ is smooth, and the associated $(1,1)$-form is positive. 

\begin{proposition} \label{AbelianMetric1}
The metric on $\ovl{L}_v$ when restricted to $X(\bb{C}_v)$ is smooth. Furthermore, the associated $(1,1)$-form $\omega = c_1(\ovl{L}_v)$ on $X(\bb{C}_v)$ is a positive $(1,1)$-form. 
\end{proposition}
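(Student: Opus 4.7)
The plan is to identify the metric $|\cdot|_v$ on $L|_X$ arising from $G_v$ with the classical translation-invariant hermitian metric on the symmetric ample line bundle $L$, which is manifestly smooth with positive first Chern form.

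First, I would record the Tate invariance. By construction the Green's function satisfies $G_v \circ F = d \cdot G_v$, so the induced metric on $L$ over $\bb{P}^N$ obeys $\tilde{\vphi}^*|\cdot|_v = |\cdot|_v^{\otimes d}$ under the canonical identification $\tilde{\vphi}^* L \simeq L^{\otimes d}$. Restricting to $X$, with $\vphi = [2]$ and $d = 4$, this gives a continuous metric with $[2]^*|\cdot|_v \simeq |\cdot|_v^{\otimes 4}$.

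Second, I would produce a smooth invariant model. Via the uniformization $X(\bb{C}_v) \simeq \bb{C}^g/\Lambda$, the symmetric ample line bundle $L$ corresponds to a pair $(H,\chi)$ with $H$ a positive definite Hermitian form on $\bb{C}^g$. The classical theta metric $h_0$, defined by $|\theta(z)|_{h_0}^2 = |\theta(z)|^2 e^{-\pi H(z,z)}$ for $\theta$ a theta function representing a section, is smooth on $L|_X$, and its first Chern form equals, up to a positive scalar, the translation-invariant positive $(1,1)$-form on $X(\bb{C}_v)$ associated to $H$. In particular $h_0$ is smooth with positive first Chern form. Since $[2]^* h_0$ and $h_0^{\otimes 4}$ are two smooth metrics on $[2]^*L \simeq L^{\otimes 4}$ with identical curvature $4\, c_1(L, h_0)$, they differ by a positive constant, and after rescaling $h_0$ we may assume $[2]^* h_0 = h_0^{\otimes 4}$ exactly.

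Third, I would compare the two metrics by a rigidity argument. The ratio $\psi := |\cdot|_v/h_0$ is a well-defined positive continuous function on the compact space $X(\bb{C}_v)$ (any local trivialization ambiguities cancel). Taking the ratio of the two Tate invariances yields the functional equation $\psi \circ [2] = \psi^4$. Iterating and setting $u = \log \psi$ gives $u \circ [2]^n = 4^n u$ for every $n \geq 1$. Since $u$ is bounded on the compact space $X(\bb{C}_v)$ and $[2]^n$ maps $X(\bb{C}_v)$ into itself, we get $4^n \sup|u| \leq \sup|u|$ for all $n$, hence $u \equiv 0$. Therefore $|\cdot|_v = h_0$, which is smooth with positive first Chern form, as required.

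The only subtlety requiring careful bookkeeping is the matching of the canonical isomorphism $\tilde{\vphi}^* L \simeq L^{\otimes d}$ used on $\bb{P}^N$ with the one used implicitly on $X$ when checking $[2]^* h_0 = h_0^{\otimes 4}$: any discrepancy is a positive scalar which can be absorbed into the rescaling of $h_0$ and so does not affect the conclusion of the rigidity argument.
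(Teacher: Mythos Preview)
Your argument is correct and takes a genuinely different route from the paper. The paper works directly at the level of curvature: it writes the canonical current on $X(\bb{C}_v)$ as the limit $\lim_n 4^{-n}([2]^n)^*\omega_{\FS}$ of pullbacks of the Fubini--Study form, and computes this limit in flat coordinates on $\bb{C}^g/\Lambda$ to be the constant-coefficient form $\sum c_{i,j}\,dz_i\wedge d\ovl z_j$ with $c_{i,j}=\int_{X(\bb{C}_v)} f_{i,j}\,d\mu_v$. Constancy of the coefficients gives smoothness of the curvature, positivity follows because an average of positive-definite hermitian matrices is positive-definite, and smoothness of the metric is then deduced from smoothness of its curvature. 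Your approach instead exploits uniqueness: the canonical metric is characterized (up to the scalar you track in your last paragraph) by the Tate invariance $[2]^*|\cdot|_v=|\cdot|_v^{\otimes 4}$, and the classical theta metric $h_0$ is a smooth, positively curved metric obeying the same invariance after rescaling, so the rigidity argument $u\circ[2]^n=4^n u$ forces equality. This is a cleaner and more conceptual identification of the metric with a known object and avoids the passage from smooth curvature back to smooth metric; on the other hand, the paper's explicit computation makes visible that the resulting curvature is translation-invariant and hence that $\omega_F^{\wedge g}$ is a constant multiple of the Haar measure, a fact used later in the section.
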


\begin{proof}
For our archimedean place $v$, we first modify $|| \cdot ||_v$ so that the metric is the Fubini--Study metric, i.e. 
$$||F(x_0,\ldots,x_n)||_v = \frac{|F(x_0,\ldots,x_n)|_v}{\sqrt{|x_0|^2 + \cdots + |x_n|^2}}.$$
This does not change the function $G_v(x)$. Then if $\omega$ denotes the Fubini--Study form on $\bb{P}^N$, we know that if $\omega_F = \lim_{n \to \infty} \frac{1}{d^n} (F^n)^* \omega$ where $\omega_F$ is a $(1,1)$-current, then we have 
$$dd^c \log |s(x)|_v = \delta_{\divv(s)} - \omega_F.$$
Now when restricted to $X(\bb{C}_v)$, $\omega_F$ is given by $\lim_{n \to \infty} \frac{1}{d^n} ([2^n])^* \omega$. If we write $X(\bb{C}_v) \simeq \bb{C}^g/\Lambda$ for some lattice $\Lambda$, we may write $\omega$ as 
$$\omega = \sum_{1 \leq i,j \leq n} f_{i,j}(z) dz_i \wedge d \ovl{z}_j.$$
Let $\mu_v$ be the Haar measure on $X(\bb{C}_v)$, normalized to have total volume $1$. It is then easy to verify that the limit of $\omega_F = \lim_{n \to \infty} \frac{1}{4^n} ([2^n])^* \omega$ is given by
$$\omega_F = \sum_{1 \leq i ,j \leq n} c_{i,j} dz_i \wedge d \ovl{z}_j$$
where
$$c_{i,j} = \int_{X(\bb{C}_v)} f_{i,j}(z) d \mu_v$$
Then clearly $\omega_F$ is a smooth positive $(1,1)$-form, which implies that the metric on our line bundle is smooth as desired. 
\end{proof}

Since the metric on $\ovl{L}_v$ is smooth, we do not need to regularize our metric in order to apply the results of Section \ref{sec: QuantBergman}. We then set $L_n = L^{d^n}$ and let $V_n = H^0(X,L_n) \otimes_{\bb{Q}} \bb{R}$. We then have the isomorphism 
$$V_n \otimes_{\bb{R}} \bb{C} \simeq \bigoplus_{v \text { arch. }} H^0(X(\bb{C}_v),L_{n})$$
We may then define a sup norm $| \cdot |_{\sup}$ on $V_n \otimes_{\bb{R}} \bb{C}$ and we let $C_{n,\sup}$ be the unit ball of sections $s$ satisfying $|s|_{\sup} \leq 1$. Similarly, we may let $M_n$ be the $\bb{Z}$-submodule of sections $s$ of $H^0(X,L_n)$ that satisfies $|s|_{\sup,v} \leq 1$ for all non-archimedean places $v$. 
\par 
By Theorem \ref{QuantProposition1}, we have 
\begin{equation} \label{eq: AbelianBound2}
\frac{1}{\dim V_n}  \left( \log \vol (C_{n,\sup}) - \log \vol(M_n) \right) \geq -O_{\vphi,K}(d^{n/2}).
\end{equation}
Now given a measure $\mu_v$ for archimedean place $v$ such that conjugate places share the same measure, we may then define a $L^2$-norm on each $H^0(X(\bb{C}_v),L_n)$ by
$$\langle s,s' \rangle_v = \int_{X(\bb{C}_v)} \langle s,s' \rangle_v d \mu_v$$
and thus on $V_n \otimes_{\bb{R}} \bb{C}$ by declaring each component to be orthogonal. Following \cite[Section 3.3]{Mor14}, our inner product $\langle \, , \, \rangle$ on $H^0(X,L_n) \otimes_{\bb{Q}} \bb{C}$ is of real type and hence it induces one on $V_n$.   
\par 
Fix an archimedean place $v$ and let $f: X(\bb{C}_v) \to \bb{R}$ be our test smooth function, normalized so that $c_{f,3} \leq 2$. We now perturb the metric of $\ovl{L}_n$ at the place $v$ by mutiplying it by $e^{-\eps f}$. If $v$ has a complex conjugate pair $v'$, we also perturb the metric at $v'$ by $e^{-\eps f}$. We denote our new adelic line bundle by $\ovl{L}_n(\eps f)$
\par 
If $\omega$ is the $(1,1)$-form for $\ovl{L}_v$, then as it is a positive $(1,1)$-form, we know that $\lambda_{\omega} > \delta$ for some fixed $\delta > 0$. We take $\eps$ small enough so that if $\omega'$ is the $(1,1)$-form for $\ovl{L}(\eps f)$, then $\lambda_{\omega'} \geq \frac{1}{2} \delta$. In particular, $\mu_v = (\omega'_v)^{\wedge r}$ is a positive volume form. We normalize $\mu_v$ so that $\mu_v$ has total volume $1$. For each other archimedean place $w$, we let $\mu_w$ be some fixed positive volume form with total volume $1$. With this choices of measures $\mu_v$, we can put a $L^2$ norm on $V_n$. 
\par 
We now let $B_{n,\sup}$ be the unit ball of sections for $V_n$ under the sup norm for $\ovl{L}_n(\eps f)$, and $B_{n,L^2}, C_{n,L^2}$ be the unit ball under the $L^2$ norm for $\ovl{L}_n(\eps f)$ and $\ovl{L}_n$ respectively. As each $\mu_w$ has total volume $1$, we have $|s|_{\sup,v} \geq |s|_{L^2,v}$ and so 
$$| \cdot |_{L^2} \leq [K:\bb{Q}]| \cdot |_{\sup}$$
on $V_n$. Thus we trivially have 
\begin{equation} \label{eq: AbelianIneq1}
\frac{1}{\dim V_n} \left(\log \vol C_{n,L^2} - \log \vol C_{n,\sup} \right) \geq -\log [K:\bb{Q}].
\end{equation}
We now have the following lemma.

\begin{lemma} \label{AbelianQuantDifference1} 
Let $n_v = 2$ if $v$ is complex and $1$ if $v$ is real. Then 
$$\log \vol B_{n,L^2} - \log \vol C_{n,L^2} = n_v \left(\log \vol B_{n,v,L^2} - \log \vol C_{n,v,L^2} \right)$$
where $B_{n,v,L^2}$ is the unit ball of sections with $L^2$-norm $\leq 1$ for $H^0(X(\bb{C}_v),L_n)$ with the metric from $\ovl{L}_n(\eps f)$ and similarly for $C_{n,v,L^2}$. 
\end{lemma}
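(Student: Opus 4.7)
The key observation is that the $L^{2}$-inner products associated to $\ovl{L}_n$ and $\ovl{L}_n(\eps f)$ on $V_n$ agree on every archimedean piece $H^0(X(\bb{C}_w),L_n)$ of $V_n \otimes_{\bb{R}} \bb{C}$ except at $w = v$ (and at $w=v'$ when $v$ is complex), because the perturbation factor $e^{-\eps f}$ only changes the metric at those places. The plan is to exploit this localization by building an adapted basis of $V_n$ and applying Proposition \ref{Modules2}.

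First, I would simultaneously diagonalize the two hermitian forms on $H^0(X(\bb{C}_v),L_n)$: pick a basis $e_1,\ldots,e_N$ of $H^0(X(\bb{C}_v),L_n)$ that is orthonormal for $\langle\,,\,\rangle_{C,v}$ and orthogonal for $\langle\,,\,\rangle_{B,v}$, with $B$-norms $\lambda_1^{1/2},\ldots,\lambda_N^{1/2}$. If $v$ is real, these can be chosen inside the real subspace fixed by the real structure. If $v$ is complex, the conjugate basis $\bar e_1,\ldots,\bar e_N$ automatically simultaneously diagonalizes the corresponding pair of forms at the conjugate place $v'$ with the same eigenvalues $\lambda_j$, since perturbing by the same real function $\eps f$ at $v$ and $v'$ respects conjugation.

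Next, I would promote this to a basis of $V_n$ as follows. For each $w \neq v, v'$, choose any basis of the $w$-component respecting the real structure; since the two inner products agree there, it can be chosen ONB for both. At the $v$-component: if $v$ is real, take $\{e_j\}$ directly; if $v$ is complex, take the $2N$ real vectors $(e_j + \bar e_j)/\sqrt{2}$ and $(ie_j - i\bar e_j)/\sqrt{2}$, which by construction lie in the $\sigma$-fixed subspace $V_n$. A direct check using the orthogonality of the archimedean decomposition shows that the resulting family is a basis of $V_n$ which is orthonormal for $\langle\,,\,\rangle_{C,L^2}$ and orthogonal for $\langle\,,\,\rangle_{B,L^2}$, with each eigenvalue $\lambda_j$ contributing $n_v$ basis vectors of $B$-norm $\lambda_j^{1/2}$.

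Finally, I would invoke Proposition \ref{Modules2} twice: once for the global pair $(B_{n,L^2},C_{n,L^2})$ in $V_n$ with the basis just constructed, and once for the local pair $(B_{n,v,L^2},C_{n,v,L^2})$ in $H^0(X(\bb{C}_v),L_n)$ using $e_1,\ldots,e_N$. Both sides reduce to explicit sums of $\log \lambda_j$'s, with the factor $n_v$ showing up in the global sum but not the local one, yielding the identity. The one step that requires genuine care is the volume convention for the local unit balls $B_{n,v,L^2}$ and $C_{n,v,L^2}$ in $H^0(X(\bb{C}_v),L_n)$: one must match the Haar measure used to compute these complex-vector-space volumes with the one induced from $V_n$ on the $v$-component, so that the multiplicity $n_v$ comes out exactly as stated rather than being absorbed into the local volume ratio.
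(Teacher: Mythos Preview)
Your approach is correct and essentially the same as the paper's: the paper computes the volume ratio directly as a difference of $\log\det$ of Gram matrices, tensors with $\bb{C}$, and passes to a basis adapted to the archimedean decomposition, whereas you spell out the simultaneous diagonalization explicitly and invoke Proposition \ref{Modules2}. Your closing caveat about matching the Haar-measure convention on the complex piece $H^0(X(\bb{C}_v),L_n)$ so that the factor $n_v$ is not absorbed is well-placed, as the paper's formulation leaves this implicit.
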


\begin{proof}
Let $\langle \, , \, \rangle$ be the inner product induced by the metric of $\ovl{L}_n$ and $\langle \, , \, \rangle'$ be induced by $\ovl{L}_n(\eps f)$. To calculate $\log \vol B'_{n,L^2} - \log \vol C'_{n,L^2}$, we pick a basis of sections $s_1,\ldots,s_N$ of $V_n$ and we compute the difference 
$$\log \det ( \langle s_i, s_j \rangle') - \log \det ( \langle s_i, s_j \rangle).$$
Tensoring by $\bb{C}$, we may consider a basis inside $V_n \otimes_{\bb{R}} \bb{C} = \bigoplus H^0(X(\bb{C}_w),L_n)$. We may then choose the $s_i$'s such that they each lie entirely in some $H^0(X(\bb{C}_w),L_n)$. Then since our inner products agree if $w$ is not $v$ or its conjugate, our expression reduces to
$$n_v (\log \det( \langle s_i, s_j \rangle'_v) - \log \det (\langle s_i, s_j \rangle_v)) = n_v \left(\log \vol B'_{n,v,L^2} - \log \vol C'_{n,v,L^2} \right)$$
as desired.    
\end{proof}

By Lemma \ref{AbelianQuantDifference1}, we have
$$\log \vol B_{n,L^2} - \log \vol C_{n,L^2} = n_v \left(\log \vol B_{n,v,L^2} - \log \vol C_{n,v,L^2} \right)$$
where $n_v = 1$ if $v$ is real and $2$ if $v$ is complex and $B_{n,v,L^2}, C_{n,v,L^2}$ are the unit ball of sections in $H^0(X(\bb{C}_v),L_n)$ with $L^2$ norm $\leq 1$ for the metric of $\ovl{L}_n(\eps f)$ and $\ovl{L}_n$ respectively. Let $| \cdot |_{v,L^2}$ be the $L^2$-norm on $H^0(X(\bb{C}_v),L_n)$ induced by $\ovl{L}_n$ and $| \cdot |'_{v,L^2}$ be the $L^2$-norm induced by $\ovl{L}_n(\eps f)$.
\par 
We now proceed to bound $\log \vol B_{n,v,L^2} - \log \vol C_{n,v,L^2}$.

\begin{proposition} \label{AbelianBound1}
For all sufficiently large $n$ depending only on $X$ and $L$, we have 
$$\frac{1}{\dim V_n} \left(\log \vol B_{n,v,L^2} - \log \vol C_{n,v,L^2} \right) \geq \frac{d^n \eps }{[K:\bb{Q}]}\int_X f d \mu_{X,v} - 2 d^{n} \eps d^{-n/8} (\sup |f|) + O_r(d^n \eps^2).$$
\end{proposition}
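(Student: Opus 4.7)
The plan is to diagonalize the two Hermitian inner products simultaneously, relate the log-volume difference to the original $L^2$-norms of a basis orthonormal for the \emph{perturbed} inner product, and then apply Jensen's inequality in the direction forced by that choice. Let $\{\tilde e_i\}_{i=1}^{d_K}$ be a complex basis of $H^0(X(\bb{C}_v),L_n)$ orthonormal for $\langle\cdot,\cdot\rangle'$ (from $\overline{L}_n(\varepsilon f)$ and $\mu_v$) and orthogonal for $\langle\cdot,\cdot\rangle$ (from $\overline{L}_n$ and $\mu_v$). Translating Proposition~\ref{Modules2} to the underlying real inner product space (each complex orthonormal vector gives two real orthonormal vectors with the same norm) yields
$$\log\vol B_{n,v,L^2} - \log\vol C_{n,v,L^2} = \sum_{i=1}^{d_K}\log|\tilde e_i|^2,$$
where $|\tilde e_i|^2 = \int_{X(\bb{C}_v)}|\tilde e_i(x)|^2\,d\mu_v(x)$ denotes the original (unperturbed) $L^2$-norm squared.

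The crux is that, since $\tilde e_i$ is orthonormal for the perturbed inner product, the measure $d\nu_i := |\tilde e_i|^2 e^{-2d^n\varepsilon f}\,d\mu_v$ has total mass $\langle\tilde e_i,\tilde e_i\rangle' = 1$ and is thus a probability measure. Rewriting $|\tilde e_i|^2 = \int e^{2d^n\varepsilon f}\,d\nu_i$ and applying Jensen's inequality to the convex function $e^x$ immediately gives $\log|\tilde e_i|^2 \geq 2d^n\varepsilon\int f\,d\nu_i$ in the correct direction. Summing over $i$, and observing that $\sum_i d\nu_i = B'(x)\,d\mu_v$, where $B'(x) := \sum_i|\tilde e_i(x)|'^2$ is the Bergman kernel of $\overline{L}_n(\varepsilon f)$ with respect to $\mu_v$, produces
$$\sum_{i=1}^{d_K}\log|\tilde e_i|^2 \geq 2d^n\varepsilon\int f(x)\, B'(x)\,d\mu_v(x).$$

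To bound $B'$, I would apply Proposition~\ref{Bergman2} to the positive line bundle $\overline{L}(\varepsilon f)$, whose $(1,1)$-form $\omega'$ still satisfies $\lambda_{\omega'} \geq \delta/2$ and whose relevant constants $c_{\omega',1}, c_{h',3}$ remain uniformly bounded in $\varepsilon$ by those of $\overline{L}$. After rescaling from the measure $(\omega')^{\wedge r}/r!$ used in Proposition~\ref{Bergman2} to the normalized measure $\mu_v = (\omega')^{\wedge r}/\int(\omega')^{\wedge r}$, one obtains $B'(x) \leq d_K(1 + d^{-n/8})$ uniformly in $x$ for $n$ large enough. Combined with the identity $\int B'\,d\mu_v = d_K$ this forces $\int|B' - d_K|\,d\mu_v \leq 2d_K d^{-n/8}$, so
$$\int f\,B'\,d\mu_v = d_K\int f\,d\mu_v + O\bigl(d_K\sup|f|\,d^{-n/8}\bigr).$$
Since $\omega' - \omega = \varepsilon\,dd^c f$ is small and $\mu_{X,v} = \omega^{\wedge r}/V_\omega$ is the Haar measure, $\int f\,d\mu_v - \int f\,d\mu_{X,v} = O_r(\varepsilon)$, which upon multiplying by $d^n\varepsilon$ produces the claimed $O_r(d^n\varepsilon^2)$ error term. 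Dividing the resulting bound by $\dim V_n = d_K[K:\bb{Q}]$ yields the stated inequality.

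The main obstacle is that the exponent $d^n\varepsilon$ in the perturbation factor $e^{-2d^n\varepsilon f}$ need not be small, so the naive Taylor expansion $e^{-2d^n\varepsilon f} \approx 1 - 2d^n\varepsilon f$ would produce an unacceptable error of order $(d^n\varepsilon)^2$. The trick of choosing the basis to be orthonormal in the \emph{perturbed} inner product is precisely what makes $|\tilde e_i|^2 e^{-2d^n\varepsilon f}d\mu_v$ a probability measure; a single application of Jensen's inequality then delivers a lower bound in the correct direction, with no higher-order-in-$\varepsilon$ correction beyond the $O_r(d^n\varepsilon^2)$ coming from comparing $\mu_v$ with $\mu_{X,v}$.
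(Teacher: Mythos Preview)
Your proposal is correct and follows essentially the same route as the paper: diagonalize the two Hermitian forms via a basis orthonormal for the perturbed inner product, apply Jensen's inequality (using that $|\tilde e_i|'^2\,d\mu_v$ is a probability measure), invoke the Bergman-kernel upper bound from Proposition~\ref{Bergman2}, and finally compare $\mu_v$ with $\mu_{X,v}$ to produce the $O_r(d^n\eps^2)$ term. The one genuine difference is in how the sign issue is handled. The paper first reduces to $f \le 0$, so that the pointwise upper bound $B'(x) \le d_K(1+d^{-n/8})$ immediately yields a lower bound on $\int f\,B'\,d\mu_v$; it then undoes the shift $f \mapsto f - \sup|f|$ at the cost of the factor $2$ in front of $\sup|f|$. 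Your $L^1$ trick---combining the same pointwise upper bound with the identity $\int B'\,d\mu_v = d_K$ to conclude $\int |B'-d_K|\,d\mu_v \le 2d_K\,d^{-n/8}$---achieves the same end directly without the sign reduction, and recovers the same constant. The two arguments are equivalent in strength; yours is marginally cleaner.
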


\begin{proof}
    
We first show that 
$$\frac{1}{\dim V_n} \left(\log \vol B_{n,v,L^2} - \log \vol C_{n,v,L^2} \right) \geq \frac{d^n \eps }{[K:\bb{Q}]}\int_X f d \mu_{X,v} - d^{n} \eps d^{-n/8} (\sup |f|) + O_r(d^n \eps^2)$$
under the assumption that $f \leq 0$. This is enough to deduce the inequality in general as we may replace $f$ with $f - \sup |f|$, which increases its supremum by at most a factor of $2$.
\par 
Now let $s_1,\ldots,s_N$ be an orthonormal basis for the $L^2$-norm from $\ovl{L}_n(\eps f)$ that is orthogonal for the $L^2$-norm from $\ovl{L}_n$. Then 
$$\log \vol B_{n,v,L^2} - \log \vol C_{n,v,L^2} = \sum_{i=1}^{N} \log |s_i(x)|^2_{v,L^2}.$$
Since $| \cdot |_v = | \cdot |'_v e^{\eps f}$, we rewrite our expression as
$$\sum_{i=1}^{N} \log \int_X |s_i(x)|'^2_v e^{\eps f} d \mu_v.$$
Since $\int_X |s_i(x)|'^2_v d \mu_v = 1$, by Jensen's inequality we get
$$\sum_{i=1}^{N} \log \int_X |s_i(x)|'^2_v e^{\eps f(x)} d \mu_v \geq \int_X \sum_{i=1}^{N} |s_i(x)|_v'^2 \log (e^{\eps f(x)}) d \mu_v = \int_X \sum_{i=1}^{n} |s_i(x)|_v'^2 (d^n \eps f(x)) d \mu_v.$$
Now since $f' \leq 0$, to bound it from below it suffices to bound $\sum_{i=1}^{n} |s_i(x)|_v'^2$ from above. We now apply Proposition \ref{Bergman2} to obtain that for $n \geq O_{L}(1)$, recalling that $L_n = L^{d^n}$, we have 
$$\sum_{i=1}^{N} |s_i(x)|_v'^2 \leq \dim_{\bb{C}} H^0(X(\bb{C}_v),L_n) \left(1 + \frac{1}{d^{n/8}} \right).$$
Hence we have
$$\int_X \sum_{i=1}^{N} |s_i(x)|'^2_v (d^n \eps f(x)) d \mu \geq - \dim_{\bb{C}} H^0(X(\bb{C}_v,L_n) \left(1 + d^{-n/8} \right) d^n \eps \int_X f d \mu_v.$$
Bounding the integral by $\sup |f|$, we may now rewrite this as 
$$\frac{1}{\dim V_n} \left(\log \vol B_{n,v,L^2} - \log \vol C_{n,v,L^2} \right) \geq \frac{d^n \eps }{[K:\bb{Q}]}\int_X f d \mu_v - d^{n} \eps d^{-n/8} \sup |f|.$$
Finally, we have to convert the integral from $d \mu_v$ to $d \mu_{X,v}$. Recall that $\mu_v = (\omega'_v)^{r}$ and that $\mu_{X,v} = (\omega_v)^r$ after a suitable normalization of $\omega'_v$ and $\omega_v$. But $\omega_v$ is a positive $(1,1)$-form and the coefficients of $\omega'_v$ and $\omega_v$ differ by $O(\eps)$ and so the coefficients of its $r^{th}$ wedge power differs at most by $O_{r}(\eps c_{\omega_v,1}) = O_r(\eps)$. Thus we may change the integral of $f$ from $d\mu_v$ to $d\mu_{X,v}$ and incur an error of $O_r(d^n \eps^2)$ as desired. 
\end{proof}

We now prove our main theorem in the case of abelian varieties.

\begin{theorem} \label{AbelianQuantTheorem2}
Let $X$ be an abelian variety over a number field $K$ and $L$ a symmetric ample line bundle on $A$. Let $\h_X(x)$ be the Neron--Tate height associated to $L$. Then there exists constants $c_1 = c_1(K,L), c_3=  c_3(K,L)$ such that for any $\eps > 0$ and embedding $v: K \xhookrightarrow{} \bb{C}$, if $f: X(\bb{C}_v) \to \bb{R}$ is a smooth function, then
\begin{enumerate}
\item there exists a hypersurface $H(f,\eps)$ defined over $K$ which has degree at most $ c_1 \eps^{-8}$ with respect to $L$,
\item for all $x \in X(\ovl{K})$ with $\Gal(\ovl{K}/K)$-orbit $F_x$, if $x \not \in H(f,\eps)$ then

$$\left|\frac{1}{|F_x|} \sum_{y \in F_x} f(y) - \int f(y) d \mu_{X,v}(y) \right| \leq c_{f,3} \left(\frac{\h_{X}(x)}{\eps} + c_3 \eps \right),$$
where $c_{f,3}$ is a constant depending on the derivatives of $f$ up to order $3$ and $\mu_{X,v}$ is the Haar measure for $A$ associated to $v$.  
\end{enumerate}
\end{theorem}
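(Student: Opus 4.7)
The plan is to follow Yuan's perturbation strategy, using the quantitative estimates already assembled in Section~\ref{sec: QuantEquibAbelian}. After the normalization $c_{f,3}\leq 2$ fixed just above, I would produce, via Minkowski's theorem (Lemma~\ref{Modules1}), a non-zero global section $s$ of $\ovl{L}_n(\eps g)$ for a carefully chosen auxiliary function $g$ built from $f$, and then show that the vanishing locus $\divv(s)$ can serve as the hypersurface $H(f,\eps)$. For $x\notin\divv(s)$ the product formula gives $h_{\ovl{L}_n(\eps g)}(x)\geq 0$, and unfolding this via the explicit form of the perturbed height yields precisely the one-sided inequality between the Galois average $\frac{1}{|F_x|}\sum_{y\in F_x}f(y)$ and $I:=\int f\,d\mu_{X,v}$. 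To turn the one-sided bound into a two-sided bound, I run the construction twice, with $g=\pm(f-I)+c_3\eps$ for a constant $c_3=c_3(K,L)$ to be chosen; each such $g$ has $\int g\,d\mu_{X,v}=c_3\eps$, which is the small positive quantity that drives the Minkowski inequality. The hypersurface $H(f,\eps)$ will then be the union of the two divisors produced.

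The first calibration is to choose $n=n(\eps)$ with $d^n$ in the range $[\eps^{-8},d\cdot\eps^{-8}]$: this will give the degree bound $\deg_L\divv(s)=d^n\deg_L X=O(\eps^{-8})$ in claim (1). With $A:=\frac{1}{|F_x|}\sum_{y\in F_x}f(y)$, the technical heart is to verify the Minkowski hypothesis
\begin{equation*}
\frac{1}{\dim V_n}\bigl(\log\vol B_{n,\sup}-\log\vol M_n\bigr)>1+\frac{\log 2}{\dim V_n}
\end{equation*}
for the perturbed sup-ball $B_{n,\sup}$ of $\ovl{L}_n(\eps g)$. I would telescope through four contributions, applied to the perturbed metric (whose $(1,1)$-form $\omega'_v$ remains at least $\lambda_{\omega_v}/2$-positive for $\eps$ below a fixed threshold): (i) $\log\vol C_{n,\sup}-\log\vol M_n\geq -O_{\vphi,K}(d^{n/2})\dim V_n$ from Theorem~\ref{QuantProposition1}; (ii) $\log\vol C_{n,L^2}-\log\vol C_{n,\sup}\geq -O(\log[K:\bb{Q}])\dim V_n$ from \eqref{eq: AbelianIneq1}; (iii) $\log\vol B_{n,L^2}-\log\vol C_{n,L^2}\geq\bigl(\tfrac{n_v c_3 d^n\eps^2}{[K:\bb{Q}]}-O(d^n\eps^2)-O(d^n\eps\,d^{-n/8})\bigr)\dim V_n$ from Lemma~\ref{AbelianQuantDifference1} combined with Proposition~\ref{AbelianBound1} applied to $\eps g$; and (iv) the reverse Gromov comparison $\log\vol B_{n,\sup}-\log\vol B_{n,L^2}\geq -O(n)\dim V_n$ from Corollary~\ref{Gromov2}. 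With $d^n\asymp\eps^{-8}$, the positive term is of order $c_3\cdot\eps^{-6}$, while the errors $d^n\eps^2$ and $d^n\eps\cdot d^{-n/8}$ are also $\asymp\eps^{-6}$ but with implicit constants independent of $c_3$, and $d^{n/2}\asymp\eps^{-4}$ and $n\asymp\log(1/\eps)$ are strictly smaller; choosing $c_3$ large enough in terms of $K$ and $L$ forces the Minkowski hypothesis for all sufficiently small $\eps$.

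Once Minkowski provides a non-zero $s\in M_n\cap B_{n,\sup}$, the final step is essentially algebraic: for $x\notin\divv(s)$,
\begin{equation*}
0\leq h_{\ovl{L}_n(\eps g)}(x)=d^n\h_X(x)+d^n\eps\cdot\frac{n_v}{[K:\bb{Q}]}\bigl(\pm(A-I)+c_3\eps\bigr),
\end{equation*}
and dividing by $d^n$ and rearranging yields $\pm(A-I)\geq -\tfrac{[K:\bb{Q}]}{n_v}\cdot\tfrac{\h_X(x)}{\eps}-c_3\eps$. Combining the two signs, absorbing the $K$-dependent constant $[K:\bb{Q}]/n_v$ into the final constants of the theorem, and undoing the initial normalization by multiplying by $c_{f,3}$, gives claim (2); the hypersurface in (1) is the union of the two $K$-rational divisors coming from the two choices of sign. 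The main obstacle, and the place where the exponent $8$ comes in, is the tight balance in piece (iii): the Bergman asymptotic error $d^{-n/8}$ from Proposition~\ref{Bergman2} must be comparable to $\eps$, i.e.\ $d^{n/8}\asymp\eps^{-1}$, and this is the binding constraint in the chain of estimates, forcing both $n\asymp 8\log(1/\eps)/\log d$ and the final degree bound $\asymp\eps^{-8}$.
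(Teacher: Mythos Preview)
Your proposal is correct and follows essentially the same route as the paper's proof: normalize $c_{f,3}\leq 2$, telescope $\log\vol B_{n,\sup}-\log\vol M_n$ through the four pieces (Theorem~\ref{QuantProposition1}, \eqref{eq: AbelianIneq1}, Proposition~\ref{AbelianBound1} via Lemma~\ref{AbelianQuantDifference1}, and Corollary~\ref{Gromov2}), choose the smallest $n$ with $d^{n/8}\geq\eps^{-1}$, shift by a constant of order $\eps^2$ to force the Minkowski hypothesis, and evaluate the height using the resulting small section. The only cosmetic differences are that the paper assumes $\int f\,d\mu_{X,v}=0$ from the outset (equivalent to your subtraction of $I$) and writes the constant shift as $(c+1)[K:\bb{Q}]\eps^2$ rather than building it into $g$; your identification of the Bergman error $d^{-n/8}$ as the binding constraint giving the exponent $8$ is exactly right.
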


\begin{proof}
Dividing $f$ by $c_{f,3}$, we may assume that $c_{f,3} \leq 2$. By Corollary \ref{Gromov2}, we obtain
$$\log C_{n,L^2} - \log C_{n,\sup} \geq -O_{\vphi}(\log d^n)\dim V_n.$$
Using Proposition \ref{AbelianBound1} along with \eqref{eq: AbelianBound2} and \eqref{eq: AbelianIneq1}, we have
$$\frac{1}{d^n \dim V_n} \left(\log \vol B_{n,\sup} - \log \vol M_n \right) \geq \frac{\eps}{[K:\bb{Q}]} \int_X f d \mu' - d^{-n/8} \eps \sup |f| + O_{\vphi,K}(d^{-n/2}).$$
We now choose the smallest $n$ such that $\eps \geq d^{-n/8}$ so that if $\int_X f d \mu_v = 0$, we get
$$\frac{1}{d^n \dim V_n} (\log \vol B_{n,\sup} - \log \vol M_n) \geq -c \eps^2$$
for some $c = c(X,K) > 0$. Now consider the adelic line bundle $\ovl{L}(\eps f + (c+1) [K:\bb{Q}] \eps^2)^n$. This will have Euler characteristic at least $d^n \dim V_n \eps^2 \geq \dim V_n + \log 2$ for $n$ sufficiently large. By Minkowski, this gives us a section $s$ of $\ovl{L}(\eps f + (c+1)[K:\bb{Q}] \eps^2)^n$ such that $\log |s|_w \leq 0$ for all places $w$.
\par 
Using this section $s$ to evaluate heights, we obtain for all $x \not \in \divv(s)$,
$$\sum_{y \in F_x} f(y) \geq -\frac{\h_{\vphi}(x)}{\eps} + (c+1)[K:\bb{Q}]\eps$$
and applying this to $-f$ gives us a corresponding upper bound. Hence we obtain a hypersurface $H(f,\eps)$ of degree at most $O_{L,K}(\eps^8)$ such that 
$$\left|\sum_{y \in F_x} f(y) - \int_X f(y) d \mu_v \right| \leq c_{f,3} \left(\frac{\h_{\vphi}(x)}{\eps} + c' \eps \right)$$
where $c'$ depends on $L$ and $K$ as desired. 
\end{proof}

As an immediate consequence, we obtain Theorem \ref{IntroAbelianGalois1}.

\begin{theorem} \label{IntroAbelianGalois1}
Let $X$ be an abelian variety over a number field $K$ and let $\h_X(x)$ denote the Neron--Tate height of $X$. Then there exists $c = c(X,K) > 0$, such that if  
$$S = \{ x \in X(\ovl{K}) \mid \h_{A}(x) \leq \frac{c}{D^8} \text{ and } [K(x):K] \leq D\},$$
then $S$ is contained in a hypersurface of degree $O_{X,K}(D^{33})$.
\end{theorem}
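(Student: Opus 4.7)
The plan is to extract the hypersurface directly from the construction in the proof of Theorem \ref{AbelianQuantTheorem2}, specialized to the trivial test function $f \equiv 0$ and with $\varepsilon$ calibrated as a function of $D$.

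Set $\varepsilon = c_0/D^4$ for a constant $c_0 = c_0(X,K) > 0$ to be chosen, and let $n$ be the smallest integer with $d^{n/8} \geq 1/\varepsilon$, so that $d^n = O_{X,K}(D^{32})$. Applying the construction in the proof of Theorem \ref{AbelianQuantTheorem2} with $f \equiv 0$, the Euler characteristic bound there yields
$$\chi\!\left(\ovl{L}((c+1)[K:\bb{Q}]\varepsilon^2)^n\right) \geq d^n \dim V_n \varepsilon^2 \geq \dim V_n + \log 2$$
for $D$ larger than a threshold depending on $X, K$, so Minkowski's theorem produces a non-zero global section $s$ of this metrized line bundle satisfying $|s|_{\sup,w} \leq 1$ at every place $w$ of $K$. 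The height-evaluation inequality obtained in that proof,
$$\sum_{y \in F_x} f(y) \geq -\frac{\h_X(x)}{\varepsilon} + (c+1)[K:\bb{Q}]\varepsilon \quad \text{for all } x \notin \divv(s),$$
specializes at $f \equiv 0$ to the lower bound
$$\h_X(x) \geq (c+1)[K:\bb{Q}]\varepsilon^2 = (c+1)[K:\bb{Q}] c_0^2/D^8 \quad \text{for every } x \notin \divv(s).$$

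Now choose $c_0$ large enough that $(c+1)[K:\bb{Q}] c_0^2$ strictly exceeds the constant $c$ in the definition of $S$; we are free to take $c$ to be any positive real less than this threshold, so this choice depends only on $X$ and $K$. Every $x \in S$ then satisfies $\h_X(x) \leq c/D^8 < (c+1)[K:\bb{Q}]\varepsilon^2$, which contradicts the above lower bound unless $x \in \divv(s)$. Setting $H := \divv(s)$, we conclude $S \subseteq H$, and $H$ has degree at most $c_1 \varepsilon^{-8} = O_{X,K}(D^{32}) \subseteq O_{X,K}(D^{33})$ with respect to $L$, as required.

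The main point to verify is that specializing the construction in the proof of Theorem \ref{AbelianQuantTheorem2} to $f \equiv 0$ preserves both the Minkowski step and the height inequality: the former uses only $\int f \, d\mu_{X,v} = 0$, which is vacuous for $f = 0$, and the latter is a formal consequence of $|s|_{\sup,w} \leq 1$ combined with the adelic perturbation $(c+1)[K:\bb{Q}]\varepsilon^2$ at the archimedean place. The Bergman kernel hypothesis on $n$ in Proposition \ref{Bergman2} is met once $D$ exceeds a threshold depending on $X, K$; for smaller $D$ the conclusion is trivial by taking $H$ to be any hypersurface of bounded degree.
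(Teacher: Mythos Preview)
Your argument has a fatal gap: it never uses the hypothesis $[K(x):K] \leq D$. That alone should signal that something is wrong, since torsion points have height $0$ and are Zariski dense in $X$, so no hypersurface of any fixed degree can contain all points of small height without a degree restriction.

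The specific error is a sign. The displayed inequality you quote from the proof of Theorem~\ref{AbelianQuantTheorem2},
\[
\frac{1}{|F_x|}\sum_{y\in F_x} f(y) \;\geq\; -\frac{\h_X(x)}{\eps} \;+\; (c+1)[K:\bb{Q}]\eps,
\]
has the wrong sign on the last term (compare with the parallel computation in the proof of Theorem~\ref{QuantTheorem2}, where the constant appears as $-c_3\eps$, and with the final statement of Theorem~\ref{AbelianQuantTheorem2} itself). With the correct sign, specializing to $f\equiv 0$ yields only $\h_X(x)\geq -(c+1)[K:\bb{Q}]\eps^2$, which is vacuous. Concretely: the Minkowski section you produce is small for the metric of $\ovl{L}\bigl((c+1)[K:\bb{Q}]\eps^2\bigr)$, whose associated height is $\h_X(x)+\text{(positive constant)}$; positivity of that height gives no lower bound on $\h_X(x)$.

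The paper's proof takes a completely different route. It picks $D+1$ smooth test functions $f_1,\ldots,f_{D+1}$ supported on pairwise disjoint regions of a fundamental domain for $X(\bb{C}_v)\simeq\bb{C}^g/\Lambda$, each with $\int f_i\,d\mu_{X,v}$ bounded below by $c/D$ and $c_{f_i,3}=O(D^3)$. Applying Theorem~\ref{AbelianQuantTheorem1} to each $f_i$ with $\eps\asymp D^{-4}$ shows that if $x$ avoids a hypersurface $H_i$ of degree $O(D^{32})$ and has $\h_X(x)\leq c/D^8$, then the Galois orbit $F_x$ must meet the support of $f_i$. If $x$ avoids $\bigcup_i H_i$, the orbit $F_x$ meets $D+1$ disjoint sets, forcing $|F_x|\geq D+1$. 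Hence every $x\in S$ lies in $\bigcup_i H_i$, which has degree $O(D^{33})$. The degree bound on $x$ enters precisely through this pigeonhole step.
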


\begin{proof}
Fix an archimedean place $v$ and identify $X(\bb{C}_v) \simeq \bb{C}^g/\Lambda$ for a rank $2g$ lattice $\Lambda$. Let $F \subseteq \bb{C}^g$ be a fundamental domain, so that the projection map $\pi: F \to \bb{C}^g/\Lambda$ is a bijection.  We may write $F = \{c_1 v_1 + \cdots + c_{2g} v_{2g} \mid 0 \leq c_i < 1\}$ for some elements $v_i \in \bb{C}^g$. Now for a fixed positive integer $D \geq 1$, we split $F$ into $D+1$ disjoint regions, $F_1,\ldots,F_{D+1}$, where
$$F_i = \{c_1 v_1 + \cdots + c_{2g} v_{2g} \mid 0 \leq c_i < 1 \text{ for } 1 \leq i \leq 2g-1 , \frac{i-1}{D+1} \leq c_{2g} < \frac{i}{D+1} \}.$$
We now choose $f_i$ to be some test function that is supported on $F_i$ that is between $0$ and $1$ and is $1$ on 
$$F'_i = \{ c_1 v_1 + \cdots + c_{2g}v_{2g} \mid \frac{1}{4} \leq c_i \leq \frac{3}{4} \text{ for } 1 \leq i \leq 2g-1, \frac{4i-3}{4(D+1)} \leq c_{2g} \leq \frac{4i-1}{4(D+1)}.\}$$
When $v_1,\ldots,v_{2g}$ form the standard basis of $\bb{R}^{2g} \simeq \bb{C}^g$, such a $f_i$ may be chosen so that $c_{f_i,3} = O(D^3)$. Then changing basis only increases $c_{f_i,3}$ by a constant that depends only on $A$. Hence we may take $c_{f_i,3} = O_A(D^3)$ and in particular by Theorem \ref{AbelianQuantTheorem1}, there exists a hypersurface of degree $O_{A,K}(\eps^8)$ that contains all points $x \in A(\ovl{K})$ for which 
$$\left|\frac{1}{|F_x|} \sum_{y \in F_x} f_i(y) - \int f_i d \mu_v \right| \leq O_A(D^3) \left(\frac{\h_{\vphi}(x)}{\eps} + O_{A,K}(\eps).\right)$$
We know that $\int f_i d \mu_v \geq \mu_v(F_i') \geq \frac{c}{D}$ for some $c > 0$ dependign only on $A$. Choosing $\eps = c'D^{-4}$ for some constant $c' = c(A,K) > 0$ sufficiently large gives us that
$$\left|\frac{1}{|F_x|} \sum_{y \in F_x} f_i(y) - \int f_i d \mu_v \right| \leq O_A(c'D^7) \h_{\vphi}(x) + \frac{c}{3D}$$
for $x \not \in H_i$ where $H_i$ is a hypersurface of degree $O_{A,K}(D^{32})$. Hence if $\h_{\vphi}(x) \leq c'D^{-8}$ for $c' = c'(A,K) > 0$ small enough, we obtain 
$$\left|\frac{1}{|F_x|} \sum_{y \in F_x} f_i(y) - \int f_i d \mu_v \right| \leq \frac{2c}{3D} \implies \sum_{y \in F_x} f_i(y) > 0 \implies F_x \cap F_i \not = \emptyset.$$
Hence if $x \not \in \cup_{i=1}^{D+1} H_i$, we obtain that $F_x$ has non-empty intersection with $D+1$ disjoint sets, and thus $|F_x| > D$. Hence if $|F_x| < D$, we must have $x \in \cup_{i=1}^{D+1} H_i$ and so if $h_{\vphi}(x) \leq c'D^{-8}$ and $[K(x):K] \leq D$, it must be that $x$ is contained in a hypersurface of degree $D^{33}$ as desired.
\end{proof}

\section{Bergman Kernels for Negatively Curved Line Bundles} \label{sec: NegativeBergman}
We now move onto the complex geometry results necessary for the caase where the metric on our line bundle is only semipositive. In this case, after a perturbation, the line bundle $\ovl{L}(\eps f)$ will have negative curvature and thus the asymptotic expansion of Bergman kernels no longer hold. Nonetheless, Yuan \cite{Yua08} manages to overcome this technical obstruction and obtain a useful upper bound on the Bergman kernel and we will follow his approach.

\subsection{Lower bound on Bergman Kernel}
We adopt the notations in Section \ref{sec: QuantBergman}. The first step is again onto the lower bound of the Bergman kernel in which we will again follow Charles \cite{Cha15}. We endow $X$ with the Kahler metric coming from $\omega$. Let $D: L^n \to L^n \otimes \Omega^{0,1}$ be the Chern connection with $D = \partial + \ovl{\partial}$ and let $\delta,\ovl{\delta}$ be the formal adjoints of $\partial$ and $\ovl{\partial}$ respectively. We then have a Laplacian 
$$\Lap_{n,l} = \ovl{\partial} \, \ovl{\delta} + \ovl{\delta} \, \ovl{\partial}: \Omega^{0,l}(X,L^n) \to \Omega^{0,l}(X,L^n).$$
We define a Lefschetz operator $L(u) = \omega \wedge u$ and we let $\Lambda$ be the formal adjoint to $L$. We first start with computing the $(1,1)$-form $i\Theta(K_X)$ associated to the cotangent bundle. 

\begin{lemma} \label{Bergman4}
Let 
$$\omega(z) = \frac{i}{2} \sum_{1 \leq i,j \leq r} c_{i,j}(z) dz_i \wedge d \ovl{z}_j$$
in a local chart. Then if $A(z)$ denotes the positive definite matrix given by $\{c_{i,j}(z)\}$, the metric for the canonical bundle $K_X$ may be given locally by $\det A^{-1}(z)$. In particular, we may bound the coefficients of $i \Theta(K_X)$ by $O(c_{\omega,2}^{2r} \lambda_{\omega}^{-2r})$. 
\end{lemma}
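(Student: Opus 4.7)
The first claim is the standard computation of the Chern curvature of the canonical bundle from a Kähler metric. The Kähler form $\omega$ determines a Hermitian metric on $TX$ whose matrix in the frame $\{\partial/\partial z_i\}$ is $A(z) = (c_{i,j}(z))$. The induced metric on $T^*X$ is given by the conjugate inverse matrix $A^{-1}(z)$ in the dual frame $\{dz_i\}$, and consequently the induced metric on $K_X = \Lambda^r T^*X$ satisfies
$$|dz_1 \wedge \cdots \wedge dz_r|^2 = \det A^{-1}(z).$$
In particular, writing the local metric as $e^{-\phi}$ with $\phi = \log \det A$, the Chern curvature of $K_X$ is
$$i\Theta(K_X) = i\partial \ovl{\partial} \log \det A(z).$$

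For the bound on the coefficients, I would use the standard matrix calculus identity $\partial \log \det A = \operatorname{tr}(A^{-1} \partial A)$, together with $\ovl{\partial}(A^{-1}) = -A^{-1} (\ovl{\partial} A) A^{-1}$, to compute
$$\partial \ovl{\partial} \log \det A = \operatorname{tr}\bigl(A^{-1} \partial \ovl{\partial} A\bigr) - \operatorname{tr}\bigl(A^{-1} (\partial A)\, A^{-1} (\ovl{\partial} A)\bigr).$$
So the $(1,1)$-coefficients of $i \Theta(K_X)$ are linear combinations, with dimension-only constants, of entries of $A^{-1}$ paired with (first and second) derivatives of entries of $A$.

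To turn this into a quantitative bound, I would estimate $A^{-1}$ via the adjugate formula $A^{-1} = \operatorname{adj}(A)/\det A$. Because $A(z)$ is positive definite with all eigenvalues $\geq \lambda_{\omega}$, we have $\det A(z) \geq \lambda_{\omega}^r$, while the entries of $\operatorname{adj}(A)$ are $(r-1)\times(r-1)$ minors of $A$ and hence bounded by $O(c_{\omega,0}^{r-1}) = O(c_{\omega,2}^{r-1})$. Therefore
$$\|A^{-1}(z)\| = O\bigl(c_{\omega,2}^{r-1} \lambda_{\omega}^{-r}\bigr).$$
Plugging this into the two summands above, the first term $\operatorname{tr}(A^{-1} \partial \ovl{\partial} A)$ contributes coefficients of size $O(c_{\omega,2}^{r-1} \lambda_{\omega}^{-r} \cdot c_{\omega,2}) = O(c_{\omega,2}^r \lambda_{\omega}^{-r})$, while the quadratic term $\operatorname{tr}(A^{-1} \partial A\, A^{-1} \ovl{\partial} A)$ contributes $O(c_{\omega,2}^{2(r-1)} \lambda_{\omega}^{-2r} \cdot c_{\omega,2}^2) = O(c_{\omega,2}^{2r} \lambda_{\omega}^{-2r})$, which dominates and gives the claimed bound.

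The only mild subtlety I anticipate is keeping conventions straight in the first half (whether one uses $A$ or its conjugate inverse on $T^*X$, and the corresponding sign in the Chern curvature formula) so that one ends up with $i\partial\ovl{\partial} \log \det A$ rather than its negative; the quantitative bound is insensitive to this sign. The rest is bookkeeping of the exponents of $c_{\omega,2}$ and $\lambda_{\omega}$, with the dominant contribution coming from the quadratic term in $A^{-1}$.
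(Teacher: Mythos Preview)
Your proposal is correct and follows essentially the same approach as the paper: both identify the metric on $K_X$ as $\det A^{-1}$, reduce to bounding second derivatives of $\log \det A$, and control these using $\det A \geq \lambda_{\omega}^r$ together with the $c_{\omega,2}$ bound on the entries of $A$ and their derivatives. The only cosmetic difference is that the paper applies the quotient rule directly to $\log(\det A)$, while you use Jacobi's formula $\partial \log \det A = \operatorname{tr}(A^{-1}\partial A)$ and the adjugate bound on $A^{-1}$; both routes yield the same dominant term $O(c_{\omega,2}^{2r}\lambda_{\omega}^{-2r})$.
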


\begin{proof}
In our chart, the hermitian metric on the tangent bundle in the basis $\{\frac{\partial}{\partial z_1},\ldots, \frac{\partial}{\partial z_n}\}$ is given by $A(z)$ and so the metric given on the cotangent bundle $\{dz_1, \ldots, dz_n\}$ is given by $A^{-1}(z)$. Hence the metric for the canonical bundle is given by $\det A^{-1}(z)$. Picking $dz_1 \wedge \cdots \wedge dz_n$ as our local section and computing $dd^c \log |s|$, we have to compute $-dd^c \log \det A(z)$. Taking any second derivative, say $x_i$ and $x_j$, gives us an expression of the form 
$$\frac{ \frac{\partial^2}{\partial x_i \partial x_j}(\det A(z)) \det A(z) - \frac{\partial}{\partial x_i} (\det A(z))  \frac{\partial}{\partial x_j}(\det A(z))}{ \det A(z)^2}$$
and we may bound this by $O(c_{\omega,2}^{2r} \lambda_{\omega}^{-2r})$ as desired.
\end{proof}

\begin{proposition} \label{Bergman1}
For all $n \geq O(\lambda_{\omega}^{-(2r+1)} c_{\omega,2}^{2r})$, if $t \in \Omega^{0,1}(X,L^n)$ is a smooth section, we have
$$\langle \Lap_{n,1}t,t \rangle \geq \frac{n}{2} |t|_{L^2} .$$
In particular, if $s \in C^{\infty}(X,L^n)$ which is orthogonal to the holomorphic sections $H^0(X,L^n)$, we have
$$|s|^2_{L^2} \leq \frac{2}{n} |\ovl{\partial} s|_{L^2}.$$
\end{proposition}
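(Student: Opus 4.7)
The first inequality is a Bochner--Kodaira--Nakano estimate, and the second inequality follows from it by a standard comparison of spectra of $\Lap_{n,0}$ and $\Lap_{n,1}$.

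For the first inequality, the plan is to apply the Bochner--Kodaira--Nakano (BKN) identity. Since BKN gives a clean positive contribution from the curvature when one works with $(n,q)$-forms, I would first use the standard isomorphism $\Omega^{0,1}(X,L^n) \simeq \Omega^{r,1}(X,L^n \otimes K_X^{-1})$ obtained by tensoring with a local holomorphic top form. Under this identification, BKN together with Nakano's inequality on $(r,1)$-forms valued in $L^n \otimes K_X^{-1}$ yields
$$\langle \Lap_{n,1} t, t\rangle \;\geq\; \langle [\,i\Theta(L^n) - i\Theta(K_X),\Lambda\,]\,\widetilde{t},\widetilde{t}\,\rangle,$$
where $\widetilde{t}$ is the image of $t$. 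Since the metric on $L$ has curvature $\omega$, we have $i\Theta(L^n)=n\omega$, so on $(r,1)$-forms the operator $[i\Theta(L^n),\Lambda]$ acts as multiplication by $n$ in the pointwise inner product (every eigenvalue of $\omega$ with respect to itself is $1$). Hence this contributes $\geq n|t|^2$ pointwise.

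The correction from $K_X$ is controlled using Lemma \ref{Bergman4}: in a normal chart as in Proposition \ref{NormalCoords1}, the coefficients of $i\Theta(K_X)$ are bounded by $O(c_{\omega,2}^{2r}\lambda_{\omega}^{-2r})$, so its eigenvalues relative to $\omega$ (whose own eigenvalues relative to the Euclidean form are $\geq \lambda_{\omega}$ by hypothesis) are bounded by $C := O(c_{\omega,2}^{2r}\lambda_{\omega}^{-(2r+1)})$. Combining,
$$\langle \Lap_{n,1} t, t\rangle \;\geq\; (n - r\,C)\,|t|^2_{L^2}.$$
Choosing the implicit constant in the hypothesis $n \geq O(\lambda_{\omega}^{-(2r+1)}c_{\omega,2}^{2r})$ large enough so that $rC \leq n/2$ yields the desired bound $\langle \Lap_{n,1}t,t\rangle \geq (n/2)|t|^2_{L^2}$.

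For the second inequality, the strategy is a standard spectral comparison. Let $s \in C^{\infty}(X,L^n)$ be orthogonal to $H^0(X,L^n) = \ker \Lap_{n,0}$; decompose $s = \sum_i c_i s_i$ into eigenfunctions of $\Lap_{n,0}=\ovl{\delta}\ovl{\partial}$ with eigenvalues $\lambda_i > 0$. For each such eigenfunction $s_i$, the $(0,1)$-form $t_i := \ovl{\partial} s_i$ is nonzero and satisfies
$$\Lap_{n,1}t_i \;=\; \ovl{\partial}\,\ovl{\delta}\,\ovl{\partial}s_i + \ovl{\delta}\,\ovl{\partial}\,\ovl{\partial}s_i \;=\; \ovl{\partial}(\lambda_i s_i) \;=\; \lambda_i t_i,$$
so $\lambda_i$ is also an eigenvalue of $\Lap_{n,1}$, and the first part forces $\lambda_i \geq n/2$. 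Therefore
$$|\ovl{\partial}s|^2_{L^2} \;=\; \langle \ovl{\delta}\,\ovl{\partial}s,s\rangle \;=\; \langle \Lap_{n,0}s,s\rangle \;=\; \sum_i \lambda_i |c_i|^2 \;\geq\; \frac{n}{2}\,|s|^2_{L^2},$$
giving the claimed bound.

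The main obstacle is Step 1: keeping track of the explicit dependence of the BKN constant on the geometry of $\omega$ (i.e., verifying that the Ricci-type contribution from $K_X$ is bounded by $O(\lambda_{\omega}^{-(2r+1)}c_{\omega,2}^{2r})$ uniformly on $X$). This is where Lemma \ref{Bergman4} and the normal coordinates of Proposition \ref{NormalCoords1} are used essentially, and the exponent $2r+1$ in the threshold on $n$ comes directly from the estimate on the metric of $K_X$ in those coordinates.
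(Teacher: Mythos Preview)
Your proposal is correct and follows essentially the same route as the paper: both identify $\Omega^{0,1}(X,L^n)$ with $\Omega^{r,1}(X,K_X^{-1}\otimes L^n)$, apply the Bochner--Kodaira--Nakano inequality, use that $[i\Theta(L),\Lambda]$ contributes $\geq |\alpha|^2$ on $(r,1)$-forms, and bound the $K_X$ contribution via Lemma~\ref{Bergman4} by $O(c_{\omega,2}^{2r}\lambda_\omega^{-(2r+1)})$. For the second inequality the paper argues slightly differently---it writes $s=\ovl{\delta}t$ with $\ovl{\partial}t=0$ via Hodge decomposition and then uses Cauchy--Schwarz, whereas you use the eigenfunction decomposition of $\Lap_{n,0}$ and the commutation $\Lap_{n,1}\ovl{\partial}=\ovl{\partial}\Lap_{n,0}$---but both deductions are standard and equivalent.
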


\begin{proof}
We may view global sections of $\Omega^{0,1}(X,L^n)$ as global $(n,1)$-forms on $K_X^{-1} \otimes L^n$. For $t \in \Omega^{0,1}(X,L^n)$, viewing it as an element $\alpha \in \Omega^{n,1}(X, K_X^{-1} \otimes L^n)$, by \cite[VII (2.1)]{Dem12} we have
$$\langle \Lap'' \alpha, \alpha \rangle \geq \int_X \langle [i \Theta(K_X^{-1} \otimes L^n), \Lambda]\alpha, \alpha\rangle d \mu$$
where $\Lap''$ is the Lpalacian for $\Omega^{n,1}(X, K_X^{-1} \otimes L^n)$. Expanding out, we have
$$[i \Theta(K_X^{-1} \otimes L^n), \Lambda] = [i \Theta(K_X^{-1}), \Lambda] + n[i \Theta(L),\Lambda].$$
By \cite[V, Proposition 5.8]{Dem12}, we know that $\langle [i \Theta(L),\Lambda] \alpha, \alpha \rangle \geq |\alpha|^2$ and so 
$$\langle n[i\Theta(L),\Lambda](\alpha),\alpha \rangle \geq n |\alpha|^2_{L^2}.$$
On the other hand, \cite[V, Proposition 5.8]{Dem12} implies that we may bound 
$$\langle [i \Theta(K_X^{-1}), \Lambda] \alpha, \alpha \rangle \geq -r \gamma |\alpha|^2$$
where $\gamma$ is an upper bound on the magnitude of the eigenvalues of $i \Theta(K_X^{-1})$ with respect to $\omega$. We may bound this by taking the ratio of the largest eigenvalue of $i \Theta(K_X^{-1})$ to the smallest eigenvalue of $\omega$ in the standard basis $z$ for our local chart. We may bound the smallest eigenvalue of $\omega$ by $\lambda_{\omega}$ and so by Lemma \ref{Bergman4}, we may bound $\gamma$ by $O(c_{\omega,2}^{2r} \lambda_{\omega}^{-(2r+1)})$. Hence if $n \geq O(c_{\omega,2}^{2r} \lambda_{\omega}^{-(2r+1)})$, we have the inequality
$$\langle [i \Theta(K_X^{-1} \otimes L^n),\Lambda] \alpha, \alpha \rangle \geq \frac{n}{2}  |\alpha|^2$$
and so
$$\langle \Lap'' \alpha, \alpha \rangle \geq \frac{n}{2}  |\alpha|^2$$
for all $n \geq O(c_{\omega,2}^{2r} \lambda_{\omega}^{-(2r+1)})$. This gives us 
$$\langle \Lap_{n,1} t , t \rangle \geq \frac{n}{2}|t|_{L^2}^2.$$
Now given $s \in C^{\infty}(X,L^n)$ that is orthogonal to $H^0(X,L^n)$, by the Hodge decomposition we know that $s = \Lap_{n,0} s'$ for some $s' \in C^{\infty}(X,L^n)$. We have $\Lap_{n,0}s' = \ovl{\delta} \ovl{\partial} s$ and so taking $t = \ovl{\partial} s$, we obtain some $t \in \Omega^{0,1}(X,L^n)$ such that $\ovl{\delta} t = s$ and $\ovl{\partial} t = 0$. Then $\Lap_{n,1} t = \ovl{\partial} s$. Let $C = \frac{n}{2}$. Since $\langle \Lap_{n,1} t , t \rangle \geq C|t|^2_{L^2}$, by Cauchy--Schwarz we get $|\Lap_{n,1} t |^2_{L^2} \geq C |t|_{L^2} |\Lap_{n,1}|_{L^2}$ and so 
$$|\ovl{\partial} s|^2_{L^2} = |\Lap_{n,1} t|^2_{L^2} \geq C |t|_{L^2} |\Lap_{n,1} t|_{L^2} \geq C \langle t, \Lap_{n,1} t \rangle = C |s|^2$$
as desired.
\end{proof}

We now prove the lower bound on the Bergman kernel.

\begin{proposition} \label{Bergman3}
Let $\ovl{L}$ be a positive line bundle with curvature form $\omega$ and let $s_1,\ldots,s_N$ be an orthonormal basis for $L^n$ for the volume form $\mu = (\omega)^{\wedge r}/r!$. Then for all $n \geq O(\lambda_{\omega}^{-(2r+1)} c_{\omega,2}^{2r} c_{h,3})^{8}$, we have
$$\sum_{i=1}^{N} |s_i(x)|^2 \geq \left(\frac{n}{2 \pi} \right)^r \left(1 - \frac{1}{n^{1/8}} \right)$$
for all $x \in X(\bb{C})$. 
\end{proposition}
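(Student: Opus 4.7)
The plan is to exploit the extremal characterization
$$\sum_{i=1}^N |s_i(x)|^2 = \sup_{s \in H^0(X,L^n) \setminus \{0\}} \frac{|s(x)|^2}{|s|^2_{L^2}},$$
and construct, for each fixed $x \in X$, a single holomorphic section $s$ with $|s(x)|^2 / |s|^2_{L^2} \geq (n/2\pi)^r(1 - n^{-1/8})$. The section $s$ will be obtained as an $\bar\partial$-correction of a local peak section, exactly mirroring the upper bound proof in Proposition \ref{Bergman2}.

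First, I would apply Proposition \ref{NormalCoords1} to find normal coordinates $z$ around $x$, and then, as in the proof of Proposition \ref{Bergman2}, modify the local trivialization $t_\alpha$ by a holomorphic factor so that
$$h(z) = -\tfrac{1}{2}\sum_{i=1}^r |z_i|^2 + O(\lambda_\omega^{-1} c_{h,3})|z|^3, \qquad h(0) = 0.$$
Choose a smooth cutoff $\chi:[0,\infty) \to [0,1]$ with $\chi \equiv 1$ on $[0,1/2]$ and $\chi \equiv 0$ on $[1,\infty)$, and a radius $r_n = n^{-2/5}$. Define the smooth local peak section $\sigma := \chi(|z|/r_n)\,t_\alpha^n$, extended by zero to $X$. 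Since $n r_n^3 = n^{-1/5}$, for $n$ in the hypothesized range one gets $e^{nh(z)} = e^{-n|z|^2/2}(1+O(n^{-1/5}))$ on $|z| \leq r_n$, and combined with the volume-form expansion \eqref{eq: BergmanVolume1} this yields
$$|\sigma|^2_{L^2} \leq \Bigl(\tfrac{2\pi}{n}\Bigr)^r\bigl(1+O(n^{-1/8})\bigr), \qquad \sigma(x) = 1.$$
Meanwhile $\bar\partial\sigma$ is supported in the annulus $r_n/2 \leq |z| \leq r_n$, where $e^{-n|z|^2/2} \leq e^{-nr_n^2/8} = e^{-n^{1/5}/8}$, so $|\bar\partial\sigma|^2_{L^2}$ is exponentially small in $n$.

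Next, I would invoke Proposition \ref{Bergman1} (applied to the $L^2$-minimal solution of $\bar\partial u = \bar\partial\sigma$, which lies in $(H^0)^\perp$) to produce $u \in C^\infty(X,L^n)$ with $\bar\partial u = \bar\partial\sigma$, $u \perp H^0(X,L^n)$, and
$$|u|^2_{L^2} \leq \tfrac{2}{n}|\bar\partial\sigma|^2_{L^2},$$
which is still exponentially small. Then $s := \sigma - u \in H^0(X,L^n)$, and by Pythagoras $|s|^2_{L^2} = |\sigma|^2_{L^2} - |u|^2_{L^2} \leq (2\pi/n)^r(1+O(n^{-1/8}))$.

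The main obstacle, and the heart of the proof, is to control $|s(x)|$ from below, for which we need a pointwise bound on $|u(x)|$ out of an $L^2$ bound on $u$. The saving observation is that $\bar\partial\sigma$ vanishes on $\{|z| < r_n/2\}$, so $u$ is holomorphic on this ball; equivalently the local representative $u_{\mathrm{loc}}(z) = u(z)\,t_\alpha(z)^{-n}$ is holomorphic there, so $|u_{\mathrm{loc}}|^2$ is plurisubharmonic and the sub-mean value inequality applies on a tiny ball $B(0,\rho)$ with $\rho = n^{-1/2}$. Converting from $|u_{\mathrm{loc}}|^2$ back to $|u|^2$ via $|u_{\mathrm{loc}}|^2 = |u|^2 e^{-nh}$, and noting $e^{-nh(z)} \leq e^{n\rho^2/2}(1+O(n^{-1/2})) = O(1)$ on $B(0,\rho)$, I would get
$$|u(x)|^2 \leq \frac{O(1)}{\mathrm{vol}(B(0,\rho))} \int_{B(0,\rho)} |u|^2\,dV \leq C_r\, n^r \cdot |u|^2_{L^2} \leq C_r\, n^{r-1}|\bar\partial\sigma|^2_{L^2},$$
which is exponentially small. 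Hence $|s(x)| = |1 - u(x)| \geq 1 - e^{-c n^{1/5}}$ for some $c > 0$, and combining with the upper bound on $|s|^2_{L^2}$ gives the desired inequality $|s(x)|^2/|s|^2_{L^2} \geq (n/2\pi)^r(1 - n^{-1/8})$ for $n$ sufficiently large (in terms of $\lambda_\omega^{-1}, c_{\omega,2}, c_{h,3}$ as in the statement).
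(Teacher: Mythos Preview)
Your proposal is correct and follows essentially the same argument as the paper: build a local peak section $\sigma = \chi_n\, t_\alpha^n$ in normal coordinates, project it to $H^0(X,L^n)$ using the spectral gap of Proposition~\ref{Bergman1} (your $s=\sigma-u$ is exactly the paper's orthogonal projection $s_n$), and recover the pointwise bound on the correction $u$ from its exponentially small $L^2$-norm via the sub-mean-value inequality on the small ball where $u$ is holomorphic. The choices of radii and the bookkeeping of error exponents match the paper's up to inessential numerical details.
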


\begin{proof}
As in Proposition \ref{Bergman2}, we can find a local chart of radius $O(c_{\omega,0})^{-1}$ such that our hermitian metric is given by
$$h = -\frac{1}{2} \sum_{i=1}^{r} |z_i|^2 + O(\lambda_{\omega}^{-1} c_{h,3})|z|^3.$$
In this chart, we may view a local section $t(z)$ of $\Omega^{0,1}(X,L^n)$ as $\sum_{i=1}^{r} f_i(z) d\ovl{z}_i$ where each $f_i$ is a holomorphic function. Let $g_i(z)$ be the norm of the local section of $\Omega^{0,1}(X)$ that corresponds to $d \ovl{z}_i$ in our chart. Then we may bound $|t(z)|$ by
\begin{equation} \label{eq: BergmanBound1}
|t(z)| \leq r e^{nh(z)} \cdot \max_{1 \leq i \leq r} |f_i(z)| \cdot \max_{1 \leq i \leq r} |g_i(z)| \leq e^{nh(z)} \cdot \max_{1 \leq i \leq r} |f_i(z)| \cdot O(c_{\omega,0}^{r} \lambda_{\omega}^{-(r+1)})
\end{equation}
as in our original good chart, using Lemma \ref{Bergman4} we may bound $g_i(z)$ from above by the entries of the inverse of the matrix $\{c_{i,j}(z)\}$ which has an upper bound of $O(\lambda_{\omega}^{-r} c_{\omega,0}^r)$, and then we take into account the change of coordinates with coefficients bounded by $O(\lambda_{\omega}^{-1})$.
\par 
Let $s$ be the local section that gives us the frame for which $h$ is of this form. We pick a cutoff function $\chi(z)$ in $\bb{C}^n$ that equals $1$ for $|z| \leq 1$ and vanishes for $|z| \geq 2$. We then set $\chi_n(z) = \chi( n^{2/5} z)$. Then for $n$ such that $2n^{-2/5}$ is smaller than the radius of our chart, we get that $s \chi_n$ defines for us a smooth section of $L^n$ on $X$. Let $s_n$ be the orthogonal projection of $s \chi_n$ to $H^0(X,L^n)$. By Proposition \ref{Bergman1}, for $n \geq O(\lambda_{\omega}^{-(2r+1)} c_{\omega,2}^{2r})$ we get
$$|s_n - s \chi_n|^2_{L^2} \leq \frac{2}{n} |\ovl{\partial}(s_n - s \chi_n)|^2_{L^2} = \frac{2}{n} |\ovl{\partial} (s \chi_n)|^2_{L^2}.$$
Locally in our chart, our section $s$ is simply the identity function and our function is simply $\chi_n$. Then $\ovl{\partial}(\chi_n)(z) = 0$ for $|z| \leq n^{-2/5}$ and for $|z| \geq 2n^{-2/5}$. For $z$ such that $n^{-2/5} \leq |z| \leq 2n^{-2/5}$, using \eqref{eq: BergmanBound1} we get
$$|\ovl{\partial} (s \chi_n)| = |s \ovl{\partial}(\chi_n)| = O(e^{-n^{1/5} + O(\lambda_{\omega}^{-1} c_{h,3})n^{-1/5}}) \cdot n^{1/4} \cdot  O(c_{\omega,0}^{r+1} \lambda_{\omega}^{-r})  = O(e^{-n^{1/6}})$$
since $n^{1/5} \geq O(\lambda_{\omega}^{-(2r+1)} c_{\omega,2}^{2r} c_{h,3})$. Then by \eqref{eq: BergmanVolume1}, locally we have 
$$\mu(z) = (1 + O(\lambda_{\omega}^{-1} c_{\omega,1})|z|) dV,$$
it follows that 
$$|\ovl{\partial}(s \chi_n)|_{L^2} \leq O(e^{-n^{1/6}}) \implies |s_n - s \chi_n|_{L^2} \leq O(e^{-n^{1/6}}).$$ 
Now since $s \chi_n$ is holomorphic in the polydisc of radius $n^{1/4}$, we may apply the mean value inequality along with \eqref{eq: BergmanVolume1} to obtain
$$|(s_n -s \chi_n)(0)| \leq O(e^{-n^{1/6}}) \left(1 + O(\lambda_{\omega}^{-1} c_{\omega,1}) n^{-2/5} \right) = O(e^{-n^{1/7}}).$$
We then get 
$$|(s_n - s \chi_n)(0)| \leq O(e^{-n^{1/7}}) \implies |s_n(0)| \geq 1 + O(e^{-n^{1/7}}).$$
Finally similar to \eqref{eq: Bergman2}, we verify that
$$|s \chi_n|^2_{L^2} \leq \left(1 + O(\lambda_{\omega}^{-1} c_{\omega,1}) n^{-1/4} \right)\int_{|z| \leq 2n^{-1/4}} e^{-n\vphi(z)} dV \leq \left(\frac{2 \pi}{n} \right)^r \left(1 + O(\lambda_{\omega}^{-1})c_{\omega,1}n^{-1/4} \right).$$
Hence using $s_n$ as our section, we get that 
$$\sup_{s \in H^0(X,L^n)} \frac{|s(x)|}{|s|_{L^2}} \geq \left(\frac{n}{2 \pi} \right)^r \left(1 + O(\lambda_{\omega}^{-1} c_{\omega,1})n^{-1/4} \right) \left( 1 +O \left(e^{-n^{1/7}} \right)\right) \geq \left(\frac{n}{2 \pi} \right)^r \left(1 - \frac{1}{n^{1/8}} \right)$$
for $n \geq O(\lambda_{\omega}^{-(2r+1)} c_{\omega,2}^{2r} c_{h,3})^{8}$ as desired.
\end{proof}

\subsection{Bounds on difference of archimedean volumes}

With our upper and lower bounds on the Bergman kernel for positive line bundles, we can prove the following quantitative bound on the Bergman kernel for line bundles which are not necessarily even semipositive following \cite[Proposition 2.12]{Yua08}. 

\begin{proposition} \label{Distortion1}
Let $\ovl{L}_1, \ovl{L}_2$ be two positive line bundles over $X$. Let $\{s_1,\ldots,s_N\}$ be an orthonormal basis for the metrized line bundle $\ovl{L}_1^{n_1} \otimes \ovl{L}_2^{-n_2}$, where we use the measure $\mu_1 = \frac{\omega_1^{\wedge r}}{r!}$ for the $L^2$-norm. Then if $n_i \geq O(\lambda_{\omega_i}^{-(2r+1)} c_{\omega_i,2}^{2r} c_{h_i,3})^{8}$ where $\omega_i, h_i$ are the $(1,1)$-form and hermitian metric of $\ovl{L}_i$, we have

$$\sum_{i=1}^{N} |s_i(x)|^2 \leq \left(\frac{n_1}{2 \pi} \right)^r \left(1 + \frac{1}{n_1^{1/8}} \right)\left(1 + \frac{1}{n_2^{1/8}}\right)^2.$$
\end{proposition}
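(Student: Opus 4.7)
The plan is to apply Yuan's peak-section trick \cite[Proposition 2.12]{Yua08} to reduce the problem, which involves the non-semipositive bundle $\ovl{L}_1^{n_1} \otimes \ovl{L}_2^{-n_2}$, to the positive-line-bundle estimates already established in Propositions \ref{Bergman2} and \ref{Bergman3}. By the standard reproducing-kernel identity,
$$\sum_{i=1}^N |s_i(x)|^2 \;=\; \sup_{0 \neq s} \frac{|s(x)|^2}{|s|^2_{L^2, \mu_1}},$$
where the supremum ranges over nonzero $s \in H^0(X, L_1^{n_1} \otimes L_2^{-n_2})$, so it suffices to bound this ratio for an arbitrary such $s$.

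The key device is a peak section of the positive bundle $\ovl{L}_2^{n_2}$ at the point $x$. Writing $K_2$ for the Bergman kernel of $\ovl{L}_2^{n_2}$ with respect to $\mu_2 = \omega_2^{\wedge r}/r!$, set $t_x(y) = K_2(y,x)/\sqrt{K_2(x,x)}$. This section satisfies $|t_x|_{L^2, \mu_2} = 1$, the evaluation $|t_x(x)|^2 = K_2(x,x)$, and — by the elementary Cauchy--Schwarz inequality for reproducing kernels $|K_2(y,x)|^2 \leq K_2(y,y) K_2(x,x)$ — the pointwise bound $|t_x(y)|^2 \leq K_2(y,y)$ for every $y \in X$. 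Crucially, this last inequality is a statement purely about $t_x$ and does not depend on which measure we use to integrate against.

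Now the product $f := s \cdot t_x$ is a holomorphic section of the positive bundle $\ovl{L}_1^{n_1}$. Applying Proposition \ref{Bergman2} to $\ovl{L}_1^{n_1}$ at $x$ gives
$$|s(x)|^2 K_2(x,x) \;=\; |f(x)|^2 \;\leq\; \left(\frac{n_1}{2\pi}\right)^r \!\!\left(1 + \frac{1}{n_1^{1/8}}\right) \int_X |s|^2 |t_x|^2 \, d\mu_1.$$
I would bound the integral by $(\sup_y K_2(y,y))\,|s|^2_{L^2, \mu_1}$ and apply Proposition \ref{Bergman2} to $\ovl{L}_2^{n_2}$ pointwise to obtain $\sup_y K_2(y,y) \leq (n_2/2\pi)^r(1 + n_2^{-1/8})$. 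Combining this with the lower bound $K_2(x,x) \geq (n_2/2\pi)^r(1 - n_2^{-1/8})$ from Proposition \ref{Bergman3} and dividing through, I get
$$\frac{|s(x)|^2}{|s|^2_{L^2, \mu_1}} \;\leq\; \left(\frac{n_1}{2\pi}\right)^r \!\!\left(1 + \frac{1}{n_1^{1/8}}\right) \cdot \frac{1 + n_2^{-1/8}}{1 - n_2^{-1/8}}.$$
For $n_2$ large enough (which is guaranteed by enlarging the implicit constant in the hypothesis $n_i \geq O(\cdots)^8$), the ratio $(1+n_2^{-1/8})/(1-n_2^{-1/8})$ is absorbed into $(1+n_2^{-1/8})^2$; taking the supremum over $s$ completes the proof.

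I do not expect a substantive obstacle beyond careful bookkeeping: the only nontrivial wrinkle is that Propositions \ref{Bergman2} and \ref{Bergman3} are stated with the "natural" volume form of each line bundle, whereas the proposition here fixes $\mu_1$ for the $L^2$-norm on sections of $\ovl{L}_1^{n_1} \otimes \ovl{L}_2^{-n_2}$. The peak-section bound $|t_x(y)|^2 \leq K_2(y,y)$ sidesteps this mismatch, since it lets us compute the $L^2$-norm of $s \cdot t_x$ with respect to $\mu_1$ while constructing $t_x$ via the $\mu_2$-Bergman kernel. Everything else is a routine combination of the previously established asymptotic Bergman estimates.
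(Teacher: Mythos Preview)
Your proof is correct and follows essentially the same strategy as the paper's: construct a peak section $t$ of $\ovl{L}_2^{n_2}$ at $x$ (the paper does this by choosing an orthonormal basis with a single element nonvanishing at $x$, which is the same object up to a phase as your $K_2(\cdot,x)/\sqrt{K_2(x,x)}$), multiply into $\ovl{L}_1^{n_1}$, apply Proposition~\ref{Bergman2} there, and control the resulting $L^2$-norm via the pointwise bound $|t(y)|^2 \le K_2(y,y)$ together with Propositions~\ref{Bergman2} and~\ref{Bergman3} for $\ovl{L}_2^{n_2}$. Your single-section formulation via the extremal characterization is slightly cleaner than the paper's basis argument, but the content is identical; note that both your final step and the paper's implicitly replace $(1+n_2^{-1/8})(1-n_2^{-1/8})^{-1}$ by $(1+n_2^{-1/8})^2$, which is harmless at the level of precision needed downstream.
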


\begin{proof}
We follow Yuan's proof. Fix a $x \in X(\bb{C})$ and construct an orthonormal basis for $H^0(X,\ovl{L}^{n_2}_2)$ for the measure $\mu_2 = \omega_2^{\wedge r}/r!$ such that there is exactly one section $t$ that is not vanishing at $x$. Then by Propositions \ref{Bergman2} and \ref{Bergman3} we know that 
$$|t(x)|^2 \geq \left(\frac{n_2}{\pi} \right)^{r} \left(1 - \frac{1}{n_2^{1/8}} \right) \text{ and } \sup_{z \in X(\bb{C})}|t(z)|^2 \leq \left(\frac{n_2}{\pi} \right)^r \left(1 + \frac{1}{n_2^{1/8}} \right).$$
Now let $s_1,\ldots,s_N$ be an orthonormal basis for $\ovl{L}_1^{n_1} \otimes \ovl{L}_2^{-n_2}$ and consider multiplication by $t$ to obtain a linearly independent set of sections $\{ts_1,\ldots,ts_n\}$ for $\ovl{L}_1^{n_1}$. We may choose $\{s_i\}$ such that $\{ts_i\}$ forms an orthogonal basis for the inner product on $\ovl{L}_1^{n_1}$. Applying Proposition \ref{Bergman2} tells us that 
$$\sum_{i=1}^{N} \frac{|ts_i(x)|^2}{|ts_i|_{L^2}} \leq \left(\frac{n_1}{2 \pi} \right)^r \left(1 +  \frac{1}{n_1^{1/8}} \right).$$
We also have
$$|ts_i|_{L^2} \leq |s_i|_{L^2}\sup_{z \in X(\bb{C})} |t(z)| \leq \left(\frac{n_2}{\pi} \right)^r \left(1 + \frac{1}{n_2^{1/8}} \right).$$
Hence we get
$$\sum_{i=1}^{N} |ts_i(x)|^2 \leq \left(\frac{n_1}{2 \pi} \right)^r \left(\frac{n_2}{\pi} \right)^r \left(1 + \frac{1}{n_1^{1/8}} \right) \left( 1 + \frac{1}{n_2^{1/8}} \right) $$
$$\implies \sum_{i=1}^{N} |s_i(x)|^2 \leq \left(\frac{n_1}{2 \pi} \right)^r \left(1 + \frac{1}{n_1^{1/8}} \right) \left(1 + \frac{1}{n_2^{1/8}} \right) \left(1 - \frac{1}{n_2^{1/8}} \right)^{-1}$$
where the implication follows from the lower bound on $|t(x)|^2$, as desired.
\end{proof}

We now use Proposition \ref{Distortion1} to obtain a lower bound on the difference of volumes of the unit balls for the $L^2$-norm, under a perturbation $e^{-\eps f}$ where $f$ is a smooth function. We fix a smooth function $f: X \to \bb{R}$ along with a positive line bundle $\ovl{L}$. 
\par 
First, observe that if $f$ is a smooth function satisfying $c_{f,3} \leq 2$, then the $(1,1)$-form given by $2i \del \ovl{\del} f$ is a hermitian matrix with coefficients bounded by $2$. Hence its eigenvalues are bounded from below by $-r$. We now choose some fixed positive line bundle $\ovl{L}_1$ such that $\omega_1$ is the $(1,1)$-form associated to it then its smallest eigenvalue of $\omega$ in our good charts is at least $3r$. Then $\omega_1 + 2 \del \ovl{\del} f$ is the $(1,1)$-form for $\ovl{L}_1(f)$, and we may take $\lambda_{\omega_1 + 2 \del \ovl{\del}f}$ to be $1$. We may further assume that the underlying line bundle for $\ovl{L}_1$ is $L^e$ for some positive integer $e$.  
\par 
We will set $\eps = \frac{1}{m}$ for some positive integer $m$ and set $n = mn'$. We may then express $\ovl{L}(\eps f)^n$ as 
$$\ovl{L}(\eps f)^n = \left(\ovl{L}^m \otimes \ovl{L}_1(f)\right)^{n'} \otimes \ovl{L}_1^{-n'}$$
Let $\mu'$ be the volume form associated to $\ovl{L}^m \otimes \ovl{L}_1$. Then we know that the total volume of $\mu'$ is exactly $\frac{(2 \pi)^r}{r!} (L^m \otimes L_1)^{(r)} = \frac{(2 \pi)^r}{r!}\deg(L^{m+e})$. In particular if we consider the $L^2$-norm but with $\mu'$ normalized so that it has total volume $1$, we have to multiply the norm of our sections by $\frac{(2 \pi)^r}{r!} \deg(L^{m+e})$. In particular, the leading coefficient of our bound in Proposition \ref{Distortion1} changes to
$$\deg(L^{m+e}) \frac{n'^r}{r!}.$$
Now by the Asymptotic Riemann--Roch \cite[Theorem 1.1.24]{Laz04}, we may replace this by 
$$\dim H^0(X, L^{mn' + en'}) \left( 1+ O \left(\frac{1}{n} \right) \right)$$
once $n$ and $n'$ are both large enough depending only on the ample line bundle $L$. From now on, we will let $\mu'$ be normalized to have total volume $1$ and we will let $\mu$ be the normalized volume form associated to $\ovl{L}^m$. Let $B_{n,L^2}$ and $C_{n,L^2}$ be the unit balls for the global sections of $\ovl{L}(\eps f)^n$ and $\ovl{L}^n$ under the measure $\mu'$.

\begin{proposition} \label{VolumeDifference1}
Assume that $f \leq 0$ and that $c_{f,3} \leq 2$. For 
$$n' \geq O(\lambda_{\omega}^{-(2r+1)} c_{\omega,2}^{2r} c_{h,3} m^{2r+1})^8$$ 
and also $n'$ sufficiently large depending on $L$, we have the lower bound
$$ \log \vol(B_{n,L^2}) - \log \vol(C_{n,L^2}) $$
$$\geq
n \eps \left(\dim H^0(X,L^{n+en'}) \right)  \left( 1 + \frac{1}{n'^{1/8}} \right)^5 \left(\int_X f d \mu  -  O(\eps)\right).$$
\end{proposition}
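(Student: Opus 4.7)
My plan is to follow the abelian-variety argument of Proposition \ref{AbelianBound1}, replacing the direct use of the asymptotic Bergman-kernel expansion (Proposition \ref{Bergman2}) by the negatively-curved upper bound of Proposition \ref{Distortion1}. The crucial observation is that although $\ovl{L}(\eps f)^n$ is not positive, it factors as
$$\ovl{L}(\eps f)^n \;=\; \bigl(\ovl{L}^m \otimes \ovl{L}_1(f)\bigr)^{n'} \otimes \ovl{L}_1^{-n'},$$
where both $\ovl{L}^m \otimes \ovl{L}_1(f)$ and $\ovl{L}_1$ are positive. Indeed, the hypothesis $\lambda_{\omega_1} \geq 3r$ together with $c_{f,3} \leq 2$ (which forces the eigenvalues of $2\del\ovl{\del}f$ to be $\geq -r$) ensures $\lambda_{\omega_1 + 2\del\ovl{\del}f} \geq 1$ uniformly in $m$.

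First, by Proposition \ref{Modules2}, I choose an orthonormal basis $s_1,\dots,s_N$ for the $L^2$-norm of $(\ovl{L}(\eps f)^n,\mu')$ that is simultaneously orthogonal for the $L^2$-norm of $(\ovl{L}^n,\mu')$, which gives
$$\log\vol(B_{n,L^2}) - \log\vol(C_{n,L^2}) \;=\; \sum_{i=1}^{N} \log |s_i|^2_{L^2,\ovl{L}^n}.$$
Since $|s|^2_{\ovl{L}^n} = |s|^2_{\ovl{L}(\eps f)^n} e^{\eps n f}$ pointwise (as in the abelian case) and $|s_i|^2_{\ovl{L}(\eps f)^n}d\mu'$ is a probability measure by orthonormality, Jensen's inequality yields
$$\sum_i \log |s_i|^2_{L^2,\ovl{L}^n} \;\geq\; \eps n \int_X f(x)\,K_n(x)\,d\mu'(x),$$
where $K_n(x) := \sum_i |s_i(x)|^2_{\ovl{L}(\eps f)^n}$ is the Bergman kernel. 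Since $f \leq 0$, the task reduces to a uniform upper bound on $K_n$.

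Next I apply Proposition \ref{Distortion1} with $\ovl{L}_1' := \ovl{L}^m \otimes \ovl{L}_1(f)$ and $\ovl{L}_2 := \ovl{L}_1$, $n_1 = n_2 = n'$. Its hypothesis is verified via the estimates $c_{\omega_1',2} = O(m\, c_{\omega,2})$ and $c_{h_1',3} = O(m\, c_{h,3})$ (the factors of $m$ originating from the dominant $m\omega$ and $mh$ parts, the $\ovl{L}_1(f)$ contribution being bounded uniformly in $m$), together with $\lambda_{\omega_1'} \geq 1$; this is precisely the condition $n' \geq O(\lambda_\omega^{-(2r+1)} c_{\omega,2}^{2r} c_{h,3}\, m^{2r+1})^8$ in the statement. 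Proposition \ref{Distortion1} bounds $K_n$ when the $L^2$-norm is defined using the unnormalized measure $\omega_1'^{\wedge r}/r!$; converting to the normalized $\mu'$ multiplies by the total volume of that measure, which as recorded in the text preceding the statement equals $\tfrac{(2\pi)^r}{r!}\deg(L^{m+e})(1+O(1/m))$, and asymptotic Riemann--Roch then replaces $n'^r \deg(L^{m+e})/r!$ by $\dim H^0(X,L^{n+en'})(1+O(1/n))$. Three $(1+1/n'^{1/8})$ factors come from Proposition \ref{Distortion1} and two further ones absorb the $O(1/m)$ and $O(1/n)$ errors, yielding $K_n(x) \leq \dim H^0(X,L^{n+en'})(1+1/n'^{1/8})^5$ uniformly in $x$.

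Substituting this bound into the Jensen inequality gives the announced lower bound up to replacing $\int f\,d\mu'$ by $\int f\,d\mu$. Expanding $(m\omega+\omega_1)^{\wedge r} = (m\omega)^{\wedge r} + O(m^{r-1})$ shows that the total volumes of the unnormalized forms differ by a factor $1+O(1/m)$ and the integrands differ by $O(m^{-1})$ relative to the dominant piece, so $\int f\,d\mu' = \int f\,d\mu + O(\eps)$ using the uniform bound on $\sup|f|$ (a consequence of $c_{f,3}\leq 2$). The principal obstacle is this uniform-in-$m$ control of the Bergman kernel: because $\ovl{L}$ is only semipositive, $m\omega$ can degenerate, and it is precisely the strict positivity $\lambda_{\omega_1} \geq 3r$ that rescues $\omega_1'$ from degeneracy and keeps the constants entering Proposition \ref{Distortion1} merely polynomial in $m$ rather than blowing up — which in turn is what allows the required lower bound on $n'$ to be of the stated polynomial form.
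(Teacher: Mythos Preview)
Your proposal is correct and follows essentially the same route as the paper: the factorization $\ovl{L}(\eps f)^n = (\ovl{L}^m \otimes \ovl{L}_1(f))^{n'} \otimes \ovl{L}_1^{-n'}$, the change-of-norm identity from Proposition \ref{Modules2}, Jensen's inequality together with $f \leq 0$, the Bergman-kernel upper bound from Proposition \ref{Distortion1}, and the final $\mu' \to \mu$ comparison via asymptotic Riemann--Roch are exactly the steps the paper carries out. The only differences are bookkeeping (the paper allots four $(1+n'^{-1/8})$ factors before absorbing the Riemann--Roch error into a fifth, whereas you count three plus two), which does not affect the argument.
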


\begin{proof}
We first want to obtain an upper bound on the distortion function for the line bundle $\ovl{L}(\eps f)^n$ under the measure $\mu'$. We rewrite 
$$\ovl{L}(\eps f)^n = \left(\ovl{L}^m \otimes \ovl{L}_1(f)\right)^{n'} \otimes \ovl{L}_1^{-n'}$$
and apply Proposition \ref{Distortion1}. The curvature form for $\ovl{L}^m \otimes \ovl{L}_1$ is $m \omega + \omega_1 - dd^c f$ and since $c_{f,3} \leq 2$, we obtain that if $n' \geq O(\lambda_{\omega}^{-(2r+1)} c_{\omega,2}^{2r} c_{h,3} m^{2r+1})^8$ and both $n$ and $n'$ are sufficiently large depending only on $L$, we get the inequality
$$\sum_{i=1}^{N} |s_i(x)|^2 \leq \dim H^0(X,L^{n+en'}) \left(1 + \frac{1}{n'^{1/8}} \right)^4 \left(1 + O\left(\frac{1}{n} \right) \right) $$
$$\leq \dim H^0(X,L^{n+en'}) \left(1 + \frac{1}{n'^{1/8}} \right)^5. $$
for any orthonormal basis $\{s_i\}$ under $\mu'$, where $| \cdot |$ is the metric on $\ovl{L}(\eps f)^n$. Now pick an orthonormal basis $\{s_i\}$ such that it is orthogonal for the inner product using the metric of $\ovl{L}^n$. Then by Proposition \ref{Modules2}, we have
$$\log \vol(B_{n,L^2}) - \log \vol(C_{n,L^2}) = \sum_{i=1}^{N} \log \int_X |s_i|^2 e^{n \eps f} d \mu'.$$
Applying Jensen's, we get
$$\sum_{i=1}^{N} \log \int_X |s_i|^2 e^{n \eps f} d \mu' \geq \sum_{i=1}^{N} \int_X \log(e^{n \eps f}) |s_i|^2 d \mu' = n \eps \int_X f \left(\sum_{i=1}^{N} |s_i|^2 \right) d \mu'.$$
Since $f \leq 0$, to get a lower bound it suffices to upper bound $\sum_{i=1}^{N} |s_i|^2$ and so we obtain
$$n \eps \int_X f \left(\sum_{i=1}^{N} |s_i|^2 \right) d \mu' \geq n \eps \left(\dim H^0(X,L^n \otimes L_1^{n'} \right) \left( 1 + \frac{1}{n'^{1/8}} \right)^5 \int_X f d \mu'.$$
Finally we write $\mu'$ as the normalized volume form corresponding to the top wedge power of $\omega' = (\omega + \eps( \omega_1 + 2 i \partial \ovl{\del} f))$. Observe that $\omega'^{\wedge r}$ will have total mass equal to the top self-intersection number of $L^{n + en'}$ while $\omega^{\wedge r}$ will have total mass equal to the top self-intersection number of $L^n$. By Asymptotic Riemann--Roch \cite[Theorem 1.1.24]{Laz04}, the ratio between the two is within $1 \pm O(\eps)$ and hence we may replace $\mu'$ with $\mu$ and bound the error by $\eps O(c_{f,2}) = O(\eps)$ as desired since $|f| \leq 1$. 
 
\end{proof}

We now apply Corollary \ref{Gromov2} to convert our lower bound on volumes of $L^2$-balls to one on $\sup$-balls. Let $B_{n,\sup}$ be the unit ball with respect to the $\sup$ norm on $\ovl{L}(\eps f)$ and $C_{n,\sup}$ the unit ball for the $\sup$ norm on $\ovl{L}^n$. 

\begin{corollary} \label{VolumeDifference2} 
Assume that $f \leq 0$ and $c_{f,3} \leq 2$. Then for 
$$n' \geq O(\lambda_{\omega}^{-(2r+1)} c_{\omega,2}^{2r} c_{h,3} m^{2r+1})^8$$ 
and also sufficiently large depending on $L$, we have 
$$\frac{1}{\dim H^0(X,L^n)} \left(\log B_{n,\sup} - \log C_{n,\sup} \right) \geq n \eps \left( 1 + \frac{1}{n'^{1/8}} \right)^5 \left(\int_X f d \mu  - O(\eps) \right) .$$
\end{corollary}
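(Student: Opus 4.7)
The plan is to relate the sup-ball volume difference to the $L^2$-ball volume difference via a telescoping identity, applying Proposition \ref{VolumeDifference1} to the middle piece and Corollary \ref{Gromov2} to the outer pieces. Explicitly, I would write
\begin{align*}
\log \vol(B_{n,\sup}) - \log \vol(C_{n,\sup})
&= \bigl[\log \vol(B_{n,\sup}) - \log \vol(B_{n,L^2})\bigr] \\
&\quad + \bigl[\log \vol(B_{n,L^2}) - \log \vol(C_{n,L^2})\bigr] \\
&\quad + \bigl[\log \vol(C_{n,L^2}) - \log \vol(C_{n,\sup})\bigr].
\end{align*}

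Applying Corollary \ref{Gromov2} to $\ovl{L}^n$ with measure $\mu'$ immediately gives $0 \leq \log \vol(C_{n,L^2}) - \log \vol(C_{n,\sup}) \leq O(\log n) \dim H^0(X,L^n)$. For the first bracket, I need to verify the hypotheses of Proposition \ref{Gromov1} hold with constants uniform in $\eps$ for the perturbed metric $e^{h - 2\eps f}$. The key observation is that $c_{f,3} \leq 2$ forces $|f| \leq 1$ and $c_{f,1} \leq 2$, so $e^{-2\eps f} \in [e^{-2}, e^2]$ and its first derivative is $O(\eps)$; by the product rule $c_{e^{h-2\eps f}, 1} = O(c_{e^h,1})$, while the lower bound $c_2'$ and the Lebesgue-comparability constant $c'$ for $\mu'$ both stay away from $0$ independently of $\eps$. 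Hence $\log \vol(B_{n,\sup}) - \log \vol(B_{n,L^2}) \geq -O(\log n) \dim H^0(X,L^n)$.

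Combining these with Proposition \ref{VolumeDifference1} and dividing by $\dim H^0(X,L^n)$ yields
$$\frac{\log \vol(B_{n,\sup}) - \log \vol(C_{n,\sup})}{\dim H^0(X,L^n)} \geq n\eps \cdot \frac{\dim H^0(X,L^{n+en'})}{\dim H^0(X,L^n)} \left(1 + \frac{1}{n'^{1/8}}\right)^5 \left(\int_X f\,d\mu - O(\eps)\right) - O(\log n).$$
By Asymptotic Riemann--Roch, since $en' = (e/m)n = O(\eps) n$, the dimension ratio equals $1 + O(\eps)$ for $n$ sufficiently large depending on $L$, and because $|f| \leq 1$ gives $|\int f\,d\mu| \leq 1$, this $1+O(\eps)$ factor can be absorbed into the $O(\eps)$ inside the integral term without changing the form of the bound.

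The main obstacle, and the final step, is absorbing the additive $-O(\log n)$ into the $n\eps \cdot O(\eps) = n \eps^2$ already present. Since $n = mn'$ and $\eps = 1/m$, we have $n\eps^2 = n'/m$, while $\log n \leq \log(mn') \leq 2 \log n'$. The hypothesis $n' \geq O(c_{\omega,2}^{2r} \lambda_\omega^{-(2r+1)} c_{h,3} m^{2r+1})^8$ ensures in particular $m \leq n'^{1/8}$, so $n'/m \geq n'^{7/8}$, and for $n'$ sufficiently large (depending only on $L$) we have $\log n' \ll n'^{7/8}$. Hence $\log n = O(n \eps^2)$, the $-O(\log n)$ is subsumed into the $O(\eps)$ inside the bound, and the stated inequality follows.
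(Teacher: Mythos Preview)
Your proof is correct and follows essentially the same route as the paper: telescope into the three pieces, control the outer two by Gromov's inequality (Corollary~\ref{Gromov2}), apply Proposition~\ref{VolumeDifference1} to the middle, convert the dimension ratio via asymptotic Riemann--Roch, and then absorb the Gromov error into the $n\eps \cdot O(\eps)$ term using the lower bound on $n'$. The only difference is cosmetic bookkeeping in the final absorption step: the paper bounds the Gromov error by $n^{3/4}\dim H^0(X,L^n)$ and then invokes $n' \geq m^{24}$ to get $n^{-1/4} \leq \eps^3$, whereas you keep the sharper $O(\log n)$ and show $\log n = O(n'/m) = O(n\eps^2)$ via $m \leq n'^{1/8}$; both deductions come from the same hypothesis on $n'$.
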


\begin{proof}
We wish to apply Corollary \ref{Gromov2} for the measure $\mu'$. First, our measure is the wedge product $(\omega + \eps(\omega_1 + dd^c f))^{\wedge r}$ normalized to have volume $1$. Now again, the total degree of this is $\deg(L^{n'+ e})$ which grows at most $O(n'^{r})$. Since both are positive definite and $\lambda_{\omega}$ is a lower bound for the eigenvalues of $\omega$ and $\omega_1 + 2 \del \ovl{\del} f$ in the charts $U_{\alpha}$, we have
$$\mu' > \frac{1}{C (n')^r} \lambda_{\omega}^r dV$$
for some $C > 0$ depending on $L$. Hence we may take $c'$ in Corollary \ref{Gromov2} so that $c'^{-1} = O( (n')^r \lambda_{\omega}^{-r})$. The metric on $\ovl{L}(\eps f)$ is given by $e^{h + \eps f}$ and we may bound $\log c_{e^{h + \eps f},1}$ by $O(\log c_{e^h,1} + \log c_{f,1})$. We may then bound $\log c_2$ by $-O(c_{h,0} + c_{f,0})$ as it is bounded above by $\sup |h| + \sup |f|$. Thus by Corollary \ref{Gromov2}, the difference between the $L^2$ volumes and $\sup$ volumes may be bounded by
$$O\left( \log c_{e^h,1} + \log c_{e^{\eps f},1} + c_{h,0} + c_{\eps f,0} + \log \lambda_{\omega} + \log n \right).$$
We may then bound $\log c_{e^h,1}$ in terms of $O(c_{h,1}c_{h,0})$, $\log c_{e^{\eps f},1}$ in terms of $O(c_{f,1} c_{f,0})$. Since $c_{f,3} \leq 2$, certainly for $n \geq O(c_{h,3})^8$ we have
$$\log \vol(B_{n,\sup}) - \log \vol(B_{n,L^2}) \geq - n \left(\frac{1}{n^{1/4}} \dim H^0(X,L^n) \right).$$
We also trivially have
$$\log \vol(C_{n,\sup}) - \log \vol(C_{n,L^2}) \leq 0.$$
Now by Asymptotic Riemann--Roch \cite[Theorem 1.1.24]{Laz04}, for $n' \geq O(\eps^{-1})$ and sufficiently large depending on $L$, we have
\begin{equation} \label{eq: RiemannRoch1} 
\frac{\dim H^0(X,L^{n+en'})}{\dim H^0(X,L^n)} \leq 1 + O(\eps).
\end{equation}
Hence by Proposition \ref{VolumeDifference1}, we get for $n' \geq O(\lambda_{\omega}^{-(2r+1)} c_{\omega,2}^{2r} c_{h,3} m^{2r+1})^8$, we have
$$\frac{1}{\dim H^0(X,L^n)} \left(\log \vol(B_{n.\sup}) - \log \vol(C_{n,\sup}) \right) $$
$$\geq - n\eps \left(1 + \frac{1}{n'^{1/8}} \right)^5 \left(\int_X f d \mu -O(\eps) \right) - \frac{1}{n^{1/4}}.$$
Since $n' \geq m^{24}$, we have $\frac{1}{n^{1/4}} \leq \eps^3$ and so we obtain
$$\frac{1}{\dim H^0(X,L^n)} \left(\log B_{n,\sup} - \log C_{n,\sup} \right) \geq n \eps \left( 1 + \frac{1}{n'^{1/8}} \right)^5 \left(\int_X f d \mu  - O(\eps) \right)$$
as desired.
\end{proof}

\section{Quantitative Equidistribution for Smooth Varieties} \label{sec: QuantEquibSmooth}
Our aim in this section is to prove the main theorem of our paper. If $L$ is an ample line bundle on $X$, we fix some power $L^n$ such that $L^n$ gives an embedding of $X$ into projective space $\bb{P}^N$. We then say a subvariety $Y$ of $X$ has degree $D$ with respect to $L$ if $Y$ has degree $D$ in the embedding into $\bb{P}^N$. 

\begin{theorem} \label{QuantTheorem2}
Let $X$ be a smooth variety defined over a number field $K$ and let $\vphi: X \to X$ a polarized endomorphism of degree $d \geq 2$, i.e. there exists an ample line bundle $L$ such that $\vphi^*L \simeq L^{\otimes d}$. Then there exists constants $c_1 = c_1(K,\vphi), c_2 = c_2(\vphi), c_3 = c_3(K,\vphi)$ such that for any $\eps > 0$ and an archimedean place $v$ of $K$, if $f: X(\ovl{K}_v) \to \bb{R}$ is a smooth function, then
\begin{enumerate}
\item there exists a hypersurface $H(f,\eps)$ defined over $K$ which has degree at most $ c_1 \eps^{-c_2}$ with respect to $L$,
\item for all $x \in X(\ovl{K})$ with Galois orbit $F_x$, if $x \not \in H(f,\eps)$ then

$$\left|\frac{1}{|F_x|} \sum_{y \in F_x} f(y) - \int f(y) d \mu_{\vphi,v}(y) \right| \leq c_{f,3} \left(\frac{\h_{\vphi}(x)}{\eps} + c_3 \eps \right).$$
\end{enumerate}
\end{theorem}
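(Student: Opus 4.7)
The plan is to follow the strategy used in Section 6 for abelian varieties, replacing the positive-curvature Bergman estimate of Proposition \ref{Bergman2} with the negative-curvature estimate Proposition \ref{Distortion1} developed in Section 7, and inserting an extra regularization step to handle the fact that the canonical metric on $\ovl{L}$ at the archimedean place $v$ is only continuous and semipositive. Normalize by dividing $f$ by $c_{f,3}$ so that $c_{f,3}\le 2$ (to be re-inserted at the end). Using Fakhruddin's theorem, fix an embedding $X \hookrightarrow \bb{P}^N$ so that $\vphi$ extends to $\tilde\vphi$, lift to $F$, and equip $\ovl{L}$ with its canonical adelic metric via the homogeneous Green's function $G_v$; if $v$ is complex, run everything simultaneously at its complex conjugate.

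The first technical step is regularization. The canonical Green's function on $\bb{P}^N$ is H\"older continuous with some exponent $\kappa = \kappa(\vphi)>0$. Apply the $\PGL_{N+1}(\bb{C})$-convolution from Section 2 to produce, for each $\theta>0$, a smooth metric $\ovl{L}^\theta$ on $\bb{P}^N$ with $\sup|h-h^\theta| = O(\theta^\kappa)$ (Lemma \ref{Regularization2}), with $i\del\ovl{\del} h^\theta \ge -O(\theta)\,\omega_{\FS}$ (Lemma \ref{Regularization3}), and with $c_{h^\theta,k} = O(\theta^{-2N^2-4N-k})$ (Proposition \ref{Regularization1}). Restrict to $X$ via Lemma \ref{GoodCharts2}, and fix an auxiliary positive line bundle $\ovl{L}_1 = L^e$ with $\omega_1 \ge 3r\,\omega_{\FS}$; this can be arranged once and for all so that $(\ovl{L}^\theta)^m \otimes \ovl{L}_1(f)$ is positive whenever $m\cdot O(\theta)$ stays bounded, which will hold in the final parameter regime.

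Now run the Section 6 argument at scale $P=mp'$ with $\eps = 1/m$, using the decomposition $\ovl{L}^\theta(\eps f)^P = \bigl((\ovl{L}^\theta)^m \otimes \ovl{L}_1(f)\bigr)^{p'} \otimes \ovl{L}_1^{-p'}$. Corollary \ref{VolumeDifference2}, once $p'$ exceeds the explicit polynomial in $\theta^{-1}$ and $m$ required by its hypothesis, yields
$$\frac{1}{\dim H^0(X,L^P)}\Bigl(\log\vol B^\theta_{P,\sup} - \log\vol C^\theta_{P,\sup}\Bigr) \ge P\eps\Bigl(\int f\,d\mu_{\vphi,v} - O(\eps)\Bigr),$$
where the $O(\eps)$ absorbs the error from replacing the volume form of $\omega^\theta$ by $\mu_{\vphi,v}$. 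Replacing the regularized metric by the canonical one at $v$ contributes a further error of $O(P\theta^\kappa)\dim H^0(X,L^P)$ to the volume differences; the choice $\theta = \eps^{2/\kappa}$ absorbs this into $O(P\eps^2)$. Combining with Theorem \ref{QuantProposition1} at scale $P=d^n$ (which provides $\log\vol C_{P,\sup}-\log\vol M_P \ge -O(P^{1/2})\dim V_P$) and Corollary \ref{Gromov2} then yields $\chi(\ovl{L}(\eps f + c_3\eps)^P) \ge \dim H^0(X,L^P) + \log 2$ for a constant $c_3 = c_3(K,\vphi)$ large enough and for $d^n$ at least an explicit polynomial in $\eps^{-1}$ of exponent $O(N^2/\kappa)$.

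Minkowski (Lemma \ref{Modules1}) then produces a nonzero section $s \in H^0(X,L^P)$ with $|s|_w \le 1$ at every place $w$ when viewed as a section of $\ovl{L}(\eps f + c_3\eps)^P$. Setting $H(f,\eps) := \divv(s)$ gives a $\Gal(\ovl{K}/K)$-invariant hypersurface of $L$-degree $P = O(\eps^{-c_2})$ with $c_2 = O(N^2/\kappa)$. Evaluating $s$ on the Galois orbit $F_x$ of any $x \notin H(f,\eps)$ and using that $\h_\vphi$ is the height of $\ovl{L}$ gives $\frac{1}{|F_x|}\sum_{y\in F_x} f(y) \ge \int f\,d\mu_{\vphi,v} - \h_\vphi(x)/\eps - c_3\eps$, and running the argument with $-f$ (taking the union of the two hypersurfaces) gives the matching upper bound; re-inserting $c_{f,3}$ finishes the proof. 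The main obstacle throughout is balancing $\theta$, $m$, and $p'$ so that the regularization error, the $O(d^{-n/2})$ term from Theorem \ref{QuantProposition1}, and the $L^2$-to-sup loss from Corollary \ref{Gromov2} are all absorbed into $O(\eps^2)$ simultaneously, which is what forces the explicit exponent $c_2$ to depend on the H\"older exponent $\kappa$ and the ambient projective dimension $N$.
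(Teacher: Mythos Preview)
Your proposal is correct and follows essentially the same route as the paper: normalize $f$, regularize the canonical metric via the $\PGL$-convolution with $\theta=\eps^{2/\kappa}$, apply Corollary~\ref{VolumeDifference2} to get the archimedean volume difference, combine with Theorem~\ref{QuantProposition1} and Gromov's inequality, then apply Minkowski to produce a small section whose divisor is $H(f,\eps)$. Two small points to tighten: the shift you add should be $c_3\eps^2$, not $c_3\eps$ (so that after dividing the height inequality by $\eps$ you land on $c_3\eps$); and the paper, rather than relying on $\ovl{L}_1$ alone to restore positivity of the tensor product, explicitly replaces $h'_v$ by $(1-c\eps^2)h'_v + c\eps^2 d^n h_{\FS}$ so that $\ovl{L}^\theta$ itself becomes positive with $\lambda_\omega \asymp \eps^2$, which is what the stated hypothesis of Proposition~\ref{VolumeDifference1} literally requires.
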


Given our smooth function $f$, we first divide it by $c_{f,3}$ to get a new function $f$ which satisfies $c_{f,3} \leq 2$. It then suffices to prove that 
$$\left|\frac{1}{|F_x|} \sum_{y \in F_x} f(y) - \int f(y) d\mu_{\vphi,v}(y) \right| \leq \frac{\h_{\vphi}(x)}{\eps} + c_3 \eps$$
for functions $f$ satisfying $c_{f,3} \leq 2$ and obtain the general version by multiplying $c_{f,3}$ back in. 
\par 
By replacing $L$ with a high enough power, we may assume that $L$ embeds $X$ into $\bb{P}^N$ with $L$ being the restriction of $O(1)$, and that $\vphi$ extends to a morphism $\tilde{\vphi}: \bb{P}^N \to \bb{P}^N$ of degree $d$. We may then lift $\tilde{\vphi}$ to a homogeneous map $F: \bb{A}^{N+1} \to \bb{A}^{N+1}$. 
\par 
For each place $v$ of $M_K$, we may define a homogeneous Green's function using $F$ by
$$G_{v}(x) = \lim_{n \to \infty} \frac{\log ||F^n(x)||_v}{d^n} \text{ for } x \in \bb{A}^{N+1}(\bb{C}_v),$$
where $||(x_0,\ldots,x_{n+1})||_v = \max\{|x_0|_v,\ldots,|x_{n+1}|_v\}$. We can then use $G_v$ to define a metric on $L$ for each place $v$ by 
$$\log |s(x)|_v = \log |P(\tilde{x})|_v - G_v(\tilde{x}) \text{ for } x \in \bb{P}^N(\bb{C}_v),$$
where $\tilde{x}$ is any lift of $x$ to $\bb{A}^{N+1}(\bb{C}_v)$. Due to the homogeneity of $G_v$, our definition is independent of the lift. Restricting to $X$, this turns $L$ into a semipositive adelic line bundle $\ovl{L}$ in the sense of Zhang \cite[Section 3.5]{Yua08}, although it would not be important for us to know this. It is easy to see that its associated height on $X(\ovl{K})$ is the canonical height $\h_{\vphi}(x)$, which may also be defined as 
$$\h_{\vphi}(x) = \lim_{n \to \infty} \frac{h(\vphi^n(x))}{d^n}.$$
where $h(x)$ is the standard Weil height on $\bb{P}^N$. Let $L_n = L^{d^n}$. Then there is an induced adelic meric on $L_n$ coming from $L$ and we will denote the adelic line bundle by $\ovl{L}_n$ and the metrized line bundle on $X(\bb{C}_v)$ by $\ovl{L}_{n,v}$. 
\par 
The global sections of $L_n$, denoted by $H^0(X,L_n)$, form a $K$-vector space. We let $M_n$ be the sub $O_K$-module of sections $s$ satisfying $|s|_{v.\sup} \leq 1$ for all non-archimedean places $v$, where $\sup$ denotes the supremum over $X(\bb{C}_v)$. Then $M_n$ is a $\bb{Z}$-submodule of maximal rank of $H^0(X,L_n)$. 
\par 
We now consider $V_n = H^0(X,L_n) \otimes_{\bb{Q}} \bb{R}$. Then $V_n$ is a $\bb{R}$-vector space and we can give it a sup norm as follows. We have 
$$V_n \otimes_{\bb{R}} \bb{C} \simeq \bigoplus_{v \text { arch. }} H^0(X(\bb{C}_v),L_{n})$$
where $X(\bb{C}_v)$ denotes the complex variety obtained by embedding $K$ into $\bb{C}$ via the place $v$. Each $H^0(X(\bb{C}_v),L_{n})$ has a supremum norm coming from the metric on $L_n$ for the place $v$, and taking the supremum of each of them defines a sup norm on $V_n \otimes_{\bb{R}} \bb{C}$ and hence on $V_n$ which we will denote by $| \cdot |_{\sup}$. 
\par 
We now let $C_{n,\sup}$ be the unit ball of sections $s \in V_n$ satisfying $|s|_{\sup} \leq 1$. Fix a Haar measure on $V_n$ and let $\vol M_n$ denote the volume of the fundamental domain of the lattice $M_n$ in $V_n$. The quantity
$$\log \vol C_{n,\sup} - \log \vol M_n$$
does not depend on the choice of our Haar measure. Again, using arithmetic Hilbert--Samuel \cite[Theorem 1.4]{Zha95}, one can deduce that 
\begin{equation} \label{eq: VolumeDifference2}
\lim_{n \to \infty} \frac{1}{d^n \dim V_n} \left(\log \vol C_{n,\sup} - \log \vol M_n \right) = 0.
\end{equation}

We now regularize the metric at each archimedean place $v$ to make it smooth so that one can apply the results of Sections \ref{sec: QuantBergman} and \ref{sec: NegativeBergman}. Fix an $\eps = d^{-k}$ for some positive integer $k$. The aim is to regularize our metric so that the difference from the original metric and the new metric on $\ovl{L}_n$ is at most $d^n \eps^2$.
\par 
We do this by directly regularizing the metric of the line bundle $O(d^n)$ on $\bb{P}^r$. Fix an archimedean place $v$ and let $| \cdot |$ be the metric on $O(d^n)$. We let $| \cdot |_{\FS}$ be the Fubini-Study metric on $O(d^n)$. Then for any section $s$, the expression $\log |s(x) |^{-1} - \log | s(x)|^{-1}_{\FS}$ defines for us a continuous function $f$ on $\bb{P}^r$ satisfying $i \del \ovl{\del} f \geq -d^n \omega_{\FS}$, where $\omega_{\FS}$ is the $(1,1)$-form for the Fubini-Study metric on $O(1)$, as the current $i \del \ovl{\del}\log |s(x)|^{-1}_v + \pi [\divv(s)]$ is semi-positive. 
\par 
We now regularize the continuous function $f$ as in Proposition \ref{Regularization1}. Fix a choice of good charts for our $\bb{P}^r$. Then given $\theta$ with $|\theta| < 1$, we can regularize and obtain $f_{\theta}$ which by Lemma \ref{Regularization3}, satisfies $dd^c f_{\theta} \geq -d^n (1+O(\theta))\omega$. Furthermore, we know that $f$ is Holder continuous for some exponent $\kappa$ and constant of the form $O_{\vphi}(d^n)$. Hence taking $\theta = \eps^{2 \kappa^{-1}}$, by Lemma \ref{Regularization2} we obtain that 
$$|f_{\theta} - f| \leq d^n O_{\vphi}(\eps^2).$$
Then Proposition \ref{Regularization1} implies that 
$$c_{f_{\theta},k} \leq c_k c_{f,0} \eps^{-2 \kappa^{-1}(2r^2 + 4r + k)}.$$
We can define a metric $| \cdot |'$ on $O(d^n)$ by taking $f_{\theta} + \log | \cdot |_{\FS}$ and we hence obtain a smooth metric $| \cdot |'$ on $O(d^n)$ satisfying 
\begin{equation} \label{eq: HermitianConstants1}
c_{h',k} \leq d^n c_{h,0} \eps^{-2 \kappa^{-1}(2r^2 +4r +k)} = O_{\vphi}(d^n) \eps^{-2 \kappa^{-1}(2r^2+ 4r + k)}
\end{equation}
and 
\begin{equation} \label{eq: HermitianConstants2}
h' - h = d^n O_{\vphi}(\eps^2).
\end{equation}
where $h$ is the original canonical metric on $O(d^n)$ for the archimedean place $v$. Now restricting to $X$ and using our choice of good charts on $X$, we get a metric on $L_n$ satisfying the same inequalities, up to different constants by Lemma \ref{GoodCharts2}. 
\par 
Now for each archimedean place $v$, our line bundle $L_n$ has a smooth hermitian metric $h'_v$ that satisfies $i \del \ovl{\del} h'_v \geq -d^n O_{\vphi}(\eps^2) \omega$, where $\omega$ is the form corresponding to the Fubini-Study metric. To make it positive, we consider modfiying the hermitian metric by setting it to $(1- c \eps^2)h'_v + c d^n \eps^2 h_{\FS}$ for some constant $c = c(\vphi) > 0$, such that our new metric which we will again denote by $h'_v$, satisfies 
\begin{equation} \label{eq: Regularization1} 
i \del \ovl{\del} h'_v \geq \eps^2 d^n \omega_{\FS}.
\end{equation}

Hence for each archimedean place $v$, we obtain a metric $h'_v$ satisfying \eqref{eq: HermitianConstants1} and \eqref{eq: HermitianConstants2} and also $i \del \ovl{\del} h'_v \geq \eps^2 \omega_{\FS}$. 
\par 
Now let $\ovl{L}'_n$ be the adelic bundle with the hermitian metrics at the archimedean place being $h'$ instead of $h$ and let $C'_n$ be the unit ball of sections $s \in V_n$ satisfying $|s|_{v,\sup} \leq 1$ for all archimedean places $v$. Since $|h'-h| \leq d^n O_{\vphi}(\eps^2)$, we deduce the following lower bound.

\begin{proposition} \label{QuantProposition3}
Let $\eps = d^{-k}$ and assume that $n \geq 4k$. Then there exists $c = c(\vphi) > 0$ such that we have
$$\frac{1}{\dim V_n} ( \log \vol(C'_{n,\sup}) - \log \vol(M_n)) \geq -c (\eps^2 d^n + [K:\bb{Q}]^2 h(\alpha)).$$
\end{proposition}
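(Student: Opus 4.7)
The plan is to combine Theorem \ref{QuantProposition1}, which already supplies the desired lower bound for the \emph{unregularized} unit ball $\vol(C_{n,\sup})$, with a direct comparison between $\vol(C_{n,\sup})$ and $\vol(C'_{n,\sup})$ based on the pointwise closeness estimate $|h'_v - h_v| = O_{\vphi}(d^n \eps^2)$ from \eqref{eq: HermitianConstants2}. The proof is really just bookkeeping; the substantive work was already done in building $h'_v$.

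\textbf{Step 1 (local comparison of norms).} Fix an archimedean place $v$. Since the fiberwise norm on $\ovl{L}_{n,v}$ satisfies $|s(x)|_{v}^{2} = e^{h_v(x)} |s_\alpha(x)|^2$ in any local trivialization, and likewise for $h'_v$, the bound \eqref{eq: HermitianConstants2} gives
\[
\bigl|\log |s(x)|'_{v} - \log |s(x)|_{v}\bigr| = \tfrac{1}{2}|h'_v(x) - h_v(x)| \leq O_{\vphi}(d^n \eps^2)
\]
uniformly in $x \in X(\bb{C}_v)$ and in $s$. Taking the supremum over $X(\bb{C}_v)$ and then over archimedean places gives the two-sided inclusion
\[
e^{-O_{\vphi}(d^n \eps^2)}\, C_{n,\sup} \;\subseteq\; C'_{n,\sup} \;\subseteq\; e^{O_{\vphi}(d^n \eps^2)}\, C_{n,\sup}
\]
inside $V_n$.

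\textbf{Step 2 (volume comparison).} Translating these inclusions into volume inequalities (using that scaling a convex body in a real vector space of dimension $\dim V_n$ by $e^{t}$ multiplies its volume by $e^{t \dim V_n}$) yields
\[
\bigl|\log \vol(C'_{n,\sup}) - \log \vol(C_{n,\sup})\bigr| \;\leq\; O_{\vphi}(d^n \eps^2)\cdot \dim V_n.
\]
Subtracting $\log \vol(M_n)$ on both sides and plugging in the lower bound of Theorem \ref{QuantProposition1},
\[
\tfrac{1}{\dim V_n}\bigl(\log \vol(C_{n,\sup}) - \log \vol(M_n)\bigr) \;\geq\; -c\bigl(d^{n/2} + [K:\bb{Q}]^2 h(\alpha)\bigr),
\]
we obtain
\[
\tfrac{1}{\dim V_n}\bigl(\log \vol(C'_{n,\sup}) - \log \vol(M_n)\bigr) \;\geq\; -c\bigl(d^{n/2} + [K:\bb{Q}]^2 h(\alpha)\bigr) - O_{\vphi}(d^n \eps^2).
\]

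\textbf{Step 3 (absorb $d^{n/2}$).} Since $\eps = d^{-k}$ we have $d^n \eps^2 = d^{n-2k}$, and the hypothesis $n \geq 4k$ gives $n - 2k \geq n/2$, hence $d^{n/2} \leq d^n \eps^2$. The two error terms in the previous line therefore consolidate into a single $O_{\vphi}(d^n \eps^2) + O([K:\bb{Q}]^2 h(\alpha))$ term, yielding the claim with a new constant $c = c(\vphi) > 0$ (absorbing also the constants inside the $O_{\vphi}(\cdot)$). The main (and essentially only) subtlety is making sure the hypothesis $n \geq 4k$ is exactly what allows $d^{n/2}$ to be dominated by the $\eps^2 d^n$ term; everything else is a mechanical application of Theorem \ref{QuantProposition1} together with \eqref{eq: HermitianConstants2}.
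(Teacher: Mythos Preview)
Your proof is correct and follows essentially the same approach as the paper: use the pointwise bound \eqref{eq: HermitianConstants2} to compare $\vol(C'_{n,\sup})$ with $\vol(C_{n,\sup})$, invoke Theorem \ref{QuantProposition1}, and then use $n \geq 4k$ to absorb $d^{n/2}$ into $d^n\eps^2$. The paper's own argument is a three-line version of exactly this (it writes ``Proposition \ref{QuantProposition2}'' where Theorem \ref{QuantProposition1} is clearly intended).
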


\begin{proof}
Since $|h'-h| \leq d^n O_{\vphi}(\eps^2)$, we have 
$$\left|\log \vol(C_{n,\sup}) - \log \vol(C'_{n,\sup}) \right| \leq d^n (\dim V_n) O_{\vphi}(\eps^2).$$ 
As $n \geq 4k$, we have $d^{n/2} \leq d^n \eps^2$ and so combnining the above inequality with Proposition \ref{QuantProposition2}, we get our inequality as desired.
\end{proof}

\subsection{Proof of quantitative equidistribution}

We now move to estimating the volume for $L^2$-norms instead of the supremum norm. Recall that we have the isomorphism 
$$H^0(X,L_n) \otimes_{\bb{Q}} \bb{C} \simeq \bigoplus_{v \text{ arch. }} H^0(X(\bb{C}_v),L_n)$$
where we count complex conjugate pairs separately. For each archimedean place $v$, recall that we can metrize the line bundle $L_n$ with our smooth positive metric $h'_v$ which induces an hermitian inner product $\langle \, , \, \rangle _v$ on the fibers. Then if we fix a measure $\mu_v$ of total volume $1$ for each archimedean place $v$, such that the complex conjugate places share the same measure, we obtain an inner product on $V_n = H^0(X,L_n) \otimes_{\bb{Q}} \bb{R}$. We now choose our volume forms $\mu_v$. As in the paragraph before Proposition \ref{VolumeDifference1}, we fix some positive line bundle $\ovl{L}_1$ on $X(\bb{C})$, with underlying line bundle $L^e$, such that if $\omega_1$ is the $(1,1)$-form, the smallest eigenvalue in our good charts for $\omega_1$ is $\geq 3r$. Recall that $\eps = d^{-k}$. We then let $\mu'_v$ be the volume form, normalized to have measure $1$, associated to the positive line bundle $\ovl{L}_{n,v}^{d^k} \otimes \ovl{L}_1$ where $\ovl{L}_{n,v}$ is the metrized line bundle corresponding to our adelic line bundle $\ovl{L}_n$ at the place $v$. 
\par 
Since $\mu'_v$ is normalized to have volume $1$, we certainly have $|s|_{L^2,v} \leq |s|_{\sup,v}$. Thus we have the trivial bound
$$| \cdot |_{L^2} \leq [K:\bb{Q}] | \cdot |_{\sup}$$
on $V_n = H^0(X,L_n) \otimes_{\bb{Q}} \bb{R}$. Hence if we let $C'_{n,L^2}$ denote the unit ball in $V_n$ satisfying $|s|_{L^2} \leq 1$, we have the inequality
\begin{equation} \label{eq:VolumeDifference1}
\frac{1}{\dim V_n} \left(\log C'_{n,L^2} - \log C'_{n,\sup} \right) \geq -\log [K:\bb{Q}].
\end{equation}

Now let's fix an archimedean place $v$ along with a smooth function $f: X(\bb{C}_v) \to \bb{R}$ such that $c_{f,3} \leq 2$. We now perturb the metric of $\ovl{L}_n$ and $\ovl{L}'_n$ at the place $v$ by mutiplying it by $e^{-d^n \eps f}$. If $v$ has a complex conjugate pair $v'$, we also perturb the metric at $v'$ by $e^{-d^n \eps f}$. We denote our new adelic line bundle by $\ovl{L}_n(\eps f)$ and $\ovl{L}'_n(\eps f)$ respectively. Using our new metric, this induces a supremum norm on $V_n$ and also a $L^2$-norm if we use the volume forms $\mu'_v$ as we did for the original adelic line bundle $\ovl{L}_n$. 
\par 
Let $B'_{n,L^2}$ and $B'_{n,\sup}$ denote the unit ball of $V_n$ under the $L^2$-norm and $\sup$ norm of $\ovl{L}'_n(\eps f)$ respectively. Our aim is to get a lower bound of $\log \vol B'_{n,\sup} - \log \vol M_n$. We first bound $\log \vol B'_{n,L^2} - \log \vol C'_{n,L^2}$ from below. 

\begin{lemma} \label{GlobalVolumeDifference1}
Let $n_v = 2$ if $v$ is complex and $1$ if $v$ is real. Then 
$$\log \vol B'_{n,L^2} - \log \vol C'_{n,L^2} = n_v \left(\log \vol B_{n,v,L^2} - \log \vol C'_{n,v,L^2} \right)$$
where $B'_{n,v,L^2}$ is the unit ball of sections with $L^2$-norm $\leq 1$ for $H^0(X(\bb{C}_v),L_n)$ with the metric from $\ovl{L}'_n(\eps f)$ and similarly for $C'_{n,v,L^2}$. 
\end{lemma}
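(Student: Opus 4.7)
The proof will apply Proposition \ref{Modules2} to the two inner products on $V_n$ induced by $\ovl{L}'_n$ and $\ovl{L}'_n(\eps f)$; the key point is that these two inner products differ only at the place $v$ and, when $v$ is complex, at its conjugate $v'$.

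First, I would use the decomposition
$$V_n \otimes_{\bb{R}} \bb{C} \simeq \bigoplus_w H^0(X(\bb{C}_w), L_n)$$
and regroup summands into complex conjugate pairs to obtain an orthogonal decomposition $V_n = \bigoplus_w V_w$ indexed by the archimedean places $w$ of $K$ (real places contributing one summand, complex places contributing a conjugate pair). This decomposition is orthogonal for both inner products: they are defined as sums of Hermitian inner products over the archimedean embeddings, and the corresponding decomposition of $V_n \otimes_{\bb{R}} \bb{C}$ is Hermitian-orthogonal, hence remains orthogonal after restricting to real vectors. Because the metrics of $\ovl{L}'_n$ and $\ovl{L}'_n(\eps f)$ agree outside the archimedean place underlying $v$, the two inner products on $V_n$ coincide on $V_w$ for every $w$ different from this place.

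Next, I would choose a basis of $V_n$ adapted to this orthogonal decomposition that is simultaneously orthonormal for the $\ovl{L}'_n$-inner product and orthogonal for the $\ovl{L}'_n(\eps f)$-inner product. By Proposition \ref{Modules2}, the total volume difference equals the sum of the logarithms of the squared $\ovl{L}'_n(\eps f)$-norms of these basis vectors; only those lying in the distinguished summand $V_v$ contribute, since on every other $V_w$ both inner products agree and each basis vector has squared norm $1$.

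Finally, I would match this contribution with $n_v \bigl(\log \vol B'_{n,v,L^2} - \log \vol C'_{n,v,L^2}\bigr)$. If $v$ is real, $V_v$ is a real form of $H^0(X(\bb{C}_v), L_n)$, a real simultaneously-orthogonal basis coincides with a complex one (the Hermitian inner products take real values on real vectors), and the real and complex log-volume differences agree, giving $n_v = 1$. If $v$ is complex, $V_v$ has real dimension $2 \dim_{\bb{C}} H^0(X(\bb{C}_v), L_n)$ and the induced real inner product is $2\re \langle \cdot, \cdot \rangle_v$ after identifying $V_v$ with $H^0(X(\bb{C}_v), L_n)$ as a real vector space. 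A complex basis $\{f_i\}$ of $H^0(X(\bb{C}_v), L_n)$ orthonormal for $\langle \cdot, \cdot \rangle_v$ and orthogonal for $\langle \cdot, \cdot \rangle'_v$ lifts to a real basis $\{f_i/\sqrt{2}, i f_i/\sqrt{2}\}$ of $V_v$ with the same properties, and each of the two members of a pair contributes the same log-ratio to the sum, doubling the complex contribution and yielding $n_v = 2$. The only delicate step is this real-versus-complex bookkeeping, but it reduces to direct linear algebra and presents no substantive obstacle.
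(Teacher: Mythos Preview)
Your proposal is correct and follows essentially the same approach as the paper. The paper's argument (deferred to Lemma~\ref{AbelianQuantDifference1}) computes the volume difference as a difference of log-determinants of Gram matrices after complexifying $V_n$, rather than invoking Proposition~\ref{Modules2} in the real setting with explicit real-versus-complex bookkeeping, but the underlying idea---use the orthogonal decomposition by archimedean places so that only $v$ and its conjugate contribute---is identical.
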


\begin{proof}
This is exactly the same as Lemma \ref{AbelianQuantDifference1}.
\end{proof}
We are now in a position to apply the volume bounds from Section \ref{sec: NegativeBergman}. Recall again that $\eps = d^{-k}$. Let $\mu_v$ be the volume form associated to the positive line bundle $\ovl{L}'_{n,v}$, normalized to have volume $1$.

\begin{proposition} \label{GlobalVolumeDifference2}
Assume that $f \leq 0$. Let $\kappa$ be a Holder exponent for the homogeneous Green's function $G_{v}$ associated to $F$. Then for all $n \geq O_r(\kappa^{-1} k) + O_{\vphi}(1)$, we have
$$\frac{1}{d^n \dim V_n} \left( \log \vol B'_{n,v,L^2} - \log \vol C'_{n,v,L^2} \right) \geq \frac{\eps}{[K:\bb{Q}]} \left(\int_X f d \mu_{v} + O_{\vphi}(\eps) \right).$$
\end{proposition}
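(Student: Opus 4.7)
The plan is to mirror Proposition \ref{VolumeDifference1}'s argument, now with the $n$-dependent weakly positive regularized line bundle $\ovl{L}'_n$ (curvature $\geq \eps^2 d^n \omega_{\FS}$, higher derivatives controlled by \eqref{eq: HermitianConstants1}) playing the role of the fixed positive line bundle there. Set $m := d^k$ and $n' := d^{n-k}$, so $n = mn'$ and the perturbation $d^n \eps f$ equals $n' f$. Proposition \ref{Modules2} rewrites
$$\log \vol B'_{n,v,L^2} - \log \vol C'_{n,v,L^2} = \sum_i \log \int_{X(\bb{C}_v)} |s_i|^2 e^{n' f}\, d\mu_v$$
for $\{s_i\}$ an orthonormal basis under the $B'$-inner product, and Jensen's inequality (exactly as in Proposition \ref{VolumeDifference1}) gives the lower bound $n' \int_X f \cdot K(x)\, d\mu_v$, where $K(x) = \sum_i |s_i(x)|^2$ is the Bergman kernel of $\ovl{L}'_n(n' f)$ with respect to $\mu_v$. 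Since $f \leq 0$, the task reduces to the pointwise upper bound $K(x) \leq N(1 + O_\vphi(\eps))$ with $N = \dim H^0(X(\bb{C}_v), L_n)$.

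To obtain this Bergman-kernel bound I would factor
$$\ovl{L}'_n(n' f) = (\ovl{L}_0 \otimes \ovl{L}_1(f))^{n'} \otimes \ovl{L}_1^{-n'}$$
by defining $\ovl{L}_0 := L^m$ equipped with the metric $h'_v/n'$, so that $\ovl{L}_0^{n'} = \ovl{L}'_n$ as metrized line bundles. The numerator base $\ovl{P} := \ovl{L}_0 \otimes \ovl{L}_1(f)$ has curvature $\omega'_n/n' + \omega_1 + 2i\del\ovl{\del} f$ whose smallest eigenvalue is $\geq 2r$ (since $\omega_1 \geq 3r\,\omega_{\FS}$ by construction and $|2i\del\ovl{\del} f| \leq r\,\omega_{\FS}$ from $c_{f,3} \leq 2$). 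Proposition \ref{Distortion1} applied with $\ovl{L}_1 \leftarrow \ovl{P}$, $\ovl{L}_2 \leftarrow \ovl{L}_1$, and $n_1 = n_2 = n'$ yields $K_{\omega_P^r/r!}(x) \leq (n'/2\pi)^r (1 + n'^{-1/8})^3$, provided $n' \geq O(\lambda_{\omega_P}^{-(2r+1)} c_{\omega_P,2}^{2r} c_{h_P,3})^8$. Bounding $c_{\omega_P,2}$ and $c_{h_P,3}$ through \eqref{eq: HermitianConstants1} (derivatives of $h'_v$ divided by $n'$) produces $O_\vphi(1)\,\eps^{-C_r \kappa^{-1}}$ for a polynomial $C_r$ in $r$; since $n' = d^{n-k}$ and $\eps^{-1} = d^k$, this forces $n \geq O_r(\kappa^{-1} k) + O_\vphi(1)$, matching the hypothesis.

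Converting to a probability measure via multiplication by $V_P = (2\pi)^r(m+e)^r L^r / r!$ and invoking asymptotic Riemann--Roch (using $(n'(m+e))^r = d^{nr}(1 + O(\eps))$) turns the bound into $K_{\mu_P}(x) \leq N(1 + O_\vphi(\eps))$ for the probability measure $\mu_P = \omega_P^r/(r! V_P)$. Replacing $\mu_P$ by the target probability measure $\mu_v$ (the normalized $\omega'_n{}^r$) contributes another $(1 + O_\vphi(\eps))$ factor to $\int f K\, d\mu_v$, by the comparison argument used in the concluding paragraph of Proposition \ref{VolumeDifference1}'s proof: the cohomology classes of $\omega_P$ and $\omega'_n/n'$ differ by the $O(\eps)$ contribution of $\omega_1 + 2i\del\ovl{\del} f$, and the error in pushing $f$ through this change of measure is controlled by $c_{f,2} = O(1)$. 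Substituting back into the Jensen bound, dividing by $d^n \dim V_n = [K:\bb{Q}]\,d^n N$, and using $n'/d^n = \eps$ with $|\int f\,d\mu_v| = O(1)$ yields the claim. The main technical delicacy is the verification that, after raising to the $8$th power in Proposition \ref{Distortion1}'s hypothesis, the regularization-dependent bounds from \eqref{eq: HermitianConstants1} still leave only a polynomial-in-$\eps^{-1}$ requirement on $n'$, so that the threshold on $n$ is linear in $k$; this works because $\lambda_{\omega_P} \geq 2r$ is supplied by the fixed auxiliary $\ovl{L}_1$ rather than by the $n$-dependent $\ovl{L}_0$.
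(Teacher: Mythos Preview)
Your approach is essentially the paper's own: the paper directly invokes Proposition \ref{VolumeDifference1} with the regularized base line bundle $\ovl{L}_v$ (carrying the metric $h'_v/d^n$), verifies its hypothesis via $\lambda_{\omega_v}^{-1}=O(d^{2k})$ and \eqref{eq: HermitianConstants1}, and then applies asymptotic Riemann--Roch, whereas you unpack that proposition and call Proposition \ref{Distortion1} directly---the same computation. One small slip: the measure in your displayed Jensen identity should be the one defining the $L^2$-norms (the paper's $\mu'_v$, i.e.\ your $\mu_P$ up to the $f$-perturbation), not $\mu_v$; the conversion to $\mu_v$ happens only at the end, exactly as you later describe.
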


\begin{proof}
We wish to apply Proposition \ref{VolumeDifference1}. We certainly have the conditions $f \leq 0$ and $c_{f,3} \leq 2$. Let $\ovl{L}_{n,v} = \ovl{L}_v^{d^n}$ where the underlying line bundle of $\ovl{L}_v$ is $O(1)$ on $X$. Let $\omega_v$ be the $(1,1)$-form corresponding to $\ovl{L}_v$ and let $h_v$ be the hermitian metric on $\ovl{L}_v$. We have $m = \eps^{-1} = d^k$ and $n' = d^{n-k}$ and so we require    
$$d^{n-k} \geq O_r(\lambda_{\omega_v}^{-(2r+1)} c_{\omega_v,2}^{2r} c_{h_v,3} d^{(2r+1)k})^8$$
to apply Proposition \ref{VolumeDifference1}. By \eqref{eq: Regularization1}, since $dd^c h_v = \frac{1}{d^n} dd^c h'_v$, we know that $\omega_v \geq \eps^2 \omega_{\FS}$ and so $\lambda_{\omega} = O(\eps^{-2}) = O(d^{2k})$. Next, by \eqref{eq: HermitianConstants1}, we know that $c_{\omega_v,2} \leq O(c_{h,4}) = O_{\vphi}(\eps^{-O_r(\kappa^{-1})}) = O_{\vphi}(d^{O_r(\kappa^{-1}k)})$ and we may also bound $c_{h,3}$ similarly. Thus it suffices to have
$$d^{n-k} \geq d^{O_r(k)} O_{\vphi}(d^{O_r(\kappa^{-1}k)}) = O_{\vphi}(d^{O_r(\kappa^{-1}k)}).$$
Hence to apply Proposition \ref{VolumeDifference1}, it suffices to have $n \geq O_r(\kappa^{-1}k) + O_{\vphi}(1)$. Proposition \ref{VolumeDifference1} then tells us that
$$\frac{1}{d^n } \left(\log \vol B_{n,v,L^2} - \log \vol C_{n,v,L^2} \right) $$
$$\geq \eps \dim H^0(X(\bb{C}_v), L_n \otimes L_1^{n'}) (1 + \eps^2) \left(\int_X f d \mu_v + O_{\vphi}(\eps) \right).$$
Now similar to \eqref{eq: RiemannRoch1}, as $L_1 = L^e$ for some fixed positive integer $e$, we know that for $n$ sufficiently large depending only on $L$, we have
$$\dim H^0(X(\bb{C}_v), L_n \otimes L_1^{n'}) = (1+O(\eps)) \dim H^0(X(\bb{C}_v), L_n) = \frac{1 + O(\eps)}{[K:\bb{Q}]} \dim V_n.$$
Hence we obtain
$$\frac{1}{d^n \dim V_n} \left(\log \vol B'_{n,v,L^2} - \log \vol C'_{n,v,L^2} \right) \geq \frac{\eps}{[K:\bb{Q}]} \left( \int_X f d \mu_v + O_r(\eps) \right) $$
as desired. 
\end{proof}

Now by the same argument as in Corollary \ref{VolumeDifference2} applied to each archimedean place $v$, using Gromov's inequality we obtain for $n \geq O_r(\kappa^{-1} k) + O_{\vphi}(1)$ that 
\begin{equation} \label{eqref: GlobalVolumeBound1}
\log \vol B'_{n,\sup} - \log \vol B'_{n,L^2} \geq -d^n \eps^2 \dim V_n.
\end{equation}

Let $B_{n,\sup}$ be the unit ball of $V_n$ under the metric coming from $\ovl{L}(\eps f)$. Putting it all together, we obtain the following lower bound on $\log \vol B_{n,\sup} - \log \vol M_n$. 

\begin{proposition} \label{GlobalVolumeDifference3}
Assume that $f \leq 0$ and let $\kappa$ be a Holder exponent for the Green's function $G_v$ associated to $F$. Then for all $n \geq O_r(\kappa^{-1}k) + O_{\vphi}(1)$, we have
$$\frac{1}{d^n \dim V_n} \left(\log \vol B_{n,\sup} - \log \vol M_n \right) \geq \frac{\eps n_v}{[K:\bb{Q}]} \int_X f d\mu_{\vphi,v} - O_{\vphi}\left(\eps^2 + \frac{[K:\bb{Q}]^2 h(\alpha)}{d^n} \right)$$
where $\mu_{\vphi,v}$ is the equilibrium measure for $\vphi$ at the archimedean place $v$. 
\end{proposition}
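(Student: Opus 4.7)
The plan is to telescope the desired quantity across the four intermediate volumes $\log \vol B'_{n,\sup}$, $\log \vol B'_{n,L^2}$, $\log \vol C'_{n,L^2}$ and $\log \vol C'_{n,\sup}$, so that each of the five resulting differences is controlled by a bound already established in this section. Writing
\begin{align*}
\log \vol B_{n,\sup} - \log \vol M_n
&= \bigl(\log \vol B_{n,\sup} - \log \vol B'_{n,\sup}\bigr) + \bigl(\log \vol B'_{n,\sup} - \log \vol B'_{n,L^2}\bigr) \\
&\quad + \bigl(\log \vol B'_{n,L^2} - \log \vol C'_{n,L^2}\bigr) + \bigl(\log \vol C'_{n,L^2} - \log \vol C'_{n,\sup}\bigr) \\
&\quad + \bigl(\log \vol C'_{n,\sup} - \log \vol M_n\bigr),
\end{align*}
the first piece is at least $-d^n O_{\vphi}(\eps^2)\dim V_n$, since the sup-norms defined by $\ovl{L}_n(\eps f)$ and $\ovl{L}'_n(\eps f)$ differ multiplicatively by at most $e^{|h'_v-h_v|}\leq e^{d^n O_{\vphi}(\eps^2)}$ at each archimedean place by \eqref{eq: HermitianConstants2}; the second is at least $-d^n\eps^2\dim V_n$ by Gromov's inequality \eqref{eqref: GlobalVolumeBound1}; the third, providing the main term, is at least $\frac{n_v d^n \eps \dim V_n}{[K:\bb{Q}]}\bigl(\int_X f\, d\mu_v + O_{\vphi}(\eps)\bigr)$ once $n\geq O_r(\kappa^{-1}k)+O_{\vphi}(1)$ by Lemma~\ref{GlobalVolumeDifference1} combined with Proposition~\ref{GlobalVolumeDifference2}; the fourth is at least $-\log[K:\bb{Q}]\cdot\dim V_n$ by the trivial comparison \eqref{eq:VolumeDifference1}, which is absorbed into $O_{\vphi}(\eps^2)d^n\dim V_n$ as soon as $n\geq 2k$; and the fifth is at least $-c(\eps^2 d^n + [K:\bb{Q}]^2 h(\alpha))\dim V_n$ by Proposition~\ref{QuantProposition3}.

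Summing these five bounds and dividing through by $d^n \dim V_n$ yields the desired inequality but with $\int_X f\, d\mu_v$ in place of $\int_X f\, d\mu_{\vphi,v}$. To upgrade to the equilibrium measure, I note that both $\mu_v$ and $\mu_{\vphi,v}$ are normalised top Monge--Amp\`ere powers of line-bundle metrics on $\ovl{L}_v$ whose difference satisfies $|h'_v - h_v|\leq d^n O_{\vphi}(\eps^2)$ by \eqref{eq: HermitianConstants2}. Integrating by parts on the compact manifold $X(\bb{C}_v)$ and telescoping,
\[
\int f\,\bigl((i\del\ovl{\del} h'_v)^r - (i\del\ovl{\del} h_v)^r\bigr) = \int (h'_v-h_v)\, i\del\ovl{\del} f \wedge \sum_{k=0}^{r-1} (i\del\ovl{\del} h'_v)^{r-1-k}\wedge (i\del\ovl{\del} h_v)^k,
\]
and combining $|h'_v - h_v|\leq d^n O_{\vphi}(\eps^2)$ with the coefficient bound $|i\del\ovl{\del} f|\leq O(c_{f,2})\omega_{\FS} \leq O(1)\omega_{\FS}$ (from $c_{f,3}\leq 2$) and the mixed intersection numbers $\int (i\del\ovl{\del} h'_v)^{r-1-k}\wedge(i\del\ovl{\del} h_v)^k\wedge\omega_{\FS} = d^{n(r-1)}\deg L$ (all three factors lying in the class $d^n c_1(L)$, resp.\ $c_1(L)$), produces $\bigl|\int f\, d\mu_v - \int f\, d\mu_{\vphi,v}\bigr| = O_{\vphi}(\eps^2)$. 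Substituting gives the claim.

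I expect the main subtlety to lie in this final measure comparison: the canonical metric $h_v$ is only continuous and $\omega_{\vphi,v}$ is only a closed positive $(1,1)$-current, so the wedge power $(i\del\ovl{\del} h_v)^r$ and the integration-by-parts identity above must be interpreted through the Bedford--Taylor theory of Monge--Amp\`ere operators on bounded plurisubharmonic functions. Continuity of Monge--Amp\`ere under the uniform convergence $h'_v\to h_v$ legitimizes the passage to the limit, and indeed the regularization procedure of Section~\ref{sec: Background} was set up precisely so that this estimate could be carried out quantitatively.
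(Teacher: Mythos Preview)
Your proof is correct and follows essentially the same route as the paper: the telescoping decomposition into five pieces is exactly the paper's combination of Proposition~\ref{QuantProposition3}, \eqref{eq:VolumeDifference1}, Lemma~\ref{GlobalVolumeDifference1} with Proposition~\ref{GlobalVolumeDifference2}, \eqref{eqref: GlobalVolumeBound1}, and the $|h'-h|\leq d^n O_{\vphi}(\eps^2)$ comparison. The only difference is in the final measure comparison $\int f\,d\mu_v - \int f\,d\mu_{\vphi,v}$: the paper localizes via a partition of unity and invokes the Chern--Levine--Nirenberg inequality to bound the mass of the mixed Monge--Amp\`ere currents, whereas you integrate by parts globally and read off the bound from cohomological intersection numbers. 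Both arguments rely on Bedford--Taylor theory to make sense of $(dd^c h_v)^r$ for the merely continuous canonical metric, and both yield the same $O_{\vphi}(\eps^2)$ bound; your version is slightly more streamlined, while the paper's local approach makes the role of the p.s.h.\ potentials more explicit.
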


\begin{proof}
Combining Propositions \ref{QuantProposition3}, \ref{GlobalVolumeDifference2} along with the inequalities from \eqref{eq:VolumeDifference1} and \eqref{eqref: GlobalVolumeBound1}, we obtain
$$\frac{1}{d^n \dim V_n} (\log \vol B'_{n,\sup} - \log \vol M_n) \geq \frac{\eps n_v}{[K:\bb{Q}]} \int_X f d \mu_v - O_{\vphi}\left(\eps^2 + \frac{[K:\bb{Q}]^2 h(\alpha)}{d^n} \right).$$
Now by \eqref{eq: HermitianConstants2}, we know that $h'$ and $h$ differ by $d^n O_{\vphi}(\eps^2)$ and so we have
$$\frac{1}{d^n \dim V_n} \left( \log \vol B_{n,\sup} - \log \vol B'_{n,\sup} \right) \geq \frac{\eps n_v}{[K:\bb{Q}]} \int_X f d \mu_v - O_{\vphi} \left(\eps^2 + \frac{[K:\bb{Q}] h(\alpha)}{d^n} \right).$$
It now suffices to obtain a bound for
$$\left(\int_X f d \mu_v - \int_X f d \mu_{\vphi,v} \right).$$
Let $\{U_{\alpha}\}_{\alpha \in I}$ be our choice of good charts, with $I$ being a finite set. Recall that each $U_{\alpha}$ is isomorphic to the polydisc $\bb{D}_3$ By using a partition of unity, we may assume that $f$ is supported within $\bb{D}_{3/2}$ of one of our charts $U_{\alpha}$. Let $u = i \frac{h_{\alpha}}{d^n}$ and $u' = i \frac{h'_{\alpha}}{d^n}$. Then $u$ and $u'$ are p.s.h. functions and locally within $\bb{D}_2$, we know that 
$$d \mu_v = c (dd^c u)^{\wedge r} \text{ and } d \mu'_v = c (dd^c u')^{\wedge r}$$
for some constant $c$ depending only on $r$.
We write 
$$d \mu_v - d \mu'_v = c\sum_{i=0}^{r-1} \left((dd^c u)^{\wedge (r-i)} \wedge (dd^c u')^{\wedge i} - (dd^c u)^{\wedge (r-i-1)} \wedge (dd^c u')^{\wedge (i+1)} \right).$$
We may rewrite each term in the sum as 
$$dd^c \left((u-u') \wedge (dd^c u)^{\wedge (r-i-1)} \wedge (dd^c u')^{\wedge i} \right)$$
where this expression makes sense as both $u$ and $u'$ are p.s.h. 
We now apply the Chern--Levine--Nirenberg inequality \cite[Chapter 3, 3.3]{Dem12} to $(dd^c u)^{\wedge r-i-1} \wedge (dd^c u')^{\wedge i}$ to obtain that 
$$||(u'-u)(dd^c u)^{\wedge (r-i-1)} \wedge (dd^c u')^{\wedge i}||_{\bb{D}_{3/2}} $$
$$\leq C |u|^{r-i-1}_{L^{\infty}(\bb{D}_2)} |u'|^{i}_{L^{\infty}(\bb{D}_2)} |u-u'|_{L^{\infty}(\bb{D}_2)}$$
for some constant $C > 0$ independent of $u$ and $u'$. But since $h'-h = d^n O_{\vphi}(\eps^2)$ we may bound this by $c_{h,0}^{r} \cdot O_{\vphi}(\eps^2)$ and hence we obtain that
$$||(u'-u)(dd^c u)^{\wedge (r-i-1)} \wedge (dd^c u')^{\wedge i}||_{\bb{D}_{3/2}} \leq O_{\vphi}(\eps^2).$$
Finally as $f$ is of compact support inside $\bb{D}_{3/2}$, we have
$$\int f dd^c \left((u-u') \wedge (dd^c u)^{\wedge (r-i-1)} \wedge (dd^c u')^{\wedge i} \right) = \int (dd^c f) \left((u-u') \wedge (dd^c u)^{\wedge (r-i-1)} \wedge (dd^c u')^{\wedge i} \right) $$
$$\leq O_{\vphi}(\eps^2 \cdot c_{f,2}) \leq O_{\vphi}(\eps^2)$$
as $c_{f,2} \leq 1$. Hence we may replace $d \mu_v$ with $d \mu_{\vphi,v}$ at the cost of an $O_{\vphi}(\eps^2)$ error. 
\end{proof}

We now prove Theorem \ref{QuantTheorem2}. 

\begin{proof}[Proof of Theorem \ref{QuantTheorem2}]
Given a smooth $f: X(\bb{C}_v) \to \bb{R}$, we first add a constant to it so that $\int_X f d \mu_{\vphi,v} = 0$. We may also divide $f$ by $c_{f,3}$ and so we may assume that $c_{f,3} \leq 2$. Given $\eps > 0$, we round it down to the nearest $d^{-k}$. This changes our constants by at most a bounded power of $d$. It now suffices to show the existence of $c_1,c_2,c_3 > 0$ such that for $x \in X(\ovl{K})$ outside of a hypersurface of degree at most $c_1 \eps^{-c_2}$, we have
$$\frac{1}{|F_x|} \sum_{y \in F_x} f(y) \geq -\left(\frac{\h_{\vphi}(x)}{\eps} + c_3 \eps \right)$$
for we can deduce the same inequality for $-f$ to get the corresponding upper bound. Let $B_{n,\sup}$ be the unit ball of $V_n$ for the adelic line bundle $\ovl{L}_n(\eps f)$. Then for $n \geq O_r(\kappa^{-1} k) + O_{\vphi}(1)$, we claim that 
$$\frac{1}{d^n \dim V_n} (\log \vol B_{n,\sup} - \log \vol M_n) \geq O_{\vphi}\left(\eps^2 + \frac{[K:\bb{Q}]h(\alpha)}{d^n} \right)$$
Indeed, we set $f' = f - c$ for some constant $c > 0$ such that $f' \leq 0$ and apply Proposition \ref{GlobalVolumeDifference3}. Scaling back $f'$ to $f$ then replaces $\int_X f' d\mu_{\vphi,v}$ with $\int_X f d\mu_{\vphi,v}$. Now for some suitable $c_3 = c_3(K,\vphi)$, the corresponding unit ball $B'_{n,\sup}$ for the metric on $\ovl{L}(\eps f + c_3 \eps^2 )$ satisfies 
$$\frac{1}{d^n \dim V_n} (\log \vol B'_{n,\sup} - \log \vol M_n) \geq \eps^2 \implies \log \vol B'_{n,\sup} - \log \vol M_n \geq \eps^2 d^n \dim V_n.$$
As $d^n \eps^2 \geq \log 2$, we may apply Minkowski's theorem to conclude that there exists a section $s$ of $H^0(X,L_n)$ such that $|s|_{\sup,v} \leq 1$ for all places $v$ of $\ovl{L}_n(\eps f + c_3 \eps^2)$. 
\par 
For $x \not \in \divv(s)$, we use our small section $s$ to compute the height of $x$ with respect to $\ovl{L}_n(\eps f + c_3 \eps^2)$, which we will denote by $h_L$. As our section $s$ is small, we have $h_L(x) \geq 0$. On the other hand, we have
$$h_L(x) = \h_{\vphi}(x) + \frac{\eps }{|F_x|}\sum_{y \in F_x} f(y) + c_3 \eps^2.$$
Dividing by $\eps$, we obtain that
$$\frac{1}{|F_x|} \sum_{y \in F_x} f(y) \geq \frac{-\h_{\vphi}(x)}{\eps} - c_3 \eps$$
which is what we wanted, as desired. Our inequalities holds as long as $x \not \in \divv(s)$, which is a hypersurface defined over $K$ of degree $d^n$ intersected with $X \subset \bb{P}^N$. Since we may take $n = O_r(\kappa^{-1} k) + O_{\vphi}(1)$, this is a hypersurface of degree $c_1 \eps^{-c_2}$ where $c_1 = c_1(\vphi)$ and $c_2 = c_2(\vphi)$ as desired.
\end{proof}

\section{Quantitative Equidistribution of Periodic Points on Surfaces}
We now give our first application of Theorem \ref{QuantTheorem2}. Let $X$ be a surface, $\vphi: X \to X$ be a polarized endomorphism of degree $d \geq 2$ and let $\Per_n$ be the set the points of period $n$. When $X = \bb{P}^2$ Briend--Duval \cite{BD01} has shown that the discrete measure $\frac{1}{|\Per_n|} \sum_{x \in \Per_n} \delta_x$ converges to the equilibrium measure $\mu_{\vphi}$. In general, equidistribution of periodic points has been shown by Dinh--Nguyen--Truong \cite{DNT15}. 
\par 
When $\vphi$ is defined over a number field $K$, we will prove that an exponential rate of convergence holds, answering \cite[Problem 7]{DS17} in the case where $\vphi$ is defined over a number field.  
\par 
To do so, we will bound the number of elements of $\Per_n$ that can lie on a geometrically irreducible degree $e$ curve $C$ in terms of $e$. By degree, we mean the induced degree after embedding $X \xhookrightarrow{} \bb{P}^N$ so that $\vphi$ extends to $F: \bb{P}^N \to \bb{P}^N$. We will split into two cases, the first is when $C$ is periodic of period $n$ and the second is when $C$ is not. We handle the second case first.

\begin{proposition} \label{PeriodicCurve1}
Let $C \subseteq X$ be a degree $e$ curve, irreducible over $\bb{C}$, such that $f^n(C) \not = C$. Then there are at most $e^2 d^n$ many points of period $n$ that lie on $C$.  
\end{proposition}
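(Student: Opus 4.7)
The plan is to reduce the counting problem to a Bezout-type intersection bound between $C$ and its image $\vphi^n(C)$, and then carefully control the degree of $\vphi^n(C)$ in $\bb{P}^N$.

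First I would observe that if $x \in \Per_n$ lies in $C$, then $x = \vphi^n(x)$ and $x \in C$, so $x = \vphi^n(x) \in \vphi^n(C)$ as well, giving $\Per_n \cap C \subseteq C \cap \vphi^n(C)$. Since $\vphi$ is a morphism, $\vphi^n(C)$ is an irreducible closed subvariety of $X$, equal to either a point or an irreducible curve. If $\vphi^n(C)$ is a point the bound is trivial, so assume it is a curve. By hypothesis $\vphi^n(C) \neq C$, so the two irreducible curves share no component and $C \cap \vphi^n(C)$ is a finite subset of $X$.

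Next I would bound $\deg \vphi^n(C)$. Using the extension $\tilde{\vphi}: \bb{P}^N \to \bb{P}^N$, which has $\tilde{\vphi}^* O(1) \simeq O(d)$, the projection formula gives
$$\tilde{\vphi}_* C \cdot H \;=\; C \cdot \tilde{\vphi}^* H \;=\; d\,(C \cdot H) \;=\; d e$$
for a hyperplane $H \subset \bb{P}^N$. Since $\tilde{\vphi}_* C = (\deg \tilde{\vphi}|_C)\,\tilde{\vphi}(C)$ as cycles and $\deg \tilde{\vphi}|_C \geq 1$, one concludes $\deg \vphi(C) \leq d e$. Iterating gives $\deg \vphi^n(C) \leq d^n e$.

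Finally, I would invoke a generalized Bezout inequality in $\bb{P}^N$: for two distinct irreducible curves $C_1, C_2 \subset \bb{P}^N$ the intersection $C_1 \cap C_2$ is finite and satisfies $|C_1 \cap C_2| \leq (\deg C_1)(\deg C_2)$. Applied to $C_1 = C$ and $C_2 = \vphi^n(C)$ this yields
$$|\Per_n \cap C| \;\leq\; |C \cap \vphi^n(C)| \;\leq\; e \cdot d^n e \;=\; e^2 d^n,$$
as desired.

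The only step requiring care is the last one, since two curves in $\bb{P}^N$ with $N \geq 3$ never meet in the ``expected'' dimension, so classical Bezout does not apply directly. The cleanest way to close the gap is to cite Fulton's refined Bezout theorem, which bounds the sum of intersection multiplicities over isolated components without assuming proper intersection. Alternatively, for $N \geq 3$ one can project $\bb{P}^N \dashrightarrow \bb{P}^2$ from a sufficiently general linear subspace so that the restriction to $C \cup \vphi^n(C)$ is a regular embedding (this is possible since the secant variety of a curve has dimension at most $3$); degrees are preserved and intersection points can only increase, reducing to the classical planar Bezout theorem.
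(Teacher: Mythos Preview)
Your proof is correct and follows essentially the same route as the paper: show $\Per_n \cap C \subseteq C \cap \vphi^n(C)$, bound $\deg \vphi^n(C) \leq e d^n$ via the projection formula, and finish with Bezout. The paper applies the projection formula to $F^n$ in one step rather than iterating, but this is equivalent; you are actually more careful than the paper in flagging that classical Bezout for curves in $\bb{P}^N$ with $N \geq 3$ needs either Fulton's refined version or a generic projection to $\bb{P}^2$, whereas the paper simply writes ``by Bezout's''.
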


\begin{proof}
Let $L = O(1)$ on $\bb{P}^N$. By the projection formula, we know that 
$$(F^n)^*L \cdot C = F^n_*C \cdot L.$$
Since $(f^n)^*L = O(d^n)$, the LHS is $ed^n$. The RHS is at least $\deg(\vphi^n(C))$ and so we get that $\deg(\vphi^n(C)) \leq ed^n$. Now if $x$ is a periodic point of period $n$ that lies in $C$, then it also lies in $\vphi^n(C)$ and hence lies in the intersection $C \cap \vphi^n(C)$. Since $C$ is irreducible over $\bb{C}$ and is not equal to $\vphi^n(C)$, by Bezout's we have 
$$|C \cap \vphi^n(C)| \leq \deg(\vphi^n(C)) \cdot \deg(C) \leq e^2 d^n$$
and hence there are at most $e^2 d^n$ many points of period $n$ that lie on $C$. 
\end{proof}

We now handle the other case, which is when $C$ is periodic of period $n$. Let $C'$ be a smooth model of $C$ and so $\vphi^n$ induces a birational map $C' \to C'$. It then extends to an endomorphism $\tilde{\vphi^n}: C' \to C'$. 

\begin{proposition} \label{PeriodicCurve2}
The number of fixed points of $\tilde{\vphi^n}$ on $C'$ is at most $d^n + 2d^{n/2} + 1$. 
\end{proposition}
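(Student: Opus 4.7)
The plan is to apply the holomorphic Lefschetz fixed-point formula to $\tilde{\vphi^n}$, after first controlling both its degree and the genus of $C'$. For the degree, since $(\vphi^n)^*L \simeq L^{\otimes d^n}$, restricting to the periodic curve $C$ gives $(\vphi^n|_C)^*(L|_C) \simeq L^{\otimes d^n}|_C$. Comparing degrees of line bundles on $C$ (which is preserved when passing to its normalization $C'$) forces $\deg \tilde{\vphi^n} = d^n$.

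Next I would bound the genus $g$ of $C'$. Applying Riemann--Hurwitz to the self-map $\tilde{\vphi^n}$ gives $2g-2 = d^n(2g-2) + R$ where $R \geq 0$ is the degree of the ramification divisor, so $(d^n-1)(2g-2) = -R \leq 0$. Since $d^n \geq 2$, this forces $g \leq 1$, so $C'$ is either $\bb{P}^1$ or an elliptic curve. With this in hand, the Lefschetz trace formula yields
$$\#\operatorname{Fix}(\tilde{\vphi^n}) \leq \Lambda(\tilde{\vphi^n}) = 1 - \operatorname{tr}\bigl(\tilde{\vphi^n}^* \mid H^1(C',\bb{C})\bigr) + d^n,$$
where we use that the contribution from $H^2$ is $\deg \tilde{\vphi^n} = d^n$ and that every isolated holomorphic fixed point contributes with local index $\geq 1$ (and $\tilde{\vphi^n}$ is not the identity since its degree exceeds one, so its fixed locus is finite). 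In the case $g = 0$ the middle term vanishes and we obtain $d^n + 1$, which is already within the desired bound.

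In the case $g = 1$, the curve $C'$ is an elliptic curve and $\tilde{\vphi^n}$ factors as a translation composed with an isogeny of degree $d^n$. The action on $H^1(C',\bb{C})$ has two eigenvalues which are complex conjugates $\alpha, \ovl{\alpha}$ satisfying $\alpha \ovl{\alpha} = d^n$, so $|\operatorname{tr}| \leq 2 d^{n/2}$. Combined with the above, this yields the claimed bound $d^n + 2 d^{n/2} + 1$. The main thing to be careful about is the $g=1$ case: I need to verify that the endomorphism of $C'$ really is a translate of an isogeny (rather than, say, a purely non-holomorphic self-map) and that the Hodge-theoretic eigenvalue bound $|\alpha|^2 = d^n$ applies; both follow from the fact that $\tilde{\vphi^n}$ is a holomorphic map of degree $d^n$ on an elliptic curve, which is automatically of this form.
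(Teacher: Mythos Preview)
Your proof is correct and follows essentially the same route as the paper, which outsources the two key steps to \cite{MMSSZZ22}: their Lemma~2.2 bounds the first dynamical degree of the restricted map by that of $\vphi^n$ (namely $d^n$), and their Proposition~6.2 then gives the fixed-point bound $d^n + 2d^{n/2} + 1$ on a curve of genus $\leq 1$, which is precisely the Lefschetz/eigenvalue computation you carry out by hand. Your version is more self-contained and in fact slightly sharper on the first point: you obtain $\deg \tilde{\vphi^n} = d^n$ exactly from the polarization identity $(\vphi^n|_C)^*(L|_C) \simeq (L|_C)^{\otimes d^n}$, whereas the paper only records the inequality. You also exclude $g \geq 2$ differently: you use Riemann--Hurwitz directly once $\deg \tilde{\vphi^n} \geq 2$ is known, while the paper argues that any self-map of a higher-genus curve is a finite-order automorphism, so some iterate $\vphi^{nm}$ would fix $C$ pointwise, contradicting that a polarized endomorphism of degree $\geq 2$ has only finitely many fixed points. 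Both routes are valid; yours avoids the external citation and the extra contradiction step.
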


\begin{proof}
If $C'$ is of genus $g \geq 2$, it follows that $\tilde{\vphi^n}$ is of finite order. Then there exists some $m$ such that $f^{nm}$ induces the identity map on $C$ but this is impossible as this would imply that $\vphi^{nm}$ on $X$ has infinitely many fixed points. Hence $C'$ is either isomorphic to $\bb{P}^1$ or to an elliptic curve $E$. 
\par 
If $C'$ is isomorphic to $\bb{P}^1$ or $\bb{E}$, then by Lemma 2.2 of \cite{MMSSZZ22}, we know that the first dynamical degree of $\tilde{\vphi^n}$ is at most the first dynamical degree of $\vphi^n$, which is $d^n$. We now apply Proposition 6.2 of \cite{MMSSZZ22} to conclude that there are at most $d^n + 2d^{n/2} + 1$ many fixed points for $\tilde{\vphi^n}$ as desired. 
\end{proof}

Putting both propositions together, we obtain the following bound on not necessarily irreducible curves.

\begin{proposition} \label{PeriodicCurve3}
Let $C$ be a not necessarily irreducible curve of degree $e$. Then there are at most $3 e^2 d^n $ many periodic points of period $n$ that lie on $C$. 
\end{proposition}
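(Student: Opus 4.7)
The plan is to decompose $C$ into its geometrically irreducible components over $\bb{C}$ and apply Propositions \ref{PeriodicCurve1} and \ref{PeriodicCurve2} componentwise. Write $C = \bigcup_{i=1}^{k} C_i$ with $C_i$ geometrically irreducible of degree $e_i$, so $\sum_i e_i = e$. Since every periodic point of period $n$ on $C$ lies on at least one $C_i$, it suffices to bound the number of such points on each $C_i$ and sum.

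For each component I would split into two cases according to whether $\vphi^n(C_i) = C_i$. In the non-periodic case, Proposition \ref{PeriodicCurve1} immediately gives at most $e_i^2 d^n$ period-$n$ points on $C_i$. In the periodic case, I would pass to the smooth model $\pi: C_i' \to C_i$, lift the induced self-map to $\tilde{\vphi^n}: C_i' \to C_i'$, and apply Proposition \ref{PeriodicCurve2} to bound its fixed points by $d^n + 2 d^{n/2} + 1$. The elementary inequality $2 d^{n/2} + 1 \leq 2 d^n$ for $d \geq 2, n \geq 1$ then upgrades this to a clean bound of $3 d^n$.

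Combining the two cases, each $C_i$ contributes at most $\max(e_i^2 d^n, 3 d^n) \leq 3 e_i^2 d^n$ since $e_i \geq 1$. Summing over $i$ and using the elementary inequality $\sum_i e_i^2 \leq \left(\sum_i e_i\right)^2 = e^2$ then yields the desired bound $3 e^2 d^n$.

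The main subtlety is the bookkeeping in the periodic case: a singular periodic point of $C_i$ need not lift to a fixed point of $\tilde{\vphi^n}$ on $C_i'$, since $\tilde{\vphi^n}$ may permute the fiber of $\pi$ over that point non-trivially. However, the number of singular points of an irreducible degree $e_i$ curve is finite and controlled in terms of $e_i$, and every smooth periodic point lifts uniquely to a fixed point on $C_i'$, so the finite discrepancy from singular points is comfortably absorbed into the slack in the constant $3$.
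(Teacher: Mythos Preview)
Your approach is essentially the same as the paper's: decompose into geometrically irreducible components, apply Proposition~\ref{PeriodicCurve1} to the components with $\vphi^n(C_i)\neq C_i$ and Proposition~\ref{PeriodicCurve2} to the $\vphi^n$-invariant ones, then sum using $\sum e_i^2 \leq (\sum e_i)^2$. The only difference is bookkeeping: the paper groups the two types of components and bounds their total contributions separately as $d^n(e-(m-k))^2 + (m-k)(d^n+2d^{n/2}+1)$ before combining, whereas you use the cruder but cleaner uniform per-component bound $3e_i^2 d^n$. Both routes land at $3e^2 d^n$.

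Your final paragraph flags a genuine subtlety that the paper's proof glosses over: Proposition~\ref{PeriodicCurve2} bounds fixed points of $\tilde{\vphi^n}$ on the normalization $C_i'$, and a singular fixed point on $C_i$ need not lift to a fixed point upstairs. Your suggested fix---bounding the number of singular points in terms of $e_i$---is the right idea, but ``comfortably absorbed into the slack in the constant $3$'' is too quick as stated. On a fixed smooth surface $X$, adjunction gives $p_a(C_i)=1+\tfrac{1}{2}C_i\cdot(C_i+K_X)$, so the number of singular points is $O_X(e_i^2)$; since the slack in your per-component bound is $3e_i^2 d^n - 3d^n \geq 3(e_i^2-1)$, this is indeed absorbed once $d^n$ exceeds an $O_X(1)$ constant (and for small $n$ the statement can be adjusted trivially). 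If you want the explicit constant $3$ for all $n\geq 1$, you would need to be more careful here, but for the applications in the paper an $O_X(1)$ constant is harmless.
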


\begin{proof}
We write $C$ as an union of irreducicble curves over $\bb{C}$, say $C_1,C_2,\ldots,C_m$. Then if $\deg(C_i) = e_i$, we have $\sum_{i=1}^{m} e_i = e$. Let $C_1,\ldots,C_{k}$ be curves that are not periodic of period $n$ and let $C_{k+1},\ldots,C_m$ be those that are periodic of period $m$. Then $\sum_{i=1}^{k} e_i \leq e-(m-k)$. Applying Proposition \ref{PeriodicCurve2} to $C_{k+1},\ldots,C_m$, we obtain that the number of periodic points of period $n$ on each of them is at most $d^n + 2d^{n/2} + 1$ and in total there is at most $(m-k)(d^n + 2d^{n/2} + 1)$ periodic points of period $n$ among $C_{k+1},\ldots,C_m$.
\par 
For $C_1,\ldots,C_k$, we apply Proposition \ref{PeriodicCurve1} to obtain that they have at most $d^n \sum_{i=1}^{k} e_i^2$ many periodic points of period $n$. Since $\sum_{i=1}^{k} e_i^2 \leq (\sum_{i=1}^{k} e_i)^2$, this is at most $d^n (e-(m-k))^2$. Putting it together, we have at most 
$$d^n (e-(m-k))^2 + (m-k) (d^n + 2d^{n/2} + 1) \leq e^2 (d^n + 2d^{n/2} + 1) \leq 3e^2 d^n$$
periodic points of period $n$ that lie on $C$. 
\end{proof}

We can now prove our exponential rate of convergence for periodic points of $\vphi$.

\begin{theorem} \label{PeriodicCurve4}
Let $X$ be a smooth projective surface over a number field $K$ and $\vphi: X \to X$ be a polarized endomorphism of degree $d \geq 2$. Then there exists constants $C > 0$ and $\lambda > 1$, depending on $\vphi$, such that if $v$ is an archimedean place of $K$, then for any smooth function $f: X(\bb{C}_v) \to \bb{R}$, we have
$$\left|\frac{1}{|\Per_n|} \sum_{x \in \Per_n} f(x) - \int f d \mu_{\vphi,v} \right| \leq C c_{f,3}\lambda^{-n}.$$
\end{theorem}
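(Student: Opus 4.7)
The plan is to apply Theorem \ref{QuantTheorem2} to periodic points (for which $\h_{\vphi}(x) = 0$) and use the counting bound of Proposition \ref{PeriodicCurve3} to absorb the contribution of the exceptional hypersurface $H(f,\eps)$. Since every $x \in \Per_n$ satisfies $\h_{\vphi}(x) = 0$, the inequality from Theorem \ref{QuantTheorem2} simplifies, for any $\eps > 0$, to
$$\left|\frac{1}{|F_x|}\sum_{y \in F_x} f(y) - \int f\, d\mu_{\vphi,v}\right| \leq c_{f,3}\, c_3\, \eps$$
whenever $x \in \Per_n \setminus H(f,\eps)$, where $H(f,\eps)$ is a hypersurface of $X$ defined over $K$ of degree at most $c_1 \eps^{-c_2}$. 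Since $X$ is a surface, $H(f,\eps)$ is (cut out by) a curve of degree at most $c_1 \eps^{-c_2}$ with respect to the chosen projective embedding.

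Because $H(f,\eps)$ is defined over $K$, each Galois orbit is either entirely inside or entirely outside it, so I can partition $\Per_n$ accordingly. Summing orbit by orbit and dividing by $|\Per_n|$, the orbits lying outside $H(f,\eps)$ contribute at most $c_{f,3}\, c_3\, \eps$ to the total error. For the orbits lying inside $H(f,\eps)$, I will estimate trivially: each point contributes at most $2\sup|f| \leq 2 c_{f,3}$, and Proposition \ref{PeriodicCurve3} bounds the number of points of $\Per_n$ inside a curve of degree $c_1 \eps^{-c_2}$ by $3 c_1^2 \eps^{-2c_2} d^n$. Combining, I get
$$\left|\frac{1}{|\Per_n|}\sum_{x \in \Per_n} f(x) - \int f\, d\mu_{\vphi,v}\right| \leq c_{f,3}\left(c_3\,\eps + \frac{6\, c_1^2\, \eps^{-2c_2}\, d^n}{|\Per_n|}\right).$$

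To finish, I need a lower bound $|\Per_n| \geq c_0\, d^{2n}$ for some $c_0 = c_0(\vphi) > 0$. This is the standard asymptotic for polarized endomorphisms of smooth projective surfaces: the topological degree of $\vphi$ is $\deg \vphi = d^2$, and the Lefschetz fixed-point formula applied to $\vphi^n$, combined with control of the contribution from positive-dimensional periodic subvarieties and from periodic points of smaller period, yields $|\Per_n| \geq c_0 d^{2n}$ for $n$ sufficiently large. Given this, the second error term becomes $O_{\vphi}(c_{f,3}\, \eps^{-2c_2}\, d^{-n})$, and the two terms can be balanced by choosing $\eps = d^{-n/(2c_2+1)}$, which gives
$$\left|\frac{1}{|\Per_n|}\sum_{x \in \Per_n} f(x) - \int f\, d\mu_{\vphi,v}\right| \leq C\, c_{f,3}\, \lambda^{-n}$$
with $\lambda = d^{1/(2c_2+1)} > 1$ and $C = C(\vphi, K)$, as required.

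The main obstacle is the lower bound $|\Per_n| \geq c_0\, d^{2n}$: one must rule out the possibility that most fixed points of $\vphi^n$ are absorbed into (finitely many) periodic subvarieties or accrue large multiplicities. For polarized endomorphisms of smooth projective surfaces this is known, but it is the crucial ingredient that turns the polynomial-in-$\eps$ degree bound of Theorem \ref{QuantTheorem2} into an exponential rate of convergence after optimization in $\eps$.
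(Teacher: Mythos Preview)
Your proof is correct and follows essentially the same approach as the paper: apply Theorem \ref{QuantTheorem2} to the Galois orbits in $\Per_n$, use Proposition \ref{PeriodicCurve3} to bound the number of periodic points on the exceptional curve, and invoke the lower bound $|\Per_n| \geq c_0 d^{2n}$. The paper resolves your concern about this lower bound by citing \cite[Theorem 1.1]{DZ23}. Your choice $\eps = d^{-n/(2c_2+1)}$ is in fact the optimal balance and yields a slightly better $\lambda$ than the paper's choice $\eps = d^{-n/(4c_2)}$, which gives $\lambda = d^{1/(4c_2)}$.
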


\begin{proof}
Let $f$ be a given smooth function and let's assume that $K$ is the minimal field of definition for $\vphi$. Let $c_1,c_2,c_3$ be the constants in Theorem \ref{QuantTheorem2} for $\vphi$ and $K$. We set $\eps = d^{n/4c_1}$. We write $\Per_n$ as an union of $\Gal(\ovl{K}/K)$-orbits $F_1,\ldots,F_m$. Now by Theorem \ref{QuantTheorem2}, there exists a curve $C_n$ defined over $K$, of degree $c_1 \eps^{-c_2} = c_1 d^{n/4}$, such that if $x$ is a periodic point that doesn't lie on it, we have
$$\left|\frac{1}{|F_x|} \sum_{y \in F_x} f(y) - \int f d \mu_{\vphi,v} \right| \leq c_3 \eps  = c_3 d^{-n/4c_2}.$$
Now by Proposition \ref{PeriodicCurve3}, we know that $C_n$ contains at most $3c_1^2 d^{3n/2} $ many periodic points of period $n$. Let $F_1,\ldots,F_k$ be the $\Gal(\ovl{K}/K)$-orbits in $\Per_n$ that do not lie on $C_n$ and let $F$ be their union. Then we also have
$$\left|\frac{1}{|F|} \sum_{y \in F} f(y) - \int f d \mu_{\vphi,v} \right| \leq c_3 c_{f,3} d^{-n/4c_2}.$$
Now there are at most $3c_1^2 d^{3n/2}$ remaining periodic points of period $n$, and by \cite[Theorem 1.1]{DZ23}, we know that $|\Per_n| \geq \alpha d^{2n}$ for some constant $\alpha > 0$ depending on $\vphi$. Hence the remaining points can only add an error of $3 \alpha^{-1} c_1^{2} c_{f,0} d^{-n/2}$. As we may take $c_1,c_2 \geq 1$, this is $\leq 3 \alpha^{-1} c_1^{2} c_{f,3} d^{-n/4c_2}$ and so we obtain
$$\left|\frac{1}{|\Per_n|} \sum_{x \in \Per_n} f(x) - \int f d\mu_{\vphi,v} \right| \leq 3 \alpha^{-1} c_1^{2} c_{f,3} C_3d^{-n/4c_2}.$$
We may then take $\lambda = d^{1/4c_2}$ and $C = 3 \alpha^{-1} c_1^2 c_{f,3} C_3$ as desired.
\end{proof}

\section{Bound on Preperiodic Points of Bounded Degree}
We now turn into our next application of Theorem \ref{QuantTheorem2}. Let $X$ be a smooth projective variety over a number field $K$ and $\vphi: X \to X$ a polarized endomorphism of degree $d \geq 2$. Our first goal is to control the geometry of preperiodic points of $\vphi$ that are of degree $D$ over our base field $K$. We will refer to this set as $\Prep_{\vphi,D}$. 
\par 
Fix an archimedean place $v$ of $M_K$ along with a choice of good charts $\cal{U}_I$ for $X(\bb{C}_v)$. Then by \cite{DNS10}, we know that the equilibrium measure $\mu_{\vphi,v}$ is locally moderate and in particular, we can find constants $C,\kappa > 0$ such that $\mu_{\vphi,v}(D(x,r)) \leq Cr^{\kappa}$ for all sufficiently small $r$, where we view $x$ as lying inside $\bb{D}_1$ for some chart $U_i \simeq \bb{D}_3$. 

\begin{lemma} \label{Measure1}
There exists a chart $U_i \simeq \bb{D}_3$ such that $\mu(\bb{D}_1) > 0$.    
\end{lemma}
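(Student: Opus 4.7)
The plan is to argue by finite covering. By the definition of a good choice of charts, every point of $X(\bb{C}_v)$ lies in the unit polydisc $\bb{D}_1$ of at least one chart $U_\alpha \simeq \bb{D}_3$. Since the index set $I$ is finite, the unit polydiscs $\{\bb{D}_1^{(\alpha)}\}_{\alpha \in I}$ form a finite cover of $X(\bb{C}_v)$.

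Next, I would invoke the fact that the equilibrium measure $\mu_{\vphi,v}$ is a probability measure on $X(\bb{C}_v)$ (this is part of its definition, e.g.\ from \cite{DS10}), so $\mu_{\vphi,v}(X(\bb{C}_v)) = 1$. By countable subadditivity applied to this finite cover, one has
$$1 = \mu_{\vphi,v}(X(\bb{C}_v)) \leq \sum_{\alpha \in I} \mu_{\vphi,v}(\bb{D}_1^{(\alpha)}),$$
so at least one term on the right-hand side must be strictly positive, in fact bounded below by $1/|I|$. Picking such an $\alpha$ gives the desired chart.

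There is no real obstacle here; the lemma is essentially a pigeonhole statement exploiting the finiteness of the covering and the fact that $\mu_{\vphi,v}$ has positive (normalized to $1$) total mass. The only substantive input is that $\mu_{\vphi,v}$ is a probability measure, which is standard.
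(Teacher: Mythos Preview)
Your proof is correct and follows essentially the same approach as the paper, which simply notes that the unit polydiscs $\bb{D}_1$ cover $X$ and hence at least one must carry positive mass. Your version is a more detailed write-up of that same pigeonhole argument.
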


\begin{proof}
This is clear as the union of the polydiscs $\bb{D}_1$ cover $X$. 
\end{proof}

Now let's fix a positive integer $D > 0$. We first show that we can find $D$ cubes, each having a non-negligible amount of measure, and also each having a neighbourhood that are pairwise disjoint from each other. Using Lemma \ref{Measure1}, we may assume that we are inside $\bb{D}_2$ where $\mu_{\vphi,v}(\bb{D}_1) = \delta > 0$. If $R$ is a cube in $\bb{C}^r \simeq \bb{R}^{2r}$, we let $2R$ denote the cube that shares the same center as $R$, but with lengths doubled.

\begin{proposition} \label{Cubes1}
Fix a positive integer $n$. Then there exists a constant $c = c(\vphi) > 0$ such that for any positive integer $D \geq 1$, there are $D$ cubes $\{R_i\}_{i=1}^{D}$, each of length $cD^{-2 \kappa}$, such that $\mu(R_i) \geq \frac{1}{2} c D^{-2 r \kappa}$ and that the cubes $\{nR_i\}_{i=1}^{D}$ are all pairwise disjoint.  
\end{proposition}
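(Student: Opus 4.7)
The plan is to find the cubes by a pigeonhole argument on a regular grid, combined with a greedy sparsification step to enforce the $n$-separation. First, using Lemma \ref{Measure1}, I fix a good chart $U \simeq \bb{D}_3$ with $\delta := \mu_{\vphi,v}(\bb{D}_1) > 0$ and partition $\bb{D}_1$ into an axis-aligned grid $\mathcal{G}$ of cubes of side $L := cD^{-2\kappa}$, where $c = c(\vphi, n) > 0$ is a small constant to be fixed. The total number of cubes is $N \asymp L^{-2r}$, and by the moderate estimate $\mu_{\vphi,v}(D(x,r)) \leq Cr^\kappa$, each cube is contained in a ball of radius $L\sqrt{2r}$, so $\mu_{\vphi,v}(R) \leq C' L^\kappa$ for a constant $C' = C'(\vphi, r)$.

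Next, declare a cube $R \in \mathcal{G}$ \emph{heavy} when $\mu_{\vphi,v}(R) \geq M := \tfrac{1}{2}cD^{-2r\kappa}$. The mass carried by the non-heavy cubes is at most $NM$, and choosing $c$ appropriately in terms of $\vphi$ and $n$ forces this to be at most $\delta/2$; the heavy cubes then carry mass at least $\delta/2$, and dividing by the per-cube upper bound yields at least $\delta/(2C' L^\kappa)$ heavy cubes. To enforce the $n$-separation, I observe that two cubes $R, R' \in \mathcal{G}$ have intersecting $n$-expansions only when their centers are within $\ell^\infty$-distance $nL$, so each cube has at most $(2n+1)^{2r}$ such neighbors. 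A standard greedy selection then extracts from the heavy cubes a subfamily of size at least (heavy count)$/(2n+1)^{2r}$ whose $n$-expansions are pairwise disjoint; a final adjustment of $c$ makes this count exceed $D$, giving the desired cubes $R_1, \ldots, R_D$.

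The main obstacle is the joint bookkeeping of the single constant $c$, which must simultaneously satisfy three constraints: (i) $NM \leq \delta/2$ so that heavy cubes carry a definite share of $\delta$; (ii) the number of heavy cubes, after the sparsification loss $(2n+1)^{2r}$, is at least $D$; and (iii) the same $c$ appears in both the side length $cD^{-2\kappa}$ and the mass threshold $\tfrac{1}{2}cD^{-2r\kappa}$. Threading all three requires a careful choice of $c$ depending on $\delta$, $\kappa$, $r$, and $n$, with the $n$-dependence absorbed into the constant promised by the proposition. Once these constraints are balanced, the construction is robust; a Vitali-type alternative starting from minimal balls of mass $M$ around points of $\operatorname{supp}\mu_{\vphi,v}$ would give a comparable family if the grid-based accounting proved awkward.
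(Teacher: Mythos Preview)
Your sparsification step via the combinatorial neighbor count $(2n+1)^{2r}$ is fine, but the heavy/light decomposition in step (i) does not close. With $L = cD^{-2\kappa}$ you correctly have $N \asymp L^{-2r}$, while the threshold dictated by the statement is $M = \tfrac{1}{2}cD^{-2r\kappa} = \tfrac{1}{2}c^{1-r}L^{r}$. Hence
\[
NM \;\asymp\; L^{-2r}\cdot L^{r} \;=\; L^{-r} \;\asymp\; c^{-r}D^{2r\kappa},
\]
which tends to infinity with $D$; no choice of $c$ depending only on $\vphi$ and $n$ can force $NM \le \delta/2$ for all $D$. The same blow-up occurs (with exponent $2r/\kappa$) if one uses the intended side length $cD^{-2/\kappa}$. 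In other words, the light cubes are simply too numerous for the crude bound ``$\text{(number of light cubes)}\times M$'' to say anything, so you cannot conclude that the heavy cubes carry mass $\ge \delta/2$.

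The paper avoids this by never trying to control all light cubes at once. It picks the cubes iteratively: choose $R_1$ with the largest mass (so $\mu(R_1)\ge \delta/N$), then discard every grid cube meeting $nR_1$. The discarded region sits inside a single cube of side $O_n(L)$, and here the moderate estimate is used a second time to bound the \emph{removed mass} by $C\,(O_n(L))^{\kappa}\asymp_n D^{-2}$. After at most $D$ iterations the surviving mass is still $\ge \delta - O_n(D\cdot D^{-2}) \ge \delta/2$, so each subsequent $R_k$ can again be chosen with $\mu(R_k)\ge (\delta/2)/N$. Your greedy sparsification can be merged into this scheme, but the key point you are missing is that one must bound the mass of $D$ removed $n$-neighborhoods (which is $\lesssim D^{-1}$), not the mass of all $N$ light cubes (which is uncontrolled).
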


\begin{proof}
As noted before, we can find $C$ and $\kappa$ such that if $R$ is a cube of length $\eps$, then $\mu_{\vphi}(R) \leq  C \eps^{\kappa}$. Also, we may find some chart locally isomorphic to $\bb{D}_2$ such that $\mu(\bb{D}_1) = \delta > 0$. We first view $\bb{D}_1$ inside the cube of radius $2$ centered at $0$, and then partition this cube into disjoint cubes of length $c' D^{-2/\kappa}$ for some $c > 0$. Then there are $(c' D^{-2/\kappa})^{-r}$ many such cubes and hence among the cubes covering $\bb{D}_1$, there must be one with measure at least $\delta(c' D^{-2/\kappa})^r$. 
\par 
Let $R_1$ be this cube. Then $nR$ is a cube of length $nc' D^{-2/\kappa}$ and so $\mu_{\vphi}(nR) \leq nc'C D^{-2}$. Thus if $c' = \frac{\delta}{nC}$, the remaining cubes which do not intersect $nR$ will have total measure at least $\delta (1- D^{-2})$ left. Choosing $R_2$ from the remaining cubes will ensure that $2R_i$ remains disjoint with each other. We may choose $R_2$ such that $\mu(R_2) \geq \delta (1- D^{-2})(c' D^{-2/\kappa})^r$ and repeating this argument, we may choose $R_k$ such that 
$$\mu(R_k) \geq \delta (1- k D^{-2})(c D^{-2/\kappa})^r \geq (1 - D^{-1})(c D^{-2/\kappa})^r \geq \frac{1}{2}(\delta c')(D^{-2/\kappa})^r.$$
Hence taking $c = \delta c' = \frac{\delta^2}{nC}$ gives us the result we want.
\end{proof}

We now apply Theorem \ref{QuantTheorem2} to obtain the existence of a hypersurface $H$ of controlled degree that contains $\Prep_{\vphi,D}$. 

\begin{theorem} \label{Galois1}
There exists a constant $e = e(\vphi) > 0$ such that for any positive integer $D > 0$, there exists a hypersurface $H$ of degree at most $O_{\vphi,K}(D^e)$, such that $\Prep_{\vphi,D} \subseteq H$.  
\end{theorem}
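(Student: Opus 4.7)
The plan is to mimic the strategy used in the proof of Theorem \ref{IntroAbelianGalois1}, replacing Theorem \ref{AbelianQuantTheorem1} with Theorem \ref{QuantTheorem2} and the fundamental-domain cubes with those produced by Proposition \ref{Cubes1}. The key simplification is that every $x \in \Prep_{\vphi,D}$ satisfies $\h_\vphi(x) = 0$ and has Galois orbit $|F_x| \leq D$, so the right-hand side of Theorem \ref{QuantTheorem2} collapses to the single term $c_{f,3}\, c_3\, \eps$.

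First I will apply Proposition \ref{Cubes1} (with $D+1$ in place of $D$, and $n=2$) to produce cubes $R_1,\ldots,R_{D+1}$ lying inside the unit polydisc of one of the good charts, of a common side length $\ell$ polynomial in $D^{-1}$, each carrying measure at least $m = m(D)$ under $\mu_{\vphi,v}$ that is also polynomial in $D^{-1}$, and such that the dilates $2R_i$ are pairwise disjoint. A standard partition-of-unity construction in the chart then yields smooth bump functions $f_i: X(\bb{C}_v) \to [0,1]$ supported in $2R_i$, identically equal to $1$ on $R_i$, extended by zero to all of $X(\bb{C}_v)$; their $C^3$-norms $c_{f_i,3}$ are controlled by a fixed negative power of $\ell$, hence by a polynomial in $D$.

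Next I apply Theorem \ref{QuantTheorem2} to each $f_i$ with $\eps = \eps_0\, D^{-A}$, where $A = A(\vphi)$ is chosen so that $c_{f_i,3}\, c_3\, \eps < \tfrac{1}{2} m$ and $\eps_0 = \eps_0(\vphi,K) > 0$ is sufficiently small. This produces hypersurfaces $H(f_i,\eps)$ defined over $K$, each of degree at most $c_1 \eps^{-c_2} = O_{\vphi,K}(D^{c_2 A})$. For any $x \in \Prep_{\vphi,D}$ with $x \notin H(f_i,\eps)$, since $\h_\vphi(x) = 0$ we have
$$\left|\frac{1}{|F_x|}\sum_{y \in F_x} f_i(y) - \int f_i\, d\mu_{\vphi,v}\right| \leq c_{f_i,3}\, c_3\, \eps < \tfrac{1}{2}\int f_i\, d\mu_{\vphi,v},$$
so $\sum_{y \in F_x} f_i(y) > 0$, which forces $F_x$ to meet the support $2R_i$.

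Consequently, if $x \in \Prep_{\vphi,D}$ avoids $\bigcup_{i=1}^{D+1} H(f_i,\eps)$, its Galois orbit meets each of the $D+1$ pairwise disjoint sets $2R_i$, forcing $|F_x| \geq D+1$ and contradicting $[K(x):K] \leq D$. Hence $\Prep_{\vphi,D} \subseteq \bigcup_{i=1}^{D+1} H(f_i,\eps)$, which is contained in a single hypersurface of degree at most $(D+1)\cdot O_{\vphi,K}(D^{c_2 A}) = O_{\vphi,K}(D^e)$ for $e = 1 + c_2 A$. The one point requiring care is the scaling lemma for the bump functions $f_i$ (to make sure that $c_{f_i,3}$ really is polynomial in $D$ with an exponent depending only on $\vphi$) and the verification that the constants in Proposition \ref{Cubes1} and Theorem \ref{QuantTheorem2} are genuinely uniform in $D$; neither presents a serious obstacle.
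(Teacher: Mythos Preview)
Your proposal is correct and follows essentially the same strategy as the paper's own proof: produce disjoint cubes via Proposition \ref{Cubes1}, build bump functions $f_i$ with polynomially controlled $C^3$-norm, apply Theorem \ref{QuantTheorem2} with $\eps$ a suitable negative power of $D$, and conclude that any preperiodic point of small degree must lie in the union of the exceptional hypersurfaces. Your use of $D+1$ cubes (rather than $D$) is in fact slightly cleaner, since it yields $|F_x|\ge D+1$ and hence rules out degree $\le D$ rather than only degree $<D$.
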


\begin{proof}
Given $D \geq 2$, we apply Proposition \ref{Cubes1} to find $D$ cubes $\{R_i\}_{i=1}^{D}$, of length $cD^{-2\kappa}$, and having measure $\mu(R_i) \geq \frac{1}{2} cD^{-2r\kappa}$ and such that $2R_i$ are all disjoint. For each $i$, we let $f_i$ be a non-negative smooth function that is identically $1$ on $R_i$ and zero outside of $2R_i$. If $f$ is a smooth function that satisfies this property when $R$ is the unit cube, we may take  $f_i(x)$ to be a translation of $f(c^{-1} D^{2\kappa}x) $. In particular, we have 
$$c_{f_i,3} = O_{\vphi}(D^{6 \kappa}).$$
Now by Theorem \ref{QuantTheorem2}, for $\eps > 0$ and $x \in \ovl{\bb{Q}}$ with $\h_{\vphi}(x) = 0$, there exists a hypersurface of degree $H(f_i,\eps)$ of degree $O_{\vphi}(\eps^{-c_2})$ such that 
$$\left|\frac{1}{|F_x|} \sum_{y \in F_x} f(y) - \int f_i(y) d \mu_{\vphi}(y) \right| \leq c_{f_i,3} c_3 \eps$$
for all $x \not \in H(f_i,\eps)$. 
Since $\mu_{\vphi}(R_i) \geq \frac{1}{2} cD^{-2r\kappa}$, we have 
$$\int f_i(y) d \mu_{\vphi}(y) \geq \frac{1}{2} cD^{-2r\kappa}.$$
Thus if we take $\eps$ such that $c_{f_i,2} c_3 \eps < \frac{1}{2} cD^{-2r \kappa}$, it follows that for $x \not \in H(f_i,\eps)$, we have $f_i(y) > 0$ for some $y \in F_x$. Hence $y \in 2R_i$ for some $y \in F_x$. 
\par 
Repeating this for each $R_i$ gives us that outside a hypersurface of degree $O_{\vphi}(D \eps^{-c_1})$, we must have $y \in 2R_i$ for some $y \in F_x$ for each $i$. But since $2R_i$'s are disjoint from each other, it must be that $|F_x| \geq D$ and so the union of $H(f_i,\eps)$ must contain all preperiodic points of algebraic degree $< D$. Now it suffices to note that $\eps$ may be taken as $D^{-O(r \kappa)}$ and so our hypersurface is of degree at most $O_{\vphi,K}(D^e)$, where $e$ depends on $\vphi$ as desired.
\end{proof}

As remarked in the introduction, when $r = 1$, Theorem \ref{Galois1} implies that there are at most $O_{\vphi,K}(D^e)$ many preperiodic points of degree $\leq D$ over $K$. This is different from Baker's results \cite{Bak06} in that Baker requires us to fix a single extension $L$ of degree $D$ whereas we can vary over all extensions of degree $D$. We will show that in the dimension one case, we can also deduce this from Favre--Rivera-Letelier's quantitative equidistribution theorem. 

\begin{proposition} \label{GaloisDimOne1}
Let $\vphi: \bb{P}^1 \to \bb{P}^1$ be a rational map defined over a number field $K$ of degree $d \geq 2$. Then there exists $e = e(\vphi)$ such that there are at most $O_{\vphi,K}(D^e)$ many preperiodic points of degree $\leq D$ over $K$. 
\end{proposition}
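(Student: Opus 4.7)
The plan is to use the Favre--Rivera-Letelier inequality \eqref{eq: IntroIneq2} together with an arithmetic discriminant / product-formula argument in the style of Baker's \cite{Bak06} approach to bound $|\Prep_{\vphi,D}|$ polynomially in $D$. A direct pigeonhole imitation of Theorem \ref{Galois1} (with $D+1$ disjoint bump functions) does not work in dimension one: the Favre--Rivera-Letelier error of order $\log|F|\cdot|F|^{-1/2}$ competes with the mass constraint $N\delta\leq 1$ on the disjoint discs, so the pigeonhole never forces $|F_x|\geq D+1$. Instead, one must extract the energy content of FRL.

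First, I would apply \eqref{eq: IntroIneq2} at each archimedean place $v$ of $K$, using test functions obtained by regularizing $-\log|z-z_0|_v$ near a fixed base point $z_0\in\bb{P}^1(\bb{C}_v)$. Since $\h_{\vphi}(x)=0$ for preperiodic $x$, two applications of the inequality (one for each factor of the diagonal of $F_x\times F_x$) translate into a quantitative lower bound of the form
$$\log|\mathrm{disc}(P_{F_x})|_v\geq -C_1|F_x|^2+O(|F_x|\log|F_x|),$$
where $P_{F_x}$ denotes the minimal polynomial of $x$ over $K$. A key simplification specific to complex dimension one is that the quantity $(\Lap f,\Lap f)_v$ appearing on the right-hand side of \eqref{eq: IntroIneq2} coincides by integration by parts with a multiple of the ordinary Dirichlet energy $\int|\nabla f|^2\,dA$, which is conformally invariant in real dimension two; this ensures the cost of each test function is uniform in its scale.

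Next, I would sum these archimedean bounds over all archimedean places of $K(x)$, combine them with analogous non-archimedean equidistribution bounds (via Chambert-Loir's theorem at the finite places), and invoke the product formula $\sum_v\log|\mathrm{disc}(P_{F_x})|_v\geq 0$. Comparing the resulting lower bound on the norm of the discriminant with the Mahler-measure upper bound on $\log|\mathrm{disc}(P_{F_x})|_v$ forced by the bounded height of $P_{F_x}$ (since $h(x)=0$) produces a polynomial constraint on the set of possible minimal polynomials of preperiodic points of degree $\leq D$, hence on $|\Prep_{\vphi,D}|$, giving the desired bound $|\Prep_{\vphi,D}|=O_{\vphi,K}(D^e)$.

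The main obstacle will be adapting Baker's argument, which counts preperiodic points within a single fixed extension $L/K$, to the uniform setting of all extensions of degree $\leq D$. This requires tracking the $S$-integer denominator structure of the minimal polynomials $P_{F_x}$ as $x$ ranges over $\Prep_{\vphi,D}$ and summing the archimedean contributions across all archimedean places of the varying extensions $K(x)$ without losing polynomial control in $D$.
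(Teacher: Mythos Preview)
Your dismissal of the pigeonhole approach is premature, and this is precisely the route the paper takes. You are right that applying \eqref{eq: IntroIneq2} to a \emph{single} Galois orbit $F_x$ with $|F_x|\leq D$ cannot force a contradiction: the error $\log|F_x|/|F_x|^{1/2}$ is at best of order $D^{-1/2}$, while the mass constraint on $D+1$ disjoint cubes forces each $\mu(R_i)\lesssim D^{-1}$. But the paper does not apply Favre--Rivera-Letelier to a single orbit. Instead, after the pigeonhole step (each orbit of size $\leq D$ misses at least one of the $D+1$ cubes $2R_i$, so if there are $N$ orbits then some fixed $2R_i$ is missed by at least $N/(D+1)$ of them), it applies \cite[Th\'eor\`eme 3]{FRL06} to the \emph{union} $F$ of all those orbits. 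This $F$ is still $\Gal(\ovl{K}/K)$-invariant, still has average canonical height zero, and now $|F|\geq N/(D+1)$ can be made as large as one likes. Since $F\cap 2R_i=\emptyset$, the left side of \eqref{eq: IntroIneq2} is $\int f_i\,d\mu_{\vphi,v}\geq \mu(R_i)\gtrsim D^{-a}$, while the right side is $O\bigl((\Lap f_i,\Lap f_i)_v\cdot\log|F|/|F|^{1/2}\bigr)$. Your own observation that the energy $(\Lap f_i,\Lap f_i)_v$ is the Dirichlet energy and hence scale-invariant in real dimension two is exactly what closes the argument: the energy is $O(1)$ independent of $D$, so taking $|F|\gtrsim D^{2a+1}$ yields a contradiction, bounding $N$ polynomially in $D$.

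Your alternative route through discriminants and the product formula is genuinely different and in the spirit of Baker's original argument, but as you yourself flag, the obstacle of passing from a fixed extension $L/K$ to all extensions of degree $\leq D$ is real: the product $\prod_{i\neq j}(\alpha_i-\alpha_j)$ over preperiodic points living in different fields $K(\alpha_i)$ lands in a compositum whose degree is not controlled by $D$, so the integrality/product-formula lower bound loses its force. Your proposal does not indicate how to overcome this, and it is not clear that it can be overcome along these lines without effectively rediscovering the union-of-orbits trick above.
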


\begin{proof}
We give a sketch. By Proposition \ref{Cubes1}, we can find $D+1$ cubes $\{R_i\}$ such that $2R_i$ are pairwise disjoint, each having measure $\frac{1}{2} cD^{-2 r \kappa}$. Now group each preperiodic point of degree $\leq D$ over $K$ with its Galois orbit. Each Galois orbit has at most $D$ elements and so there is one cube $R_i$ such that it does not lie in $2R_i$. Now given $cD^e(D+1)$ many Galois orbits, there must exist one $i$ such that $cD^{e}$ of the Galois orbits all do not intersect $2R_i$. Applying \cite[Theoreme 3]{FRL06} with a non-negative function $f$ which is identically $1$ on $R_i$ and supported on $2R_i$ then gives us a bound on $c$ and $e$ as desired.      
\end{proof}

Sometimes it is more desirable to obtain an actual numerical bound on the size $\Prep_{\vphi,D}$ rather than just controlling its geometry. We show that when $X = \bb{P}^2$, under some assumptions it is possible to obtain a bound on $|\Per_{\vphi,D}|$, the set of periodic points of $\vphi$ that are of degree $D$ over the base field $K$, that is polynomial in $D$.  
\par 
Let $\vphi$ be an endomorphism on $\bb{P}^2$ of degree $d \geq 2$ defined over a number field $K$. We will further assume that there are only finitely many periodic subvarieties of $\vphi$ in $\bb{P}^2$. Heuristically, this is expected to hold for a general endomorphism $\vphi$, although it has only been shown to hold for a very general endomorphism \cite{Xie24}.

\begin{theorem} \label{Galois2}
Let $\vphi: \bb{P}^2 \to \bb{P}^2$ is an endomorphism of degree $d \geq 2$ defined over a number field $K$. Further assume that there are only finitely many periodic curves for $\vphi$. Then there exists $e = e(\vphi) > 0$ such that $|\Per_{\vphi,D}| \leq O_{\vphi,K}(D^e)$.
\end{theorem}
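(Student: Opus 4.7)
The plan is to apply Theorem \ref{Galois1} to reduce the counting to a one-dimensional problem on the irreducible components of a controlled hypersurface. First, Theorem \ref{Galois1} yields a hypersurface $H \subseteq \bb{P}^2$ defined over $K$ of degree $D_0 = O_{\vphi,K}(D^{e_0})$ containing $\Per_{\vphi,D} \subseteq \Prep_{\vphi,D}$. Decompose $H = H_{\mathrm{per}} \cup H_{\mathrm{nper}}$, where $H_{\mathrm{per}}$ is the union of the irreducible components that are periodic curves of $\vphi$. By hypothesis there are only finitely many periodic curves, so $\deg H_{\mathrm{per}} = O_{\vphi}(1)$.

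For $H_{\mathrm{per}}$: on each periodic irreducible curve $C$ of period $n$, the projection formula $d^n \deg C = \deg(\vphi^n|_C) \deg C$ (as in the proof of Proposition \ref{PeriodicCurve1}) shows that $\vphi^n|_C$ is a self-morphism of degree $d^n\ge 2$; passing to the normalization, which is either $\bb{P}^1$ or an elliptic curve by the argument of Proposition \ref{PeriodicCurve2}, periodic points of $\vphi$ on $C$ correspond to periodic points of $\vphi^n|_{\tilde C}$. Applying Proposition \ref{GaloisDimOne1} in the $\bb{P}^1$ case and the analogous bound for elliptic curves deduced from Theorem \ref{IntroAbelianGalois1} in the elliptic case, each such curve contributes $O_{\vphi,K,C}(D^{e_1})$ periodic points of degree $\leq D$. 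Summing over the finite list of periodic curves yields $|\Per_{\vphi,D} \cap H_{\mathrm{per}}| = O_{\vphi,K}(D^{e_1})$.

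For $H_{\mathrm{nper}}$: after grouping Galois-conjugate components and symmetrizing test functions, it suffices to handle a single $K$-rational non-periodic irreducible component $C$ of degree $e_C$. Fix an archimedean place $v$ and construct a smooth $f_C \geq 0$ supported in a $\delta$-tubular neighborhood of $C(\bb{C}_v)$, equal to $1$ on $C(\bb{C}_v)$, with $c_{f_C,3} = O(\delta^{-3})$. Combining the locally moderate estimate $\mu_{\vphi,v}(D(x,r)) \leq Cr^\kappa$ from \cite{DNS10} with a cover of $C(\bb{C}_v)$ by discs of radius $\delta$ yields a bound $\int f_C\, d\mu_{\vphi,v} \leq A(e_C)\,\delta^{\beta}$ for some $\beta > 0$ and a polynomial $A$. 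For any $x \in \Per_{\vphi,D} \cap C$ the Galois orbit $F_x$ lies in $C$, so $\frac{1}{|F_x|}\sum_{y\in F_x} f_C(y) = 1$. Applying Theorem \ref{QuantTheorem2} with $\delta,\eps$ polynomial in $e_C^{-1}$ chosen so that $A(e_C)\delta^\beta + c_3\, O(\delta^{-3})\eps < 1$ forces $x \in H(f_C,\eps)$, a hypersurface of degree $O(\eps^{-c_2})$. When $C \not\subseteq H(f_C,\eps)$, Bezout in $\bb{P}^2$ gives $|\Per_{\vphi,D}\cap C|\leq e_C\cdot O(\eps^{-c_2})$, polynomial in $e_C$. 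Summing over the non-periodic components and using $\sum_C e_C \leq \deg H = O(D^{e_0})$ yields $|\Per_{\vphi,D} \cap H_{\mathrm{nper}}| = O_{\vphi,K}(D^{e_2})$, completing the bound.

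The main obstacles are (i) making the tube-measure estimate uniform with polynomial dependence in $e_C$, which relies on the Hölder regularity of the Green's function at $v$ together with a covering argument that respects the geometry of $C(\bb{C}_v)$; (ii) circumventing the edge case $C \subseteq H(f_C,\eps)$, which I expect to handle by partitioning $C(\bb{C}_v)$ and using several localized bump functions so that at least one of the resulting hypersurfaces does not contain $C$; and (iii) carefully executing the Galois symmetrization for components not defined over $K$. The chief difficulty is (i), as the dependence of $\beta$ on the Hölder exponent $\kappa$ ultimately controls the overall exponent $e$.
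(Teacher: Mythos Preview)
Your treatment of the periodic components $H_{\mathrm{per}}$ matches the paper's, but your handling of the non-periodic components is both more complicated than necessary and contains a real obstruction. The paper's argument for a non-preperiodic component $C_j$ is short: iterate $C_j$ under $\vphi$ until the first time $\vphi^k(C_j)$ is no longer among the components $C_1,\ldots,C_m$ of $H$; such a $k$ exists because the iterates of $C_j$ are pairwise distinct. Since $\vphi^{k-1}(C_j)$ is some $C_i$, it has degree $\le cD^{e_0}$, so $\deg \vphi^k(C_j)\le cdD^{e_0}$. The map $\vphi^k$ is a bijection on periodic points and does not increase the degree over $K$, so the periodic points of degree $\le D$ on $C_j$ inject into $\vphi^k(C_j)\cap H$. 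Bezout now gives $|\Per_{\vphi,D}\cap C_j|\le c^2 d\,D^{2e_0}$, and summing over the $\le cD^{e_0}$ components finishes. No tube estimate, no bump functions, no edge case.

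The obstruction in your approach is step (i). A degree-$e_C$ curve in $\bb{P}^2(\bb{C}_v)$ has $2$-dimensional Hausdorff measure proportional to $e_C$, so a $\delta$-tube is covered by $O(e_C\delta^{-2})$ balls of radius $\delta$; combining with $\mu_{\vphi,v}(B(x,\delta))\le C\delta^\kappa$ only yields $\mu_{\vphi,v}(\text{tube}_\delta(C))\le O(e_C\,\delta^{\kappa-2})$, which is useless unless $\kappa>2$, and the H\"older exponent of the Green's function can be arbitrarily small. The sharper moderate estimates of \cite{DNS10} give, after normalizing, at best $\mu_{\vphi,v}(\{|\sigma_C|<\delta\})\le C\,\delta^{c/e_C}$, so the usable $\beta$ decays like $1/e_C$; feeding this back, you would need $\delta$ (and hence $\eps$) exponentially small in $e_C\le cD^{e_0}$, which destroys the polynomial bound on $\deg H(f_C,\eps)$. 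On top of this, the edge case $C\subseteq H(f_C,\eps)$ is not benign: your localized-bump fix would produce several hypersurfaces, but you still need one that meets $C$ properly and whose degree is controlled, which brings you right back to (i). The paper's iterate-and-Bezout trick sidesteps all of this.
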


\begin{proof}
Given a positive integer $D \geq 2$, by Theorem \ref{Galois1} we can find a hypersurface $H$ of degree $cD^{e'}$, for some $e' = e'(\vphi) > 0$ and $c = c(\vphi,K)$, that contains all periodic points of degree $< D$. We write $H = \bigcup_{i=1}^{n} C_i$ where each $C_i$ is an irreducible curve over $\bb{C}$. 
\par 
Let $C_j$ be one of these curves that is not preperiodic. We will bound the number of periodic points of degree $< D$ on it. To do so, we consider the sequence of iterates $\{f^i(C_j)\}_{i \geq 1}$ and let $k$ be the smallest integer such that $f^k(C_j)$ is not one of the our curves $C_i$. Since each $C_i$ has degree at most $cD^{e'}$, it follows that $f^{k-1}(C_j)$ has degree at most $cD^{e'}$ and so $f^k(C_j)$ has degree at most $(cd)D^{e'}$. 
\par 
Now if $C_j$ has $N$ periodic points with degree $< D$, it follows that $f^k(C_j)$ also has $N$ periodic points with degree $< D$. These $N$ periodic points must necessarily also lie inside $H$ as they have degree $< D$ over $K$. Hence 
$$N \leq |f^k(C_j) \cap H| \leq (cd)D^{e'} \cdot c D^{e'} = c^2 d D^{2e'}.$$
\par 
If $C_j$ were instead preperiodic, we let $C'_1,\ldots,C'_m$ be the finitely many the periodic curves of $\vphi$. Then there exists some finite extension $L$ for which each $C'_i$ is defined over and for which there exists some positive integer such that $\vphi^{n_i}:C'_i \to C'_i$ is an endomorphism of $C'_i$ defined over $L$. Since $\vphi$ is polarized on $\bb{P}^2$, it must be that $\vphi^{n_i}$ restricted to $C'_i$ is a polarized endomorphism and thus by possibly passing to yet another finite extension, we may assume that $C'_i$ is birational to an elliptic curve or $\bb{P}^1$ and $\vphi^{n_i}$ is an endomorphism of degree $\geq 2$. For elliptic curves, it is then well known that the number of periodic points of degree $D$ on each $C'_i$ may be bounded by $O_{\vphi,K}(D^4)$ and for $\bb{P}^1$, it follows by Theorem \ref{Galois1} that we can find $e_i > 0$ such that the number of periodic points of degree $D$ is at most $O_{\vphi,K}(D^{e_i})$.
\par 
In conclusion, for each irreducible $C_i$ of our hypersurface $H$, we can find some uniform $f = f(\vphi) > 0$ such that the number of periodic points of degree $\leq D$ on $C_i$ is at most $O_{\vphi}(D^f)$. Summing up over each $C_i$, we get at most $O_{\vphi,K}(D^{f+e'})$ periodic points of degree $D$ over $K$, with $f$ and $e'$ only depending on $\vphi$ as desired. 
\end{proof}

In general, it is not known which endomorphisms $\vphi: \bb{P}^2 \to \bb{P}^2$ have finitely many periodic curves. However in the case where $\vphi$ restricts to a polynomial endomorphism $\vphi: \bb{A}^2 \to \bb{A}^2$, there are some results of Xie \cite{Xie24} that classify when this happens under an additional assumption. Let $\ovl{\vphi}$ be the induced map on the line at infinity $L_{\infty} = \bb{P}^2 \setminus \bb{A}^2$. We say that $\ovl{\vphi}$ is not of polynomial type if $\ovl{\vphi}^{-n}(x) \to \infty$ for all $x \in L_{\infty}$. 

\begin{theorem}[Theorem 1.20 \cite{Xie24}] \label{TheoremXie1}
Let $\vphi: \bb{P}^2 \to \bb{P}^2$ be an endomorphism of degree $d \geq 2$ that restricts to an endomorphism of $\bb{A}^2$. Assume that the induced morphism $\ovl{\vphi}$ on the hyperplane at infinity $L_{\infty}$ is not of polynomial type. Then if $\vphi$ has infinitely many periodic curves, $\vphi$ must be of the form $(f(x,y),g(x,y))$ for homogeneous polynomials $f,g$ after conjugating by a translation. Furthermore, all but finitely many periodic curves are a line of the form $\{z_1 = \lambda z_2\}$.     
\end{theorem}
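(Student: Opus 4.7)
The plan is to study how periodic curves of $\vphi$ interact with the line at infinity $L_{\infty}$, and then exploit the non-polynomial-type hypothesis on $\ovl{\vphi}$ to force all but finitely many such curves into a single pencil of lines. Let $C \subset \bb{A}^2$ be an irreducible periodic curve of period $n$. Its closure $\ovl{C} \subset \bb{P}^2$ meets $L_\infty$ in a finite set $\ovl{C}\cap L_\infty$, and this set must be $\ovl{\vphi}^n$-invariant. In particular, each intersection point is periodic for $\ovl{\vphi}$, and the local intersection multiplicities at those points are preserved by $\vphi^n$. The hypothesis that $\ovl{\vphi}$ is not of polynomial type means $\ovl{\vphi}$ has no totally invariant point on $L_\infty$, so there is no single point where all the orbit mass accumulates; periodic orbits are spread out, and multipliers of periodic cycles behave like those of a generic rational map.

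First I would establish a pigeonhole: if $\vphi$ has infinitely many irreducible periodic curves $\{C_k\}$, then either (a) infinitely many of them are lines, or (b) infinitely many have degree at least $2$. In case (b), by the projection formula $\vphi^{n*}\ovl{C}_k \sim d^n \ovl{C}_k$ (where $n$ is the period of $C_k$), the curve's degree together with its local multiplicities at $L_\infty$ is preserved in a rigid way under iteration. I would then use the valuative tree / Berkovich analysis at infinity (after Favre--Jonsson) to classify the finitely many valuations on $\bb{A}^2$ corresponding to branches at infinity of such a curve; since these valuations must be periodic under the induced action of $\vphi$ on the tree, and since $\ovl{\vphi}$ not of polynomial type rules out the existence of an infinite family of distinct attracting or totally invariant valuations, I would conclude that only finitely many higher-degree periodic curves can exist.

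Second, for case (a), I would argue that the infinitely many periodic lines $\{L_k\}$ all share a common basepoint. Each line meets $L_\infty$ in a single periodic point of $\ovl{\vphi}$. Since the set of periodic points of $\ovl{\vphi}$ on $\bb{P}^1$ of bounded period is finite and $\vphi$ has finitely many periodic lines through any given point at infinity of bounded period (a quick Bezout/degree count), infinitely many lines must use infinitely many distinct periodic points at infinity. But any two non-parallel lines in $\bb{A}^2$ meet in a point, and if these intersection points were unbounded in number while all being forced to be preperiodic, one would produce too many preperiodic points in bounded regions — controllable via Theorem \ref{QuantTheorem2} or directly via Bezout. This forces the lines to be concurrent: they all pass through a single point $p \in \bb{A}^2$, and $p$ must then be a fixed point of $\vphi$. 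After conjugating by a translation sending $p$ to the origin, the pencil of lines through the origin is preserved by $\vphi$, which is equivalent to saying $\vphi(x,y) = (f(x,y),g(x,y))$ with $f,g$ homogeneous of degree $d$.

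The main obstacle is the first step, specifically ruling out infinitely many higher-degree periodic curves. A curve of degree $e \geq 2$ has more flexibility at infinity than a line, and the non-polynomial-type hypothesis must be used in a quantitative way via the valuative tree to show that only finitely many distinct $\vphi$-periodic valuations at infinity can arise as branches of periodic curves. This is the deep content of Xie's theorem, and I would expect the proof to require the full machinery of dynamics on the Berkovich affine plane, including the classification of fixed valuations and the structure of the induced action of $\vphi$ on the tree of valuations centered at $L_\infty$.
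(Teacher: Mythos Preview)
This theorem is not proved in the paper. It is stated as Theorem 1.20 of \cite{Xie24} and is invoked as a black box; the paper gives no argument for it, only uses it as input for Theorem \ref{Galois4}. So there is no ``paper's own proof'' to compare your proposal against.

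Your outline is a reasonable sketch of how one might try to prove Xie's theorem from scratch, and you correctly identify that the heart of the matter is ruling out infinitely many higher-degree periodic curves via the valuative tree machinery, which you acknowledge is beyond an elementary argument. But several steps in your sketch are not correct as stated. In case (a), your claim that infinitely many periodic lines must use infinitely many distinct periodic points at infinity does not follow: there is no a priori bound on the number of periodic lines through a single periodic point at infinity of fixed period, so the Bezout/degree count you allude to does not work. And the argument forcing concurrency of the lines via ``too many preperiodic points in bounded regions'' is vague and does not obviously go through. In any case, since the paper simply cites the result, what you should do here is the same: cite \cite{Xie24} rather than attempt a proof.
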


Under the assumptions of Theorem \ref{TheoremXie1}, we are able to prove cases of Theorem \ref{Galois2} without the assumption of finitely many periodic curves.
\par 
First, note that if $\vphi$ is of the form $(f(x,y),g(x,y))$ for homogeneous polynomials $f,g$, then $(0,0)$ is an exceptional point for $\vphi$. In particular if we conjugated by the translation $(x-a,y-b)$, then $(a,b)$ is an exceptional point for $\vphi$ and hence must lie in $\ovl{K}$. Thus after conjugating and possibly extending our base field, we may assume that $\vphi$ is of the form $(f(x,y),g(x,y))$ for $f,g$ homogeneous polynomials of degree $d$ and with coefficients in $K$. 

\begin{lemma} \label{HomogeneousGalois1}
Let $H_{\lambda} = \{z_1 = \lambda z_2\}$ be a periodic hyperplane. If $\tilde{\vphi}(z) = \frac{f(z,1)}{g(z,1)}$ is the rational function corresponding to $(f,g)$, then $\lambda$ must be a periodic point of $\vphi$. 
\end{lemma}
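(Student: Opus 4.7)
The plan is to exploit the fact that homogeneity of $\varphi=(f,g)$ forces $\varphi$ to permute the pencil of lines through the origin, and the induced action on this pencil (which is a copy of $\mathbb{P}^1$) is precisely $\tilde{\varphi}$.

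First I would observe that since $f,g$ are homogeneous of degree $d$, for any $(a,b)\in\mathbb{A}^2$ and any scalar $t$ we have
$$\varphi(ta,tb)=(f(ta,tb),g(ta,tb))=t^{d}(f(a,b),g(a,b)).$$
In particular, for the line $H_\lambda=\{z_1=\lambda z_2\}$, plugging in $(a,b)=(\lambda,1)$ gives
$$\varphi(\lambda z_2,z_2)=z_2^{d}\bigl(f(\lambda,1),g(\lambda,1)\bigr).$$
Thus $\varphi(H_\lambda)$ is the line through the origin with direction vector $(f(\lambda,1),g(\lambda,1))$. Since $\varphi$ is a well-defined endomorphism of $\mathbb{P}^2$, $f$ and $g$ have no common zero outside the origin, so this direction vector is not $(0,0)$ (assuming $\lambda\in\mathbb{C}$; the point $\lambda=\infty$, corresponding to the $z_1$-axis, is handled symmetrically by plugging in $(1,0)$). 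Consequently
$$\varphi(H_\lambda)=H_{\tilde{\varphi}(\lambda)},$$
where we interpret the right-hand side as the $z_1$-axis if $g(\lambda,1)=0$.

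Next, I would iterate: because $\varphi$ sends lines through the origin to lines through the origin, one has $\varphi^{k}(H_\lambda)=H_{\tilde{\varphi}^{k}(\lambda)}$ for every $k\ge 1$, by an immediate induction on $k$ using the previous identity. The hypothesis that $H_\lambda$ is periodic of some period $n\ge 1$ means $\varphi^{n}(H_\lambda)=H_\lambda$, i.e.\ $H_{\tilde{\varphi}^{n}(\lambda)}=H_\lambda$. Since the map $\mu\mapsto H_\mu$ from $\mathbb{P}^1$ to the pencil of lines through the origin is a bijection, this forces $\tilde{\varphi}^{n}(\lambda)=\lambda$, so $\lambda$ is a periodic point of $\tilde{\varphi}$.

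The argument is essentially a direct computation, so I do not expect a serious obstacle; the only point to be careful about is the possible appearance of $\infty$ in $\mathbb{P}^1$ when $g(\lambda,1)=0$ (and the symmetric case $\lambda=\infty$), but this is harmless once one works consistently with the projective coordinate $[\lambda:1]$ on $\mathbb{P}^1$ parameterizing lines through the origin, since $\tilde{\varphi}$ is everywhere defined on $\mathbb{P}^1$ as a morphism.
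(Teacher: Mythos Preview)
Your proof is correct and follows the same approach as the paper: the paper's proof consists of the single observation that $\varphi$ sends $H_{\lambda}$ to $H_{\tilde{\varphi}(\lambda)}$, and you have simply supplied the explicit homogeneity computation justifying that assertion and then iterated it. There is nothing substantively different.
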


\begin{proof}
This is clear as $\vphi$ sends $H_{\lambda}$ to $H_{\tilde{\vphi}(\lambda)}$.     
\end{proof}

Let $\vphi^n(x,y) = (f_n(x,y),g_n(x,y))$ where $f_n,g_n$ are homogeneous polynomials of degree $d^n$ and let $\lambda$ be a fixed point of order $n$ where we assume that $n$ is minimal. Then $\vphi^n$ sends $H_{\lambda}$ back to itself and any periodic point on $H_{\lambda}$ is necessarily a periodic point for $\vphi^n$. 

\begin{lemma} \label{HomogeneousGalois2}
Under the isomorphism $H_{\lambda} \simeq \bb{P}^1$ by sending $(\lambda z , z) \mapsto z$, our endomorphism $\vphi^n$ becomes $z \mapsto z^{d^n} g_n(\lambda,1)$.
\end{lemma}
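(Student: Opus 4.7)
The plan is to give a direct computation using homogeneity and the periodicity of $\lambda$. First I would write out $\vphi^n$ applied to a point $(\lambda z, z) \in H_{\lambda}$: since $f_n$ and $g_n$ are homogeneous of degree $d^n$, we have
$$\vphi^n(\lambda z, z) = \bigl(f_n(\lambda z, z),\, g_n(\lambda z, z)\bigr) = \bigl(z^{d^n} f_n(\lambda, 1),\, z^{d^n} g_n(\lambda, 1)\bigr).$$

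Next I would use the hypothesis that $\lambda$ is a fixed point of $\tilde{\vphi}^n$ (which is Lemma \ref{HomogeneousGalois1} together with the setup preceding the lemma), so that $\tilde{\vphi}^n(\lambda) = f_n(\lambda,1)/g_n(\lambda,1) = \lambda$. This yields the algebraic identity $f_n(\lambda, 1) = \lambda \, g_n(\lambda, 1)$, and substituting back gives
$$\vphi^n(\lambda z, z) = \bigl(\lambda \cdot z^{d^n} g_n(\lambda, 1),\, z^{d^n} g_n(\lambda, 1)\bigr),$$
which is again a point of the form $(\lambda w, w)$ with $w = z^{d^n} g_n(\lambda, 1)$, confirming that $H_{\lambda}$ is $\vphi^n$-invariant. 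Under the identification $(\lambda w, w) \leftrightarrow w$, the induced self-map on $\bb{P}^1$ is therefore $z \mapsto z^{d^n} g_n(\lambda, 1)$, which is the claimed formula.

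There is essentially no obstacle: the statement is a one-line bookkeeping consequence of (i) homogeneity of the coordinate polynomials of $\vphi^n$ and (ii) the definition of $\lambda$ as a period-$n$ point of $\tilde{\vphi}$. The only thing worth flagging is the implicit convention that $\tilde{\vphi}(z) = f(z,1)/g(z,1)$ matches the identification $(\lambda z, z) \mapsto z$ (so that the first coordinate plays the role of the affine variable); once this is set, everything follows from the displayed computation above.
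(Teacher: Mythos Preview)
Your proof is correct and follows essentially the same direct computation as the paper: apply $\vphi^n$ to $(\lambda z,z)$, use homogeneity of $g_n$ to pull out $z^{d^n}$, and read off the second coordinate under the given identification. The only difference is that you additionally verify $f_n(\lambda,1)=\lambda g_n(\lambda,1)$ to confirm invariance of $H_\lambda$, whereas the paper simply takes this as known from the setup and reads off the second coordinate directly.
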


\begin{proof}
Our endomorphism $\vphi^n$ sends $(\lambda z,z)$ to $(f_n(\lambda z, z), g_n(\lambda z,z))$ and hence under the isomorphism $H_{\lambda} \simeq \bb{P}^1$, our endomorphism is $z \mapsto g_n(\lambda z,z)$. Since $g_n(x,y)$ is homogeneous of degree $d^n,$ we have
$$g_n(\lambda z,z) = z^{d^n} g_n(\lambda,1)$$
as desired.
\end{proof}

Let $c_n = g_n(\lambda,1)$ and set $\vphi_{\lambda}(z) = z^{d^n} c_n$. Then 
$$\vphi_{\lambda}^2(z) = z^{d^{2n}} c_n^{d^n + 1}, \ldots, \vphi^m_{\lambda}(z) = z^{d^{mn}}c_n^{d^{(m-1)n} + d^{(m-2)n} + \cdots +1}.$$

\begin{lemma} \label{HomogenousGalois3}
Let $z$ be a periodic point for $\vphi_{\lambda}(z)$. Then $c_n z^{d^n - 1}$ is a root of unity.
\end{lemma}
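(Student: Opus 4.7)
The plan is to run a direct calculation using the explicit iterate formula $\vphi_{\lambda}^m(z)=z^{d^{mn}}c_n^{d^{(m-1)n}+d^{(m-2)n}+\cdots+1}$ that was just recorded, and then collect the exponents of $z$ and $c_n$ to produce a single power of $c_n z^{d^n-1}$.

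Concretely, assume $z$ is periodic for $\vphi_{\lambda}$ of some period $m\geq 1$. The identity $\vphi_{\lambda}^m(z)=z$ then reads
$$z^{d^{mn}}\,c_n^{\,1+d^n+d^{2n}+\cdots+d^{(m-1)n}} \;=\; z.$$
Set $N=1+d^n+d^{2n}+\cdots+d^{(m-1)n}=\tfrac{d^{mn}-1}{d^n-1}$, so that $d^{mn}-1=N(d^n-1)$. Rearranging gives $z^{N(d^n-1)}=c_n^{-N}$, i.e.
$$\bigl(c_n\,z^{d^n-1}\bigr)^{N}=1,$$
which is precisely the assertion that $c_n z^{d^n-1}$ is a root of unity (in fact an $N$th root of unity).

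One small point to double-check is that $z\neq 0$, so that dividing through by $z$ in the first display is legitimate; but $z=0$ is a fixed point of $\vphi_{\lambda}(z)=c_n z^{d^n}$ only in the trivial sense (and for such $z$ the quantity $c_n z^{d^n-1}$ is not what we want to study — the lemma is implicitly about the finite periodic points on the affine chart of $H_{\lambda}\simeq \bb{P}^1$). Since the computation is pure algebra given the iterate formula from Lemma~\ref{HomogeneousGalois2}, there is no genuine obstacle beyond this bookkeeping.
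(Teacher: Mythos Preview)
Your proof is correct and follows essentially the same approach as the paper: use the iterate formula $\vphi_{\lambda}^m(z)=z^{d^{mn}}c_n^{N}$ with $N=\frac{d^{mn}-1}{d^n-1}$, set it equal to $z$, and rearrange to obtain $(c_n z^{d^n-1})^N=1$. The added remark on $z\neq 0$ is harmless bookkeeping and not present in the paper, but the core argument is identical.
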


\begin{proof}
If $z$ is of period $m$, then we have
$$z^{d^{mn}} c_n^{\frac{d^{mn}-1}{d^n -1}} = z \implies (z^{d^n -1} c_n)^{\frac{d^{mn}-1}{d-1}} = 1$$
and so $z^{d^n-1} c_n$ is necessarily a root of unity. 
\end{proof}

We are now able to bound the number of periodic points of degree at most $D$ that lie on $H_{\lambda}$. 

\begin{lemma} \label{HomogeneousGalois4}
For each $\lambda$, there is at most $O_K(D^6)$ many $z$'s such that $z$ is a periodic point for $\vphi_{\lambda}$ and $[K(z,\lambda):K] \leq D$. 
\end{lemma}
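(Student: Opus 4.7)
The plan is to use Lemma~\ref{HomogenousGalois3}, which says that if $z$ is periodic for $\vphi_\lambda(z) = c_n z^{d^n}$, then $w := c_n z^{N}$ is a root of unity, where $N := d^n-1$. This reduces the problem to counting pairs $(w, z)$ with $w$ a root of unity and $z$ some $N$-th root of $w/c_n$, each satisfying the degree hypothesis $[K(\lambda, z):K] \leq D$.

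First, I would bound the number of possible $w$'s. Since $c_n = g_n(\lambda,1) \in K(\lambda)$, we have $w \in K(\lambda, z)$ and hence $\bb{Q}(w) \subseteq K(\lambda, w) \subseteq K(\lambda, z)$, so $[\bb{Q}(w):\bb{Q}] \leq D [K:\bb{Q}]$. Writing $m = \ord(w)$, this says $\phi(m) = O_K(D)$. I would then invoke the classical Bateman estimate that the number of positive integers $m$ with $\phi(m) \leq X$ is $O(X)$. Since each such $m$ contributes $\phi(m) \leq X$ roots of unity of order exactly $m$, the total number of roots of unity $w$ with $\phi(\ord(w)) \leq X$ is at most $O(X^2)$. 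Applied with $X = O_K(D)$, this yields at most $O_K(D^2)$ possible values of $w$.

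Next, I would bound the number of $z$'s for each fixed $w$. All solutions to $z^N = w/c_n$ differ by multiplication by an $N$-th root of unity, so if $z_1,z_2$ are two such solutions satisfying the degree bound, then $\eta := z_2/z_1$ is a root of unity lying in $K(\lambda, z_1, z_2)$, whose degree over $K$ is at most $D^2$ (as a compositum of two degree-$\leq D$ extensions). Hence $\phi(\ord(\eta)) = O_K(D^2)$, and by the same Bateman-type count as before there are at most $O_K(D^4)$ possible values of $\eta$. Fixing one solution $z_1$, this gives at most $O_K(D^4)$ allowable values for $z_2$. Multiplying by the bound from the previous paragraph, the total count is $O_K(D^2) \cdot O_K(D^4) = O_K(D^6)$, as claimed.

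The argument is essentially routine once the reduction to roots of unity is made; the only minor subtlety is tracking that $c_n \in K(\lambda)$ so that $w$ automatically lies in the field $K(\lambda, z)$ controlled by the hypothesis, rather than in some larger extension. I do not anticipate any serious obstacle.
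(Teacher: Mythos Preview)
Your proposal is correct and follows essentially the same approach as the paper's proof: reduce via Lemma~\ref{HomogenousGalois3} to counting roots of unity $w$ of bounded degree, then for each $w$ count admissible $N$-th roots via the ratio $\eta = z_2/z_1$. The only cosmetic difference is that you invoke the Bateman estimate on $\#\{m : \phi(m)\le X\}$, while the paper uses the cruder bound $\phi(m) \ge \sqrt{m}$ for $m \ge 6$; both yield the same $O_K(D^2)$ and $O_K(D^4)$ counts and hence the final $O_K(D^6)$.
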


\begin{proof}
Let $\lambda$ be given and let $z$ be a periodic point for $\vphi_{\lambda}$. Then we know that $z^{d^n-1} c_n$ is a root of unity, $\omega$, where $c_n = g_n(\lambda,1)$. Since $z^{d^n-1} c_n$ lies in $K(\lambda,z)$, it follows that $[K(\omega):K] \leq D$. Hence there are at most $O_K(D^2)$ many such $\omega$'s as a root of unity of order $n \geq 6$ has at least $\sqrt{n}$ many Galois conjugates over $\bb{Q}$. 
\par 
For each such $\omega$, any two solutions of $z^{d^n-1}c_n = \omega$ differ by a root of unity $\omega'$. Now if these two solutions are of degree at most $D$ over $K$, then we necessarily have $[K(\omega'):K] \leq D^2$. Then there are necessarily at most $O_K(D^4)$ possible choices of $\omega'$. Hence we get a total of $O_K(D^6)$ many possible $z$'s such that $z$ is a periodic point of $\vphi_{\lambda}$. 
\end{proof}

With this, we can prove our bound on the number of periodic points of degree $\leq D$ for endomorphisms $\vphi: \bb{P}^2 \to \bb{P}^2$ that restrict to an endomorphism on $\bb{A}^2$. 

\begin{theorem} \label{Galois4}
Let $\vphi: \bb{P}^2 \to \bb{P}^2$ be an endomorphism of degree $d \geq 2$ defined over a number field $K$ that restricts to an endomorphism of $\bb{A}^2$. Assume further that the induced morphism $\ovl{\vphi}$ on the line at infinity $L_{\infty}$ is not of polynomial type. Then there exists $e = e(\vphi) > 0$ such that $|\Per_{\vphi,D}| \leq O_{\vphi,K}(D^e)$ for every $D \geq 1$. 
\end{theorem}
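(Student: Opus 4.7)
The plan is to treat the dichotomy given by Theorem \ref{TheoremXie1}. If $\vphi$ has only finitely many periodic curves, Theorem \ref{Galois2} applies directly. Otherwise, after conjugating by a translation (whose center is a fixed exceptional point, hence algebraic over $K$) and replacing $K$ by a finite extension $K'/K$ depending only on $\vphi$, we may assume that $\vphi = (f(x,y), g(x,y))$ with $f,g$ homogeneous of degree $d$, and that every periodic curve other than finitely many exceptional curves $C_1', \ldots, C_m'$ is a line $H_\lambda = \{z_1 = \lambda z_2\}$ with $\lambda$ periodic for $\tilde\vphi(z) = f(z,1)/g(z,1)$. Since $K'/K$ is a fixed finite extension, a polynomial bound over $K'$ yields one over $K$, so we work over $K'$ in what follows.

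I would partition $\Per_{\vphi, D}$ into three disjoint parts: (i) points on some periodic line $H_\mu$, (ii) points on some exceptional periodic curve $C_j'$, and (iii) points on no periodic curve. For (i), if $p = (x,y) \in H_\mu$ has $[K'(p):K'] \le D$, then $\mu = x/y$ satisfies $[K'(\mu):K'] \le D$ and by Lemma \ref{HomogeneousGalois1} is periodic for $\tilde\vphi$. Applying the one-dimensional case of Theorem \ref{Galois1} to $\tilde\vphi$ (see also Proposition \ref{GaloisDimOne1}) bounds the number of such $\mu$ by $O_{\vphi, K'}(D^{e_1})$ for some $e_1 = e_1(\vphi)$; for each such $\mu$, Lemma \ref{HomogeneousGalois4} bounds the periodic points of $\vphi$ on $H_\mu$ of degree $\le D$ by $O_{K'}(D^6)$, so (i) contributes $O_{\vphi, K'}(D^{e_1 + 6})$. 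Part (ii) is handled exactly as in the proof of Theorem \ref{Galois2}: after a further finite extension, each $C_j'$ is birational to $\bb{P}^1$ or an elliptic curve with $\vphi^{n_j}|_{C_j'}$ polarized of degree $\ge 2$, giving a polynomial bound $O_{\vphi, K'}(D^{f_j})$ on each $C_j'$ and a total of $O_{\vphi, K'}(D^{\max_j f_j})$.

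For part (iii), I would use Theorem \ref{Galois1} to produce a hypersurface $H$ defined over $K$ of degree $O_{\vphi, K}(D^{e'})$ containing $\Prep_{\vphi, D}$. Any point in (iii) lies on some irreducible component $C$ of $H$, and such $C$ cannot be periodic (else the point would lie on a periodic curve). For each non-periodic irreducible component $C$ of $H$, I apply the Bezout argument used in Proposition \ref{PeriodicCurve1} and the proof of Theorem \ref{Galois2}: pick the smallest $k \ge 1$ for which $\vphi^k(C)$ is not a component of $H$, so that $\deg \vphi^k(C) \le d \cdot \deg H = O(D^{e'})$; since $\vphi^k$ is injective on periodic points, the periodic points of $\vphi$ on $C$ of degree $\le D$ inject into $\vphi^k(C) \cap H$, which contains $O(D^{2e'})$ points by Bezout. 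Summing over the at most $O(D^{e'})$ components of $H$ contributes $O_{\vphi, K}(D^{3e'})$ to (iii), and combining gives $|\Per_{\vphi, D}| \le O_{\vphi, K}(D^e)$ with $e := \max(e_1 + 6, \max_j f_j, 3 e')$, depending only on $\vphi$.

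The main obstacle is to verify that $\tilde\vphi$ behaves like a genuine polarized endomorphism of $\bb{P}^1$ in part (i), so that the one-dimensional version of Theorem \ref{Galois1} is legitimately applicable. In the degenerate cases where $\deg \tilde\vphi \le 1$ (for instance if $f(z,1)$ and $g(z,1)$ share a large common factor), one checks that either only finitely many $H_\mu$ are periodic for $\tilde\vphi$ (if $\tilde\vphi$ is constant or an automorphism of infinite order), making (i) trivial, or $\tilde\vphi$ is a finite-order automorphism, in which case $\vphi^k|_{H_\mu}$ is a polarized endomorphism of degree $d^k \ge 2$ on each $H_\mu$ that can be analyzed with the same homogeneous-map computation underlying Lemma \ref{HomogeneousGalois4}. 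Neither of these degeneracies affects the exponent $e$.
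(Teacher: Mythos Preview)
Your proof is correct and follows the same overall strategy as the paper, but with a different bookkeeping that introduces an unnecessary step. The paper simply reruns the argument of Theorem~\ref{Galois2}: it produces the hypersurface $H$ of degree $O(D^{e'})$ from Theorem~\ref{Galois1}, and then for each irreducible component $C$ of $H$ it distinguishes between $C$ non-preperiodic (handled by the Bezout argument) and $C$ preperiodic (so $C$ is eventually one of the $H_\lambda$ or one of the finitely many exceptional curves, on which Lemma~\ref{HomogeneousGalois4} or the elliptic/$\bb{P}^1$ bound applies). In particular, the paper never needs to bound the number of relevant $\mu$'s separately: at most $\deg H = O(D^{e'})$ of the lines $H_\lambda$ can occur as components of $H$, so the one-dimensional quantitative equidistribution for $\tilde\vphi$ is not required. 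Your route through Proposition~\ref{GaloisDimOne1} is valid but redundant.

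Two small points. First, your worry about $\deg\tilde\vphi\le 1$ is unfounded: in homogeneous coordinates $\vphi=[f(X,Y):g(X,Y):Z^d]$, and since $\vphi$ is a morphism the forms $f,g$ share no common factor, so $\tilde\vphi=\ovl\vphi$ has degree exactly $d\ge 2$. Second, in your part (iii) you only know that the component $C$ is \emph{non-periodic}, not \emph{non-preperiodic}; if $C$ is preperiodic but not periodic and all its iterates happen to be components of $H$, then your ``smallest $k$ with $\vphi^k(C)\not\subset H$'' need not exist. This is easily repaired (push the periodic points forward along $\vphi$ to the periodic iterate of $C$, where (i) or (ii) applies, using that $\vphi$ is injective and degree-nonincreasing on periodic points), and the paper avoids it by making the dichotomy preperiodic/non-preperiodic as in Theorem~\ref{Galois2}.
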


\begin{proof}
We perform the same strategy as in Theorem \ref{Galois2}. It then suffices to bound the number of periodic points of degree at most $D$ in $H_{\lambda}$ for $\lambda$ being a periodic point of $\tilde{\vphi}(z)$. Given such a periodic point $(\lambda z ,z)$ that lies on $H_{\lambda}$, we must have $[K(\lambda,z):K] \leq D$. Then by Lemma \ref{HomogeneousGalois4}, there are at most $O_K(D^6)$ many such periodic points $z$. The same argument as in Theorem \ref{Galois2} then allows us to conclude the existence of $e = e(\vphi) > 0$ such that $|\Per_{\vphi,D}| \leq O_{\vphi,K}(D^e)$ as desired.
\end{proof}

We may also consider the extension $K_n = K(\Per_n)$, where we adjoin all periodic points of period $n$ to our field $K$. Baker \cite{Bak06} has proven in dimension one that
$$[K_n:K] \geq c \frac{|\Per_n|}{\log |\Per_n|}$$
for some constant $c = c(\vphi,K) > 0$. This translates to an exponential growth in terms of $n$ for the degree $[K_n:K]$. We will first show that for endomorphisms $\vphi: \bb{P}^2 \to \bb{P}^2$ of degree $d \geq 2$, we also have an exponential growth for the degree $[K_n:K]$. 

\begin{theorem} \label{PeriodicGalois1}
Let $X$ be a smooth projective surface defined over a number field $K$ and let $\vphi: X \to X$ be a polarized endomorphism of degree $d \geq 2$. Then there exists $c = c(\vphi,K)$ such that $[K_n:K] \geq c^n$. In fact there exists $c' > 1$ such that  for $1- O(c'^{-n})$ proportion of points $x \in \Per_n$, we will have $[K(x):K] \geq c^n$.       
\end{theorem}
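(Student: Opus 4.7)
The plan is to combine Theorem \ref{Galois1} with the curve-counting bound of Proposition \ref{PeriodicCurve3} and the lower bound $|\Per_n| \geq \alpha d^{2n}$ from \cite{DZ23}. Since $X$ is a surface, any hypersurface $H \subseteq X$ is a curve, so Proposition \ref{PeriodicCurve3} applies directly.

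First, fix a positive integer $D$ (to be chosen of the form $D = c^n$) and consider $S_D := \Per_n \cap \Prep_{\vphi,D}$, the set of $n$-periodic points of degree at most $D$ over $K$. By Theorem \ref{Galois1}, there exists a hypersurface $H_D$ defined over $K$, of degree at most $C_1 D^e$ with respect to $L$ (for constants $C_1 = C_1(\vphi,K)$ and $e = e(\vphi)$), such that $S_D \subseteq H_D$. Applying Proposition \ref{PeriodicCurve3} to the curve $H_D$, we get
\[
|S_D| \leq |H_D \cap \Per_n| \leq 3(C_1 D^e)^2 d^n = 3 C_1^2 D^{2e} d^n.
\]
On the other hand, by \cite[Theorem 1.1]{DZ23} we have $|\Per_n| \geq \alpha d^{2n}$ for some $\alpha = \alpha(\vphi) > 0$, so
\[
\frac{|S_D|}{|\Per_n|} \leq \frac{3 C_1^2}{\alpha} \cdot \frac{D^{2e}}{d^n}.
\]

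Now choose any $c$ with $1 < c < d^{1/(2e)}$, so that $c^{2e} < d$, and set $D := \lfloor c^n \rfloor$. Then
\[
\frac{|S_D|}{|\Per_n|} \leq \frac{3 C_1^2}{\alpha} \left(\frac{c^{2e}}{d}\right)^n = O\bigl(c'^{-n}\bigr),
\]
where $c' := d / c^{2e} > 1$. This shows that for all but an $O(c'^{-n})$-proportion of $x \in \Per_n$, one has $[K(x):K] > c^n$, proving the second statement. In particular, for $n$ sufficiently large there is at least one such $x$, and since $K_n \supseteq K(x)$ we conclude $[K_n:K] \geq c^n$. Absorbing the finitely many small $n$ into the constant $c$ (by shrinking it slightly) yields the first statement.

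The main step to verify carefully is the reduction: we crucially use that in dimension two a hypersurface is a curve so Proposition \ref{PeriodicCurve3} applies verbatim; in higher dimension one would instead need a bound on periodic points lying on a hypersurface (which is why the theorem is stated for surfaces). The choice $c < d^{1/(2e)}$ is forced by the exponent $e$ produced by Theorem \ref{Galois1}, and hence $c$ depends on $\vphi$ and on the Hölder exponent $\kappa$ entering the proof of Theorem \ref{Galois1}; no further delicacies arise.
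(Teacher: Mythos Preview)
Your proof is correct and follows essentially the same approach as the paper: apply Theorem \ref{Galois1} to contain $\Prep_{\vphi,D}$ in a curve of degree $O(D^e)$, use Proposition \ref{PeriodicCurve3} to bound the number of $n$-periodic points on that curve by $O(D^{2e}d^n)$, and compare against $|\Per_n|\ge \alpha d^{2n}$ from \cite{DZ23} to get the proportion statement. Your write-up is in fact a bit more explicit than the paper's, which leaves the invocation of the $|\Per_n|\ge \alpha d^{2n}$ bound implicit.
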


\begin{proof}
This follows immediately from Proposition \ref{PeriodicCurve3} and Theorem \ref{Galois1}. By Theorem \ref{Galois1}, we know that all preperiodic points of degree at most $c^n$ lie on a curve of degree at most $C(c^{en})$, where $e$ depends only on $\vphi$ and $C$ depends on $\vphi$ and $K$. By Proposition \ref{PeriodicCurve3}, this curve contains at most $C^2 c^{2en} d^n$ many periodic points of period $n$. Hence if we take $c, c' > 1$ such that $C^2 c^{2en} < (c'^{-1}d)^n$, then for $1 - O(c'^{-n})$ proportion of points $x \in \Per_n$ will not lie on our curve and so will necessarily have degree $\geq c^n$ as desired.     
\end{proof}

\printbibliography

@misc{DZ23,
      title={Periodic points for meromorphic self-maps of Fujiki varieties}, 
      author={Tien-Cuong Dinh and Guolei Zhong},
      year={2023},
      eprint={2312.02533},
      archivePrefix={arXiv},
      url={https://arxiv.org/abs/2312.02533}, 
}

@misc{DK24,
      title={Equidistribution speed of periodic points for complex polynomials}, 
      author={Tien-Cuong Dinh and Lucas Kaufmann},
      year={2024},
      eprint={2409.19787},
      archivePrefix={arXiv},
      url={https://arxiv.org/abs/2409.19787}, 
}

@article {DNT15,
    AUTHOR = {Dinh, Tien-Cuong and Nguy\^en, Vi\^et-Anh and Truong, Tuyen
              Trung},
     TITLE = {Equidistribution for meromorphic maps with dominant
              topological degree},
   JOURNAL = {Indiana Univ. Math. J.},
  FJOURNAL = {Indiana University Mathematics Journal},
    VOLUME = {64},
      YEAR = {2015},
    NUMBER = {6},
     PAGES = {1805--1828},
}

@article {Mas84,
    AUTHOR = {Masser, D. W.},
     TITLE = {Small values of the quadratic part of the {N}\'eron-{T}ate
              height on an abelian variety},
   JOURNAL = {Compositio Math.},
  FJOURNAL = {Compositio Mathematica},
    VOLUME = {53},
      YEAR = {1984},
}

@article {Pet09,
    AUTHOR = {Petsche, Clayton},
     TITLE = {Nonarchimedean equidistribution on elliptic curves with global
              applications},
   JOURNAL = {Pacific J. Math.},
  FJOURNAL = {Pacific Journal of Mathematics},
    VOLUME = {242},
      YEAR = {2009},
    NUMBER = {2},
     PAGES = {345--375},
}

@article {BP05,
    AUTHOR = {Baker, Matthew and Petsche, Clayton},
     TITLE = {Global discrepancy and small points on elliptic curves},
   JOURNAL = {Int. Math. Res. Not.},
  FJOURNAL = {International Mathematics Research Notices},
      YEAR = {2005},
    NUMBER = {61},
     PAGES = {3791--3834},
}

@article {DP99,
    AUTHOR = {David, Sinnou and Philippon, Patrice},
     TITLE = {Minorations des hauteurs normalis\'ees des sous-vari\'et\'es
              des tores},
   JOURNAL = {Ann. Scuola Norm. Sup. Pisa Cl. Sci. (4)},
  FJOURNAL = {Annali della Scuola Normale Superiore di Pisa. Classe di
              Scienze. Serie IV},
    VOLUME = {28},
      YEAR = {1999},
}

@misc{Ni22,
      title={On the arithmetic Hilbert-Samuel theorem : a proof by deformation}, 
      author={Dorian Ni},
      year={2022},
      eprint={2207.05165},
      archivePrefix={arXiv},
      url={https://arxiv.org/abs/2207.05165}, 
}

@misc{GR22,
      title={Nombre de petits points sur une variété abélienne}, 
      author={Éric GAUDRON and Gaël RÉMOND},
      year={2022},
      url={https://lmbp.uca.fr/~gaudron/art21.pdf}, 
}

@article {Tian90,
    AUTHOR = {Tian, Gang},
     TITLE = {On a set of polarized {K}\"ahler metrics on algebraic
              manifolds},
   JOURNAL = {J. Differential Geom.},
  FJOURNAL = {Journal of Differential Geometry},
    VOLUME = {32},
      YEAR = {1990},
    NUMBER = {1},
     PAGES = {99--130},
}

@article {Mac03,
    AUTHOR = {MacAulay, F. S.},
     TITLE = {Some {F}ormulae in {E}limination},
   JOURNAL = {Proc. Lond. Math. Soc.},
  FJOURNAL = {Proceedings of the London Mathematical Society},
    VOLUME = {35},
      YEAR = {1903},
     PAGES = {3--27},
}

@article {YZ13,
    AUTHOR = {Yuan, Xinyi and Zhang, Tong},
     TITLE = {Effective bound of linear series on arithmetic surfaces},
   JOURNAL = {Duke Math. J.},
  FJOURNAL = {Duke Mathematical Journal},
    VOLUME = {162},
      YEAR = {2013},
    NUMBER = {10},
     PAGES = {1723--1770},
}

@misc{Ing22,
      title={Explicit canonical heights for divisors relative to endomorphisms of $\mathbb{P}^N$}, 
      author={Patrick Ingram},
      year={2022},
      eprint={2207.07206},
      archivePrefix={arXiv},
      url={https://arxiv.org/abs/2207.07206}, 
}

@book {Laz04,
    AUTHOR = {Lazarsfeld, Robert},
     TITLE = {Positivity in algebraic geometry. {I}},
    SERIES = {Ergebnisse der Mathematik und ihrer Grenzgebiete. 3. Folge. A
              Series of Modern Surveys in Mathematics},
    VOLUME = {48},
 PUBLISHER = {Springer-Verlag, Berlin},
      YEAR = {2004},
     PAGES = {xviii+387},
}

@incollection {Ber03,
    AUTHOR = {Berndtsson, Bo},
     TITLE = {Bergman kernels related to {H}ermitian line bundles over
              compact complex manifolds},
 BOOKTITLE = {Explorations in complex and {R}iemannian geometry},
    SERIES = {Contemp. Math.},
    VOLUME = {332},
     PAGES = {1--17},
 PUBLISHER = {Amer. Math. Soc., Providence, RI},
      YEAR = {2003},
}

@article {MMSSZZ22,
    AUTHOR = {Matsuzawa, Yohsuke and Meng, Sheng and Shibata, Takahiro and
              Zhang, De-Qi and Zhong, Guolei},
     TITLE = {Invariant subvarieties with small dynamical degree},
   JOURNAL = {Int. Math. Res. Not. IMRN},
  FJOURNAL = {International Mathematics Research Notices. IMRN},
      YEAR = {2022},
    NUMBER = {15},
     PAGES = {11448--11483},
}

@article {Cha15,
    AUTHOR = {Charles, Laurent},
     TITLE = {A note on the {B}ergman kernel},
   JOURNAL = {C. R. Math. Acad. Sci. Paris},
  FJOURNAL = {Comptes Rendus Math\'ematique. Acad\'emie des Sciences. Paris},
    VOLUME = {353},
      YEAR = {2015},
    NUMBER = {2},
     PAGES = {121--125},
}

@article {DS17,
    AUTHOR = {Dinh, Tien-Cuong and Sibony, Nessim},
     TITLE = {Equidistribution problems in complex dynamics of higher
              dimension},
   JOURNAL = {Internat. J. Math.},
  FJOURNAL = {International Journal of Mathematics},
    VOLUME = {28},
      YEAR = {2017},
    NUMBER = {7},
     PAGES = {1750057, 31},
}

@article {Xie24,
    AUTHOR = {Xie, Junyi},
     TITLE = {Algebraicity criteria, invariant subvarieties and
              transcendence problems from arithmetic dynamics},
   JOURNAL = {Peking Math. J.},
  FJOURNAL = {Peking Mathematical Journal},
    VOLUME = {7},
      YEAR = {2024},
    NUMBER = {1},
     PAGES = {345--398},
}

@article {DNS10,
    AUTHOR = {Dinh, Tien-Cuong and Nguy\^en, Vi\^et-Anh and Sibony, Nessim},
     TITLE = {Exponential estimates for plurisubharmonic functions and
              stochastic dynamics},
   JOURNAL = {J. Differential Geom.},
  FJOURNAL = {Journal of Differential Geometry},
    VOLUME = {84},
      YEAR = {2010},
    NUMBER = {3},
     PAGES = {465--488},
}

@article {DMN17,
    AUTHOR = {Dinh, Tien-Cuong and Ma, Xiaonan and Nguy\^en, Vi\^et-Anh},
     TITLE = {Equidistribution speed for {F}ekete points associated with an
              ample line bundle},
   JOURNAL = {Ann. Sci. \'Ec. Norm. Sup\'er. (4)},
  FJOURNAL = {Annales Scientifiques de l'\'Ecole Normale Sup\'erieure.
              Quatri\`eme S\'erie},
    VOLUME = {50},
      YEAR = {2017},
    NUMBER = {3},
     PAGES = {545--578},
}

@article {Fak03,
    AUTHOR = {Fakhruddin, Najmuddin},
     TITLE = {Questions on self maps of algebraic varieties},
   JOURNAL = {J. Ramanujan Math. Soc.},
  FJOURNAL = {Journal of the Ramanujan Mathematical Society},
    VOLUME = {18},
      YEAR = {2003},
    NUMBER = {2},
     PAGES = {109--122},
}

@article {AB95,
    AUTHOR = {Abbes, A. and Bouche, T.},
     TITLE = {Th\'{e}or\`eme de {H}ilbert-{S}amuel ``arithm\'{e}tique''},
   JOURNAL = {Ann. Inst. Fourier (Grenoble)},
  FJOURNAL = {Universit\'{e} de Grenoble. Annales de l'Institut Fourier},
    VOLUME = {45},
      YEAR = {1995},
    NUMBER = {2},
     PAGES = {375--401},
}

@misc{Loo24,
      title={Arakelov-Green's functions for dynamical systems on projective varieties}, 
      author={Nicole R. Looper},
      year={2024},
      eprint={2404.06981},
      archivePrefix={arXiv},
}

@article {BV89,
    AUTHOR = {Bismut, Jean-Michel and Vasserot, \'{E}ric},
     TITLE = {The asymptotics of the {R}ay-{S}inger analytic torsion
              associated with high powers of a positive line bundle},
   JOURNAL = {Comm. Math. Phys.},
  FJOURNAL = {Communications in Mathematical Physics},
    VOLUME = {125},
      YEAR = {1989},
    NUMBER = {2},
     PAGES = {355--367},
}

@article {GS92,
    AUTHOR = {Gillet, Henri and Soul\'{e}, Christophe},
     TITLE = {An arithmetic {R}iemann-{R}och theorem},
   JOURNAL = {Invent. Math.},
  FJOURNAL = {Inventiones Mathematicae},
    VOLUME = {110},
      YEAR = {1992},
    NUMBER = {3},
     PAGES = {473--543},
}

@book {Mor14,
    AUTHOR = {Moriwaki, Atsushi},
     TITLE = {Arakelov geometry},
    SERIES = {Translations of Mathematical Monographs},
    VOLUME = {244},
      NOTE = {Translated from the 2008 Japanese original},
 PUBLISHER = {American Mathematical Society, Providence, RI},
      YEAR = {2014},
     PAGES = {x+285},
}

@article {Zha95,
    AUTHOR = {Zhang, Shouwu},
     TITLE = {Positive line bundles on arithmetic varieties},
   JOURNAL = {J. Amer. Math. Soc.},
  FJOURNAL = {Journal of the American Mathematical Society},
    VOLUME = {8},
      YEAR = {1995},
    NUMBER = {1},
     PAGES = {187--221},
}

@article {Zha92,
    AUTHOR = {Zhang, Shouwu},
     TITLE = {Positive line bundles on arithmetic surfaces},
   JOURNAL = {Ann. of Math. (2)},
  FJOURNAL = {Annals of Mathematics. Second Series},
    VOLUME = {136},
      YEAR = {1992},
    NUMBER = {3},
     PAGES = {569--587},
}

@article {DS10b,
    AUTHOR = {Dinh, Tien-Cuong and Sibony, Nessim},
     TITLE = {Equidistribution speed for endomorphisms of projective spaces},
   JOURNAL = {Math. Ann.},
  FJOURNAL = {Mathematische Annalen},
    VOLUME = {347},
      YEAR = {2010},
    NUMBER = {3},
     PAGES = {613--626},
}

@article {DS03,
    AUTHOR = {Dinh, Tien-Cuong and Sibony, Nessim},
     TITLE = {Dynamique des applications d'allure polynomiale},
   JOURNAL = {J. Math. Pures Appl. (9)},
  FJOURNAL = {Journal de Math\'{e}matiques Pures et Appliqu\'{e}es.
              Neuvi\`eme S\'{e}rie},
    VOLUME = {82},
      YEAR = {2003},
    NUMBER = {4},
     PAGES = {367--423},
      ISSN = {0021-7824},
   MRCLASS = {37F10 (32H50 32U05 37A25 37C40)},
  MRNUMBER = {1992375},
MRREVIEWER = {Jean-Yves\ Briend},
       DOI = {10.1016/S0021-7824(03)00026-6},
       URL = {https://doi.org/10.1016/S0021-7824(03)00026-6},
}

@article {BD01,
    AUTHOR = {Briend, Jean-Yves and Duval, Julien},
     TITLE = {Deux caract\'{e}risations de la mesure d'\'{e}quilibre d'un
              endomorphisme de {${\rm P}^k(\bold C)$}},
   JOURNAL = {Publ. Math. Inst. Hautes \'{E}tudes Sci.},
  FJOURNAL = {Publications Math\'{e}matiques. Institut de Hautes \'{E}tudes
              Scientifiques},
    NUMBER = {93},
      YEAR = {2001},
     PAGES = {145--159},
}

@article {FJ03,
    AUTHOR = {Favre, C. and Jonsson, M.},
     TITLE = {Brolin's theorem for curves in two complex dimensions},
   JOURNAL = {Ann. Inst. Fourier (Grenoble)},
  FJOURNAL = {Universit\'{e} de Grenoble. Annales de l'Institut Fourier},
    VOLUME = {53},
      YEAR = {2003},
    NUMBER = {5},
     PAGES = {1461--1501},
      ISSN = {0373-0956,1777-5310},
   MRCLASS = {32U25 (32U40 37F10)},
  MRNUMBER = {2032940},
MRREVIEWER = {Romain\ Dujardin},
       DOI = {10.5802/aif.1985},
       URL = {https://doi.org/10.5802/aif.1985},
}

@article {Bro65,
    AUTHOR = {Brolin, Hans},
     TITLE = {Invariant sets under iteration of rational functions},
   JOURNAL = {Ark. Mat.},
  FJOURNAL = {Arkiv f\"{o}r Matematik},
    VOLUME = {6},
      YEAR = {1965},
     PAGES = {103--144},
}

@article {FLM83,
    AUTHOR = {Freire, Alexandre and Lopes, Artur and Ma\~{n}\'{e}, Ricardo},
     TITLE = {An invariant measure for rational maps},
   JOURNAL = {Bol. Soc. Brasil. Mat.},
  FJOURNAL = {Boletim da Sociedade Brasileira de Matem\'{a}tica},
    VOLUME = {14},
      YEAR = {1983},
    NUMBER = {1},
     PAGES = {45--62},
}

@misc{Dem12,
      title={Complex Analytic and Differential Geometry}, 
      author={Jean-Pierre Demailly},
      year={2012},
      eprint={www-fourier.ujf-grenoble.fr/∼demailly/books.html},
      archivePrefix={availabile online at}
}

@misc{GTV23,
      title={Sparsity of postcritically finite maps of $\mathbb{P}^k$ and beyond: A complex analytic approach}, 
      author={Thomas Gauthier and Johan Taflin and Gabriel Vigny},
      year={2023},
      eprint={2305.02246},
      archivePrefix={arXiv},
}

@misc{MS22,
      title={On the dynamical Bogomolov conjecture for families of split rational maps}, 
      author={Niki Myrto Mavraki and Harry Schmidt},
      year={2022},
      eprint={2201.10455},
      archivePrefix={arXiv},
}

@article {MS94,
    AUTHOR = {Morton, Patrick and Silverman, Joseph H.},
     TITLE = {Rational periodic points of rational functions},
   JOURNAL = {Internat. Math. Res. Notices},
  FJOURNAL = {International Mathematics Research Notices},
      YEAR = {1994},
    NUMBER = {2},
     PAGES = {97--110},
}

@article {Kep21,
    AUTHOR = {Huang, Keping},
     TITLE = {Uniform bounds for periods of endomorphisms of varieties},
   JOURNAL = {J. Number Theory},
  FJOURNAL = {Journal of Number Theory},
    VOLUME = {221},
      YEAR = {2021},
     PAGES = {358--364},
}

@article {Hut09,
    AUTHOR = {Hutz, Benjamin},
     TITLE = {Good reduction of periodic points on projective varieties},
   JOURNAL = {Illinois J. Math.},
  FJOURNAL = {Illinois Journal of Mathematics},
    VOLUME = {53},
      YEAR = {2009},
    NUMBER = {4},
     PAGES = {1109--1126},
}

@article {Hut18,
    AUTHOR = {Hutz, Benjamin},
     TITLE = {Good reduction and canonical heights of subvarieties},
   JOURNAL = {Math. Res. Lett.},
  FJOURNAL = {Mathematical Research Letters},
    VOLUME = {25},
      YEAR = {2018},
    NUMBER = {6},
     PAGES = {1837--1863},
}

@article {Bak06,
    AUTHOR = {Baker, Matthew},
     TITLE = {A lower bound for average values of dynamical {G}reen's
              functions},
   JOURNAL = {Math. Res. Lett.},
  FJOURNAL = {Mathematical Research Letters},
    VOLUME = {13},
      YEAR = {2006},
    NUMBER = {2-3},
     PAGES = {245--257},
}

@article {Fak14,
    AUTHOR = {Fakhruddin, Najmuddin},
     TITLE = {The algebraic dynamics of generic endomorphisms of
              {$\Bbb{P}^n$}},
   JOURNAL = {Algebra Number Theory},
  FJOURNAL = {Algebra \& Number Theory},
    VOLUME = {8},
      YEAR = {2014},
    NUMBER = {3},
     PAGES = {587--608},
}

@misc{DMY24,
      title={Bounded geometry for PCF-special subvarieties}, 
      author={Laura DeMarco and Niki Myrto Mavraki and Hexi Ye},
      year={2024},
      eprint={2405.17343},
      archivePrefix={arXiv},
}

@article {DNS17,
    AUTHOR = {D'Andrea, Carlos and Narv\'{a}ez-Clauss, Marta and Sombra,
              Mart\'{\i}n},
     TITLE = {Quantitative equidistribution of {G}alois orbits of small
              points in the {$N$}-dimensional torus},
   JOURNAL = {Algebra Number Theory},
  FJOURNAL = {Algebra \& Number Theory},
    VOLUME = {11},
      YEAR = {2017},
    NUMBER = {7},
     PAGES = {1627--1655},
}

@article {DM24,
    AUTHOR = {DeMarco, Laura and Mavraki, Niki Myrto},
     TITLE = {Dynamics on {$\mathbb{P}^1$}: preperiodic points and pairwise
              stability},
   JOURNAL = {Compos. Math.},
  FJOURNAL = {Compositio Mathematica},
    VOLUME = {160},
      YEAR = {2024},
    NUMBER = {2},
     PAGES = {356--387},
}

@misc{GGK21,
      title={The Uniform Mordell-Lang Conjecture}, 
      author={Ziyang Gao and Tangli Ge and Lars Kühne},
      year={2021},
      eprint={2105.15085},
      archivePrefix={arXiv},
}

@misc{Kuh21,
      title={Equidistribution in Families of Abelian Varieties and Uniformity}, 
      author={Lars Kühne},
      year={2021},
      eprint={2101.10272},
      archivePrefix={arXiv},
}

@misc{Gau23,
      title={Good height functions on quasi-projective varieties: equidistribution and applications in dynamics}, 
      author={Thomas Gauthier},
      year={2023},
      eprint={2105.02479},
      archivePrefix={arXiv},
}

@incollection {DS10,
    AUTHOR = {Dinh, Tien-Cuong and Sibony, Nessim},
     TITLE = {Dynamics in several complex variables: endomorphisms of
              projective spaces and polynomial-like mappings},
 BOOKTITLE = {Holomorphic dynamical systems},
    SERIES = {Lecture Notes in Math.},
    VOLUME = {1998},
     PAGES = {165--294},
 PUBLISHER = {Springer, Berlin},
      YEAR = {2010},
}

@article {BD14,
    AUTHOR = {Baker, Matthew and De Marco, Laura},
     TITLE = {Special curves and postcritically finite polynomials},
   JOURNAL = {Forum Math. Pi},
  FJOURNAL = {Forum of Mathematics. Pi},
    VOLUME = {1},
      YEAR = {2013},
     PAGES = {e3, 35},
}

@article {GTZ11,
    AUTHOR = {Ghioca, Dragos and Tucker, Thomas J. and Zhang, Shouwu},
     TITLE = {Towards a dynamical {M}anin-{M}umford conjecture},
   JOURNAL = {Int. Math. Res. Not. IMRN},
  FJOURNAL = {International Mathematics Research Notices. IMRN},
      YEAR = {2011},
    NUMBER = {22},
     PAGES = {5109--5122},
}

@incollection {Zha06,
    AUTHOR = {Zhang, Shou-Wu},
     TITLE = {Distributions in algebraic dynamics},
 BOOKTITLE = {Surveys in differential geometry. {V}ol. {X}},
    SERIES = {Surv. Differ. Geom.},
    VOLUME = {10},
     PAGES = {381--430},
 PUBLISHER = {Int. Press, Somerville, MA},
      YEAR = {2006},
}

@article {Zha98,
    AUTHOR = {Zhang, Shou-Wu},
     TITLE = {Equidistribution of small points on abelian varieties},
   JOURNAL = {Ann. of Math. (2)},
  FJOURNAL = {Annals of Mathematics. Second Series},
    VOLUME = {147},
      YEAR = {1998},
    NUMBER = {1},
     PAGES = {159--165},
}

@article {Ulm98,
    AUTHOR = {Ullmo, Emmanuel},
     TITLE = {Positivit\'{e} et discr\'{e}tion des points alg\'{e}briques
              des courbes},
   JOURNAL = {Ann. of Math. (2)},
  FJOURNAL = {Annals of Mathematics. Second Series},
    VOLUME = {147},
      YEAR = {1998},
    NUMBER = {1},
     PAGES = {167--179},
}

@article {CS93,
    AUTHOR = {Call, Gregory S. and Silverman, Joseph H.},
     TITLE = {Canonical heights on varieties with morphisms},
   JOURNAL = {Compositio Math.},
  FJOURNAL = {Compositio Mathematica},
    VOLUME = {89},
      YEAR = {1993},
    NUMBER = {2},
     PAGES = {163--205},
}

@article {Yua08,
    AUTHOR = {Yuan, Xinyi},
     TITLE = {Big line bundles over arithmetic varieties},
   JOURNAL = {Invent. Math.},
  FJOURNAL = {Inventiones Mathematicae},
    VOLUME = {173},
      YEAR = {2008},
    NUMBER = {3},
     PAGES = {603--649},
}

@article{Lyu83,
    AUTHOR = {Lyubich, M. Ju.},
     TITLE = {Entropy properties of rational endomorphisms of the {R}iemann
              sphere},
   JOURNAL = {Ergodic Theory Dynam. Systems},
  FJOURNAL = {Ergodic Theory and Dynamical Systems},
    VOLUME = {3},
      YEAR = {1983},
    NUMBER = {3},
     PAGES = {351--385},
}

@article{SUZ97,
    AUTHOR = {Szpiro, L. and Ullmo, E. and Zhang, S.},
     TITLE = {\'{E}quir\'{e}partition des petits points},
   JOURNAL = {Invent. Math.},
  FJOURNAL = {Inventiones Mathematicae},
    VOLUME = {127},
      YEAR = {1997},
    NUMBER = {2},
     PAGES = {337--347},
}

@article{Bil97,
    AUTHOR = {Bilu, Yuri},
     TITLE = {Limit distribution of small points on algebraic tori},
   JOURNAL = {Duke Math. J.},
  FJOURNAL = {Duke Mathematical Journal},
    VOLUME = {89},
      YEAR = {1997},
    NUMBER = {3},
     PAGES = {465--476},
}

@misc{YZ24,
      title={Adelic line bundles on quasi-projective varieties}, 
      author={Xinyi Yuan and Shou-Wu Zhang},
      year={2024},
      eprint={2105.13587},
      archivePrefix={arXiv},
}

@article{Fil17,
    title={A metric of mutual energy and unlikely intersections for dynamical systems
},
    author={Paul Fili},
    year={2017},
    journal={\url{https://arxiv.org/abs/1708.08403}},
    }

@article{BR06,
    AUTHOR = {Baker, Matthew H. and Rumely, Robert},
     TITLE = {Equidistribution of small points, rational dynamics, and
              potential theory},
   JOURNAL = {Ann. Inst. Fourier (Grenoble)},
  FJOURNAL = {Universit\'{e} de Grenoble. Annales de l'Institut Fourier},
    VOLUME = {56},
      YEAR = {2006},
    NUMBER = {3},
     PAGES = {625--688},
}

@article{CL06,
    AUTHOR = {Chambert-Loir, Antoine},
     TITLE = {Mesures et \'{e}quidistribution sur les espaces de {B}erkovich},
   JOURNAL = {J. Reine Angew. Math.},
  FJOURNAL = {Journal f\"{u}r die Reine und Angewandte Mathematik. [Crelle's
              Journal]},
    VOLUME = {595},
      YEAR = {2006},
     PAGES = {215--235},
}

@article{BR10,
    AUTHOR = {Baker, Matthew and Rumely, Robert},
     TITLE = {Potential theory and dynamics on the {B}erkovich projective
              line},
    JOURNAL = {Mathematical Surveys and Monographs},
    VOLUME = {159},
 PUBLISHER = {American Mathematical Society, Providence, RI},
      YEAR = {2010},
     PAGES = {xxxiv+428},
}

@article{FRL06,
    AUTHOR = {Favre, Charles and Rivera-Letelier, Juan},
     TITLE = {\'{E}quidistribution quantitative des points de petite hauteur sur
              la droite projective},
   JOURNAL = {Math. Ann.},
  FJOURNAL = {Mathematische Annalen},
    VOLUME = {335},
      YEAR = {2006},
    NUMBER = {2},
     PAGES = {311--361},
}

\end{document}